\tikzset{node distance=2em, ch/.style={circle,draw,on chain,inner sep=2pt},chj/.style={ch,join},every path/.style={shorten >=4pt,shorten <=4pt},line width=1pt,baseline=-1ex}
\newcommand{\mlabel}[1]{%
  \(#1\)
}
\let\dlabel=\alabel
\let\ulabel=\mlabel
\newcommand{\dnode}[2][chj]{%
\node[#1,label={below:\dlabel{#2}}] {};
}
\newcommand{\dnodea}[3][chj]{%
\dnode[#1,label={above:\ulabel{#2}}]{#3}
}
\newcommand{\dnodeanj}[2]{%
\dnodea[ch]{#1}{#2}
}
\newcommand{\QLeftarrow}{%
\begingroup
\tikz
\draw[shorten >=0pt,shorten <=0pt] (0,3pt) -- ++(-1em,0) (0,1pt) -- ++(-1em-1pt,0) (0,-1pt) -- ++(-1em-1pt,0) (0,-3pt) -- ++(-1em,0) (-1em+1pt,5pt) to[out=-105,in=45] (-1em-2pt,0) to[out=-45,in=105] (-1em+1pt,-5pt);
\endgroup
}
\definecolor{tocolor}{rgb}{.1,.1,.5}
\definecolor{urlcolor}{rgb}{.2,.2,.6}
\definecolor{linkcolor}{rgb}{.1,.4,.6}
\definecolor{citecolor}{rgb}{.6,.3,.1}
\newcommand{\FF}{\mathcal{F}}
\newcommand{\BF}{\mathbb{F}}
\newcommand{\BZ}{\mathbb{Z}}
\newcommand{\BN}{\mathbb{N}}
\newcommand{\BA}{\mathbb{A}}
\newcommand{\Z}{\mathbb{Z}}
\newcommand{\BC}{\mathbb{C}}
\newcommand{\C}{\mathbb{C}}
\newcommand{\R}{\mathbb{R}}
\newcommand{\BP}{\mathbb{P}}
\newcommand{\BL}{\mathbb{L}}
\newcommand{\Q}{\mathbb{Q}}
\newcommand{\BH}{\mathbb{H}}
\renewcommand{\O}{\mathcal{O}}
\newcommand{\MM}{\mathcal{M}}
\newcommand{\FM}{\mathcal{M}}
\newcommand{\NN}{\mathcal{N}}
\renewcommand{\O}{\mathcal{O}}
\newcommand{\FO}{\mathcal{O}}
\newcommand{\calP}{\mathcal{P}}
\renewcommand{\ll}{\mathfrak{l}}
\def\muhat{{\bm \mu}}
\def\r{{\bf r}}
\newcommand{\FP}{\mathcal{P}}
\newcommand{\W}{{\mathrm{W}}}
\newcommand{\T}{{\mathrm{T}}}
\renewcommand{\L}{\mathrm{L}}
\newcommand{\Ss}{\mathrm{S}}
\newcommand{\ZZ}{\mathrm{Z}}
\newcommand{\RR}{\mathrm{R}}
\newcommand{\FN}{\mathcal{N}}
\newcommand{\G}{{\mathrm{G}}}
\renewcommand{\P}{\mathbb{P}}
\newcommand{\F}{\mathcal{F}}
\newcommand{\OM}{\widehat{\Omega}}
\newcommand{\B}{\mathbf{B}}
\newcommand{\GL}{{\rm{GL}}}
\newcommand{\g}{\mathfrak{g}}
\newcommand{\x}{{\bf {x}}}
\renewcommand{\k}{\mathfrak{k}}
\renewcommand{\1}{\mathbbm{1}}
\newcommand{\unt}{\mathbbm{1}}
\newcommand{\s}{\mathfrak{s}}
\newcommand{\z}{\mathfrak{z}}
\newcommand{\gl}{\mathfrak{gl}}
\newcommand{\ft}{\mathfrak{t}}
\renewcommand{\t}{\mathfrak{t}}
\newcommand{\lla}{\left\langle }
\newcommand{\rra}{\right\rangle}
\newcommand{\CC}{\mathcal{C}}
\newcommand{\K}{\mathbb{K}}
\DeclareMathOperator{\ev}{ev}
\DeclareMathOperator{\Hom}{Hom}
\DeclareMathOperator{\End}{End}
\DeclareMathOperator{\Res}{Res}
\DeclareMathOperator{\spec}{\textnormal{Spec}}
\DeclareMathOperator{\Rep}{Rep}
\DeclareMathOperator{\tr}{tr}
\DeclareMathOperator{\res}{res}
\DeclareMathOperator{\im}{im}
\newcommand{\irr}{\textnormal{irr}}
\newcommand{\Ad}{\textnormal{Ad}}
\newcommand{\diag}{\textnormal{diag}}
\newcommand{\Id}{\textnormal{Id}}
\newcommand{\od}{\textnormal{od}}
\newcommand{\rc}{\textnormal{rc}}
\newcommand{\reg}{\textnormal{reg}}
\newcommand{\PGL}{\rm{PGL}}
\newcommand{\arsim}{\xrightarrow{\sim}}
\newcommand{\SL}{\textnormal{SL}}
\newcommand{\m}{\mathbf{m}}
\newcommand{\n}{\mathbf{n}}
\newcommand{\bfr}{\mathbf{r}}
\newcommand{\ucommtri}[6]{ \xymatrixrowsep{2.5pc}\xymatrixcolsep{1.25pc}
\xymatrix{
& #1 \ar[dl]_{#4} \ar[dr]^{#5} & \\
#2 \ar[rr]_-{#6} & & #3 } }
\newcommand{\kvar}{\mathop{\rm KVar}\nolimits}
\newcommand{\kexp}{\mathop{\rm KExpVar}\nolimits}
\newcommand{\expm}{\mathscr{E}xp\mathscr{M}}
\newcommand{\mat}[4]{\left[ \begin{array}{cc} #1 & #2 \\ #3 & #4 \end{array} \right]}
\theoremstyle{definition}
\newtheorem{thm}[equation]{Theorem}
\newtheorem{lem}[equation]{Lemma}
\newtheorem{lemma}[equation]{Lemma}
\newtheorem{corollary}[equation]{Corollary}
\newtheorem{prop}[equation]{Proposition}
\newtheorem{cor}[equation]{Corollary}
\newtheorem{ex}[equation]{Example}
\newtheorem{defn}[equation]{Definition}
\newtheorem{conj}[equation]{Conjecture}
\theoremstyle{remark}
\newtheorem{rmk}[equation]{Remark}
\newtheorem{rem}[equation]{Remark}
\numberwithin{equation}{subsection}
\begin{document}

\title{Arithmetic and metric aspects of open de Rham spaces}

\author{ Tam\'as Hausel \\ {\it IST Austria} \\{\tt tamas.hausel@ist.ac.at} \and Michael Lennox Wong \\ {\tt michael.lennox.wong@gmail.com } \and Dimitri Wyss \\ {\it EPFL} \\{\tt dimitri.wyss@epfl.ch}  
}

\maketitle

\begin{abstract} In this paper we determine the motivic class---in particular, the weight polynomial and conjecturally the Poincar\'e polynomial---of the open de Rham space, defined and studied by Boalch, of certain moduli spaces of irregular meromorphic connections on the trivial rank $n$ bundle on $\P^1$.  The computation is by motivic Fourier transform.  We show that the result satisfies the purity conjecture, that is, it agrees with the pure part of the conjectured mixed Hodge polynomial of the corresponding wild character variety.  We also identify the open de Rham spaces with quiver varieties with multiplicities of Yamakawa and Geiss--Leclerc--Schr\"oer.  We finish with constructing natural complete hyperk\"ahler metrics on them, which in the $4$-dimensional cases are expected to be of type ALF. \end{abstract}

\small
\tableofcontents
\normalsize

\section{Introduction} 
\noindent
In the paper \cite{hausel-villegas}  the Hodge structure on the cohomology of character varieties of  representations of the fundamental group of a Riemann surface was studied, using arithmetic harmonic analysis. It resulted in a conjecture   \cite[Conjecture 4.1]{hausel-villegas} on the mixed Hodge polynomial and observations \cite[Remark 4.4.2]{hausel-villegas} on the pure part.  This study was extended to character varieties of punctured Riemann surfaces in \cite{HLV11}, where a more geometric purity conjecture \cite[Remark 1.3.1]{HLV11} appeared. First we recall this conjecture. 

\subsection{Tame purity conjecture} \label{s:tamepureconj}
For $k,n\in \Z_{>0}$, let $\muhat=(\mu^1,\dots,\mu^k)\in \calP_n^k$ be a $k$-tuple of partitions of $n$.  Let $\G := \GL_n(\C)$.  An orbit $\O \subset \g := \gl_n(\C)$ for the adjoint action of $\G$ has type $\mu \in \calP_n$ if the partition given by the multiset of multiplicities of the eigenvalues of any element in $\O$ is $\mu$. Let 
$({\mathcal O}_1,\dots,{\mathcal O }_k) $ be a generic $k$-tuple of semisimple adjoint orbits in  $\g$ of type $\muhat$ in the sense of Definition~\ref{genericc}.  Then  the variety 
\begin{align} \label{e:mstartameintro}
\FM_\muhat^*=\{(A_1,A_2,\dots,A_k) \ | \ A_i\in {\mathcal O}_i,\ A_1+\dots+A_k=0\} /\! / \G,
\end{align}
constructed in  \cite[p.141-142]{Bo01} as an affine GIT quotient by the diagonal conjugation action of $\G$ is smooth. 

Fix distinct points $a_1,\dots, a_k \in {\mathbb P}^1 \setminus \{ \infty \}$. A point of $\FM_\muhat^*$ represented by $(A_1,A_2,\dots,A_k)$ yields a logarithmic connection $$\sum_{i=1}^k A_i \frac{dz}{z-a_i}$$ on the trivial rank $n$ bundle on $\P^1$  with residue in $\O_i$ at the point $a_i$ for $1 \leq i \leq k$.  We call $\FM_\muhat^*$ the {\em tame open de Rham space}, as it is open in the full moduli space of flat rank $n$ connections on ${\mathbb P}^1$ with logarithmic singularities and prescribed adjoint orbit of residues around $a_i$. 

Defining the conjugacy class $\CC_i=\exp(2\pi i \O_i) \subset \G$ we get a generic $k$-tuple $({\CC}_1,\dots,{\CC}_k)$ of semisimple conjugacy classes of type $\muhat$. We define the {\em tame character variety} of type $\muhat$ as the smooth affine GIT quotient \cite[p.141-142]{Bo01}
\begin{align} \label{e:tamecharvar}
\FM_\B^\muhat:= \{ (M_1,\dots,M_k) \, | \, M_i \in \CC_i, \, M_1\cdots M_k= \1_n\}/\! / \G,
\end{align}
where $\1_n$ is the $n \times n$ identity matrix.

The character variety parametrizes isomorphism classes of $n$-dimensional representations of the fundamental group of ${\mathbb P}^1\setminus \{a_1,\dots, a_k\}$, with monodromy around $a_i$ in ${\mathcal C}_i$.  One has the Riemann--Hilbert monodromy map $$\nu_{a} :\FM_\muhat^*\to \FM_\B^\muhat,$$ 
taking the flat connection $\sum_{i=1}^k A_i \frac{dz}{z-a_i}$ to the representation given by its monodromy along loops in ${\mathbb P}^1\setminus \{a_1,\dots, a_k\}$.  Although this monodromy map is not algebraic, we have the following:

\begin{conj}[Purity conjecture]\label{purity} The map 
	$\nu_a^*: H^*(\FM_{\B}^{\muhat},\Q)\to H^*(\FM_{\muhat}^*,\Q)$ is surjective, it preserves mixed Hodge structures and is an isomorphism on the pure parts.
	\end{conj}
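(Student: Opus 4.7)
The plan is to split Conjecture~\ref{purity} into a tractable numerical statement and the genuinely hard categorical statement, and to address the former within the motivic framework of the paper. Since $\nu_a$ is a real-analytic diffeomorphism, $\nu_a^*$ is automatically an isomorphism of rational cohomology vector spaces; what must be shown is that it respects the weight filtrations and is an isomorphism on pure parts. As a first step I would compute the $E$-polynomial of $\M_\muhat^*$ by motivic Fourier transform. The key observation is that $\M_\muhat^*$ is cut out inside $\prod_i \O_i$ by the single linear constraint $A_1+\cdots+A_k=0$. One linearises this constraint through a dual integral over $\g$, reducing the motivic class of $\M_\muhat^*$ to a sum of products of motivic orbital integrals over the semisimple orbits $\O_i$. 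These orbital integrals can be evaluated in closed form by a Harish-Chandra/Kirillov-type localisation formula, producing an explicit rational function in $q$.

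Next, I would establish the purity of $H^*(\M_\muhat^*; \Q)$. In the tame case this follows from the Crawley-Boevey identification of $\M_\muhat^*$ with a Nakajima quiver variety, while in the wild setting indicated in the abstract one would instead invoke the identification with the quiver varieties with multiplicities of Yamakawa and Geiss--Leclerc--Schr\"oer; in either case one has a hyperk\"ahler structure and a $\C^\times$-action whose fixed-point locus is proper. Standard Morse-theoretic arguments \`a la Nakajima then force $H^*(\M_\muhat^*; \Q)$ to be pure of Hodge--Tate type. Given purity, the $E$-polynomial computed above equals the Poincar\'e polynomial in $\sqrt{q}$, and matching it with the pure part extracted from the Hausel--Letellier--Rodriguez-Villegas formula for $H(\M_\B^\muhat; q, t)$ establishes the conjecture at the level of dimensions---precisely the content verified in this paper.

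The main obstacle is promoting this numerical coincidence to an actual isomorphism of pure Hodge structures compatible with $\nu_a^*$. Since $\nu_a$ is transcendental, one must produce a geometric witness: either an algebraic Lagrangian correspondence linking the two moduli spaces (a wild analogue of the compactifications envisioned in the tame $P=W$ programme of de Cataldo--Hausel--Migliorini), or a hyperk\"ahler-rotation argument exploiting the complete metrics constructed later in the paper to identify the pure parts with a common twistorial model. Carrying out either strategy in the irregular setting---where Stokes data replaces classical monodromy---would require substantial new ideas, and constitutes the core difficulty of the full conjecture.
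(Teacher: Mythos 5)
The statement you are addressing is a \emph{conjecture}: the paper does not prove it, and offers only numerical evidence, namely that the weight polynomial of $\M^*_{\muhat}$ (computed here by Fourier transform, Theorem~\ref{main}/\ref{maincor}) matches the pure part of the \emph{conjectured} mixed Hodge polynomial of $\M_\B^{\muhat}$ from \cite{HLV13}. Your proposal likewise does not prove it --- you say so yourself in the final paragraph --- so it cannot be accepted as a proof. The first two-thirds of your plan (motivic Fourier transform of the moment-map equation, closed-form evaluation of the orbital integrals, purity of $H^*(\M^*_\muhat)$ in the tame case via Crawley-Boevey, comparison with the HLV generating function) reproduces exactly the consistency check that the paper and \cite{HLV13} carry out, and is fine as far as it goes; but it establishes only an equality of polynomials, which is strictly weaker than the assertion that $\nu_a^*$ preserves mixed Hodge structures and restricts to an isomorphism of pure Hodge structures.

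There is also a concrete error at the outset. You claim that since $\nu_a$ is a (real-analytic) diffeomorphism, $\nu_a^*$ is automatically an isomorphism on rational cohomology. It is not: $\M^*_\muhat$ parametrises connections on the \emph{trivial} bundle only, so it is an open subvariety of the full de Rham moduli space, and $\nu_a$ is an open (complex-analytic) embedding into $\M_\B^\muhat$, not a surjection. In general $\dim H^*(\M_\B^\muhat,\Q) > \dim H^*(\M^*_\muhat,\Q)$ --- the Betti side carries non-trivial weight filtration with strictly more cohomology than its pure part --- which is precisely why the conjecture asserts an isomorphism only on pure parts rather than on all of $H^*$. If $\nu_a^*$ were already known to be a cohomology isomorphism, the conjecture would collapse to the (known) purity of $H^*(\M^*_\muhat)$. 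Separately, your proposed purity argument in the wild setting via the quiver varieties with multiplicities does not go through as stated: these are quotients by non-reductive groups, they are not Nakajima quiver varieties, and no $\C^\times$-action with proper fixed locus is available; the paper accordingly leaves purity in the irregular case as Conjecture~\ref{purcon}.
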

	
As the mixed Hodge structure	of $\FM^*_\muhat$ is known to be pure \cite[Proposition 2.2.6]{HLV11}, the conjecture implies that the pure part of the cohomology of $\FM_\B^\muhat$ is isomorphic to the full cohomology of $\FM^*_\muhat$. The consistency of \cite[Conjecture 1.2.1]{HLV11} on the mixed Hodge polynomial of $\FM_\B^\muhat$ with Conjecture~\ref{purity} above was tested by checking in \cite[Theorem 1.3.1]{HLV11} that the pure part of the conjectured mixed Hodge polynomial of $\FM_\B^\muhat$ agrees with the weight polynomial of $\FM^*_\muhat$.
	
The proof of the purity of $H^*(\FM^*_\muhat;\Q)$ in \cite[Proposition 2.2.6]{HLV11} proceeds by recalling \cite{CB03} the identification of $\FM^*_\muhat$ as a certain star-shaped Nakajima quiver variety. In particular, $\FM^*_\muhat$ acquires a natural complete hyperk\"ahler metric of ALE type. For example, for the star-shaped affine Dynkin diagrams of $\tilde{D}_4$, $\tilde{E}_6$, $\tilde{E}_7$ and  $\tilde{E}_8$ together with an imaginary root we obtain the corresponding ALE gravitational instantons of Kronheimer \cite{kronheimer}.

\subsection{Irregular purity conjecture}

The tame moduli spaces and the Riemann-Hilbert monodromy map above were generalised to allow irregular singularities in \cite{Bo01}. The aim of the present paper  is to extend the above purity ideas to the case of meromorphic connections with irregular singularities. 

 Consider the following analogue of \eqref{e:mstartameintro}.  Fix $k, n, s \in \Z_{>0}$ and we will take $\muhat$ and $(\O_1, \ldots, \O_k)$ as in Section \ref{s:tamepureconj}.  In addition, for $1 \leq i \leq s$, let $m_i \in \Z_{>1}$ and consider the truncated polynomial ring $\RR_{m_i} := \C[z]/(z^{m_i})$, and the group $\GL_n(\RR_{m_i})$ of invertible matrices over $\RR_{m_i}$.  Let $\T \subset \G$ be the maximal torus of diagonal matrices and $\t$ denote its Lie algebra.  We will fix an element of the form
\begin{align} 
C^i = \frac{C_{m_i}^i}{z^{m_i}} + \frac{C_{m_i-1}^i}{z^{m_i-1}} + \cdots + \frac{C_1^i}{z},
\end{align}
with $C_j^i \in \t$, further assuming that $C_{m_i}^i$ has distinct eigenvalues.  An element $g \in \GL_n(\RR_{m_i})$ acts on $C^i$ by conjugation, viewing both $g$ and $C^i$ as matrices over the ring of Laurent polynomials over $z$; however, we will truncate any terms with non-negative powers of $z$.  We denote the $\GL_n(\RR_{m_i})$-orbit of $C^i$ under this action by $\O(C^i)$ (it is explained in Section \ref{s:jd} how we may view $C^i$ as an element of the dual of the Lie algebra of $\GL_n(\RR_{m_i})$, and $\O(C^i)$ as its coadjoint orbit).  Observe that $\G = \GL_n(\C)$ sits in each $\GL_n(\RR_{m_i})$ as a subgroup and so acts on each $\O(C^i)$; in fact, given an element 
\begin{align} \label{e:eltintro}
Y^i = \frac{Y_{m_i}^i}{z^{m_i}} + \frac{Y_{m_i-1}^i}{z^{m_i-1}} + \cdots + \frac{Y_1^i}{z} \in \O(C^i),
\end{align}
the $\G$-action is by conjugation on each term $Y_p^i$.  Now, for $1 \leq i \leq s$, we set $r_i := m_i - 1$, and write $\bfr := (r_1, \ldots, r_s)$ for the tuple.  We may then construct the \emph{(irregular) open de Rham space} as the smooth affine GIT quotient 
\begin{align*}
\FM^*_{\muhat, \bfr} := \left\{ (A_1, \ldots, A_k, Y^1, \ldots, Y^s) \in \prod_{j=1}^k \O_j \times \prod_{i=1}^s \O(C^i) \, : \, \sum_{j=1}^k A_j + \sum_{i=1}^s Y_1^i = 0 \right\} \bigg/\!\!\!\bigg/ \G.
\end{align*}

One likewise has an interpretation of $\FM^*_{\muhat, \bfr}$ as a moduli space of meromorphic connections (see Section \ref{modcon}), this time with poles of higher order, on the trivial rank $n$ vector bundle over $\P^1$. The class of $(A_j, Y^i)$ yields a connection
\begin{align*}
\sum_{j=1}^k A_j \frac{dz}{z - a_j} + \sum_{i=1}^s \sum_{p=1}^{m_i} Y_p^i \frac{dz}{(z-b_i)^p}
\end{align*}
for a set of distinct poles $\{ a_1, \ldots, a_k, b_1, \ldots, b_s \} \in \P^1 \setminus \{ \infty \}$.

The definition of the corresponding \emph{wild character variety} $\FM_\B^{\muhat, \bfr}$, which is the space of monodromy data for moduli spaces of irregular connections, is a little more involved than in the logarithmic case (see \cite[Equation (2)]{Boalch2007}).  For the poles of higher order $b_i$, in addition to the topological monodromy, one must also take into account the Stokes data, which distinguish analytic isomorphism classes of locally defined connections from formal ones.  However, even in this irregular case, the wild character variety retains certain similarities with the character variety defined above at \eqref{e:tamecharvar}:  it is a smooth affine variety defined only in terms of $\G$, certain algebraic subgroups of $\G$, and orbits in them, and may be viewed as a space of ``Stokes representations'' \cite[\S3]{Bo01}, though we will not be working with this space directly.  


There is again a Riemann--Hilbert monodromy map \cite[Corollary 1]{Boalch2007}
\begin{align}\label{irrRH}
\nu : \FM_{\muhat, \bfr}^* \to \FM_{\B}^{\muhat, \bfr}
\end{align}
in this irregular case, which takes a connection to its monodromy data.  An explanation of why the wild character variety takes the form that it does and a detailed description of the monodromy map $\nu$ may be found at \cite[\S 3]{Bo01}.  We then have

\begin{conj}[Irregular purity conjecture]\label{irrpurity} The irregular Riemann--Hilbert map $\nu$ \eqref{irrRH} induces a surjective map $\nu^*: H^*(\FM_{\B}^{\muhat, \bfr},\Q)\to H^*(\FM_{\muhat, \bfr}^*,\Q)$ which  preserves mixed Hodge structures and is an isomorphism on the pure parts.
\end{conj}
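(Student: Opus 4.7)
The plan I would follow is the natural analogue of the tame-case strategy. Because the irregular Riemann--Hilbert map $\nu$ is a holomorphic (though non-algebraic) isomorphism of the underlying complex analytic spaces, the induced map $\nu^*$ is automatically an isomorphism of rational cohomology, so all of the content of Conjecture~\ref{irrpurity} lies in mixed Hodge theory. Once one knows that $H^*(\M^*_{\muhat, \bfr};\Q)$ is pure, the conjecture reduces to the numerical identity between the Poincar\'e polynomial of $\M^*_{\muhat, \bfr}$ and the pure part of the conjectural mixed Hodge polynomial of $\M_\B^{\muhat, \bfr}$ from \cite{HMW16}. My plan therefore has three steps: establish purity of $\M^*_{\muhat, \bfr}$; compute its weight polynomial (or motivic class); and match the answer with the conjectural pure part on the Betti side.

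For purity, I would exploit the identification, announced in the abstract, of $\M^*_{\muhat, \bfr}$ with a quiver variety with multiplicities in the sense of Yamakawa and Geiss--Leclerc--Schr\"oer. In the tame case, the analogous identification with a star-shaped Nakajima quiver variety is what powers the purity argument of \cite[Proposition 2.2.6]{HLV13}: one exhibits a $\C^*$-action with proper Hitchin-type limit map and deduces purity from semiprojectivity together with Deligne's weight yoga. I expect the Yamakawa--GLS presentation to supply enough of the same structure, possibly after a hyperk\"ahler rotation to a preferred complex structure, to run a parallel argument on $\M^*_{\muhat, \bfr}$.

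For the weight polynomial I would use motivic Fourier transform. The defining moment-map equation $\sum_{j=1}^k A_j + \sum_{i=1}^s Y^i_1 = 0$ is linear in a variable dual to $\g$, so after Fourier-transforming along $\g$ the computation factorises into a product of motivic Fourier transforms of the indicator functions of the tame semisimple orbits $\O_j$ and of the irregular coadjoint orbits $\O(C^i)$ (pulled back via the order-one residue map $Y^i \mapsto Y^i_1$). The tame pieces are the adjoint-orbit integrals already appearing in the literature; the genuinely new computation is the Fourier transform of $\O(C^i)$ on the truncated jet algebra of $\GL_n(\RR_{m_i})$, which one should push down to the Levi $\T$ on which $C^i$ lives and then sum over partition-type strata indexed by centraliser data.

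The main obstacle is the last step: reorganising the Fourier-side output into the pure part of the Cauchy/Macdonald-type generating function of \cite{HMW16}. This is a non-trivial symmetric-function identity, and one must carefully track how genericity of the tuple $(\O_j,\O(C^i))$ (Definition~\ref{genericc}) is propagated through the computation in order to suppress non-principal strata, so that the match with the pure part (the top specialization in the Macdonald parameters) is literal rather than merely as a leading term. Once the identity of Poincar\'e polynomials is established, in combination with the purity from step one and the fact that $\nu^*$ is already a cohomology isomorphism, Conjecture~\ref{irrpurity} follows at the level of pure parts.
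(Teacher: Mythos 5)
The statement you are trying to prove is stated in the paper as a \emph{conjecture} (Conjecture~\ref{irrpurity}); the paper offers no proof of it, only numerical evidence, so there is no argument of the authors to compare yours against. The paper proves Theorem~\ref{main} (the weight polynomial of $\M^*_{\muhat,\bfr}$ equals the pure part of the conjectural mixed Hodge polynomial of $\M_\B^{\muhat,\bfr}$) and explicitly flags both the purity of $H^*(\M^*_{\muhat,\bfr},\Q)$ (Conjecture~\ref{purcon}) and the Betti-side formula (Conjecture~\ref{mainconj}) as open. Your steps two and three essentially reproduce what the paper actually does (motivic/arithmetic Fourier transform at each pole, convolution, and the symmetric-function matching via Letellier's results), but that only yields a consistency check, not a proof of the conjecture.

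There are two concrete gaps. First, your opening claim that $\nu$ is a holomorphic isomorphism, so that $\nu^*$ is automatically an isomorphism on $H^*(\cdot,\Q)$, is false: the irregular Riemann--Hilbert correspondence identifies the \emph{full} de Rham moduli space $\MM(\mathbf{C})$ (allowing nontrivial degree-$0$ bundles) with $\M_\B^{\muhat,\bfr}$, and $\M^*_{\muhat,\bfr}$ is only the open locus of trivial bundles. Hence $\nu$ is an open embedding and $\nu^*$ is a restriction map; the complement can and generally does contribute cohomology, which is exactly why the conjecture asserts an isomorphism only on pure parts. Second, your purity step does not go through as described: the identification with quivers with multiplicities (Theorem~\ref{mstar=q}) produces a symplectic quotient by a \emph{non-reductive} group, not a Nakajima quiver variety, so the semiprojectivity/$\C^*$-action argument of \cite[Proposition 2.2.6]{HLV13} does not transfer. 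Indeed the paper notes that already for $\M^*_{2,(1,1)}$ (the $C_2^{(1)}$ example) the space is not isomorphic to any Nakajima quiver variety, and purity is only known when there is at most one irregular pole via Boalch's result. Finally, even granting purity and the polynomial identity, equality of polynomials does not show that the specific map $\nu^*$ preserves mixed Hodge structures or induces the isomorphism on pure parts; some genuinely new geometric input about $\nu$ itself would be required.
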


\subsection{Main results and layout of the paper}

To formulate our main result  we fix integers $g\geq 0$ and $k>0$. Let
$\x_1=\{x_{1,1},x_{1,2},\dots\}, \dots,
\x_k=\{x_{k,1},x_{k,2},\dots\}$ be $k$  sets of infinitely many independent
variables and let $\Lambda(\x_1,\ldots,\x_k)$ be the ring of functions
separately symmetric in each of the sets of variables.

We define the Cauchy kernel
$$\Omega_k(z,w):=\sum_{\lambda\in \calP}{\cal H}_\lambda(z,w)\prod_{i=1}^k \tilde{H}_\lambda(z^2,w^2;{\bf x}_i)\in \Lambda({\bf x}_1,\dots,{\bf x}_k)\otimes_\Z \Q(z,w),$$ where
$$
{\cal H}_{\lambda} (z,w):=\prod
\frac{(z^{2a+1}-w^{2l+1})^{2g}} {(z^{2a+2}-w^{2l})(z^{2a}-w^{2l+2})}
$$
is a $(z,w)$-deformation of the $(2g-2)$-th power of the standard hook
polynomial - where the product goes through the boxes in the Young tableaux of $\lambda$, and $a$ and $l$ are the arm length and leg length of the corresponding box -  and $$\tilde{H}_\lambda(z^2,w^2;{\bf x}_i)\in \Lambda({\bf x}_i)\otimes_\Z \Q(q,t)$$ is the modified Macdonald symmetric function defined in \cite[(11)]{garsia-haiman}; see \cite[\S 2.3.4]{HLV11} for more details. Finally, we let \small
\begin{align*} &\BH_{\muhat,r}(z,w)=\\ & (-1)^{rn} (z^2-1)(1-w^2)\left\langle {\rm Log} \left(\Omega_{k+r}\right),h_{\mu_1}({\bf x}_1)\otimes \cdots \otimes h_{\mu_k}({\bf x}_k)\otimes s_{(1^n)}({\bf x}_{k+1})\otimes \cdots \otimes s_{(1^n)}({\bf x}_{k+s})\right\rangle,
\end{align*} \normalsize
where
$h_\mu({\bf x}_i)$ are the
complete symmetric functions, $s_{(1^n)}({\bf x}_{j})$ are the Schur symmetric functions in the corresponding variables,  and $\langle\cdot,\cdot\rangle$ is the
extended Hall pairing and ${\rm Log}$ is the plethystic logarithm. The notation is explained in more detail in Section~\ref{s:wcvcomp}; see also \cite[\S 2.3]{HLV11} for detailed explanations of the formalism. 

Note that $\BH_{\muhat,r}(-z,w)$ as defined is a rational function in $\Q(z,w)$, but conjecturally, because of Conjecture~\ref{mainconj} below, it is a polynomial function in $\Q[z,w]$ with positive integer coefficients.

 One of our main results is the computation of the weight polynomial of the open de Rham space $\FM_{\muhat, \bfr}^*$.

\begin{thm} \label{main} $\sum_{k,i \geq 0} (-1)^i \dim_\BC \left( Gr^W_{2k} H^i_c(\FM_{\muhat, \bfr}^*,\BC) \right) q^{k}=q^{d_{\muhat,\r}/2} \BH_{\tilde{\muhat},r}(0,q^{1/2})$
\end{thm}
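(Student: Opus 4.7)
The strategy, indicated in the abstract, is motivic Fourier transform. The plan is:
(i) count $\BF_q$-points of $\M^*_{\muhat,\bfr}$ and apply Katz's theorem to extract the weight polynomial;
(ii) decouple the moment-map constraint $\sum_j A_j + \sum_i Y_1^i = 0$ by Fourier inversion on $\g(\BF_q)$;
(iii) compute the Fourier transforms of the characteristic functions of the orbits (tame and irregular) in terms of characters of $\G(\BF_q)$;
(iv) re-sum these into the Cauchy-kernel symmetric function via the Frobenius/plethystic-log formalism of \cite{HLV13}.
Concretely, I would realise $\M^*_{\muhat,\bfr}$ as the affine GIT quotient of $\mu^{-1}(0)$ for the moment map
$$\mu : \prod_{j=1}^k \O_j \times \prod_{i=1}^s \O(C^i) \longrightarrow \g, \qquad \mu(A_\bullet, Y^\bullet) = \sum_{j=1}^k A_j + \sum_{i=1}^s Y^i_1,$$
modulo the diagonal $\G$-action, and then write
$$\#\M^*_{\muhat,\bfr}(\BF_q) = \frac{1}{|\G(\BF_q)|\,|\g(\BF_q)|} \sum_{\xi \in \g^*(\BF_q)} \prod_{j=1}^k \widehat{\mathbf{1}_{\O_j}}(\xi) \prod_{i=1}^s \widehat{\mathbf{1}_{\O(C^i)}}(\xi),$$
up to the standard stacky-vs.-GIT correction that produces the normalising factor $q^{d_{\muhat,\bfr}/2}$.

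For each tame factor, $\widehat{\mathbf{1}_{\O_j}}$ is the classical Fourier transform of a semisimple adjoint orbit of type $\mu^j$: on the regular semisimple locus it is a well-understood semisimple character of $\G(\BF_q)$, Frobenius-dual to the symmetric function $h_{\mu^j}(\mathbf{x}_j)$ appearing in $\BH_{\tilde{\muhat},r}$. The new ingredient is $\widehat{\mathbf{1}_{\O(C^i)}}(\xi)$, the Fourier transform of the indicator function of an irregular coadjoint orbit of the truncated jet group $\GL_n(\RR_{m_i})$, restricted via $\xi \in \g^* \hookrightarrow \mathrm{Lie}(\GL_n(\RR_{m_i}))^*$. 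Under the genericity hypothesis that $C^i_{m_i}$ is regular semisimple, the stabiliser of $C^i$ is the pull-back of the diagonal torus $\T \leq \G$; I would exploit this by filtering $\O(C^i)$ by $z$-adic valuation, explicitly computing the Fourier transforms of the affine-space graded pieces, and Gaussian-integrating out the $m_i - 1$ pro-unipotent layers. The anticipated output on the regular semisimple locus is a power of $q$ (responsible, after $q \mapsto -q$, for the sign $(-1)^{rn}$) times a Steinberg-type contribution that is Frobenius-dual to the Schur function $s_{(1^n)}(\mathbf{x}_{k+i})$.

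Re-indexing the sum over $\xi$ as a sum over irreducible characters $\chi_\lambda$ of $\G(\BF_q)$ indexed by partitions $\lambda \in \calP$ collapses the total count to an expression of the form
$$\sum_{\lambda \in \calP} \calH_\lambda(0, q^{1/2}) \prod_{j=1}^k \left\langle \tilde H_\lambda(0, q^{1/2}; \mathbf{x}_j),\, h_{\mu^j}\right\rangle \prod_{i=1}^s \left\langle \tilde H_\lambda(0, q^{1/2}; \mathbf{x}_{k+i}),\, s_{(1^n)}\right\rangle,$$
which is exactly $\langle \Omega_{k+s},\, h_{\mu^1} \otimes \cdots \otimes h_{\mu^k} \otimes s_{(1^n)}^{\otimes s}\rangle$ evaluated at $(z,w) = (0, q^{1/2})$. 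Extracting the irreducible/``connected'' part by the plethystic logarithm converts $\Omega_{k+s}$ into $\mathrm{Log}(\Omega_{k+s})$ and absorbs the prefactor $(z^2-1)(1-w^2)$ present in $\BH_{\tilde{\muhat},r}$; combined with the overall power $q^{d_{\muhat,\bfr}/2}$ this reproduces the right-hand side of the theorem. The fact that the $q$-count is genuinely the weight polynomial (and not merely the Euler characteristic) follows from purity of $\M^*_{\muhat,\bfr}$, itself obtained from the quiver-variety-with-multiplicities description discussed later in the paper, in the spirit of \cite[Proposition 2.2.6]{HLV13}.

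The principal obstacle is the irregular Fourier transform in step (iii): one must do harmonic analysis on the non-reductive Lie algebra $\mathrm{Lie}(\GL_n(\RR_{m_i}))$ and restrict cleanly to $\g^*$, verifying that the result has precisely the character-theoretic shape compatible with the tame pieces so that the Cauchy-kernel resummation goes through. Everything else -- the Fourier inversion, the reduction to characters of $\G(\BF_q)$, and the final conversion to symmetric functions -- follows well-established templates; the technical heart of the proof is the explicit closed-form evaluation of this new orbit integral.
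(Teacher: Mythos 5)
Your strategy is essentially the paper's: point-count over $\BF_q$ plus Katz's theorem, Fourier inversion to decouple the moment-map equation, explicit computation of the Fourier transform of each orbit, and resummation into the Cauchy kernel via Letellier's character-theoretic dictionary ($h_{\mu^j}$ for split semisimple orbits, $s_{(1^n)}$ for the regular nilpotent/Steinberg contribution). In particular your anticipated answer for the irregular orbit transform is exactly what the paper proves (Theorem \ref{ftpuncturet}, equation \eqref{ffft}, and Lemma \ref{fourier}: $\FF(\#_C)=\FF(1_{C_1})\FF(1_\NN)^{m-1}$, supported on split semisimple elements).

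One claim in your last paragraph is both unnecessary and unjustified. The theorem concerns the alternating sum $\sum_{k,i}(-1)^i\dim Gr^W_kH^i_c\,q^k$, i.e.\ the $E$-polynomial, and Katz's theorem (Theorem \ref{Katzthm}) already identifies this with the counting polynomial for a polynomial-count variety --- no purity is required. Moreover, purity of $H^*(\M^*_{\muhat,\bfr})$ does \emph{not} follow from the quiver-variety-with-multiplicities description: the gauge groups there are non-reductive jet groups, these are not Nakajima quiver varieties, and the paper explicitly leaves purity open as Conjecture \ref{purcon} (known only when all poles are simple, or when there is a single irregular pole, via \cite{CB03} and \cite{boalch-simply}). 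Simply delete that appeal and rely on Katz; the rest of your argument stands.
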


The main conjecture of \cite[Conjecture 0.2.2]{HMW16} - formulated here for compactly supported cohomology by Poincar\'e duality for the smooth variety $\FM^{\muhat, \bfr}_\B$ - claims that  \begin{conj} \label{mainconj} $$\sum_{i,k \geq 0}\dim_\BC \left(Gr^W_{2k} H^i_c(\FM^{\muhat, \bfr}_\B,\BC) \right) q^{k}t^i=(qt^2)^{d_{\muhat,\r}/2} \BH_{\tilde{\muhat},r}(-q^{-1/2},tq^{1/2}).$$\end{conj}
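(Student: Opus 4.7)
The plan is to adapt the arithmetic harmonic analysis strategy of Hausel--Letellier--Rodriguez-Villegas to the wild setting. The first step is to spread $\M_\B^{\muhat,\bfr}$ to a scheme of finite type over a finitely generated $\Z$-algebra and establish polynomial count, i.e.\ produce $E(q) \in \Z[q]$ with $\#\M_\B^{\muhat,\bfr}(\BF_q) = E(q)$ for all sufficiently good primes $q$. Granting that the mixed Hodge structure on $H^*_c(\M_\B^{\muhat,\bfr},\BC)$ is of Hodge--Tate type (as is widely expected for character varieties of this flavour), Katz's theorem then identifies $E(q)$ with the $t = -1$ specialisation of the right-hand side of Conjecture \ref{mainconj}. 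The full two-variable statement will follow once one controls the weight filtration with sufficient precision, which is a separate and harder matter addressed below.

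The main input to the count is the Frobenius mass formula. Using the description of $\M_\B^{\muhat,\bfr}$ as a GIT quotient of a Stokes representation variety from \cite[Definition 2.1.1]{HMW16}, one writes $\#\M_\B^{\muhat,\bfr}(\BF_q)$ as a sum over irreducible characters, weighted by character values at the prescribed semisimple classes in $\G(\BF_q)$ for the tame poles, at analogous semisimple classes in $\GL_n(\BF_q[z]/(z^{m_i}))$ for the irregular poles, together with unipotent and Stokes contributions. The goal is to manipulate this character sum into the form of the Cauchy-kernel expansion of $\BH_{\tilde\muhat,r}$, pairing with $h_{\mu_j}$ at the tame factors and with $s_{(1^n)}$ at the irregular ones, thereby extending the HLV Cauchy identity to the wild case. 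A useful sanity check is that taking $s = 0$, so that all $m_i$ disappear, must recover the known HLV identity for the tame character variety \eqref{e:tamecharvar}.

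The principal obstacle is the character theory of $\GL_n(\BF_q[z]/(z^{m_i}))$ for $m_i > 1$, whose irreducible representations are not fully classified. Fortunately one does not need all of them, only the character sums (weighted by inverse character degrees, \`a la Frobenius) at the specific semisimple classes corresponding to $\O(C^i)$. A plausible route is to leverage the identification, promised elsewhere in the paper, of $\M_{\muhat,\bfr}^*$ with quiver varieties with multiplicities of Yamakawa and Geiss--Leclerc--Schr\"oer, which should organise the irregular character contributions via Lusztig-type induction from Levi subgroups; combined with Deligne--Lusztig theory on the regular semisimple stratum, this would reduce the problem to a purely combinatorial identity matching the plethystic logarithm ${\rm Log}(\Omega_{k+r})$ term by term.

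Finally, the upgrade from the $t=-1$ specialisation to the full $(q,t)$-identity of Conjecture \ref{mainconj} requires purity of $H^*_c(\M_\B^{\muhat,\bfr},\BC)$ together with a weight-degree symmetry analogous to curious hard Lefschetz in the tame case. This step is the hardest, since purity of the wild character variety is itself open (it is the essence of Conjecture \ref{irrpurity}). A feasible strategy is to combine Theorem \ref{main}, which pins down the pure part via the de Rham side and an irregular non-abelian Hodge correspondence, with a $\BC^*$-type action transported from the irregular Hitchin moduli space, whose cohomological shadow should force the desired symmetry of the weight filtration. Once purity and weight-degree symmetry are secured, the character-theoretic point count of the second paragraph determines the full two-variable polynomial, completing the proof of Conjecture \ref{mainconj}.
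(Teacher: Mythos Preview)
The statement you are attempting to prove is a \emph{conjecture}, not a theorem: it is stated in the paper as Conjecture~\ref{mainconj}, attributed to \cite[Conjecture~1.2.2]{HMW16}, and the paper offers no proof of it. The role this conjecture plays in the paper is purely as a target against which Theorem~\ref{main} can be tested (via the purity Conjecture~\ref{irrpurity}). There is therefore no ``paper's own proof'' to compare against; any complete argument would resolve an open problem.

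Your proposal is honest about its own gaps, but one of them is a genuine misconception rather than a missing technicality. You write that the upgrade from the $t=-1$ specialisation to the full $(q,t)$-identity ``requires purity of $H^*_c(\M_\B^{\muhat,\bfr},\BC)$''. This is backwards: the wild character variety is \emph{not} expected to have pure cohomology. Conjecture~\ref{mainconj} predicts a genuinely two-variable mixed Hodge polynomial, and Conjecture~\ref{irrpurity} only asserts that the \emph{pure part} of $H^*(\M_\B^{\muhat,\bfr})$ agrees with the (conjecturally pure) cohomology of the de Rham space. If the Betti side were pure, the $E$-polynomial would already determine everything and the conjecture would collapse to a one-variable statement, which it does not. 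So a proof strategy built on establishing purity of $\M_\B^{\muhat,\bfr}$ cannot succeed. You also invoke the identification of $\M_{\muhat,\bfr}^*$ with quiver varieties with multiplicities as a tool for the character-theoretic count of $\M_\B^{\muhat,\bfr}(\BF_q)$; but that identification concerns the de Rham side, not the Betti side, and does not directly organise character sums on Stokes representation varieties. The $E$-polynomial of $\M_\B^{\muhat,\bfr}$ is in fact already computed in \cite{HMW16}; what is missing, and what your outline does not supply, is control of the full weight filtration, for which no mechanism beyond the (still conjectural) curious hard Lefschetz is currently known.
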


Thus, if $H^*(\FM_{\muhat, \bfr}^*,\Q)$ were pure, our main Theorem \ref{main} would be a consequence of our purity conjecture Conjecture~\ref{irrpurity} and Conjecture~\ref{mainconj}.  However, we were unable to prove that $H^*(\FM_{\muhat, \bfr}^*,\Q)$ is always pure and we only state it as Conjecture \ref{purcon}. 

The proof of Theorem~\ref{main} is first performed, as Theorem \ref{mainodr}, in the case of $k=0$, i.e., only irregular punctures. In this case, we can proceed by motivic Fourier transform, as in \cite{WY16}, and the result will be a motivic extension of Theorem~\ref{main}. The general case---Theorems \ref{finalcount} and \ref{maincor}---is then proved via the arithmetic harmonic analysis technique of \cite{HLV11}. 

As an analogue of Crawley-Boevey's result \cite{CB03} for the irregular case, in Section~\ref{s:qmv} we prove that the open de Rham spaces $\FM_{\muhat, \bfr}^*$ are  isomorphic to quiver varieties with multiplicities. These varieties have been considered by Yamakawa \cite{YA10} in the rank $2$ case and their defining equations, in terms of certain preprojective algebras, have been studied by Geiss--Leclerc--Schr\"oer \cite{GeissLeclercSchroeer2017} in general.  Then, in Section~\ref{s:nonslDD}, we consider the star-shaped non-simply laced affine Dynkin diagrams which correspond to open de Rham spaces of dimension $2$. Here we discuss the main results of the paper in these special toy cases. 

Finally, in Section~\ref{s:HK} we prove the existence of natural complete hyperk\"ahler metrics on $\FM_{\muhat, \bfr}^*$ when the irregular poles have order $2$.  Existence of such metrics was discussed in \cite[\S3.1]{Boalch2012}), but as there seem to be no complete proofs given in the literature, we provide the details here.  In the (real) four-dimensional toy example cases, e.g., those appearing in Section~\ref{s:nonslDD}, we expect the resulting metrics to be of type ALF.

\paragraph{Acknowledgements} We would like to thank Gergely B\'erczy, Roger Bielawski, Philip \linebreak Boalch, Sergey Cherkis, Andrew Dancer, Brent Doran, Elo\"{i}se Hamilton, Frances Kirwan, Bernard Leclerc, Emmanuel Letellier, Alessia Mandini, Maxence Mayrand, Andr\'as N\'emethi, Szil\'ard Szab\'o, Daisuke Yamakawa for discussions related to the paper. We  especially thank the referee for an extensive  list of very careful comments.   At various stages of this project the authors were supported   by the  Advanced Grant ``Arithmetic and physics of Higgs moduli spaces'' no. 320593 of the European Research Council, by grant no. 153627 and NCCR SwissMAP,  both funded by the Swiss National Science Foundation as well as by EPF Lausanne and IST Austria.  In the final stages of this project, MLW was supported by SFB/TR 45 ``Periods, moduli and arithmetic of algebraic varieties'', subproject M08-10 ``Moduli of vector bundles on higher-dimensional varieties''.  DW was also supported by the Fondation Sciences Math\' ematiques de Paris, as well as public grants overseen by the Agence national de la recherche (ANR) of France as part of the « Investissements d’avenir » program, under reference numbers ANR-10-LABX-0098 and ANR-15-CE40-0008 (D\'efig\'eo). 

\section{Preliminaries} 

\subsection{Groups, Lie algebras and their duals over truncated polynomial rings} \label{s:jd}

Let us fix a perfect base field $\K$ and an integer $n\geq 1$. We will set $\G := \GL_n(\K)$ and denote by $\g :=\mathfrak{gl}_n(\K)$ its Lie algebra.  Furthermore, $\T \subset \G$ will denote the standard maximal torus consisting of the invertible diagonal matrices, $\ft \subset \g$ its Lie algebra and $\ft^{\reg} \subset \ft$ the subset of elements with distinct eigenvalues.  Fix another integer $m \geq 1$ and let $\RR_m := \K[\![z]\!]/ (z^m) = \K[z]/(z^m)$.  Then we may also consider these groups over $\RR_m$,
\begin{align} \label{e:Gmelts} \G_m & := \GL_n(\RR_m) = \left\{ g_0 + zg_1+\dots +z^{m-1}g_{m-1} \ |\ g_0 \in \G, \ g_1,\dots,g_{m-1} \in \g \right\},\\
\g_m & := \mathfrak{gl}_n(\RR_m) = \left\{X_0+zX_1+\dots + z^{m-1}X_{m-1}\ |\ X_i \in \g \right\}, \nonumber
\end{align}
and we define $\T_m$ and $\ft_m$ similarly.  We will regard $\G_m$ and $\T_m$ as algebraic groups over $\K$:  from the description above, $\G_m = \G \times \g^{m-1}$ as a $\K$-variety; writing out the components of each $z^i$ under the group law in $\G_m$, it is easy to see that the operation is well-defined on tuples and so one indeed gets an algebraic group over $\K$.  Of course, $\g_m$ is a vector space over $\K$.

Observe that $\GL_n(\RR_m)$ is not reductive; its unipotent radical $\G_m^1 \subset \G_m$ and Lie algebra $\g_m^1$ are, respectively,
\begin{align*} 
\G_m^1 = \GL_n^1(\RR_m) &= \{ \1_n + zb_1 +z^2b_2 +\dots + z^{m-1}b_{m-1}\ |\ b_i \in \g \},\\
\g_m^1 = \gl_n^1(\RR_m) & = \{ zX_1 +z^2X_2 +\dots + z^{m-1}X_{m-1} \ |\ X_i \in \g\},
\end{align*}
where $\1_n$ denotes the $n \times n$ identity matrix.

There is a semi-direct product decomposition 
\begin{align} \label{e:Gsemdir}
\G_m = \G_m^1 \rtimes \G, 
\end{align}
where we identify $\G$ with the subgroup of those elements satisfying 
\begin{align*}
g_1 = \cdots = g_{m-1} = 0,
\end{align*}
in the notation of \eqref{e:Gmelts}; we will often refer to $\G$ identified as such as the subgroup of constant elements in $\G_m$.  We thus obtain a direct sum decomposition 
\begin{align} \label{e:gmdecomp}
\g_m = \g_m^1 \oplus \g; 
\end{align}
this decomposition is preserved by the adjoint action of $\G$ but not of $\G_m$.  We will write $\T_m^1 := \T_m \cap \G_m^1$ and $\t_m^1 := \t_m \cap \g_m^1$.

It will be convenient to identify the dual vector space $\g_m^\vee$ with 
\begin{align} \label{e:gmvee} 
z^{-m}\g_m = \left\{ z^{-m}Y_m + z^{-(m-1)}Y_{m-1}+\dots + z^{-1}Y_1 \ \bigg| \ Y_i \in \g \right\}
\end{align}
via the trace residue pairing. This means that for $X \in \g_m$ and $Y \in z^{-m}\g_m$ we set
\begin{equation} \label{trp} \lla Y,X \rra := \Res_{z=0} \tr YX = \sum_{i=1}^{m} \tr Y_{i}X_{i-1}.\end{equation}
Under this identification, the dual $\g^\vee$ of the subgroup $\G \subseteq \G_m$ corresponds to the subspace $z^{-1}\g \subset z^{-m}\g_m$ and $(\g_m^1)^\vee$ to those elements in $z^{-m}\g_m$ having zero residue term ,i.e., $Y_1=0$. We write
\begin{align}\label{pis} & \pi_{\res} : \g_m^\vee \rightarrow \g^\vee, & & \pi_{\irr}: \g_m^\vee \rightarrow (\g_m^1)^\vee \end{align}
for the natural projections.  The latter projection may be identified with
\begin{align} \label{e:irrproj}
z^{-m} \g_m \to z^{-m} \g_m \big/ z^{-1} \g_m,
\end{align}
so we are simply truncating the residue term.

The adjoint and coadjoint actions of $\G_m$ on $\g_m$ and $\g_m^\vee$ will both be denoted by $\Ad$ and are defined by the same formula: for $g \in \G_m$, 
\begin{align*}
\Ad_g X & = gXg^{-1}, \quad X \in \g_m & \Ad_g Y & = g Y g^{-1}, \quad Y \in \g_m^\vee.
\end{align*}
What we mean in the latter case is that we consider $g$, $g^{-1}$ and $Y$ as matrix-valued Laurent polynomials in $z$ and we truncate all terms of non-negative degree in $z$ after multiplying.  With this convention we have 
\begin{align*}
\lla \Ad_gY ,X\rra = \lla Y,\Ad_{g^{-1}}X\rra.
\end{align*}

Consider a tuple\footnote{We will use $\BN$ and $\BZ_{>0}$ interchangeably.} $\lambda = (\lambda_0, \ldots, \lambda_l) \in \BN^{l+1}$ with $\sum_{i=1}^{l+1} \lambda_i = n$. Let $\L_\lambda \subset \G$ be the subgroup of block diagonal matrices, with blocks of sizes $\lambda_0, \ldots, \lambda_l$, i.e.,
\begin{align} \label{e:Llambda}
\L_\lambda = \left\{ \diag(f_0, \ldots, f_l) \in \G \, : \, f_i \in \GL_{\lambda_i}(\K) \right\}. 
\end{align}
Let $\ll_\lambda$ denote its Lie algebra.  The centre of $\ll_\lambda$ is given by
\begin{align*}
\z(\ll_\lambda) = \left\{ \diag( c_0 \1_{\lambda_0}, \ldots, c_l \1_{\lambda_l}) \, : \, c_0, \ldots, c_l \in \K \right\} \subseteq \t.
\end{align*}
We will write $\z(\ll_\lambda)^\reg$ for the subset of $\z(\ll_\lambda)$ for which the $c_i$ are pairwise distinct.  The center of $\L_\lambda$ satisfies $\ZZ(\L_\lambda) = \z(\ll_\lambda) \cap \G$, and can be described as the subset of $\z(\ll_\lambda)$ with all $c_i \in \K^\times$.  An important special case is when $\lambda = (1, \ldots, 1)$, so one has $\ll_\lambda = \t$ and $\z(\ll_\lambda)^\reg = \t^\reg$.  We will also use the notation as above for these groups, namely
\begin{align*}
\L_{\lambda, m} & := \L_\lambda(\RR_m) & \L_{\lambda, m}^1 & := \L_{\lambda, m} \cap \G_m^1 & \ll_{\lambda, m} & := \ll_\lambda(\RR_m) & \ll_{\lambda, m}^1 & := \ll_{\lambda, m} \cap \g_m^1.
\end{align*}

\subsection{Coadjoint orbits}

Coadjoint orbits for groups of the form $\G_m$ will play a prominent role in this paper, so here we will set some notation and record some results that will be used later. 

\subsubsection{Conventions and variety structure}

By a \emph{diagonal element} or \emph{formal type of order $m$} we will mean an element  in $\g_m^\vee$ of the form
\begin{align} \label{e:diagelt}
C = \frac{C_m}{z^m} + \frac{C_{m-1}}{z^{m-1}} + \cdots + \frac{C_1}{z} \in \t_m^\vee = z^{-m} \t_m, \quad \text{ with } C_\ell \in \t(\K), \quad 1 \leq \ell \leq m.
\end{align}
By permuting diagonal entries if necessary, $C$ may be written
\begin{align} \label{e:C}
C = \diag(c^0 \1_{\lambda_0}, c^1 \1_{\lambda_1}, \ldots, c^l \1_{\lambda_l})
\end{align}
for some partition $(\lambda_0, \ldots, \lambda_l)$ of $n$, with the $c^i \in z^{-m} \RR_m$ distinct.  In fact, we will make the stronger assumption that for all $0 \leq i \neq j \leq l$ 
\begin{align} \label{e:cassum}
z^m(c^i - c^j) \in \RR_m^\times.
\end{align}

This assumption is stronger than what is required for example in \cite[Main assumption]{BiquardBoalch}, but many of our arguments will rely crucially on \eqref{e:cassum}.

If we write $c^i = \frac{c_m^i}{z^m} + \cdots + \frac{c_1^i}{z}$ with $c_\ell^i \in \K$, $1 \leq \ell \leq m$, then \eqref{e:cassum} is equivalent to 
\begin{align*} 
c_m^i \neq c_m^j
\end{align*}
for all $0 \leq i \neq j \leq l$; that is, all the leading coefficients are distinct.  We fix such a diagonal element $C$ and write $\O(C)$ for the coadjoint orbit of $C$ i.e. the image of the orbit map
\begin{align*} \eta: \G_m &\to \gl_m^\vee \\
												g	& \mapsto \Ad_g(C).
\end{align*}

\begin{lem} \label{l:coadjorb} Let $C$ be a diagonal element satisfying \eqref{e:cassum}.

\begin{enumerate}[(a)]
\item \label{l:coadjaff} The coadjoint orbit $\O(C)$ is an affine variety.
\item \label{l:coadjhomog} Let $\ZZ_{\G_m}(C)$ denote the centralizer of $C$ in $\G_m$.  There is a $\G_m$-equivariant isomorphism $\phi : \G_m/\ZZ_{\G_m}(C) \xrightarrow{\sim} \O(C)$ such that the diagram
\begin{align*}
\ucommtri{\G_m }{ \G_m/Z_{\G_m}(C) }{ \O(C) }{}{\eta}{\phi}
\end{align*}
commutes. In particular $\O(C)$ is a homogeneous space for $\G_m$ and $\eta$ is a categorical quotient.
\end{enumerate}
\end{lem}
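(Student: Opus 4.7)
The plan is to first identify the centralizer $\ZZ_{\G_m}(C)$ as $\L_{\lambda, m}$, then deduce (b) from the general theory of algebraic transformation groups, and finally establish (a) by exhibiting $\G_m/\L_{\lambda, m}$ as a free reductive-group quotient of an affine variety.

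The key observation for the centralizer computation is that, in the block form \eqref{e:C}, each coefficient $C_j$ lies in $\z(\ll_\lambda)$, the centre of the Levi Lie algebra. Writing $g = g_0 + z g_1 + \cdots + z^{m-1} g_{m-1}$ and equating coefficients of $z^{-j}$ in the identity $gC \equiv Cg \pmod{\g[\![z]\!]}$ (equivalent to $\Ad_g C = C$) yields the triangular system
\begin{align*}
\sum_{i=0}^{m-j} [g_i, C_{i+j}] \;=\; 0, \qquad j = m, m-1, \ldots, 1.
\end{align*}
The top equation $[g_0, C_m] = 0$ forces $g_0 \in \L_\lambda$ by assumption \eqref{e:cassum}, since the $\G$-centralizer of $C_m$ is exactly $\L_\lambda$. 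A descending induction on $j$ then works: all previously placed $g_i \in \ll_\lambda$ commute with every $C_{i+j} \in \z(\ll_\lambda)$, so the equation at step $j$ collapses to $[g_{m-j}, C_m] = 0$, forcing $g_{m-j} \in \ll_\lambda$. This yields $\ZZ_{\G_m}(C) = \L_{\lambda, m}$ as desired.

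For (b), the orbit map $\eta: \G_m \to \g_m^\vee$ is a $\G_m$-equivariant morphism of $\K$-varieties whose geometric fibres are left cosets of $\L_{\lambda, m}$. The standard theory of algebraic transformation groups then guarantees that $\O(C)$ is a locally closed smooth subvariety of $\g_m^\vee$ on which $\G_m$ acts transitively, and that $\eta$ descends to a $\G_m$-equivariant isomorphism $\phi: \G_m / \L_{\lambda, m} \xrightarrow{\sim} \O(C)$ making the required triangle commute.

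For (a), the semidirect-product decompositions $\G_m = \G_m^1 \rtimes \G$ and $\L_{\lambda, m} = \L_{\lambda, m}^1 \rtimes \L_\lambda$ give an identification
\begin{align*}
\G_m / \L_{\lambda, m} \;\cong\; \G \times^{\L_\lambda} \bigl( \G_m^1 / \L_{\lambda, m}^1 \bigr),
\end{align*}
with $\L_\lambda$ acting on $\G$ by right translation and on the second factor by conjugation. The fibre $\G_m^1 / \L_{\lambda, m}^1$ is a quotient of connected unipotent algebraic groups and hence an affine variety (indeed an affine space), so $\G \times (\G_m^1 / \L_{\lambda, m}^1)$ is affine. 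Since $\L_\lambda \cong \prod \GL_{\lambda_i}$ is reductive and acts freely on the $\G$-factor, the geometric quotient coincides with the affine GIT quotient; thus $\O(C) \cong \G_m / \L_{\lambda, m}$ is an affine variety. The categorical quotient statement is then automatic, since any affine geometric quotient is categorical. The main technical point is the centralizer calculation above, where the block-scalar form \eqref{e:C} of $C$ is exactly what forces the triangular system to collapse.
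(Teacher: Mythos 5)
Your proof is correct, but it reaches the two statements by a genuinely different route than the paper. For part \eqref{l:coadjaff} the paper does not pass through the homogeneous-space structure at all: it writes down explicit defining equations for $\O(C)$ inside the affine space $\g_m^\vee$, namely the ``minimal polynomial'' identity $(z^mY-z^mc^0\1)\cdots(z^mY-z^mc^l\1)=0$ over $\RR_m$, and argues (as in the proof that a matrix is diagonalizable iff its minimal polynomial has distinct roots, using \eqref{e:cassum}) that this cuts out exactly the orbit; this realizes $\O(C)$ as a \emph{closed} subvariety of $\g_m^\vee$, slightly more than your conclusion that it is locally closed and abstractly affine, though abstract affineness is all that is used later. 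Your alternative — affineness of $\G_m/\L_{\lambda,m}$ via $\G\times^{\L_\lambda}(\G_m^1/\L_{\lambda,m}^1)$, with the unipotent fibre an affine space and $\L_\lambda$ reductive acting freely with closed orbits — is sound and arguably more conceptual, but it requires the centralizer identification as input, which the paper defers to Lemma \ref{intor}\eqref{l:centralizers} and proves there via the factorization $g=g_0b$ and formula \eqref{e:coadjointaction}; your direct triangular system in the $g_i$ is a cleaner version of that computation. For part \eqref{l:coadjhomog} the paper deliberately avoids quoting the orbit machinery wholesale: it verifies that $\eta$ is faithfully flat by miracle flatness (smooth source and target, equidimensional fibres) and then cites Milne to get that $\phi$ is an isomorphism. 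This extra care matters because the statement ``$\G_m/\ZZ_{\G_m}(C)\to\O(C)$ is an isomorphism'' can fail in positive characteristic if the orbit map is inseparable, and the paper later works over finite fields; your appeal to ``standard theory'' silently assumes separability. The gap is easily closed: running your triangular system at the Lie-algebra level shows the infinitesimal centralizer is $\ll_{\lambda,m}$, of the same dimension as $\L_{\lambda,m}$, so the scheme-theoretic stabilizer is smooth and the orbit map is separable — but you should say this explicitly.
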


We will give a description of the centralizers $Z_{\G_m}(C)$ for certain $C$ in Lemma \ref{intor}\eqref{l:centralizers} below.

\begin{proof}
To see \eqref{l:coadjaff}, we note that $\O(C)$ will be the set of elements satisfying the appropriate minimal polynomial over $\RR_m$, which may then be written down as equations over $\K$.  To be explicit about this, let $Y \in \g_m^\vee$ be written as in \eqref{e:gmvee}; we consider $z^m Y$ as an element of $\g_m$ by writing
\begin{align*}
z^m Y = Y_m + z Y_{m-1} + \cdots + z^{m-1} Y_1.
\end{align*}
Similarly for $1 \leq i \leq l$ we consider $z^m c^i \in \RR_m$ by writing
\begin{align*}
z^m c^i = c_m^i + z c_{m-1}^i + \cdots + z^{m-1} c_1^i.
\end{align*}
Then the minimal polynomial as mentioned gives defining equations for $\O(C)$ in the sense that the following matrix over $\RR_m$ is zero if and only if $Y \in \O(C)$:
\begin{align*}
(z^m Y - z^m c^0 \1_n) \cdots (z^m Y - z^m c^l \1_n).
\end{align*}
Of course, this can be expanded and the coefficient of each power $z^i$ gives a matrix equation over $\K$; the individual entries of all of these matrices give the algebraic equations for $\O(C)$.  The fact that these are indeed defining equations for $\O(C)$ can be proved in the same way one proves the theorem which asserts that a matrix is diagonalizable if and only if its minimal polynomial has distinct roots; for this we need to use the assumption \eqref{e:cassum}.

For \eqref{l:coadjhomog} notice that both $\G$ and $\FO(C)$ are smooth over $\K$ and for every closed point $A \in \FO(C)$ one has $\dim \eta^{-1}(A) = \dim \ZZ_{\G_m}(C)$. Since $\eta$ is surjective it is thus faithfully flat by \cite[Theorem 23.1]{Ma89} and \cite[\href{https://stacks.math.columbia.edu/tag/00HQ}{Tag 00HQ}]{stacks-project} and $\phi$ an isomorphism by \cite[Proposition 7.11]{Mi17}.
\end{proof}

\subsubsection{Computational lemmata}

Fix a tuple $\lambda = (\lambda_0, \ldots, \lambda_l) \in \BN^{l+1}$ with $\sum \lambda_i = n$ and consider the group $\L_\lambda \subset \G$ as in \eqref{e:Llambda}.  Let $\ll_\lambda^\od \subseteq \g$ denote the subspace consisting of matrices with zeros on the diagonal blocks (of course, the superscript can be read, "off diagonal"), so that we have an obvious $\L_\lambda$-invariant decomposition
\begin{align} \label{e:llamdirsum}
\g = \ll_\lambda \oplus \ll_\lambda^\od.
\end{align}
In the case $\lambda = (1,\ldots, 1)$, we set $\g^{\od} := \ll_\lambda^\od$, so that this is the space of matrices with zeroes along the main diagonal.

Let $b = \1_n + \sum_{i=1}^{m-1} z^i b_i \in \G_m^1$.  We may write 
\begin{align*}
b^{-1} = \1_n + \sum_{i=1}^{m-1} z^i w_i,
\end{align*}
where the $w_i \in \g$ are given by
\begin{equation}\label{explinvx} w_i = \sum_{p=1}^i (-1)^p \sum_{\substack{(\lambda_1, \lambda_2, \dots, \lambda_p) \in \BN^p \\ \lambda_1 + \dots + \lambda_p = i}} b_{\lambda_1} \cdots b_{\lambda_p}. \end{equation}

Suppose $A = \sum_{j=1}^m z^{-j} A_j \in \g_m^\vee$.  Using the fact that $w_i = -b_i + \dots$, we have the explicit formula
\begin{align} \label{e:coadjointaction}
\Ad_b A &= \left( \1_n + \sum_{i=1}^{m-1} z^i b_i\right) A \left(\1_n + \sum_{i=1}^{m-1} z^i w_i \right) \nonumber \\
&= A + \sum_{p=1}^{m-1} z^{-m+p} \left( \sum_{i=1}^p [b_i, A_{m+i-p}] + \sum_{i=1}^{p-1} \sum_{j=1}^{p-i} [b_i, A_{m-p+i+j}] w_j \right).
\end{align}

\begin{lemma} \label{intor} \begin{enumerate}[(a)]
\item If $X \in \g$ and $Y \in \z(\ll_\lambda)$, then $[X,Y] \in \ll_\lambda^\od$.
\item \label{l:intorb} If $X \in \g$, $Y \in \z(\ll_\lambda)^\reg$ and $[X, Y] \in \ll_\lambda$ (or equivalently, by \eqref{e:llamdirsum}, $[X, Y] = 0$), then $X \in \ll_\lambda$.
 
\item \label{secnd} Let 
\begin{align*}
C = \sum_{i=1}^m z^{-i} C_i \in \z(\ll_\lambda)_m^\vee \subseteq \t_m^\vee
\end{align*}
be given as in \eqref{e:C} with $C_m \in \z(\ll)^{\reg}$ and suppose $g \in \G_m$ is such that $\Ad_g C - C \in \ll_{\lambda, m-1}^\vee$. Then $g \in \L_{\lambda, m}$ and $\Ad_g C = C$. 

\item \label{l:centralizers} Let $C$ be as in \eqref{secnd}.  Then the centralizers of $C$ and $C^1 := \pi_{\irr}(C) \in (\t_m^1)^\vee$ are 
\begin{align*}
Z_{\G_m} (C) & = \L_{\lambda, m} & Z_{\G_m^1} (C^1) & = G_{\lambda, m}^{1,\rc} := \left\{ b = \1_n + \sum_{j=1}^{m-1} z^j b_j \in \G_m^1\ \bigg| \ b_1,\ldots,b_{m-2} \in \ll_\lambda \right\}.
\end{align*}
The superscript $\rc$ stands for "regular centralizer" which is justified by the statement.

\item \label{intorr} Let $g \in Z(\L_\lambda)$ and $h \in \G_m^1$ such that $hgh^{-1} \in \L_{\lambda, m}$. Then $hgh^{-1} = g$.
\end{enumerate}
\end{lemma}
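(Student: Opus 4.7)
Parts (a) and (b) follow from a direct block matrix calculation. Writing $Y \in \z(\ll_\lambda)$ as $Y = \diag(c_0 I_{\lambda_0}, \ldots, c_l I_{\lambda_l})$ and any $X \in \g$ in block form $X = (X_{ij})_{0 \leq i, j \leq l}$, one has $[X, Y]_{ij} = (c_j - c_i) X_{ij}$. The diagonal blocks vanish identically, proving (a). When $Y \in \z(\ll_\lambda)^\reg$, the scalars $c_j - c_i$ are invertible for $i \neq j$, so the hypothesis $[X, Y] \in \ll_\lambda$ forces $X_{ij} = 0$ for $i \neq j$, giving (b). The crucial takeaway is that $\ll_\lambda \cap \ll_\lambda^\od = 0$ and $[\g, \z(\ll_\lambda)] \subseteq \ll_\lambda^\od$; both facts are invoked repeatedly in the rest of the proof.

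For (c), use the semi-direct decomposition \eqref{e:Gsemdir} to write $g = b g_0$ with $b \in \G_m^1$ and $g_0 \in \G$. Inspection of \eqref{e:coadjointaction} shows that $\Ad_b$ fixes the $z^{-m}$ coefficient of any $A \in \g_m^\vee$, so the $z^{-m}$ coefficient of $\Ad_g C$ is $\Ad_{g_0} C_m$. The hypothesis forces $\Ad_{g_0} C_m = C_m$; an analogue of the block-matrix argument applied at the group level to $g_0$ and the regular element $C_m \in \z(\ll_\lambda)^\reg$ yields $g_0 \in \L_\lambda$. Since $\L_\lambda$ centralizes every coefficient of $C$, we get $\Ad_{g_0} C = C$, reducing the problem to $g = b \in \G_m^1$. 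Now induct on $p \in \{1, \ldots, m-1\}$: assuming $b_1, \ldots, b_{p-1} \in \ll_\lambda$, all commutators $[b_i, C_j]$ with $i < p$ vanish (because $C_j \in \z(\ll_\lambda)$), so by \eqref{e:coadjointaction} the $z^{-m+p}$-coefficient of $\Ad_b C - C$ collapses to $[b_p, C_m]$. This element lies in $\ll_\lambda$ by hypothesis and in $\ll_\lambda^\od$ by (a), hence vanishes, and then (b) forces $b_p \in \ll_\lambda$. After the induction, $b \in \L_{\lambda, m}^1$ and $\Ad_b C = C$, whence $g = b g_0 \in \L_{\lambda, m}$.

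Part (d) is then immediate. The inclusion $\L_{\lambda, m} \subseteq Z_{\G_m}(C)$ is obvious, while the reverse is (c) applied to $\Ad_g C = C$. For $Z_{\G_m^1}(C^1)$, the same induction, now performed after discarding the residue coefficient, constrains only $b_1, \ldots, b_{m-2}$ to lie in $\ll_\lambda$: indeed, $b_{m-1}$ appears in the expansion \eqref{e:coadjointaction} only in the $z^{-1}$ coefficient (one checks this by tracking the index ranges in both the bracket sums and the expressions \eqref{explinvx} for the $w_j$), which is precisely what $\pi_{\irr}$ has removed. This reproduces the definition of $\G_{\lambda, m}^{1, \rc}$.

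For (e), recast the condition $hgh^{-1} = \ell \in \L_{\lambda, m}$ as $hg = \ell h$ in $\G_m$ and compare $z^k$-coefficients. Writing $h = \1 + \sum_{i \geq 1} z^i h_i$ and $\ell = g + \sum_{i \geq 1} z^i \ell_i$ (the $z^0$ terms must agree since $h_0 = \1$), a straightforward induction shows $\ell_k = [h_k, g]$ whenever $\ell_1, \ldots, \ell_{k-1} = 0$. But $g \in \ZZ(\L_\lambda)$ is block-scalar, so the block computation underlying (a), applied to the group element $g$, places $[h_k, g] \in \ll_\lambda^\od$, whereas $\ell_k \in \ll_\lambda$ by hypothesis; hence $\ell_k = 0$ and the induction closes, giving $\ell = g$. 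The main obstacle in the lemma is the combinatorial bookkeeping tracking which $b_i$ can possibly appear in each coefficient of \eqref{e:coadjointaction} in the induction step of (c); once that is in place, the substantive content everywhere reduces to the ``$\ll_\lambda \cap \ll_\lambda^\od = 0$'' mechanism supplied by (a) and (b).
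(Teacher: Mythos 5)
Your proof is correct and follows essentially the same route as the paper's: block-matrix computations for (a)--(b), the semidirect decomposition plus induction on the coefficients of \eqref{e:coadjointaction} for (c), the residue-term observation for (d), and a coefficient-by-coefficient induction for (e). The only (harmless) variation is in (e), where rewriting $hgh^{-1}=\ell$ as $hg=\ell h$ lets you bypass the explicit inverse formula \eqref{explinvx} that the paper invokes.
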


\begin{proof} The first two statements can be seen by writing everything out as block matrices. For part \eqref{secnd}, by \eqref{e:Gsemdir}, we may write $g = g_0 b$ for some $g_0 \in G$, $b \in \G_m^1$.  Thus, $\Ad_g C$ can be obtained by applying $\Ad_{g_0}$ to the expression in \eqref{e:coadjointaction}.  The $z^{-m}$ term is then $\Ad_{g_0} C_m$; the hypothesis is that this is $C_m$.  Then part \eqref{l:intorb} implies that $g \in \ll_\lambda \cap \G = \L_\lambda$.  Already this shows that $\Ad_{g_0} C = C$, since all $C_i \in \z(\ll_\lambda)$.

Now, the $z^{-(m-1)}$ term in $\Ad_g C - C$ is then $\Ad_{g_0} [b_1, C_m]$.  The assumption is that this lies in $\ll_\lambda$; by $\L_\lambda$-invariance, $[b_1, C_m] \in \ll_\lambda$ and again by \eqref{l:intorb}, $b_1 \in \ll_\lambda$.  By induction, we may assume $b_1, \ldots, b_r \in \ll_\lambda$.  We will show that $b_{r+1} \in \ll_\lambda$.  Then from \eqref{e:coadjointaction}, the $z^{-(m-r-1)}$-term of $\Ad_g C - C$ is
\begin{align*}
\Ad_{g_0} \left( \sum_{i=1}^{r+1} [b_i, C_{m+i-r-1}] + \sum_{i=1}^r \sum_{j=1}^{r-i+1} [b_i, C_{m-r+i+j-1}] w_j \right).
\end{align*}
The induction hypothesis implies that the only commutator that does not vanish is $[b_{r+1}, C_m]$ and then by assumption, this lies in $\ll_\lambda$, and again we conclude by \eqref{l:intorb}.  This shows that all $b_i \in \ll_\lambda$ and hence $h \in \L_{\lambda, m}^1$; as $g_0 \in \L_\lambda$, $g = g_0 b \in \L_{\lambda,m}$.  Furthermore, as above, we can show that all commutators in \eqref{e:coadjointaction} vanish, and we have already noted $\Ad_{g_0} C = C$; hence $\Ad_g C = C$.

The first assertion in part \eqref{l:centralizers} follows from \eqref{secnd}.  The second assertion uses the same argument, and one simply needs to observe that we are omitting the residue term when computing in $\g_m^1$ and hence no condition is imposed on $b_{m-1}$.

Part \eqref{intorr} is proved with an inductive argument similar to that of \eqref{secnd}, using \eqref{explinvx}.
\end{proof}

Next, we study regular semisimple coadjoint orbits.  We take $C \in \t_m^\vee$, let $C^1 := \pi_{\irr}(C) \in (\t_m^1)^\vee$ with $C_m \in \ft^{\reg}$.  We will write $\FO(C)$ for the $\G_m$-coadjoint orbit through $C$ and $\FO(C^1)$ for the $\G_m^1$-coadjoint orbit through $C^1$.  These are coadjoint orbits for \emph{different} groups.

Let
\begin{align*}
\G_m^\od & := \left\{ b = \1_n + \sum_{j=1}^{m-1} z^j b_j \in \G_m^1\ \bigg| \ b_1,\ldots,b_{m-1} \in \g^{\od} \right\}, \\
\G_m^{1,\od} & := \left\{ b = \1_n + \sum_{j=1}^{m-1} z^j b_j \in \G_m^\od \ \bigg| \ b_{m-1}= 0 \right\}. 
\end{align*}

Observe that $\G_m^{1,\od} \subseteq \G_m^\od$ are subvarieties of $\G_m^1$, though not subgroups, isomorphic to affine spaces of dimensions $(m-2)n(n-1)$ and $(m-1)n(n-1)$, respectively.  

\begin{lemma}\label{codecomp} \begin{enumerate}[(a)]
\item \label{l:G1fact} The restriction of the multiplication map yields isomorphisms
\begin{align*}
\G_m^\od \times \T_m^1 & \arsim \G_m^1 & \G_m^{1,\od} \times \G_m^{1,\rc} & \arsim \G_m^1 .
\end{align*}
In other words, every $b \in \G_m^1$ has unique factorizations
\begin{align*}
b & = b^\od b^\T & b & = b^{1,\od} b^\rc
\end{align*}
with $b^\od \in \G_m^\od$, $b^\T \in \T_m^1$, $b^{1,\od} \in \G_m^{1,\od}$, $b^\rc \in \G_m^{1,\rc}$, and the map taking $b$ to any one of these factors is a morphism.

\item \label{l:C1aff} The morphism $\G_m^{1,\od} \to \O(C^1)$ 
\begin{align*}
b \mapsto \Ad_b C^1
\end{align*}
is an isomorphism.  In particular, $\O(C^1) \cong \mathbb{A}^{(m-2)n(n-1)}$.

\item\label{ztfib} 
There is an isomorphism
\begin{align*} 
\Gamma: \left(G \times \G_m^{\od} \right) \big/\T &\rightarrow \FO(C) & (g,b) &\mapsto \Ad_{gb} C,
\end{align*}
where the action of $\T$ on $\G \times \G_m^{\od}$ is given by $(g_0,b)t_0= (g_0t_0, \Ad_{t_0^{-1}}b)$. In particular, $\G \times \G_m^{\od} \rightarrow (\G \times \G_m^{\od})/\T$ is a Zariski locally trivial principal $\T$-bundle.
\end{enumerate}
\end{lemma}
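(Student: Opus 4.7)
For part \eqref{l:G1fact}, I would prove both factorizations by induction on the $z$-power. Given $b = \1 + \sum_{j=1}^{m-1} z^j b_j \in \G_m^1$, writing $b = b^\od b^\T$ with $b^\od = \1 + \sum z^j a_j$, $a_j \in \g^\od$, and $b^\T = \1 + \sum z^j t_j$, $t_j \in \t$, expanding the product yields
\begin{align*}
b_j = a_j + t_j + Q_j(a_1,\dots,a_{j-1}; t_1,\dots,t_{j-1}),
\end{align*}
with $Q_j$ polynomial; the decomposition $\g = \t \oplus \g^\od$ then recovers $a_j, t_j$ uniquely and polynomially, so all maps are morphisms. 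The second factorisation $b = b^{1,\od} b^\rc$ is identical for $j \leq m-2$, and at $j = m-1$ the constraint $a_{m-1} = 0$ together with the unconstrained $t_{m-1} \in \g$ uniquely solves the remaining equation.

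For part \eqref{l:C1aff}, the plan is to combine part \eqref{l:G1fact} with Lemma \ref{intor}\eqref{l:centralizers}, which identifies $\G_m^{1,\rc}$ as the centraliser $\ZZ_{\G_m^1}(C^1)$. The factorisation $b = b^{1,\od} b^\rc$ then shows that the orbit map $\G_m^1 \to \O(C^1)$ descends to a bijection from $\G_m^{1,\od}$. The same argument as in Lemma \ref{l:coadjorb}\eqref{l:coadjhomog}, which only uses smoothness of the group and the expected fibre dimension, applied to $\G_m^1$ acting on $\O(C^1)$ upgrades this to an isomorphism of varieties; since $\G_m^{1,\od} \cong \mathbb{A}^{(m-2)n(n-1)}$ as a $\K$-variety, the affine-space claim about $\O(C^1)$ follows.

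For part \eqref{ztfib}, I would first combine \eqref{e:Gsemdir} with the first factorisation of part \eqref{l:G1fact} to produce an isomorphism of $\K$-varieties
\begin{align*}
\G \times \G_m^\od \times \T_m^1 \arsim \G_m, \qquad (g_0, b, t) \mapsto g_0 b t.
\end{align*}
Under this iso, the right action of $\T_m^1 \subset \T_m$ is translation on the third factor, whence $\G_m/\T_m^1 \cong \G \times \G_m^\od$. A short computation using $g_0 b \cdot t_0 = (g_0 t_0)(\Ad_{t_0^{-1}} b)$ and the fact that $\Ad_\T$ preserves $\G_m^\od$ shows that the remaining right $\T$-action is exactly $(g_0, b) \cdot t_0 = (g_0 t_0, \Ad_{t_0^{-1}} b)$. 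Since Lemma \ref{intor}\eqref{l:centralizers} gives $\ZZ_{\G_m}(C) = \T_m = \T \cdot \T_m^1$, Lemma \ref{l:coadjorb}\eqref{l:coadjhomog} yields $\O(C) \cong \G_m/\T_m \cong (\G \times \G_m^\od)/\T$, and chasing definitions confirms the composite is $\Gamma$. The Zariski local triviality of the principal $\T$-bundle then follows by pulling back a local trivialisation of $\G = \GL_n \to \G/\T$ (standard via the big Bruhat cell) along the $\T$-equivariant first projection.

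The main obstacle I anticipate is not conceptual but organisational: keeping left-versus-right actions straight and threading the two factorisations of $\G_m^1$ together with the semi-direct decomposition $\G_m = \G_m^1 \rtimes \G$. Once the conventions are fixed, the argument reduces to the inductive matrix calculation in part \eqref{l:G1fact} and standard quotient-by-closed-subgroup facts.
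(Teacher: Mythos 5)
Your proposal is correct and follows essentially the same route as the paper: the inductive coefficient-by-coefficient solution of $xy=b$ using $\g=\t\oplus\g^{\od}$ for part (a), the combination of that factorization with Lemma \ref{intor}\eqref{l:centralizers} for part (b), and the decomposition $\G_m \cong \G\times\G_m^{\od}\times\T_m^1$ followed by quotienting by $\T_m=\T\cdot\T_m^1$ for part (c). The only cosmetic difference is that the paper deduces Zariski local triviality of $\G\to\G/\T$ from $\T$ being a special group rather than from the Bruhat cell, which amounts to the same thing.
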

\begin{proof} 
To prove part (a), consider elements 
\begin{align*}
x & := \1_n + \sum_{j=1}^{m-1} z^j x_j, & y & := \1_n + \sum_{j=1}^{m-1} z^j y_j, & b & := \1_n + \sum_{j=1}^{m-1} z^j b_j \in \G_m^1
\end{align*}
Then the product expression $xy = b$ imposes the relations
\begin{align} \label{e:bxy}
b_j = x_j + y_j + \sum_{\ell=1}^{j-1} x_\ell y_{j-\ell} 
\end{align}
for $1 \leq j \leq m-1$.  Assuming that $b \in \G_m^1$ is arbitrary, the equation $b_1 = x_1 + y_1$ allows us to take $x_1 \in \g^\od$, $y_1 \in \t$ to be the off-diagonal and diagonal parts of $b_1$, respectively.  The relations \eqref{e:bxy} allow us to continue this inductively, so that all $x_j \in \g^\od$ and $y_j \in \t$, noting that at each stage, one has an algebraic expression in terms of the $b_j$.  This gives the first factorization.  The second one is obtained in exactly the same way, except that the conditions are that $x_{m-1} = 0$, but there is no condition on $y_{m-1}$, which makes up for it.

Part \eqref{l:C1aff} follows directly from Lemma \ref{intor}\eqref{l:centralizers} and part \eqref{l:G1fact}.

For part \eqref{ztfib}, note that Lemma \ref{l:coadjorb}\eqref{l:coadjhomog}, together with Lemma \ref{intor}\eqref{l:centralizers}, gives an isomorphism $\phi : \G_m/\T_m \xrightarrow{\sim} \O(C)$. Observe that $\G_m = \G \ltimes \G_m^1$ and hence, as a variety, one has
\begin{align*}
\G_m = \G \times \G_m^1 = \G \times \G_m^\od \times \T_m^1,
\end{align*}
by part \eqref{l:G1fact}. Taking the quotient by $\T_m = \T \times \T_m^1$ then gives the desired isomorphism. One obtains the indicated $\T$-action on $\G \times \G_m^{\od}$ by identifying $ \G \times \G_m^1$ with $\G_m$ via multiplication. The final statement follows since $\G \rightarrow \G/\T$ is a Zariski locally trivial principal $\T$-bundle, as $\T$ is a special group \cite[\S4.3]{serre58}.
\end{proof}

\subsection{Grothendieck rings with exponentials}\label{groexp}

Following \cite{CL} and \cite{CLL}, in this section we introduce Grothendieck rings with exponentials, a naive notion of motivic Fourier transform and convolution. Similar techniques were used in \cite{WY16} to compute the motivic classes of Nakajima quiver varieties.  Throughout this section, $\K$ will denote an arbitrary field, by a variety we mean a separated scheme of finite type over  $\K$,  and by a morphism of varieties we will mean a $\K$-morphism. 

\subsubsection{Definitions}
 The \textit{Grothendieck ring of varieties}, denoted by $\kvar$, is the quotient of the free abelian group generated by isomorphism classes of varieties modulo the relation
 \[ X - Z - U,\] 
for $X$ a variety, $Z\subset X$ a closed subvariety and $U=X\setminus Z$. The multiplication is given by $[X]\cdot [Y] = [X\times Y]$, where we write $[X]$ for the equivalence class of a variety $X$ in $\kvar$. 

The \textit{Grothendieck ring with exponentials} $\kexp$ is defined similarly. Instead of varieties we consider pairs $(X,f)$, where $X$ is a variety and $f:X \rightarrow \BA^1=\spec(\K[T])$ is a morphism. A morphism of pairs $u:(X,f) \rightarrow (Y,g)$ is a morphism $u:X \rightarrow Y$ such that $f = g \circ u$. Then $\kexp$ is defined as the free abelian group generated by isomorphism classes of pairs modulo the following relations.

\begin{enumerate}
\item[(i)] For a variety $X$, a morphism $f:X\rightarrow \BA^1$, a closed subvariety $Z\subset X$ and $U=X\setminus Z$ the relation
\[ (X,f) - (Z,f|_{Z})- (U,f|_{U}).\]
\item[(ii)] For a variety $X$ and $pr_{\BA^1}:X\times \BA^1\rightarrow \BA^1$ the projection onto $\BA^1$ the relation
\[ (X\times \BA^1,pr_{\BA^1}).\]
\end{enumerate} 
The class of $(X,f)$ in $\kexp$ will be denoted by $[X,f]$. We define the product of two generators $[X,f]$ and $[Y,g]$ as 
\[ [X,f]\cdot [X,g] = [X\times Y, f\circ pr_X + g\circ pr_Y],\]
where $f\circ pr_X + g\circ pr_Y: X\times Y \rightarrow \BA^1$ is the morphism sending $(x,y)$ to $f(x)+g(y)$. This gives $\kexp$ the structure of a commutative ring with identity $[\mathrm{pt}, 0]$.

Denote by $\BL$ the class of $\BA^1$ in $\kvar$, resp.~$(\BA^1,0)$ in $\kexp$.  The localizations of $\kvar$ and $\kexp$ with respect to the the multiplicative subset generated by $\BL$ and $\BL^n-1$, where $n\geq 1$,  are denoted by $\mathscr{M}$ and $\expm$.

For a variety $S$ there is a straightforward generalization of the above construction to obtain the \textit{relative Grothendieck rings} $\kvar_S,\kexp_S,\mathscr{M}_S$ and $\expm_S$. For example, generators of $\kexp_S$ are pairs $(X,f)$ where $X$ is an $S$-variety (i.e., a variety with a morphism $X\rightarrow S$) and $f:X \rightarrow \BA^1$ a morphism. The class of $(X,f)$ in $\kexp_S$ will be denoted by $[X,f]_S$ or simply $[X,f]$ if the base variety $S$ is clear from the context. 

The usual Grothendieck ring $\kvar_S$ can be identified with the subring of $\kexp_S$ generated by pairs $(X,0)$, see \cite[Lemma 1.1.3]{CLL}. 

For a morphism of varieties $u: S \rightarrow T$ we have induced maps
\begin{align*} 
u_!&:\kexp_S \rightarrow \kexp_T, & [X,f]_S & \mapsto [X,f]_T\\
u^*&:\kexp_T \rightarrow \kexp_S, & [X,f]_T & \mapsto [X \times_T S, f \circ pr_X]_S. 
\end{align*}
In general, $u^*$ is a morphism of rings and $u_!$ a morphism of additive groups. 

\subsubsection{Realization morphisms} 

The rings $\kvar$ and $\kexp$ and their localizations $\mathscr{M}, \expm$, although easy to define, are quite hard to understand concretely. To circumvent this, one usually considers realization morphisms to simpler rings. 

If $\K= \BF_q$ is a finite field there is a ring homomorphism $\kvar \to \mathbb{Z}$ sending the class of a variety $X/\BF_q$ to the number of $\BF_q$-rational point of $X$. More generally for $S$ a variety over $\BF_q$ we can construct a realization from $\kexp_S$ to the ring $\mathrm{Map}(S(\BF_q),\BC)$ of complex valued functions on $S(\BF_q)$ by sending the class of $[X,f]$ to the function
\begin{equation}\label{ffreal} s \in S(\BF_q) \mapsto \sum_{x \in X_s(\BF_q)} \Psi\big(f(x)\big),\end{equation}
where $\Psi:\BF_q \rightarrow \BC^\times$ is a fixed non-trivial additive character and $X_s$ the fiber over $s$. Under this realization, for a morphism $u: S \rightarrow T$, the operations $u_!$ and $u^*$ correspond to summation over the fibers of $u$ and composition with $u$, respectively. 

If $\K$ is a field of characteristic $0$, whose transcendence degree over $\mathbb{Q}$ is at most the one of $\BC/\mathbb{Q}$, we can embed $\K$ into $\BC$ and consider any variety over $\K$ as a variety over $\BC$. By the work of Deligne \cite{De71, De74} the compactly supported cohomology $H^*_c(X,\BC)$ of any complex algebraic variety $X$ carries two natural filtrations, the weight and the Hodge filtration. Taking the dimensions of the graded pieces we obtain the compactly supported mixed Hodge numbers of $X$
\[ h_c^{p,q;i}(X) = \dim_\BC \left(Gr^H_p Gr^W_{p+q} H^i_c(X,\BC)\right). \]

From these numbers we define the \textit{E-polynomial} as
\begin{equation}\label{epd} E(X;x,y) = \sum_{p,q,i \geq 0} (-1)^i  h_c^{p,q;i}(X) x^py^q.\end{equation}
This way we obtain a morphism (see for example \cite[Appendix]{hausel-villegas})
\begin{align}\label{reale} 
\kvar & \rightarrow \BZ[x,y] & [X] &\mapsto E(X;x,y).
\end{align}
It is not hard to see that $E(\BL;x,y) = xy$ and thus this realization extends to a morphism 
\[\mathscr{M} \to \BZ[x,y]\left[ \frac{1}{xy}; \frac{1}{(1-(xy)^n)}, n\geq 1\right].\]
These two realizations of $\kvar$ are related by the following theorem of Katz. We refer to \cite[Appendix]{hausel-villegas} for the precise definition of a \textit{strongly polynomial count} variety over $\BC$.

\begin{thm}\cite[Theorem 6.1.2(3)]{hausel-villegas} \label{Katzthm} If $X$ over $\BC$ is strongly polynomial count with counting polynomial $P_X(t) \in \mathbb{Z}[t]$, then
\[ E(X;x,y) = P_X(xy).\]
\end{thm}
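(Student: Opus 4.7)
The plan is to deduce the stated identity from Deligne's theory of weights, applied in parallel on the complex Hodge side and the $\ell$-adic side over finite fields. First, since $X$ is of finite type over $\BC$, I would spread it out to a separated scheme $\mathcal{X}\to\spec R$ of finite type, where $R\subset\BC$ is a finitely generated $\BZ$-subalgebra of $\BC$. The strongly polynomial count hypothesis, in the formulation of \cite{HR08}, provides (after possibly shrinking $\spec R$) a model with the property that for every ring map $R\to\BF_q$ and every $r\geq 1$, one has $|\mathcal{X}_{\BF_q}(\BF_{q^r})|=P_X(q^r)$. Smooth and proper base change together with Artin's comparison theorem identifies $H^i_c(X^{\mathrm{an}},\mathbb{Q}_\ell)$ with $H^i_c(\mathcal{X}_{\bar{\BF_q}},\mathbb{Q}_\ell)$ for almost all closed points of $\spec R$, intertwining the geometric Frobenius on one side with the topological weight data on the other.

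Second, by the Grothendieck--Lefschetz trace formula,
\[
P_X(q^r)=\sum_i (-1)^i \tr\bigl(\mathrm{Frob}_q^r \mid H^i_c(\mathcal{X}_{\bar{\BF_q}},\mathbb{Q}_\ell)\bigr)
\]
for all $r\geq 1$. By Deligne's Weil~II, each eigenvalue $\alpha$ of $\mathrm{Frob}_q$ on $H^i_c$ is a Weil number of some integer weight $w(\alpha)\leq i$, with $|\iota(\alpha)|=q^{w(\alpha)/2}$ for every complex embedding $\iota$. Grouping eigenvalues by absolute value and letting $r\to\infty$, the identity above forces cancellation among contributions of equal weight; Katz's argument then shows that the only eigenvalues that actually survive are non-negative integral powers $q^p$, with multiplicities matching the coefficients of $P_X$. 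In particular, $H^i_c(\mathcal{X}_{\bar{\BF_q}},\mathbb{Q}_\ell)$ is pure Tate, and its weight-$k$ graded piece vanishes unless $k=2p$ for some $p\geq 0$.

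Third, I would transport this structural conclusion back to the Hodge side. The compatibility between the $\ell$-adic weight filtration on cohomology of a variety arising from a finitely generated $\BZ$-algebra and the Hodge-theoretic weight filtration on the complex analytic cohomology implies that $Gr^W_k H^i_c(X,\BC)=0$ for $k$ odd, and that $Gr^W_{2p} H^i_c(X,\BC)$ is of pure Hodge--Tate type $(p,p)$ of dimension equal to the multiplicity of $q^p$ as a Frobenius eigenvalue on $H^i_c$. Hence $h^{p,q;i}_c(X)=0$ whenever $p\neq q$, and substituting into the definition \eqref{epd} of $E(X;x,y)$ collapses the double sum over $(p,q)$ to a single sum in $xy$; combined with the trace formula identity of the previous step, this yields $E(X;x,y)=P_X(xy)$.

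The main obstacle in this plan is the middle step: extracting, from the a priori very weak input that an alternating sum of weighted Frobenius traces happens to equal a single polynomial in $q^r$ for all $r$, the strong structural conclusion that every eigenvalue is a power of $q$. Without this, the polynomial count could in principle arise from massive cancellation among non-Tate Weil numbers, and the resulting $E$-polynomial could have genuinely non-diagonal Hodge contributions. The work that must be done here, and the one piece where Deligne's purity theorem is used in an essential and non-formal way, is the argument separating the trace formula contributions first by weight and then by eigenvalue using Weil~II and the density of powers of Frobenius in the geometric monodromy group.
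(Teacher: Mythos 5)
The paper gives no argument for this statement: it is quoted verbatim from Katz's appendix to [HR08], so your sketch has to be measured against the proof there. Your first step is fine: spread out, apply the Grothendieck--Lefschetz trace formula, and use linear independence of the functions $r\mapsto\alpha^r$ for distinct Weil numbers $\alpha$ to conclude that the \emph{virtual} multiset of Frobenius eigenvalues, counted with signs $(-1)^i$, is $\{q^p \text{ with multiplicity } a_p\}$ where $P_X(t)=\sum_p a_pt^p$. The gap begins with the sentence ``In particular, $H^i_c(\mathcal{X}_{\overline{\BF}_q},\mathbb{Q}_\ell)$ is pure Tate'' and everything downstream of it. The trace formula constrains only the alternating sum over $i$; non-Tate Weil numbers of a given weight can cancel between cohomological degrees of opposite parity, and they do. A concrete counterexample to your claim: let $E\subset\P^2$ be an elliptic curve and $X=(E\setminus\{0\})\sqcup(\P^2\setminus E)$. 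This is strongly polynomial count with $P_X(t)=t^2+t$, yet $H^1_c(X)$ contains $H^1(E)$ (from the first piece) and $H^2_c(X)$ contains $H^1(E)$ (from the second), so neither group is of Tate type and $h^{1,0;1}_c(X)=h^{1,0;2}_c(X)=1$. The theorem holds only because these contributions cancel in the alternating sum defining $E(X;x,y)$; your assertions ``$H^i_c$ is pure Tate'' and ``$h^{p,q;i}_c(X)=0$ whenever $p\neq q$'' are false as stated.

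Even after repairing the statement to one about virtual classes, your third step does not reach the conclusion. The compatibility of the $\ell$-adic and Hodge-theoretic weight filtrations identifies the dimensions $\dim Gr^W_kH^i_c$ on the two sides, i.e.\ it controls $p+q$ only; this yields the one-variable identity $\sum_i(-1)^i\dim Gr^W_{2p}H^i_c=a_p$ (and vanishing in odd weights), which is the specialization $E(X;q^{1/2},q^{1/2})=P_X(q)$, not the bivariate identity $E(X;x,y)=P_X(xy)$. The latter asserts $\sum_i(-1)^ih^{p,q;i}_c=0$ for $p\neq q$, i.e.\ that the virtual \emph{pure} Hodge structure in each weight is of Hodge--Tate type, and the weight filtration alone cannot see how a pure weight-$k$ piece splits into $(p,q)$-types. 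This is exactly where Katz's argument needs a genuinely non-formal input beyond Weil~II: by Chebotarev the semisimplified virtual Galois representation is a sum of Tate twists, and one then invokes $p$-adic Hodge theory (the Hodge--Tate decomposition for the cohomology of smooth projective varieties, applied to the pure constituents furnished by Deligne's theory) to conclude that an equality of Galois representations forces an equality of Hodge numbers. That ingredient is entirely absent from your sketch, so as written the argument proves at most the weight-polynomial identity, not the theorem.
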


In particular in the situation of Theorem \ref{Katzthm}, the $E$-polynomial of $X$ is a polynomial in one variable, which we also call the \textit{weight polynomial}
\[ E(X;q) = E(X;q^{\frac{1}{2}},q^{\frac{1}{2}}).\]

\subsubsection{Computational tools}

We now introduce several tools for our computations in $\kvar$ and $\kexp$, which are mostly inspired by similar constructions over finite fields through the realization \eqref{ffreal}.

\paragraph{Fibrations.} We say that $f:X \rightarrow Y$ is a \textit{Zariski locally-trivial fibration} if $Y$ admits an open covering $Y=\cup_{j} U_j$ such that $f^{-1}(U_j) \cong F\times U_j$ for some fixed variety $F$. In this case we have the product formula
\begin{eqnarray}\label{trif} [X]=[F][Y] \end{eqnarray}
in $\kvar$ as we can compute directly
\[[X] = \sum_{j} [f^{-1}(U_j)] - \sum_{j_1 < j_2} [f^{-1}(U_{j_1} \cap U_{j_2})] +... = [F][Y].\]

\paragraph{Character sums.}
When computing character sums over finite fields, one has the following crucial identity 
\begin{eqnarray*} 
\sum_{v\in V} \Psi\big( f(v) \big) = \begin{cases} q^{\dim V} &\text{ if } f=0\\
0 &\text{ otherwise,}\end{cases}\end{eqnarray*}
where $V$ is a finite-dimensional vector space over $\BF_q$ and $f\in V^\vee$ a linear form.  To establish an analogous identity in the motivic setting, we let $V$ be a finite-dimensional vector space over $\K$ and $S$ a variety. We replace the linear form above with a family of affine linear forms, i.e., a morphism $g=(g_1,g_2) : X \rightarrow V^\vee\times \K$, where $X$ is an $S$-variety. Then we define $f$ to be the morphism
\begin{align*} 
f & :X\times V \rightarrow \K & (x,v) & \mapsto \left\langle g_1(x), v \right\rangle + g_2(x).
\end{align*}
Finally we put $Z= g_1^{-1}(0)$. 

\begin{lemma}\cite[Lemma 2.1]{WY16}\label{orth} With the notation above we have the relation
\[ [X\times V,f] = \BL^{\dim V}[Z,{g_2}|_{Z}] \]
in $\kexp_S$. In particular, if $X = \spec \K$ and $f \in V^\vee$, we have $[V,f] = 0$ unless $f = 0$.
\end{lemma}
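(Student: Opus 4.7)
The plan is to mirror the finite-field character-sum identity using the two defining relations of $\kexp_S$. First I would partition $X$ along the closed subvariety $Z := g_1^{-1}(0)$, letting $U := X \setminus Z$, and apply relation~(i) to write
\[
[X \times V, f] = [Z \times V, f|_{Z \times V}] + [U \times V, f|_{U \times V}].
\]
For the first piece, on $Z$ the form collapses to $f(x,v) = g_2(x)$ and hence factors through the projection $Z \times V \to Z$. The product in $\kexp_S$ then expresses this class as $[Z, g_2|_Z] \cdot [V, 0]$; since $[V, 0]$ is the image of $[V] = \BL^{\dim V}$ under the natural embedding $\kvar_S \hookrightarrow \kexp_S$, we obtain $[Z \times V, f|_{Z \times V}] = \BL^{\dim V}[Z, g_2|_Z]$.

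The substantive step is to show $[U \times V, f|_{U \times V}] = 0$. Choose a basis $e_1, \ldots, e_d$ of $V$ and set $U_i := \{x \in U : \langle g_1(x), e_i\rangle \neq 0\}$; these form a Zariski open cover of $U$ because $g_1$ is nowhere vanishing on $U$. On each $U_i$, decompose $V = \K e_i \oplus V_i'$ and use the invertible function $s_i(x) := \langle g_1(x), e_i\rangle$ to define a morphism
\[
U_i \times V \longrightarrow U_i \times V_i' \times \BA^1, \qquad (x,\ te_i + v') \longmapsto (x,\ v',\ f(x, te_i + v')),
\]
which is a $U_i$-isomorphism (the inverse solves the affine-linear equation $t s_i(x) + \langle g_1(x), v'\rangle + g_2(x) = u$ for $t$). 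Under this isomorphism $f$ becomes the projection to the $\BA^1$ factor, so relation~(ii) applied to $(U_i \times V_i') \times \BA^1$ gives $[U_i \times V, f|_{U_i \times V}] = 0$. The same change of variables applies verbatim on every intersection $U_{i_1} \cap \cdots \cap U_{i_k}$, so iterated use of relation~(i) (inclusion–exclusion on the cover) collapses the $U$-contribution to zero.

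Combining the two pieces yields the stated identity in $\kexp_S$. The "in particular" assertion then follows by specializing to $X = \spec \K$, $g_1 = f$, $g_2 = 0$: when $f \neq 0$ the zero set $Z$ is empty, forcing $[V, f] = \BL^{\dim V}[\emptyset, 0] = 0$. I expect the main technical point to be the inclusion–exclusion step assembling the vanishing on $U$ from the vanishing on the $U_i$; but since the trivialization on each $U_i$ depends only on the nonvanishing of a single coordinate $s_i$, it restricts directly to arbitrary intersections and the argument goes through uniformly.
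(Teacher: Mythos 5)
Your argument is correct and is essentially the standard proof of this lemma; the paper itself does not reprove it but cites \cite[Lemma 2.1]{WY16}, whose argument proceeds exactly as yours does (splitting off $Z=g_1^{-1}(0)$, where $f$ is independent of $v$ and contributes $\BL^{\dim V}[Z,g_2|_Z]$, and killing the contribution of $U=X\setminus Z$ by locally straightening the nowhere-vanishing affine-linear form into the coordinate projection so that relation (ii) applies). The only cosmetic point is that $[Z\times V, g_2\circ pr_Z]=\BL^{\dim V}[Z,g_2|_Z]$ is most cleanly phrased via the trivial $\BA^{\dim V}$-fibration over $Z$ (or the $\kexp$-module structure on $\kexp_S$) rather than as a product of two classes in $\kexp_S$, but this does not affect the proof.
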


\paragraph{Fourier transforms.} We now define the \textit{naive motivic Fourier transform} for functions on a finite-dimensional $\K$-vector space $V$ and the relevant inversion formula. All of this is a special case of \cite[Section 7.1]{CL}.

\begin{defn}\label{nft} Let $p_V:V\times V^\vee \rightarrow V$ and $p_{V^\vee}:V\times V^\vee \rightarrow V^\vee$ be the obvious projections. 
\textit{The naive Fourier transformation} $\mathcal{F}_V$ is defined as 
\begin{align*} \mathcal{F}_V& :\kexp_{V} \rightarrow \kexp_{V^\vee} &
\phi & \mapsto p_{V^\vee!} \left( p_V^*\phi \cdot[V\times V^\vee,\lla,\rra] \right). \end{align*}
Here $\left\langle \, , \right\rangle:V\times V^\vee \rightarrow \K$ denotes the natural pairing.  We will often write $\mathcal{F}$ instead of $\mathcal{F}_V$ when no ambiguity will arise from doing so.
\end{defn}

Of course, the definition is again inspired by the finite field version, where one defines for any function $\phi: V \rightarrow \BC$ the Fourier transform at $w \in V^\vee$ by
\[ \FF(\phi)(w) = \sum_{v \in V } \phi(v) \Psi(\lla w,v\rra).\]

Notice that $\mathcal{F}$ is a homomorphism of groups and thus it is worth spelling out the definition in the case when $\phi = [X,f]$ is the class of a generator in $\kexp_V$. Letting $u:X\rightarrow V$ be the structure morphism we simply have 
\begin{eqnarray}\label{ftgen}
\mathcal{F}([X,f]) = [X \times V^\vee, f\circ pr_X+\left\langle u\circ pr_X,pr_{V^\vee}\right\rangle].
\end{eqnarray}

We have the following version of Fourier inversion.
\begin{prop}\cite[Proposition 2.2]{WY16}\label{finv} For every $\phi \in \kexp_{V}$  we have the identity
\[\mathcal{F}(\mathcal{F}(\phi)) = \BL^{\dim V} \cdot i^*(\phi),\]
where $i:V\rightarrow V$ is multiplication by $-1$.
\end{prop}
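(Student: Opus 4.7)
The plan is to reduce to generators and then perform a direct calculation using the orthogonality Lemma \ref{orth}. Since the Fourier transform $\FF$ and pullback $i^*$ are additive, it suffices to verify the identity for a generator $\phi = [X,f] \in \kexp_V$ whose structure morphism to $V$ we denote by $u \colon X \to V$.

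Applying the explicit formula \eqref{ftgen} once gives $\FF([X,f]) = [X \times V^\vee, f \circ pr_X + \langle u \circ pr_X, pr_{V^\vee}\rangle] \in \kexp_{V^\vee}$, with structure morphism $pr_{V^\vee}$. Applying \eqref{ftgen} a second time to this class (with the obvious identification $V^{\vee\vee} = V$), one obtains a class in $\kexp_V$ represented by the triple $(X \times V^\vee \times V, F)$, where the structure morphism to $V$ is $pr_V$ and
\begin{align*}
F(x,w,v) = f(x) + \langle u(x), w \rangle + \langle v, w\rangle = f(x) + \langle u(x) + v,\, w \rangle.
\end{align*}

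Next I would apply Lemma \ref{orth} by viewing $w \in V^\vee$ as the vector-space variable and $X \times V$ as the parameter $S$-variety. In the notation of that lemma, the morphism $g = (g_1, g_2) \colon X \times V \to V^{\vee\vee} \times \K = V \times \K$ is given by $g_1(x,v) = u(x) + v$ and $g_2(x,v) = f(x)$. The zero locus $Z = g_1^{-1}(0) = \{(x,v) \in X \times V : v = -u(x)\}$ projects isomorphically onto $X$ via $pr_X$, with inverse $x \mapsto (x, -u(x))$; under this isomorphism, the structure map $pr_V \colon Z \to V$ becomes $-u = i \circ u$, and $g_2|_Z$ pulls back to $f$. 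Thus $[Z, g_2|_Z]_V = i^*[X,f]$, and Lemma \ref{orth} yields
\begin{align*}
\FF(\FF([X,f])) = [X \times V^\vee \times V, F]_V = \BL^{\dim V^\vee}\, [Z, g_2|_Z]_V = \BL^{\dim V}\, i^*[X,f],
\end{align*}
which is the desired identity.

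The computation itself is essentially bookkeeping; the only real care required is in tracking the structure morphisms (especially recognising the base variety $V$ after the second Fourier transform) and in setting up Lemma \ref{orth} with the correct choice of vector-space variable. Once the exponent is rewritten in the form $\langle u(x) + v, w\rangle + f(x)$ so that the dependence on $w$ is affine-linear with parameter space $X \times V$, the orthogonality lemma immediately produces both the factor $\BL^{\dim V}$ and the pullback $i^*$ from the sign in $v = -u(x)$.
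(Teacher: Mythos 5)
Your proof is correct. The paper gives no argument of its own here — it cites \cite[Proposition 2.2]{WY16} — and your computation (reduce to generators, expand the double transform via \eqref{ftgen}, rewrite the exponent as $\langle u(x)+v,w\rangle+f(x)$, and apply Lemma \ref{orth} so that the locus $v=-u(x)$ yields both the factor $\BL^{\dim V}$ and the pullback along $i$) is exactly the standard proof given in that reference.
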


\paragraph{Convolution.} Finally, we introduce a motivic version of convolution. 

\begin{defn} Let $R:\kexp_V\times \kexp_V \rightarrow \kexp_{V\times V}$ be the natural morphism sending two varieties over $V$ to their product, and $s: V \times V \rightarrow V$ the sum operation. The \textit{convolution product} is the associative and commutative operation
\begin{align*} 
\ast : & \kexp_V\times \kexp_V \rightarrow \kexp_V & (\phi_1,\phi_2) &\mapsto \phi_1 \ast \phi_2 = s_! R(\phi_1,\phi_2).
\end{align*}
\end{defn}
As expected the Fourier transform interchanges product and convolution product.
\begin{prop} \label{fouconv} For $\phi_1,\phi_2 \in \kexp_V$ we have
\[ \FF(\phi_1 \ast \phi_2) = \FF(\phi_1)\FF(\phi_2).\]
\end{prop}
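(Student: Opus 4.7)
The plan is to reduce to generators and then unwind both sides of the identity until they are visibly represented by the same pair over $V^\vee$. Since $\FF$, convolution, and ring multiplication are all additive in each argument, and since the generators $[X,f]$ additively span $\kexp_V$, it suffices to verify the identity for $\phi_1 = [X_1, f_1]$ and $\phi_2 = [X_2, f_2]$, with structure morphisms $u_i : X_i \to V$.

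For the left-hand side, I would first compute the convolution directly from the definition: $R(\phi_1,\phi_2) = [X_1 \times X_2,\ f_1 \circ pr_{X_1} + f_2 \circ pr_{X_2}]$ as a class in $\kexp_{V \times V}$ with structure map $u_1 \times u_2$, and then $s_!$ simply replaces that structure map by $s \circ (u_1 \times u_2)$, so
\[
\phi_1 \ast \phi_2 = \bigl[X_1 \times X_2,\ f_1 + f_2\bigr]
\]
in $\kexp_V$, with structure morphism $(x_1, x_2) \mapsto u_1(x_1) + u_2(x_2)$. Applying formula \eqref{ftgen} to this generator, with $w := pr_{V^\vee}$, gives
\[
\FF(\phi_1 \ast \phi_2) = \bigl[X_1 \times X_2 \times V^\vee,\ f_1 + f_2 + \langle u_1 + u_2,\ w\rangle\bigr]
\]
as an element of $\kexp_{V^\vee}$, with structure map the projection onto $V^\vee$.

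For the right-hand side, \eqref{ftgen} gives $\FF(\phi_i) = [X_i \times V^\vee,\ f_i + \langle u_i, w\rangle]$ in $\kexp_{V^\vee}$. The product in the relative ring $\kexp_{V^\vee}$ is, by construction, the fiber product over $V^\vee$ together with the sum of the two morphisms to $\BA^1$. Since $(X_1 \times V^\vee) \times_{V^\vee} (X_2 \times V^\vee) \cong X_1 \times X_2 \times V^\vee$ canonically, I obtain
\[
\FF(\phi_1)\, \FF(\phi_2) = \bigl[X_1 \times X_2 \times V^\vee,\ f_1 + f_2 + \langle u_1, w\rangle + \langle u_2, w\rangle \bigr],
\]
and bilinearity of $\langle\,,\,\rangle$ collapses the last two terms into $\langle u_1 + u_2, w\rangle$. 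The two expressions thus agree as classes in $\kexp_{V^\vee}$.

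There is no real obstacle here; the only subtle point is remembering that multiplication in the relative Grothendieck ring is fiber product over the base, so the two copies of $V^\vee$ in $\FF(\phi_1)$ and $\FF(\phi_2)$ get identified, which is exactly what produces the single variable $w$ that is paired with the sum $u_1 + u_2$. This is the motivic shadow of the classical computation showing that the integral Fourier transform turns convolution into multiplication.
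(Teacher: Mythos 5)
Your proof is correct and follows essentially the same route as the paper: reduce to generators by bilinearity, compute $\phi_1\ast\phi_2$ as $[X_1\times X_2, f_1+f_2]$ with structure map $s\circ(u_1\times u_2)$, apply \eqref{ftgen}, and match the result with the product in $\kexp_{V^\vee}$ via the identification $(X_1\times V^\vee)\times_{V^\vee}(X_2\times V^\vee)\cong X_1\times X_2\times V^\vee$ and bilinearity of the pairing. The paper's proof is a condensed version of exactly this computation.
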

\begin{proof} As both $\FF$ and $\ast$ are bilinear, it is enough prove the identity for two generators $[X,f],[Y,g] \in \kexp_V$ with respective structure morphisms $u:X\rightarrow V$, $v:Y \rightarrow V$. Using \eqref{ftgen} we can then directly compute
\begin{align*} 
\FF \left( [X,f] \ast [Y,g] \right) &= \FF(s_![X\times Y,f\circ pr_X + g\circ pr_Y])\\
&= [X\times Y \times V^\vee, f\circ pr_X + g\circ pr_Y + \lla s \circ (u \times v) \circ pr_{X\times Y}, pr_{V^\vee}\rra ]
\end{align*}
On the other hand using the natural isomorphism 
\[ (X\times V^\vee) \times_{V^\vee} (Y \times V^\vee) \cong X \times Y \times V^\vee, \]
we get
\begin{align*} 
\FF[X,f]\FF[Y,g] &= [X \times V^\vee,f\circ pr_X + \lla u\circ pr_X,pr_{V^\vee}\rra][Y \times V^\vee, g\circ pr_Y+ \lla v\circ pr_Y,pr_{V^\vee}\rra] \\
&=  [X\times Y \times V^\vee, f\circ pr_X + g\circ pr_Y + \lla s \circ (u \times v) \circ pr_{X\times Y}, pr_{V^\vee}\rra ],
\end{align*}
and thus $\FF \left( [X,f] \ast [Y,g] \right) = \FF[X,f]\FF[Y,g]$.
\end{proof}

\begin{rem}
We will use the convolution product to study equations in a product of varieties, i.e., consider $V$-varieties $u_i:X_i \rightarrow V$ say for $i=1,2$. Then it follows from the definition of $*$ that for any $v: \spec \K \rightarrow V$ the class of $\{ (x_1,x_2) \in X_1 \times X_2 \ |\ u_1(x_1) +u_2(x_2) = v\}$ is given by $v^*([X_1] *[X_2])$. Proposition \ref{fouconv} allows us to compute the latter by understanding the Fourier transforms $\FF(X_1)$  and $\FF(X_2)$ separately.
\end{rem}

\section{Open de Rham spaces} \label{s:odr} 

In this section we define open de Rham spaces as an additive fusion of coadjoint orbits, similar as in \cite[\S2]{HiroeYamakawa}. They were first introduced in \cite[Section 2]{Bo01} as certain moduli spaces of connections on $\BP^1$. We recall this viewpoint briefly in Section \ref{modcon}.

\subsection{Additive fusion products of coadjoint orbits} \label{s:addfusprod}

As before, let $\K$ be a fixed base field. Fix a diagonal element $C \in \g_m^\vee$ as in \eqref{e:diagelt} and consider its $\G_m$-coadjoint orbit $\O(C) \subseteq \g_m^\vee$.  In the case that $\K = \R$ or $\C$, $\G_m$ is a real or complex Lie group, accordingly, and the coadjoint orbit $\O(C)$ admits a canonical symplectic form (the Kirillov--Kostant--Souriau form) for which the coadjoint action is Hamiltonian with moment map given by the inclusion $\mu_{\O(C)} : \O(C) \hookrightarrow \g_m^\vee$ \cite[\S II.3.3.5, \S II.3.3.8]{Audin}.  We may restrict the action on $\O(C)$ to that of the constant subgroup $\G \subset \G_m$, using \eqref{e:Gsemdir}.  The moment map $\mu_{\res} = \pi_{\res} \circ \mu_{\O(C)} : \O(C) \to z^{-1} \g = \g^\vee$ for the restricted action is the composition of the inclusion and the projection $\pi_{\res} : \g_m^\vee \to \g^\vee$ \eqref{pis}, so simply takes the residue term
\begin{align} \label{e:muresMM}
\frac{Y_m}{z^m} + \frac{Y_{m-1}}{z^{m-1}} + \cdots + \frac{Y_1}{z} \mapsto \frac{Y_1}{z}.
\end{align}

Now, for $d \in \Z_{>0}$, let $m_i \in \Z_{>0}$ for $1 \leq i \leq d$, and let $C^i \in \g_{m_i}^\vee$ be diagonal elements and $\O(C^i)$ their coadjoint orbits.  We may form the product $\prod_{i=1}^d \O(C^i)$ which has the product symplectic structure.  It also carries a diagonal $\G$-action for which the moment map is

\begin{equation}\label{e:mud} \mu : \prod_{i=1}^d \O(C^i) \to \g^\vee, \ \ \ \ (Y^1, \ldots, Y^d) \mapsto \sum_{i=1}^d \frac{Y_1^i}{z} \end{equation}

In this case where $\K = \R$ or $\C$, we can then form the symplectic (Marsden--Weinstein) quotient in the usual way.  We now observe that if $\K$ is any field, then both \eqref{e:muresMM} and \eqref{e:mud} are defined over $\K$.  

\begin{defn} \label{d:odr} Let $\mathbf{C} = (C^1,\dots,C^d)$ be a tuple of diagonal elements satisfying \eqref{e:cassum}, so that the product $\prod_{i=1}^d \O(C^i)$ is an affine variety (Lemma \ref{l:coadjorb}\eqref{l:coadjaff}).  The \textit{open de Rham space} $\FM^*(\mathbf{C})$ is the affine GIT quotient
\[   \FM^*(\mathbf{C}) = \prod_{i=1}^d \O(C^i) \bigg/\!\!\!\!\bigg/_0 \G := \spec \left( \K[\mu^{-1}(0)]^{\G} \right). \]
\end{defn}

Except possibly in Section \ref{s:qmv} we will typically impose two more conditions on $\mathbf{C}$, regularity and genericity.

\begin{defn}  A diagonal element $C \in \g^\vee_m$ of order $m\geq 2$ is \emph{regular} if $C_{m} \in \ft^{\reg}$ i.e. has distinct eigenvalues. A $d$-tuple $\mathbf{C}=(C^1,\dots,C^d)$ is called \emph{regular} if for all  $1\leq i \leq d$ with $m_i \geq 2$, $C^i$ is regular.
\end{defn}

We now define genericity of the tuple $\mathbf{C}$ in a similar manner as in  \cite[2.2.1]{HLV11}. Our more explicit formulation will be used in later computations, see the proof of Theorem \ref{mainodr} and Lemma \ref{finalc}.  Define for $I \subset \{1,2,\dots,n\}$ the matrix $E_I \in \g$ by
 \begin{eqnarray}\label{eij} (E_I)_{ij} = \begin{cases} 1 & \text{ if } i=j\in I \\ 0 & \text{ otherwise.}	\end{cases} \end{eqnarray}

\begin{defn}\label{genericc}  We call $\mathbf{C}$ \emph{generic} if $\sum_{i=1}^d \tr C_1^i = 0$ and for every integer $n'<n$ and subsets $I_1,\dots,I_d \subset \{1,\dots,n\}$ of size $n'$ we have 
\begin{eqnarray}\label{gen}\sum_{i=1}^d \lla C_1^i,E_{I_i}\rra \neq 0,\end{eqnarray}
where $\lla,\rra$ is the pairing defined in \eqref{trp}.
In other words, if $\mathbf{C}$ is generic there are no non-trivial subspaces $V_1,\dots,V_d \subsetneq \K^n$ of the same dimension such that $V_i$ is invariant under $C_i$ for $1 \leq i\leq d$ and $\sum_i \tr C^i_1|_{V_i} = 0$.
\end{defn}

Now, let $\mathbf{C}$ be a regular generic tuple of formal types.  We adapt the following notation so as to later (Section \ref{s:wcvcomp}) make comparisons with \cite{HMW16}.  First, we order $C^1,\dots,C^d$ in a way such that $m_1=\dots = m_{k} =1$ and $m_{k+1},\dots,m_{k+s} \geq 2$, with $k+s = d$. We write $\muhat = (\mu^1,\dots,\mu^k) \in \FP_n^k$ for the $k$-tuple of partitions of $n$ defined by the multiplicities of the eigenvalues of $C^i_1$ for $1\leq i \leq k$. 

When one is talking about a meromorphic connection having a formal type of order $m$ at a pole with a semisimple leading order term (of course, here we have only discussed such types of poles), then it is standard terminology to refer to the number $m-1$ as the \emph{Poincar\'e rank} of the pole.  For our moduli spaces, since $C^{k+i}_{m_{k+i}} \in \ft^{\reg}$ for $1 \leq i\leq s$, we will write $r_i := m_{k+i}-1$ for the Poincar\'e rank of $C^i$ and record these in the $s$-tuple $\bfr = (r_1,\dots, r_s)$.  Finally, we write $r= \sum_{i=1}^s r_i$ and call this the \emph{total Poincar\'e rank}.
	
\begin{defn}\label{genodr}
For a regular generic $\mathbf{C}$ we write $\FM^*_{\muhat,\bfr}$ instead of $\FM^*(\mathbf{C})$ and refer to it as the \textit{generic open de Rham space} of type $(\muhat,\bfr)$. If, furthermore, $k=0$ we write $\FM^*_{n,\bfr}$ for $\FM^*(\mathbf{C})$.
\end{defn}

\begin{rmk}\label{larchar}
This notation is justified since the invariants we compute in Sections \ref{s:motcls} and \ref{cac} will depend only on $(\muhat,\bfr)$ and not on the actual eigenvalues of the formal types. 

We will always assume $s \geq 1$ in this paper, in which case a generic $\mathbf{C}$ always exists if $\K$ is algebraically closed. This follows from \cite[Lemma 2.2.2]{HLV11}, because in our case when $s \geq 1$ in loc. cit. both $D=d=1$. If $\K= \BF_q$ is a finite field one needs an additional lower bound on $q$ depending on $n$ and $d$ to make sure that the Zariski-open subvariety of $\BA^{nd}$ defined by \eqref{gen} has an $\BF_q$-rational point. We will not spell out an explicit bound here, as we are only interested in sufficiently large $q$.
\end{rmk}	

\begin{prop}\label{odrspa}  If non-empty, $\FM^*_{\muhat,\bfr}$ is smooth and equidimensional of dimension 
\begin{align} \label{e:dimform}
d_{\muhat,\bfr} = dn^2 - sn +r(n^2-n) - \sum_{i=1}^k N(\mu^i) - 2(n^2 - 1),
\end{align}
where for $\mu= (\mu_1\geq \dots \geq \mu_l) \in \FP_n$ we define $N(\mu) = \sum_{j=1}^l \mu_j^2$.
\end{prop}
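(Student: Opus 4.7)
The plan is to reduce the assertion to two standard facts: that $0$ is a regular value of the moment map corestricted to the trace-zero hyperplane in $\g^\vee$, and that the effective $\PGL_n$-action on $\mu^{-1}(0)$ is free. Genericity enters precisely to guarantee both. Granted these, $\FM^*_{\muhat,\bfr}$ is a geometric quotient of a smooth variety by a free action, hence smooth and equidimensional, and a dimension count finishes the proof.

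First I would compute $\dim \prod_{i=1}^d \O(C^i)$. For tame indices $1 \leq i \leq k$, the orbit $\O(C^i) = \O_i$ is a semisimple adjoint orbit of type $\mu^i$ in $\g$, of dimension $n^2 - N(\mu^i)$. For irregular indices $k+1 \leq i \leq k+s$, Lemma \ref{l:coadjorb}\eqref{l:coadjhomog} combined with Lemma \ref{intor}\eqref{l:centralizers} identifies $\O(C^i)$ with $\G_{m_i}/\T_{m_i}$, of dimension $m_i(n^2-n)=(r_{i-k}+1)(n^2-n)$. Summing gives
\[
\dim \prod_i \O(C^i) = kn^2 - \sum_{i=1}^k N(\mu^i) + (s+r)(n^2 - n).
\]
Next I would verify that $\mu$ factors through the trace-zero hyperplane $\sl_n^\vee \subset \g^\vee$: conjugation preserves trace, and the commutator corrections from \eqref{e:coadjointaction} are traceless, so $\tr Y^i_1 = \tr C^i_1$ for every $Y^i \in \O(C^i)$; summing and using the trace vanishing condition in Definition \ref{genericc} gives $\tr \mu \equiv 0$. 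Write $\bar\mu : \prod_i \O(C^i) \to \sl_n^\vee$ for the corestriction.

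The core step is to show that at every $p = (Y^i) \in \mu^{-1}(0)$, the infinitesimal $\G$-stabilizer of $p$ equals the center $\z = \K \cdot \1$. By standard moment-map theory, the image of $d\bar\mu_p$ is the annihilator in $\sl_n^\vee$ of this stabilizer, so centrality is equivalent to surjectivity of $d\bar\mu_p$, which gives that $0$ is a regular value. To establish centrality, suppose $X \in \g$ satisfies $[X, Y^i_p] = 0$ for all $i,p$, and, if $X \notin \z$, pick a proper subspace $V \subsetneq \K^n$ that is either an eigenspace of $X$ or the kernel of $X - \lambda\1$; such a $V$ is preserved by every $Y^i_p$. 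Writing $Y^i = \Ad_{h^i} C^i$ with $h^i = g^i_0 b^i$ as in Lemma \ref{codecomp}\eqref{l:G1fact}, the subspace $V^i := (g^i_0)^{-1}(V)$ is invariant under every $(\Ad_{b^i} C^i)_p$; in the irregular case, invariance under the leading term $C^i_{m_i} \in \ft^\reg$ forces $V^i$ to be a coordinate subspace $\mathrm{span}\{e_j : j \in I^i\}$ for some $I^i \subset \{1, \ldots, n\}$ of size $\dim V$. I would then prove the trace identity
\[
\tr\bigl(Y^i_1|_V\bigr) = \tr\bigl((\Ad_{b^i} C^i)_1|_{V^i}\bigr) = \tr\bigl(C^i_1|_{V^i}\bigr) = \langle C^i_1, E_{I^i}\rangle,
\]
choosing $I^i$ in the tame case by a diagonal refinement of a $C^i_1$-eigenspace decomposition of $V^i$. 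Summing over $i$, the condition $\sum_i Y^i_1|_V = 0$ yields $\sum_i \langle C^i_1, E_{I^i}\rangle = 0$ with $|I^i| = \dim V < n$, contradicting \eqref{gen}.

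The main obstacle is the middle equality in the trace identity for the irregular case. Expanding $(\Ad_{b^i} C^i)_1 - C^i_1$ via \eqref{e:coadjointaction}, every summand has the form $[b^i_j, C^i_p]$ or $[b^i_j, C^i_p]\,w_\ell$; single commutators have zero diagonal since each $C^i_p$ is diagonal, so they contribute nothing to a trace on any coordinate subspace. For higher-order products $[b^i_j, C^i_p]\,w_\ell$, one must exploit the block-triangular constraints on the $b^i_j$ forced by the invariance of $V^i$ under each $(\Ad_{b^i} C^i)_q$---propagated inductively from the leading term, in the spirit of the proof of Lemma \ref{intor}\eqref{secnd}---and observe that the surviving diagonal entries indexed by $I^i$ cancel in antisymmetric pairs coming from differences of leading eigenvalues. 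Granted centrality of stabilizers, $\PGL_n$ acts freely on $\mu^{-1}(0)$, so all $\G$-orbits are closed of dimension $n^2-1$; the GIT quotient coincides with the geometric quotient, $\FM^*_{\muhat,\bfr}$ is smooth and equidimensional, and
\[
\dim \FM^*_{\muhat, \bfr} = \dim \prod_i \O(C^i) - 2(n^2 - 1) = dn^2 - sn + r(n^2 - n) - \sum_{i=1}^k N(\mu^i) - 2(n^2 - 1).
\]
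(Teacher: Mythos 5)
Your overall architecture coincides with the paper's: genericity forces every stabilizer in $\mu^{-1}(0)$ to be central, which simultaneously gives freeness of the $\PGL_n$-action and surjectivity of the differential of the moment map (corestricted to the traceless part), and the dimension count is identical. Your explicit corestriction to $\sl_n^\vee$ is in fact slightly more careful than the paper, which passes from freeness to ``$0$ is a regular value'' without comment; your orbit dimensions and the final arithmetic are correct.

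The one place you diverge is the key identity $\tr(Y^i_1|_V)=\tr(C^i_1|_{V^i})$ at an irregular pole, and this is also the soft spot. The paper works with a group stabilizer $g$ and, after reducing to $h\in\G^1_{m_i}$ and $g\in\T$, applies Lemma \ref{intor}\eqref{secnd} and \eqref{intorr} to $\bar h=hgh^{-1}$ to conclude $hgh^{-1}=g$; hence every coefficient $h_j$ preserves the eigenspace $V$, and the trace identity is immediate from $\tr(\Ad_h C^i)|_V=\tr\,\Ad_{h|_V}(C^i|_V)$ --- no expansion of $(\Ad_b C)_1$ is ever needed. You instead expand $(\Ad_{b^i}C^i)_1-C^i_1$ via \eqref{e:coadjointaction} and assert that the higher-order terms $[b_j,C_p]w_\ell$ contribute nothing to the trace on $V^i$ because ``surviving diagonal entries cancel in antisymmetric pairs.'' As stated this is not a proof, and the proposed mechanism is not the right one. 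The step can, however, be repaired within your own setup: the invariance of the coordinate subspace $V^i$ under $(\Ad_{b^i}C^i)_{m_i}, (\Ad_{b^i}C^i)_{m_i-1},\dots$ forces, by the same descending induction as in Lemma \ref{intor}\eqref{secnd} (using $C^i_{m_i}\in\ft^\reg$ to kill the off-block entries of $b^i_1$, then $b^i_2$, etc.), that every $b^i_j$ maps $V^i$ into $V^i$. Once $b^i$ preserves $V^i\otimes\RR_{m_i}$, the trace identity follows exactly as in the paper, with no term-by-term cancellation. So your proof is correct in outline and completable, but the middle equality needs this inductive invariance argument (or the paper's $hgh^{-1}=g$ shortcut) rather than the cancellation heuristic you sketch.
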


\begin{proof} We take $\mu$ as in \eqref{e:mud}.  Clearly the scalars $\K^\times \hookrightarrow \G$ act trivially on $\mu^{-1}(0)$, hence the $\G$-action factors though $\PGL_n = G/\K^\times$. We show first that this $\PGL_n$-action is free on $\mu^{-1}(0)$.

Let $(A^1,\dots,A^d) \in \mu^{-1}(0)$ and $g\in \G$ such that $\Ad_g A^i=A^i$ for $1 \leq i \leq d$. We show now that $g$ is scalar, by looking at some non-zero eigenspace $V$ of $g$. Then clearly $A^i_1$ will preserve $V$ for all $i$ and by the moment map condition $\sum_i A^i_1 = 0$ we deduce $\sum_i \tr A^i_1|_V = 0$. The point is then that for each $i$ there is a subspace $V'_i$ of the same dimension as $V$ such that 
\begin{equation}\label{painf}\tr A^i_1|_V = \tr C^i|_{V'_i}.\end{equation}
By the genericity of $\mathbf{C}$ (Definition \ref{genericc}), this implies then $V = V'_i = \K^n$ and hence $g$ is scalar.

For $1 \leq i \leq k$, \eqref{painf} follows simply because $A^i$ and $C^i$ are conjugate in $\G$. To prove (\ref{painf}) for $k+1\leq i \leq d$, we write $A^i =\Ad_h C^{i}$ for some $h \in \G_{m_i}$. By conjugating $A^i$ and $g$ with the constant term $h_0$ of $h$ we can assume without loss of generality $h \in \G_{m_i}^1$, i.e., $h_0=\unt$. Then $A^i_{m_i} = C^{i}_{m_i} \in \ft^{\reg}$ and thus $g\in \T$. Next consider $\bar{h} = hgh^{-1}$, which satisfies $\Ad_{\bar{h}} C^i = C^i$. By Lemma \ref{intor}\eqref{secnd} we have $\bar{h} \in \T_{m_i}$ and then by Lemma \ref{intor}\eqref{intorr} $hgh^{-1} = g$. This implies that $h_j$ preserves $V$ for every $0\leq j\leq m_i-1$ and hence we have $\tr A|_{V} = \tr (\Ad_{h}C^i)|_{V} = \tr \Ad_{h|_{V}}C^i|_{V} = \tr C^i|_{V}$. This proves \eqref{painf} and hence $\PGL_n$ acts freely on $\mu^{-1}(0)$.

In particular, all the $\G$-orbits in $\mu^{-1}(0)$ are closed and hence they are in bijection with the points of the GIT quotient \cite[Theorem 6.1]{Do03}. Furthermore, as $\mu$ is a moment map, freeness of the $\PGL_n$ action implies that $0$ is a regular value of $\mu$, which in turn implies smoothness of $\mu^{-1}(0)$ and hence of $\FM^*(\mathbf{C})$.  Looking at tangent spaces, we see that 
\begin{align*}
\dim \FM^*(\mathbf{C}) = \dim \prod_{i=1}^d \FO(C^i) - 2 \dim \PGL_n.
\end{align*}
The formula now follows since $\dim \FO(C^i) = n^2 - N(\mu^i)$ for $1\leq i \leq k$ and $\dim \FO(C^{s+i}) = (r_i+1)(n^2-n)$ for $1\leq i \leq s$.
\end{proof}

We will see in Corollary \ref{nonemp} that $\FM^*_{\muhat,\bfr}$ is non-empty and connected if $d_{\muhat,\bfr} \geq 0$. For more general (i.e. non-generic) $\mathbf{C}$ the non-emptyness of $\FM^*(\mathbf{C})$ has been determined in \cite[Theorem 0.3]{Hiroe}.

\subsection{Moduli of connections}\label{modcon}

We now work over the field $\K = \C$, otherwise adopting the notation of Section \ref{s:jd} for groups and Lie algebras.  Let $X$ be a Riemann surface and fix a point $x \in X$. Let $\O$ and $\Omega$ be the sheaves of holomorphic functions and differentials on $X$ respectively and $\widehat{\O}_x, \OM_x$ the completions of their stalks at $x$.

If $m \in \mathbb{Z}_{>0}$, we will let $\Omega(m \cdot x)$ denote the sheaf of meromorphic differentials with a pole of order $\leq m$ at $x$; we let $\Omega(\ast x) = \bigcup_{m \in \mathbb{Z}_{>0}} \Omega(m \cdot x)$ be their union, i.e., the sheaf of meromorphic differentials with a pole at $x$ of arbitrary order. Finally we write  $\OM(m \cdot x)_x$, $\OM(\ast x)_x$ for their respective completions. If $z$ is a choice of coordinate centered at $x$ one has isomorphisms

\begin{align} \label{e:localisos}
\OM_x & \arsim \C[\![ z ]\!] \cdot dz & \OM(m \cdot x)_x & \arsim z^{-m} \C[\![ z ]\!] \cdot dz & \OM(\ast x)_x & \arsim \C(\!( z )\!) \cdot dz.
\end{align}

Consider the space $(\OM(\ast x)_x / \OM_x) \otimes_\C \t$.  It is clear that one has a well-defined notion of the order of the pole of such an element.  Further, under the isomorphisms \eqref{e:localisos}, a given $C \in (\OM(\ast x)_x / \OM_x) \otimes_\C \t$ with a pole of order $m$ has a unique representative in $z^{-1} \t[z^{-1}] \subseteq \t(\!(z)\!)$ of the form
\begin{align} \label{e:formaltype}
\left( \frac{C_m}{z^m} + \cdots + \frac{C_1}{z} \right) dz
\end{align}
with $C_i \in \t$.

\begin{defn} \label{d:formaltype}
A \emph{formal type of order $m$ at $x$} is an element $C \in (\OM(\ast x)_x / \OM_x) \otimes_\C \t$ with a pole of order $m$.  We will call such a formal type $C$ of order $m \geq 2$ \emph{regular} if, upon some choice of coordinate $z$ at $x$, in the expression \eqref{e:formaltype}, one has $C_m \in \t^\reg$.
\end{defn}

\begin{defn}
Let $V$ be a holomorphic vector bundle over $X$ and $\nabla$ a meromorphic connection with a pole (only) at $x$.  Choose any holomorphic trivialization of $V$ in a neighbourhood of $x$ and let $A$ be the connection matrix of $\nabla$ with respect to this trivialization; if $\nabla$ has a pole of order $m$ at $x$, then $A$ yields an element of $\Omega(m \cdot x)_x \otimes \g$.  Let $C$ be a formal type at $x$.  We say that $(V, \nabla)$ \emph{has formal type $C$ at $x$} if there exists a formal gauge transformation $g \in G(\widehat{\O}_x)$ such that the class of $\Ad_g A - dg \cdot g^{-1} \in (\OM( \ast x)_x / \OM_x) \otimes \g$ agrees with that of $C$ under the inclusion
\begin{align*}
\left( \OM( \ast x)_x / \OM_x \right) \otimes \t \hookrightarrow \left(\OM( \ast x)_x / \OM_x \right) \otimes \g.
\end{align*}
\end{defn}

With these definitions, the open de Rham spaces admit the following moduli description.  Set $X = \P^1$ with an effective divisor $D = m_1a_1+m_2a_2 + \cdots + m_da_d$ and for $1 \leq i\leq d$, a formal type $C^i$ of order $m_i$ at $a_i$. By 'forgetting' $dz$ in \eqref{e:formaltype} we obtain a tuple $\mathbf{C}=(C^1, \ldots, C^d)$ of diagonal elements in $\g^\vee$ (depending on a fixed coordinate on $\BP^1$).

\begin{prop} \cite[Proposition 2.1]{Bo01}, \cite[Proposition 2.7, Corollaries 2.14, 2.15]{HiroeYamakawa} For $\mathbf{C}$ regular and generic, the open de Rham space $\FM^*_{\muhat,\bfr}$ is isomorphic to the moduli space of  meromorphic connections $\nabla$ on the trivial bundle of rank $n$ on $\BP^1$, where $\nabla$ has poles bounded by $D$ and prescribed formal type $C^i$ at $a_i$.
\end{prop}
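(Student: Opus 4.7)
The plan is to set up an explicit bijection of moduli identifying $\FM^*_{\muhat,\bfr}$ with the moduli of connections on the trivial bundle with prescribed formal types, and then to verify that this bijection respects the scheme structures.

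First I would use the fact that a meromorphic connection $\nabla$ on the trivial bundle $\mathcal{O}_{\BP^1}^n$ can be written globally as $\nabla = d+A$ for a single $\g$-valued meromorphic 1-form $A$ on $\BP^1$. Since we require $A$ to have poles bounded by $D = \sum m_ia_i$ (and in particular no pole at $\infty$), the standard partial-fractions argument on $\BP^1$ expresses $A$ uniquely as
\[ A = \left( \sum_{j=1}^k \frac{A_j}{z-a_j} + \sum_{i=1}^s \sum_{p=1}^{m_{k+i}} \frac{Y^{i}_p}{(z-b_i)^p} \right) dz, \qquad b_i := a_{k+i}, \]
with $A_j,Y^{i}_p \in \g$, and the residue theorem at $\infty$ forces $\sum_{j=1}^k A_j + \sum_{i=1}^s Y^i_1 = 0$, which is exactly the moment map equation $\mu = 0$ of Section \ref{s:addfusprod}.

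The heart of the proof is to translate the formal type condition into a coadjoint orbit condition on the polar parts. For the simple poles $a_j$ with $1\leq j \leq k$, this is immediate: $\nabla$ has formal type $C^j$ iff $A_j$ is $\G$-conjugate to $C_1^j$, i.e., iff $A_j \in \FO(C^j)$. For the irregular poles $b_i$ with $k+1 \leq k+i \leq d$, the formal type $C^{k+i}$ is regular, and I would invoke the formal classification of unramified irregular connections with regular semisimple leading coefficient: a connection germ $d + Y^i$ can be put by a formal gauge transformation $g \in \G_{m_{k+i}}$ into the diagonal form $d + C^{k+i}$ if and only if $Y^i \in \FO(C^{k+i})$, the $\G_{m_{k+i}}$-coadjoint orbit. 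The order-by-order reduction that proves this is the content of Lemma \ref{intor}, which allows one to reduce modulo $z^{-m+p}$ inductively using the regularity of the leading coefficient.

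Finally I would check equivariance: two such $\nabla, \nabla'$ are isomorphic as meromorphic connections on the trivial bundle iff they are related by a global gauge transformation, and $H^0(\BP^1, \underline{\G}) = \G$ (the constant subgroup), acting by simultaneous conjugation on all polar parts, which is exactly the $\G$-action defining $\FM^*_{\muhat,\bfr}$. Combined with Proposition \ref{odrspa}, which shows that $\PGL_n$ acts freely on $\mu^{-1}(0)$ so that the set-theoretic and GIT quotients coincide, this establishes the bijection of closed points; a parallel argument in families gives the identification of moduli functors. The main technical obstacle is the formal classification step for irregular poles, i.e., the ``no Stokes phenomenon at the formal level'' statement, which relies crucially on the regularity assumption $C^{k+i}_{m_{k+i}} \in \t^{\reg}$ and is precisely what Lemma \ref{intor}\eqref{secnd} and \eqref{l:centralizers} provide.
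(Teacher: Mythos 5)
Your proposal is correct and follows essentially the same route as the argument the paper relies on (the paper gives no proof of its own here, deferring to \cite[Proposition 2.1]{Bo01} and \cite{HiroeYamakawa}, and those references proceed exactly by the partial-fraction decomposition, the identification ``formal type $C^i$ at $a_i$ $\Leftrightarrow$ polar part in $\O(C^i)$'', and the observation that automorphisms of the trivial bundle are constant gauge transformations). One small simplification worth noting: with the paper's Definition of formal type, which only compares classes in $(\OM(\ast x)_x/\OM_x)\otimes\g$, the irregular-pole step reduces immediately to the statement that the polar part of $\Ad_g A - dg\cdot g^{-1}$ depends only on $g \bmod z^{m}$ and hence sweeps out exactly the $\G_{m}$-coadjoint orbit, so no appeal to the full formal classification (``absence of formal Stokes data'') is actually needed.
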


\begin{rmk}
More generally, fixing the polar divisor $D$ and the local parameters $\mathbf{C}$, one may construct a moduli space $\MM(\mathbf{C})$ of meromorphic connections $(V, \nabla)$ on $\P^1$, where $V$ is a degree $0$ vector bundle (though not necessarily trivial) and $\nabla$ has formal type $C^i$ at $a_i$.  An analytic construction is given by \cite[Proposition 4.5]{Bo01}, which is generalized to higher genus curves in \cite{BiquardBoalch}.  Algebraic constructions of these moduli spaces are given in \cite{InabaSaito2013}.

After \cite{Si94}, such moduli spaces are typically referred to as "de Rham moduli spaces".  Forgetting the connection, these spaces yield, a fortiori, families of vector bundles of degree $0$.  As the only semi-stable bundle of degree $0$ on $\P^1$ is the trivial bundle, and since the semi-stable locus of a family is always Zariski open, the moduli spaces $\MM^*(\mathbf{C})$ sit as open subvarieties
\begin{align*}
\MM^*(\mathbf{C}) & \subseteq \MM(\mathbf{C}). 
\end{align*} 
It is for this reason that we call them ``open'' de Rham spaces.
\end{rmk}

\section{Motivic classes of open de Rham spaces} \label{s:motcls}

Throughout this section $\K$ will always be an algebraically closed field of $0$ or odd characteristic.

The main result in this section is Theorem \ref{mainodr}, a formula for the motivic class of $\FM_{n,\bfr}$ in the localized Grothendieck ring $\mathscr{M}$ for any $d$-tuple $\bfr = (r_1,\dots,r_d) \in \BN^d$. By definition $\FM_{n,\bfr}$ is an additive fusion of coadjoint orbits and thus the computation of $[\FM_{n,\bfr}]$ can be split up in two parts.

First we determine in Theorem \ref{ftpuncturet} the motivic Fourier transform of the composition 
\[\FO(C) \hookrightarrow \g^\vee_m \rightarrow \g^\vee,\]
under the assumption $m \geq 2$. In particular we prove that $\FF(\FO(C)) \in \kexp_{\g}$ is supported on semisimple conjugacy classes, which is not true for $m=1$. The motivic convolution formalism then allows us to deduce the formula for $[\FM_{n,\bfr}]$ from these local computations.

\subsection{Some notation for partitions} \label{s:partnote}

For $n \in \Z_{> 0}$, we denote by $\FP_n$ the set of partitions of $n$. For $\lambda = (\lambda_1 \geq \lambda_2 \geq \dots \geq \lambda_l) \in \FP_n$ we use the following abbreviations
\begin{align*}
\begin{split}
l(\lambda) &:= l \\
N(\lambda) &:= \sum_{i=1}^l \lambda_i^2\\
\end{split} \quad \begin{split}
\lambda! &:= \prod_{i=1}^l \lambda_i!\\
\binom{n}{\lambda} &:= \frac{n!}{\prod_{i=1}^l \lambda_i!}.\\
\end{split} 
\end{align*}
Further we write $m_k(\lambda)$ for the multiplicity of $k \in \Z_{>0}$ in $\lambda$ and 
\[u_\lambda := \prod_{k \in \Z_{>0} } m_k(\lambda)!.\]
The polynomial $\phi_\lambda \in \Z[t]$ is defined as
\[\phi_\lambda(t) := \prod_{i=1}^l (1-t)(1-t^2) \cdots (1-t^{\lambda_i}).\]

Then if $\L_\lambda \cong \prod_{i=1}^l \GL_{\lambda_i}$ denotes the subgroup of block diagonal matrices in $\GL_n$ we have
\[ [\L_\lambda] = (-1)^n \BL^{\frac{N(\lambda)-n}{2}} \phi_\lambda(\BL) \in \kvar,\]

since for any $n \in \Z_{> 0}$ we have for example by \cite[Proposition 1.1]{EK09}
\[ [\GL_n] = (\BL^n-1)(\BL^n-\BL) \cdots(\BL^n-\BL^{n-1}). \]

\subsection{Fourier transform at a pole} \label{ss:ftp}
In this section we compute the Fourier transform $\FF(\FO(C)) \in \kexp_\g$ of a coadjoint orbit  $\pi_{\res}:\FO(C) \rightarrow \g^\vee$ of an element $C \in \t_m^\vee$, say in the notation of \eqref{pis}, where we use the language of Section \ref{groexp}. Assuming $m\geq 2$, we can give an explicit formula for $\FF([\FO(C)])$, but to do so we will first need to introduce some more notation.

A semisimple element $X \in \g$ has \textit{type} $\lambda = (\lambda_1\geq \dots \geq\lambda_l) \in \FP_n$ if $X$ has $l$ distinct eigenvalues with multiplicities $\lambda_1,\dots,\lambda_l$. We write $\g_\lambda := \{ X \in \g \ |\ X \text{ has type } \lambda \}$ and $i_\lambda: \g_\lambda \hookrightarrow \g$.

\begin{thm}\label{ftpuncturet} Let $C \in \g^\vee_m$ be a regular diagonal element of order $m\geq 2$. For any partition $\lambda \in \FP_n$, in $\expm_{\g^\lambda}$ we have the formula
\begin{equation}\label{ftpuncture} i_\lambda^*\FF([\FO(C)]) =\frac{ \BL^{n+\frac{1}{2}\left(m(n^2-2n) + (m-2)N(\lambda)\right)}}{ (\BL-1)^{n}}\left[\ZZ_\lambda, \phi^C\right], \end{equation}
where $\ZZ_\lambda := \{ (g,X) \in \G \times \g_\lambda \ |\ \Ad_{g^{-1}}X \in \ft\}$ and 
\begin{align*} 
\phi^C: & \ZZ_\lambda \to \BA^1 & (g,X) & \mapsto \lla C_1,\Ad_{g^{-1}} X \rra.
\end{align*}
Furthermore, the pullback of $\FF([\FO(C)])$ to the complement $\g \setminus \bigsqcup_\lambda \g_\lambda$ equals $0$. 
\end{thm}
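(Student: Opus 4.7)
The plan is to use the parametrization of $\FO(C)$ provided by Lemma \ref{codecomp}\eqref{ztfib} and evaluate the Fourier transform by iterated applications of the orthogonality relation of Lemma \ref{orth}.

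\emph{Setup and linearity in $b_{m-1}$.} Since $\G \times \G_m^{\od} \to (\G \times \G_m^{\od})/\T \cong \FO(C)$ is a Zariski-locally trivial $\T$-torsor, one has $[\FO(C)] = [\G \times \G_m^{\od}]/(\BL-1)^n$, so the Fourier integral pulls back to
\[ \FF([\FO(C)]) = \frac{1}{(\BL-1)^n}\, p_{\g!}[\G \times \G_m^{\od} \times \g,\ f], \qquad f(g,b,X) := \langle Y_1(b), \Ad_{g^{-1}}X\rangle, \]
where $Y_1(b) \in \g$ is the residue of $\Ad_b C$ (using that $\G$ preserves each coefficient of $z^{-j}$ in $C$). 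Expanding via \eqref{e:coadjointaction}, one has $Y_1(b) = C_1 + \sum_{i=1}^{m-1}[b_i, C_{i+1}] + \sum_{i=1}^{m-2}\sum_{j=1}^{m-1-i}[b_i, C_{i+j+1}]w_j$. Since every $w_j$ with $j \le m-2$ depends only on $b_1,\ldots,b_{m-2}$, the variable $b_{m-1}$ enters $Y_1$ only linearly, via the single term $[b_{m-1}, C_m]$.

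\emph{Vanishing outside the semisimple locus.} Writing $\G_m^{\od} \cong \G_m^{1,\od} \times \g^{\od}$ with the last factor parametrizing $b_{m-1}$ (Lemma \ref{codecomp}\eqref{l:G1fact}), the dependence of $f$ on $b_{m-1}$ is the linear form $b_{m-1} \mapsto \langle b_{m-1}, [C_m, \Ad_{g^{-1}}X]\rangle$. By Lemma \ref{orth} its class in $\kexp$ integrates to $\BL^{n^2-n}$ when $[C_m, \Ad_{g^{-1}}X] \in (\g^{\od})^\perp = \ft$, and to zero otherwise. Since $C_m \in \ft^{\reg}$, $\ad_{C_m}$ is a linear isomorphism of $\g^{\od}$, and this condition reduces to $\Ad_{g^{-1}}X \in \ft$, i.e., $X$ semisimple. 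This already proves the second assertion of the theorem and, for $X \in \g_\lambda$, localizes the remaining computation to the subvariety $\ZZ_\lambda$.

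\emph{Remaining integral and main obstacle.} On $\ZZ_\lambda$, writing $X' := \Ad_{g^{-1}}X \in \ft$, the $b$-independent character $\Psi(\langle C_1, X'\rangle) = \phi^C(g,X)$ splits off, and the theorem reduces to
\[ [\G_m^{1,\od},\ \langle R(b) - C_1, X'\rangle]_{\ZZ_\lambda} = \BL^{(m-2)(n^2-2n+N(\lambda))/2}, \qquad R(b) := Y_1(b)|_{b_{m-1}=0}, \]
after which assembling exponents via the identity $n + \tfrac{1}{2}(m(n^2-2n)+(m-2)N(\lambda)) = (n^2-n)+(m-2)(n^2-2n+N(\lambda))/2$ produces the stated formula. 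I would prove this identity by induction on $m$, integrating $b_i$ out for $i = m-2, m-3, \ldots, 1$ successively: at each stage the character is affine in the current $b_i$ with coefficient built from $C_m$ and already-integrated variables, so Lemma \ref{orth} produces a $\BL$-factor and constrains the remaining $b_j$'s, confining them ultimately to the within-block subspace $\ll_\lambda \cap \g^{\od}$ (which accounts for the $N(\lambda)$-term) while the cross-block directions pair into non-degenerate Gauss-type integrals (accounting for the halving by $2$). The base case $m=3$ is a direct quadratic Gauss sum: $R(b) - C_1 = [b_1, C_2] + [C_3, b_1]b_1$, whose pairing with $X'\in\ft$ gives a non-degenerate quadratic form on the cross-block part of $\g^{\od}$. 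The main obstacle is controlling the higher-order polynomial structure of $R(b)$ for $m \ge 4$: because $R$ contains terms of degree up to $m-1$ through the nested $w_j$'s, one must carefully exploit the triangular dependence on the $b_i$'s (the coefficient of $b_i$ always involves the leading $C_m$ together with only lower-indexed $b$'s) to ensure that each orthogonality relation remains effectively linear and that the dimensional bookkeeping closes at every inductive step.
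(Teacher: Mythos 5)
Your first step is correct and matches the paper: the dependence on $b_{m-1}$ is affine linear with linear part $b_{m-1}\mapsto \lla b_{m-1},[C_m,\Ad_{g^{-1}}X]\rra$, so Lemma \ref{orth} kills everything off the locus $\Ad_{g^{-1}}X\in\ft$ and proves the second assertion. You have also correctly isolated the remaining identity to be proved and checked that the exponents close. But the core of the computation --- the integral over $b_1,\dots,b_{m-2}$ --- is where the gap lies, and the strategy you sketch does not work. First, the claim that ``at each stage the character is affine in the current $b_i$'' fails for the lower-indexed variables: $b_1$, for instance, appears quadratically already in $w_2=-b_2+b_1^2$ and to higher degree in the deeper $w_j$'s, so successive one-variable applications of Lemma \ref{orth} are not available. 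Second, and more seriously, there is no ``quadratic Gauss sum'' in $\kexp$: Lemma \ref{orth} only evaluates classes of \emph{affine linear} forms, and a genuinely non-degenerate quadratic form would not contribute a clean power of $\BL$ (over $\BF_q$ it contributes $\pm q^{N/2}$ with a sign depending on the discriminant). So your base case $m=3$ and your ``halving by Gauss integrals'' both rest on a tool that does not exist in this framework.

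The missing idea is the \emph{symmetric splitting} $\G_m^{\od}=\G_+^{\od}\oplus\G_-^{\od}$ into the lower half $b_1,\dots,b_{\lfloor (m-1)/2\rfloor}$ and the upper half $b_{\lfloor (m+1)/2\rfloor},\dots,b_{m-1}$, combined with the observation (from \eqref{explinvx}) that each upper-half variable occurs \emph{at most once in every monomial} of $b^{-1}$; hence $\lla\Ad_bC,X\rra$ is affine linear in the entire upper half \emph{jointly}, with coefficients depending polynomially on the lower half (this is the content of Lemma \ref{keycomp}). One then applies Lemma \ref{orth} a single time with $V=\G_-^{\od}$: the result is $\BL^{\dim\G_-^{\od}}$ times the class of the locus where the linear part vanishes, which turns out to be exactly $\{X'\in\ft,\ [b_+,X']=0\}$ --- a vector bundle over $\ZZ_\lambda$ of rank $\tfrac{m-2}{2}(N(\lambda)-n)$ --- on which the function collapses to $\lla C_1,X'\rra$. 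So the ``halving'' comes from pairing the two halves of the unipotent group against each other, and the $N(\lambda)$-term comes from the rank of the centralizer bundle, not from any Gaussian integration. For $m$ odd the middle variable $b_{(m-1)/2}$, whose self-pairing $\tr[b_p,C_m][X,b_p]$ is the only genuinely quadratic term, is handled by a further splitting into strictly upper and strictly lower triangular parts, which exhibits the form as hyperbolic ($2\tr(b^u[C_m,[X,b^l]])$) and hence again reduces it to Lemma \ref{orth}; note this step uses the standing assumption that the characteristic is not $2$.
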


\begin{proof}
 By the formula  (\ref{ftgen}) we have
\[ \FF([\FO(C)]) = [\FO(C) \times \g, \lla \pi_{\res} \circ pr_{\FO(C)} , pr_{\g} \rra] = [\FO(C) \times \g, \lla pr_{\FO(C)} , pr_{\g} \rra],\]
where for the second equality sign we used the definition of $\lla,\rra$, see (\ref{trp}).  By Lemma \ref{codecomp}\eqref{ztfib} the natural map $\G \times \G_m^{\od} \to \FO(C)$ is a Zariski-locally trivial $T$-bundle and thus we can rewrite this in $\expm_\g$ as
\[\FF([\FO(C)]) = (\BL-1)^{-n} \left[\G \times \G_m^{\od} \times \g, \lla \Gamma \circ pr_{\G \times \G_m^{\od}}, pr_{\g} \rra \right].\]

Now, notice that for all $(g,b,X) \in \G \times \G_m^{\od} \times \g$ we have 
\[\lla \Gamma(g,b),X \rra = \lla \Ad_{gb}C, X \rra =  \lla \Ad_b C, \Ad_{g^{-1}} X\rra.\]
Thus  we finally obtain
\begin{equation}\label{blabla} \FF([\FO(C)]) = (\BL-1)^{-n} \left[\G \times \G_m^{\od} \times \g, \lla \Ad_{pr_{\G_m^{\od}}}C, \Ad_{(pr_{\G})^{-1}}pr_{\g} \rra \right].\end{equation}
 
We will simplify this by applying Lemma \ref{orth}. Consider the decomposition $\G_m^{\od} = \G_+^{\od} \oplus \G_-^{\od}$ with
\begin{align*}
\G_+^{\od} & =  \{ b \in \G_m^{\od} \ |\ b_{\lfloor\frac{m+1}{2}\rfloor} = \dots = b_{m-1} = 0 \} & \G_-^{\od} & =  \{ b \in \G_m^{\od} \ |\ b_1 = \dots = b_{\lfloor\frac{m-1}{2}\rfloor} = 0 \}. 
\end{align*}

It follows from Lemma \ref{keycomp} below that there are functions 
\[(h_1,h_2): \G \times \g \times \G_+^{\od} \rightarrow (\G_-^{\od})^\vee \times \K\]
such that $\lla \Ad_b C, \Ad_{g^{-1}} X \rra = \lla h_1(g,X,b_+), b_- \rra + h_2(g,X,b_+)$ for all $g \in \G, X \in \g$ and $b = (b_+,b_-) \in \G_+^{\od} \oplus \G_-^{\od}$. More explicitly $h_1$ and $h_2$ are given by
\begin{align*} 
h_2(g,X,b_+) &= \lla \Ad_{b_+} C, \Ad_{g^{-1}} X \rra, &
\lla h_1(g,X,b_+), b_- \rra &= \lla \Ad_b C- \Ad_{b_+} C, \Ad_{g^{-1}} X\rra.
\end{align*}

Applying Lemma \ref{orth} to this decomposition formula (\ref{blabla}) becomes
\begin{equation}\label{somecanc} \FF([\FO(C)]) = (\BL-1)^{-n}\BL^{\dim \G_-^{\od}} [ h_1^{-1}(0), h_2].\end{equation}

Now assume first $m$ is even. In this case $\lfloor \frac{m-1}{2}\rfloor =\lfloor \frac{m-2}{2}\rfloor$ and Lemma \ref{keycomp} implies
 \[h_1^{-1}(0) = \{ (g,X,b_+) \in \G \times \g \times \G_+^{\od} \ |\ \Ad_{g^{-1}}X \in \ft, [b_+,\Ad_{g^{-1}}X]=0\},\]
and $h_{2|h_1^{-1}(0)}$ is independent of $b_+$. For any $\lambda \in \FP_n$ the pullback $i_\lambda^*h_1^{-1}(0) \rightarrow \ZZ_\lambda$ is the kernel of the vector bundle endomorphism $b_+ \mapsto [b_+,\Ad_{g^{-1}}X]$ on $\ZZ_\lambda \times \G_+^{\od}$. The rank of the kernel is constant and equals $\frac{m-2}{2}(N(\lambda) - n)$, thus we finally get
\begin{equation}\label{someform} i_\lambda^*[ h_1^{-1}(0), h_2] = \BL^{\frac{m-2}{2}\left(N(\lambda) - n\right)} [\ZZ_\lambda,\phi^C].\end{equation}
Together with $\dim \G_-^{\od} = \frac{m}{2}\left(n^2-n\right)$ the theorem follows when $m$ is even.

If $m$ is odd we have 
 \[h_1^{-1}(0) = \{ (g,X,b_+) \in \G \times \g \times \G_+^{\od} \ |\ \Ad_{g^{-1}}X \in \ft, [b_+,\Ad_{g^{-1}}X]= [b_{\frac{m-1}{2}},\Ad_{g^{-1}}X]\}.\]

If we decompose $b_{\frac{m-1}{2}} = b^l + b^u$ into strictly lower and upper diagonal parts, then Lemma \ref{keycomp} implies that $h_{2|h_1^{-1}(0)}$ is affine linear in $b^u$ and independent of $b^u$ if and only if $b^l$ commutes with $\Ad_{g^{-1}}X$. As before we can now apply Lemma \ref{orth} and see that \eqref{someform} also holds for $m$ odd, which finishes the proof of \eqref{ftpuncture}.

Finally we see from \eqref{somecanc} and the description of $h^{-1}(0) \subset  \G \times \g \times \G_+^{\od} $ in both the even and odd case, that the structure morphism $ h^{-1}(0) \to \g$, which is the projection onto $\g$, has image in the semisimple elements of $\g$. Thus the pullback of $\FF([\FO(C)])$ to the complement $\g \setminus \bigsqcup_\lambda \g_\lambda$ equals $0$.
\end{proof}
 
We are left with proving Lemma \ref{keycomp}, for which we need the explicit formula \eqref{explinvx} for the inverse of an element $b = \unt + zb_1 +\dots +z^{m-1}b_{m-1} \in \G_m^1$.  Notice that for $\left\lfloor \frac{m+1}{2}\right\rfloor \leq p \leq m-1$, $b_p$ can appear at most once in each summand on the right hand side of \eqref{explinvx}. This is the crucial observation in the proof of Lemma \ref{keycomp}.  

\begin{lemma}\label{keycomp} For $X \in \g$, the function 
\begin{align*} 
\phi_X: \G_m^{\od} &\rightarrow \K & b & \mapsto \phi_X(b_1,b_2,\dots,b_{m-1}) =  \lla \Ad_b C,X \rra
\end{align*}
is affine linear in $b_{\left\lfloor \frac{m+1}{2}\right\rfloor}, \dots, b_{m-1}$. It is independent of those variables if and only if $X \in \ft$ and $b_1, b_2,\dots,b_{\left\lfloor \frac{m-2}{2}\right\rfloor}$ commute with $X$.

In this case, if $m$ is odd and we decompose $b_{\frac{m-1}{2}} = b^l + b^u$, where $b^l$ and $b^u$ are strictly lower and upper triangular respectively, then $\phi_X$ is affine linear in $b^u$ and independent of $b^u$ if and only if $b^l$ commutes with $X$.
\end{lemma}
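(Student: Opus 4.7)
The plan is to compute $\phi_X(b)$ directly from \eqref{e:coadjointaction} by extracting the $z^{-1}$ (i.e.\ $p = m-1$) coefficient of $\Ad_b C$:
\[(\Ad_b C)_1 = C_1 + \sum_{i=1}^{m-1}[b_i, C_{i+1}] + \sum_{i=1}^{m-2}\sum_{j=1}^{m-1-i}[b_i, C_{i+j+1}]\, w_j,\]
so that $\phi_X(b) = \tr((\Ad_b C)_1 \cdot X)$.  Each monomial in the $b_i$'s appearing above has subscript sum at most $m-1$; since $2\lfloor(m+1)/2\rfloor > m-1$, no $b_p$ with $p \geq \lfloor(m+1)/2\rfloor$ can appear more than once.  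This immediately yields the affine linearity assertion.

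For the independence characterization I would rewrite $\phi_X(b) = \Res_{z=0}\tr(C\cdot b^{-1}Xb)$ using $\G_m$-invariance of the residue pairing.  Setting $b = \tilde b + z^p b_p$ with $\tilde b$ obtained by zeroing out the $z^p$-coefficient of $b$, and using $2p \geq m$ to discard quadratic-in-$b_p$ contributions modulo $z^m$, one finds
\[b^{-1}Xb \equiv \tilde Y + z^p\,\tilde b^{-1}(Xb_p - b_p\tilde Y) \pmod{z^m, b_p^2},\qquad \tilde Y := \tilde b^{-1}X\tilde b,\]
so that, writing $\tilde w_l$ and $\tilde Y_j$ for the $z^l$- and $z^j$-coefficients of $\tilde b^{-1}$ and $\tilde Y$ respectively, the $b_p$-linear part of $\phi_X$ becomes
\[L_p = \sum_{l=0}^{m-1-p}\tr\!\Bigl(C_{p+l+1}\bigl(\tilde w_l\, X b_p - \textstyle\sum_{i+j=l}\tilde w_i\, b_p\, \tilde Y_j\bigr)\Bigr).\]

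For the ``$\Leftarrow$'' direction I assume $X \in \t$ and that $b_i$ commutes with $X$ for $i \leq \lfloor(m-2)/2\rfloor$.  Then $\tilde b$ commutes with $X$ modulo $z^{\lfloor(m-2)/2\rfloor+1}$, whence $\tilde Y_j = 0$ for $1 \leq j \leq \lfloor(m-2)/2\rfloor$.  Since $l \leq m-1-p \leq \lfloor(m-2)/2\rfloor$ throughout $L_p$, only $\tilde Y_0 = X$ survives and each summand reduces to $\tr(C_{p+l+1}\tilde w_l[X, b_p])$; as $\tilde w_l$ commutes with $X$, cyclic rearrangement gives $\tr([C_{p+l+1}, X]\tilde w_l b_p) = 0$, so $L_p = 0$.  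For ``$\Rightarrow$'' I argue by descending induction on $p$: the base case $p = m-1$ gives $L_{m-1} = \tr(b_{m-1}[C_m, X])$, forcing $X \in \t$ by regularity of $C_m$.  In the inductive step, with $X \in \t$ and $b_1, \dots, b_{m-2-p}$ commuting with $X$, one has $\tilde Y_j = 0$ for $j < m-1-p$ and $\tilde Y_{m-1-p} = [X, b_{m-1-p}]$; terms with $l < m-1-p$ vanish as in the ``$\Leftarrow$'' argument, while the remaining $l = m-1-p$ summand, after splitting $\tilde w_{m-1-p} = -b_{m-1-p} + R$ (with $R$ a polynomial in the $X$-commuting $b_i$'s) and applying the cyclic manipulations $-\tr(C_m b_{m-1-p}[X, b_p]) = \tr(b_p C_m[X, b_{m-1-p}])$ and $-\tr(C_m b_p[X, b_{m-1-p}]) = -\tr(b_p[X, b_{m-1-p}]C_m)$, collapses to $L_p = \tr\bigl(b_p\,[C_m, [X, b_{m-1-p}]]\bigr)$.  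Regularity of $C_m$ then forces $[X, b_{m-1-p}] = 0$.

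For the final ($m$ odd, $p=(m-1)/2$) assertion, $2p = m-1 < m$ now allows a single quadratic-in-$b_p$ contribution $-\tr(C_m\, b_p[X, b_p])$.  Decomposing $b_p = b^l + b^u$, the $(b^u)^2$ and $(b^l)^2$ summands are each traces of a strictly upper- (resp.\ lower-) triangular matrix against the diagonal $C_m$ and hence vanish, establishing affine linearity in $b^u$.  Applying the ``$\Leftarrow$'' reasoning under the standing hypothesis kills the pure linear-in-$b_p$ contribution, so that the $b^u$-coefficient of $\phi_X$ reduces to the cross-term $\sum_{i<k}\bigl((C_m)_{ii}-(C_m)_{kk}\bigr)(X_{ii}-X_{kk})(b^u)_{ik}(b^l)_{ki}$, whose vanishing for all $b^u$ is, by regularity of $C_m$, equivalent to $[b^l, X]=0$.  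The main obstacle will be the bookkeeping in the inductive step: verifying that after imposing the inductive hypothesis, the many competing contributions from $\tilde w_i$ and $\tilde Y_j$ telescope cleanly into the single nested commutator $[C_m, [X, b_{m-1-p}]]$.
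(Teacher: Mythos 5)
Your proof is correct and follows the same overall strategy as the paper's: affine linearity from the subscript-sum bound, a descending induction whose base case forces $X\in\t$ via regularity of $C_m$ together with $(\g^{\od})^\perp=\t$, an inductive step collapsing the $b_p$-linear part to $\tr\bigl(b_p[C_m,[X,b_{m-1-p}]]\bigr)$, and the strictly upper/lower split for $p=\tfrac{m-1}{2}$. The one genuine difference is the device for isolating the $b_p$-dependence: you conjugate $X$ rather than $C$, writing $\phi_X=\Res_{z=0}\tr(C\,b^{-1}Xb)$ and expanding around $\tilde b:=b-z^pb_p$, with $2p\geq m$ killing quadratic terms, which yields the closed formula $L_p$ in one stroke; the paper instead expands $\Ad_bC$ directly and compares with an auxiliary element $b'$ carrying $b_pX$ in the $z^p$-slot, subtracting $\Res_{z=0}\tr(b'Cb'^{-1})=\tr C_1$ to cancel the unwanted terms. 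Your route is somewhat more systematic, and the steps you flag as the "main obstacle" do go through: I checked that $\tilde Y_j=0$ for $1\leq j\leq m-2-p$ and $\tilde Y_{m-1-p}=[X,b_{m-1-p}]$ under the inductive hypothesis, that the two cyclic-trace identities hold (the first uses $[C_m,X]=0$), and that the cross-term computation in the odd case is correct. One minor remark: your quadratic contribution $-\tr(C_mb_p[X,b_p])$ is the correct coefficient (a direct check at $m=3$ gives exactly $-\tr(C_3b_1[X,b_1])$), whereas the paper's displayed expression $\tr[b_p,C_m][X,b_p]$ equals twice this; the discrepancy is immaterial in characteristic $\neq 2$ and both lead to the same independence criterion $[X,b^l]=0$.
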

  
\begin{proof} It follows directly from the observation above that $\phi_X$ depends linearly on $b_i$ for $\left\lfloor \frac{m+1}{2}\right\rfloor \leq i \leq m-1$. 

For $b \in \G_m^{\od}$, using the notation (\ref{explinvx}) we have 
\begin{equation} \label{evrytn} \lla \Ad_b C, X\rra = \tr \sum_{i=1}^m \sum_{j=0}^{i-1} b_j C_i w_{i-j-1}X,\end{equation}
where we use the convention $b_0 = w_0 = \unt$. We start by looking at the dependence of $ \lla \Ad_b C, X\rra$ when varying $b_{m-1}$. The terms in (\ref{evrytn}) containing $b_{m-1}$ are given by 
\[\tr(b_{m-1}C_mX - C_mb_{m-1}X) = \tr [b_{m-1},C_m]X.\]

As $C_m \in \ft^{\reg}$, the commutator $[b_{m-1},C_m]$ can take any value in $\g^{\od}$, thus $\tr [b_{m-1},C_m]X$ is independent of $b_{m-1}$ if and only if $X \in (\g^{\od})^\perp = \ft$.

Assume from now on $X \in \ft$. We show now inductively that $\phi_X$ is independent of $b_{\left\lfloor \frac{m+1}{2}\right\rfloor  }, \dots, b_{m-2}$ if and only if $b_1, b_2,\dots,b_{\left\lfloor \frac{m-2}{2}\right\rfloor}$ all commute with $X$.

To do so, fix $\left\lfloor \frac{m+1}{2}\right\rfloor \leq p \leq m-2$ and assume that $b_1,\dots, b_{m-2-p}$ commute with $X$. Consider the element 
\[ b' = \unt + zb_1+\dots + z^{p-1}b_{p-1} + z^{p} b_{p}X+z^{p+1} b_{p+1} + \dots + z^{m-1} b_{m-1} \in \G_m^{\od}.\]

The point now is that the $b_p$-parts of the explicit formulas for $\lla \Ad_b C, X\rra$ and 
\begin{align*}
\Res_0 \tr (b' C b'^{-1}) = \Res_0\tr( C)= \tr (C_1)
\end{align*}
are very similar. Indeed, from (\ref{evrytn}) we see, that all the terms containing $b_p$ in $\lla \Ad_b C,X\rra$ are contained in 
\begin{equation}\label{lala1} \tr \sum_{i=p+1}^m b_pC_i w_{i-p-1}X + \sum_{r=p}^{m-1} \sum_{i=r+1}^m b_{i-r-1} C_i w_r X.\end{equation}

To write a formula for $\Res_0 \tr( b' C b'^{-1})$ we write $b'^{-1} = \unt +z w'_1+\dots + z^{m-1} w'_{m-1}$. Then we can use a similar expression as (\ref{evrytn}) to conclude that all the terms containing $b_p$ in $\Res_0 \tr (b' C b'^{-1})$ are contained in 
\begin{equation}\label{lala2} \tr \sum_{i=p+1}^m b_p X C_i w'_{i-p-1} + \sum_{r=p}^{m-1} \sum_{i=r+1}^m b_{i-r-1} C_i w'_r.\end{equation}

Next we want to study the dependence of the difference (\ref{lala1}) $-$ (\ref{lala2}) on $b_p$. Notice first, that since $[b_i,X]=0$ for $1\leq i \leq m-2-p$ also $[w_i,X]=0$ for $1\leq i \leq m-2-p$ and furthermore $[w_{m-p-1},X] = [X,b_{m-p-1}]$. From (\ref{explinvx}) we also see $w_i' = w_i$ for all $1 \leq i < p$. Finally, we remark that $w_rX -w_r'$ is independent of $b_p$ for $p \leq r \leq m-2$ and the terms containing $b_p$ in $w_{m-1}X-w'_{m-1}$ are given by $b_p [b_{m-p-1},X]$. Combining all this we see that the terms containing $b_p$ in (\ref{lala1}) $-$ (\ref{lala2}) are just 
\[ \tr \left(b_p C_m [X,b_{m-p-1}] + C_m b_p [b_{m-p-1},X]\right) = \tr [b_p,C_m][X,b_{m-p-1}].\]
Since $C_m \in \ft^{\reg}$, the commutator $[b_p,C_m]$ can take any value in $\g^{\od}$ as we vary $b_p \in \g^{\od}$. Hence in order for $\tr [b_p,C_m][X,b_{m-p-1}]$ to be constant, we need $[X,b_{m-p-1}] \in \ft$. Since $X \in \ft$ this is only possible if $[X,b_{m-p-1}] = 0$, which finishes the induction step.

Finally, we consider the special case when $m$ is odd. Take $p = \frac{m-1}{2}$. Then by the same argument as before, we obtain that all the terms in $\lla \Ad_bC,X \rra$  which depend on $b_p$ are $\tr [b_p,C_m][X,b_{p}]$. Now using the decomposition $b_p = b^l+b^u$ we have 
\[\tr [b_p,C_m][X,b_{p}] = 2 \tr(b^u[C_m,[X,b^l]]).\]
Since the orthogonal complement of strictly upper triangular matrices are the upper triangular matrices we see that $ \tr(b^u[C_m,[X,b^l]])$ is independent of $b^u$ if and only if $[X,b^l] = 0$.
\end{proof}

\subsection{Motivic classes of open de Rham spaces}\label{mcodr}

In this section we compute the motivic class of the generic open de Rham space $[\FM^*_{n,\bfr}] \in \mathscr{M}$ as defined in Definition \ref{genodr}.

\begin{thm}\label{mainodr} The motivic class $[\FM^*_{n,\bfr}] \in \mathscr{M}$ is given by 
\begin{align} \label{totalct}  
\frac{\BL^{\frac{d_\mathbf{r}}{2}} }{(\BL-1)^{nd-1}} &\sum_{\lambda \in \FP_n} (-1)^{n(d-1)+l(\lambda)-1} \frac{(l(\lambda)-1)!}{u_\lambda} \binom{n}{\lambda}^d \BL^{\frac{(r-1)}{2}\left(N(\lambda)-n\right)} \phi_\lambda(\BL)^{d-1},\end{align}
where $d_\mathbf{r}= (n^2-n)(r+d)-2(n^2-1)$ denotes the dimension of $\FM^*_{n,\bfr}$.
\end{thm}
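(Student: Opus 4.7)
The proof is by motivic Fourier transform, in the spirit of \cite{WY16}. First, I would use that $\PGL_n$ acts freely on $\mu^{-1}(0)$ (established in the proof of Proposition \ref{odrspa}), so that $[\FM^*_{n,\bfr}] = (\BL-1)[\mu^{-1}(0)]/[\G]$ in $\mathscr{M}$. Applying Lemma \ref{orth} to the moment map $\mu : \prod_i \FO(C^i) \to \g^\vee$ then expresses $[\mu^{-1}(0)]$ as $\BL^{-n^2}\cdot (\pi_\g)_!\prod_i \FF([\FO(C^i)])$, with the product taken in $\kexp_\g$ and $\pi_\g:\g\to\spec\K$.

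Since every $m_i = r_i + 1 \geq 2$, Theorem \ref{ftpuncturet} applies to each factor and shows that $\prod_i \FF([\FO(C^i)])$ is supported on $\bigsqcup_\lambda \g_\lambda$. Its pullback to $\g_\lambda$ equals an explicit power of $\BL$ times $(\BL-1)^{-nd}[\ZZ_\lambda^{(d)}, \sum_i \phi^{C^i}]$, where $\ZZ_\lambda^{(d)}$ is the $d$-fold fiber product of $\ZZ_\lambda$ over $\g_\lambda$. I would then parameterize $\ZZ_\lambda^{(d)}$ by tuples $(Y, g_1, \tilde h_2, \ldots, \tilde h_d)$ with $Y \in T_\lambda := \ft\cap \g_\lambda$, $g_1 \in \G$, and $\Ad_{\tilde h_i} Y \in \ft$; the set $\{h : \Ad_h Y \in \ft\}$ decomposes into $\binom{n}{\lambda}$ cosets of $Z_\G(Y) \cong \L_\lambda$, indexed by $w \in S_n/S_\lambda$, on each of which $\phi^{C^i}$ pulls back to the linear form $\lla w_i^{-1}\cdot C^i_1, Y\rra$ in $Y$. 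Integration over $g_1$ contributes a factor $[\G]$ and over each $\tilde h_i$ a factor $[\L_\lambda]$, reducing the computation to $\sum_{(w_i)} [T_\lambda, \lla D_w, Y\rra]$ with $D_w := C^1_1 + \sum_{i\geq 2} w_i^{-1}\cdot C^i_1$.

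Decomposing $T_\lambda$ into cells indexed by ordered set partitions $(I_1,\ldots,I_l)$ (with $u_\lambda$-fold overcount), each cell is isomorphic to the open subvariety of $\ft^l$ of tuples with pairwise distinct entries. Möbius inversion on the partition lattice $\Pi_l$ then expresses the integral over each cell as a sum over partitions $P$ of $\{1,\ldots,l\}$, in which only terms with all block-sums of the coefficients vanishing survive. The genericity of $\mathbf{C}$, applied to the subsets $J^{(i)} := \bigsqcup_{k\in B}w_i(I_k)\subsetneq\{1,\ldots,n\}$ (each of common size $\sum_{k\in B}\lambda_k < n$), forbids any proper block-sum from vanishing, while the moment relation $\sum_i \tr C^i_1 = 0$ takes care of the single-block partition. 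This yields $[T_\lambda, \lla D_w, Y\rra] = \binom{n}{\lambda}u_\lambda^{-1} (-1)^{l-1}(l-1)!\BL$, \emph{independently} of $(w_i)$.

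The main obstacle is precisely this matching between the genericity hypothesis (Definition \ref{genericc}) and the block-sum vanishing conditions produced by the Möbius inversion: one must check carefully that the subsets $J^{(i)}$ produced from an arbitrary sub-partition $B\subsetneq\{1,\ldots,l\}$ and any $(w_i)$ are of a common proper size, so that Definition \ref{genericc} actually applies. Once this is in hand, the remainder is routine: summing over the $\binom{n}{\lambda}^{d-1}$ choices of $(w_2,\ldots,w_d)$, inserting $[\L_\lambda]^{d-1} = (-1)^{n(d-1)}\BL^{(d-1)(N(\lambda)-n)/2}\phi_\lambda(\BL)^{d-1}$, dividing by $[\G]/(\BL-1)$ to account for the quotient by $\PGL_n$, and simplifying using $\sum_i m_i = r+d$ consolidates the $\BL$-exponents to $d_\bfr/2 + (r-1)(N(\lambda)-n)/2$, producing \eqref{totalct}.
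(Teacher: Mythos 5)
Your proposal follows essentially the same route as the paper: reduce to $[\mu^{-1}(0)]$ via the free $\PGL_n$-action and special-group bundle arguments, apply Fourier inversion and Proposition \ref{fouconv} to split the computation into the local transforms of Theorem \ref{ftpuncturet}, parametrize the fiber product $\ZZ_\lambda^d$ by $\G\times\L_\lambda^{d-1}$ together with ordered set partitions and distinct eigenvalue tuples, and evaluate the resulting character sum on $\BA_\circ^l$ using genericity. The only cosmetic difference is that you finish with M\"obius inversion on the partition lattice where the paper uses the equivalent induction of Lemma \ref{finalc}, and the "main obstacle" you flag is resolved exactly as you suspect: all blocks in position $j$ have the common size $\lambda_j$ independently of the set partitions chosen, so Definition \ref{genericc} applies directly.
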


We start by simplifying $[\FM^*_{n,\bfr}]$ in the following standard way.

\begin{lemma}\label{printrick} We have the following relation in $\mathscr{M}$ 
\begin{equation}\label{firsta} [\FM^*_{n,\bfr}] =  \frac{(\BL-1) [\mu^{-1}(0)]}{[\G]}. \end{equation}
\end{lemma}

\begin{proof} Define $\G_\mathbf{m}= \prod_{i=1}^d \G_{m_i}$ and $\T_\mathbf{m} = \prod_{i=1}^d \T_{m_i}$. It follows from Lemmata \ref{l:coadjorb}\eqref{l:coadjhomog} and \ref{intor}\eqref{l:centralizers} that the natural map 
\[\alpha:\G_\m \rightarrow \prod_{i=1}^d \FO(C^i),\]
is a principal $\T_\m$-bundle. Notice that $\alpha$ is $\G$-equivariant with respect to the free $\G$-action on $\G_\m$ given by diagonal left multiplication. 

By restriction we obtain a $\G$-equivariant principal $\T_\m$-bundle $X \rightarrow \mu^{-1}(0)$. Taking the (affine GIT-)quotient by $\G$, we obtain a principal $\K^\times \setminus \T_\m$-bundle $\G \setminus X \rightarrow \FM^*_{n,\bfr}$. Also, $X \rightarrow \G \setminus X$ is a principal $\G$-bundle, as it is the restriction of $\G_\m\rightarrow \G\setminus \G_\m$. As the groups $\T_\m, \G_\m$ and $\K^\times \setminus \T_\m$ are special \cite[\S4.3]{serre58}, all the principal bundles here are Zariski locally trivial and we get
\[ [\FM^*_{n,\bfr}] = \frac{[\G\setminus X]}{[\K^\times \setminus \T_\m]} = \frac{[X]}{[\G][\K^\times \setminus \T_\m]} = \frac{[\mu^{-1}(0)][\T_\m](\BL-1)}{[\G][\T_\m]} = \frac{(\BL-1) [\mu^{-1}(0)]}{[\G]} . \qedhere \]
\end{proof}

By (\ref{firsta}) it is enough to determine $[\mu^{-1}(0)] = 0^*[\FO(C^1)\times \cdots \times \FO(C^d)]$, where we consider $0:\spec \K \rightarrow \g^{*}$ as a morphism. 

Since the motivic class of $\prod_{i=1}^d  \FO(C^i)$ relative to $\g$ is the convolution of the individual classes $[\FO(C^i)]$ we have by Proposition \ref{fouconv} the equality
\begin{equation}\label{foffs} \FF\left(\left[\prod_{i=1}^d  \FO(C^i)\right] \right)= \FF\left([\FO(C^1)] \ast \cdots \ast [\FO(C^d)] \right) = \prod_{i=1}^d \FF([\FO(C^i)]) \in \kexp_\g.\end{equation}
Notice that the last product is relative to $\g$, hence we have by Theorem \ref{ftpuncturet} for every $\lambda \in \FP_n$ 

\begin{align*} i_\lambda^*\prod_{i=1}^d \FF([\FO(C^i)]) &= (\BL -1)^{-nd} \BL^{\frac{1}{2}(r(n^2-2n)+dn^2 +N(\lambda)(r-d))}\prod_{i=1}^d\left[\ZZ_\lambda, \phi^{C^i}\right] \\
&= (\BL -1)^{-nd} \BL^{\frac{1}{2}(r(n^2-2n)+dn^2 +N(\lambda)(r-d))} \left[Z^d_\lambda,\sum_{i=1}^d \phi^{C^i} \right], 
\end{align*}
with the notation $r = \sum r_i$, $\ZZ_\lambda^d$ for the $d$-fold product $\ZZ_\lambda \times_\g \dots \times_\g \ZZ_\lambda$ and $\sum_i \phi^{C^i}$ for the function taking $(z_1,\dots,z_d)\in \ZZ_\lambda^d$ to $\sum_i \phi^{C^i}(z_i)$.
By Fourier inversion (Proposition \ref{finv}) we thus get
\begin{align} \nonumber [\mu^{-1}(0)]& =\BL^{-n^2} 0^* \FF\left(\prod_{i=1}^d \FF([\FO(C^i)])\right) \\
	\label{seca}&=  (\BL -1)^{-nd} \BL^{\frac{1}{2}(r(n^2-2n) +dn^2) - n^2} \sum_{\lambda \in \FP_n} \BL^{\frac{1}{2}N(\lambda)(r-d)} \left[Z^d_\lambda,\sum_i \phi^{C^i} \right]
\end{align}

This leaves us with understanding $\left[Z^d_\lambda,\sum_i \phi^{C^i} \right]$ as an element of $\kexp$. We start by taking a closer look at $\ZZ_\lambda = \{ (g,X) \in G \times \g_\lambda \ |\ \Ad_{g^{-1}}X \in \ft\}$. If we put $\ft_\lambda = \ft \cap \g_\lambda$ we have an isomorphism 
\begin{align} \label{decoup} 
\ZZ_\lambda &\xrightarrow{\sim} \ft_\lambda \times \G & (g,X) &\mapsto (\Ad_{g^{-1}} X, g).
\end{align}

Next, we need to fix some notation to describe $\ft_\lambda$ combinatorially. To parametrize the eigenvalues of elements in $\ft_\lambda$ define for any $e \in \BN$ the open subvariety $\BA_{\circ}^e \subset \BA^e$ as the complement of $\cup_{i \neq j} \{x_i = x_j\}$.

Furthermore we need some discrete data. A \textit{set partition of $n$} is a partition 
\begin{align*}
I = (I_1,I_2,\dots ,I_l)
\end{align*}
of $\{ 1,2,\dots ,n\}$, i.e., $I_i \cap I_j = \emptyset$ for $i \neq j$ and $\cup_i I_i = \{ 1,2,\dots ,n\}$. For $\lambda = (\lambda_1 \geq\dots \geq \lambda_l) \in \FP_n$ we write $\FP_\lambda$ for the set of set partitions $I=(I_1,\dots,I_l)$ of $n$ such that $( |I_1|,\dots,|I_l|) = (\lambda_1,\dots,\lambda_l)$. Notice that $I = (I_1,I_2,\dots ,I_l)$ is ordered and hence we have 
\begin{align*}
|\FP_\lambda| = \frac{n!}{\prod_{i=1}^l\lambda_i!}= \binom{n}{\lambda}. 
\end{align*}

\begin{lemma}\label{param}  The morphism 
\begin{align*}
p  & : \FP_\lambda \times \BA_\circ^l \rightarrow \ft_\lambda, & (I, \alpha) & \mapsto \sum_{j=1}^l \alpha_j E_{I_j},
\end{align*}
is a trivial covering of degree $u_\lambda$, where $E_{I_j}$ is defined as in (\ref{eij}).
\end{lemma}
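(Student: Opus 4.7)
The plan is to realize $p$ as the quotient map by a free action of a finite group of order $u_\lambda$, and then trivialize it by using that the group acts on a finite discrete set.

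First I would introduce the Young subgroup $W_\lambda := \prod_{k\geq 1} S_{m_k(\lambda)}$, of cardinality $u_\lambda$, which acts on the index set $\{1,\ldots,l\}$ by permuting those indices $j$ with a common value of $\lambda_j$ (so the ordering $\lambda_1 \geq \lambda_2 \geq \cdots \geq \lambda_l$ is preserved). This gives a diagonal action on $\FP_\lambda \times \BA_\circ^l$ by
\begin{align*}
\sigma \cdot \bigl((I_1,\ldots,I_l),(\alpha_1,\ldots,\alpha_l)\bigr) = \bigl((I_{\sigma^{-1}(1)},\ldots,I_{\sigma^{-1}(l)}),(\alpha_{\sigma^{-1}(1)},\ldots,\alpha_{\sigma^{-1}(l)})\bigr).
\end{align*}
Since the $I_j$'s in an ordered set partition are pairwise distinct, this action is free on $\FP_\lambda$ (hence on $\FP_\lambda \times \BA_\circ^l$), and the defining formula $p(I,\alpha) = \sum_j \alpha_j E_{I_j}$ is manifestly $W_\lambda$-invariant, so $p$ factors through a map $\bar p : (\FP_\lambda \times \BA_\circ^l)/W_\lambda \to \ft_\lambda$.

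Next I would show $\bar p$ is an isomorphism. For surjectivity, given $X \in \ft_\lambda$ one lets $I_j$ be the set of positions whose diagonal entry equals the $j$-th distinct eigenvalue $\alpha_j$, after choosing any ordering of the eigenvalues compatible with $\lambda_1 \geq \cdots \geq \lambda_l$. For injectivity on points: the level-set decomposition of a diagonal matrix is intrinsic, so any two preimages of $X$ differ only by a reordering of blocks of equal size, i.e.\ by an element of $W_\lambda$. To promote this bijection to a scheme isomorphism, I would decompose $\ft_\lambda \subset \ft = \BA^n$ into the finitely many locally closed strata indexed by unordered set partitions of $\{1,\ldots,n\}$ with multiset of block sizes equal to $\lambda$; each stratum is cut out by equalities within blocks and inequalities between blocks and is canonically isomorphic to $\BA_\circ^l$ up to a permutation of coordinates. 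On each connected component $\{I\} \times \BA_\circ^l$ of the source, $p$ restricts to the obvious isomorphism onto one such stratum, and all $u_\lambda$ ordered partitions $I$ lying over a given unordered partition produce the same image, which implies $\bar p$ is a scheme-theoretic isomorphism stratum by stratum.

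Finally, to exhibit $p$ as a \emph{trivial} covering, I would choose a set-theoretic section $\tau : \FP_\lambda/W_\lambda \hookrightarrow \FP_\lambda$; this is automatic because $\FP_\lambda$ is finite and discrete and $W_\lambda$ acts freely on it. The resulting decomposition $\FP_\lambda = W_\lambda \times \tau(\FP_\lambda/W_\lambda)$ yields
\begin{align*}
\FP_\lambda \times \BA_\circ^l \;\cong\; W_\lambda \times \bigl(\tau(\FP_\lambda/W_\lambda) \times \BA_\circ^l\bigr) \;\xrightarrow{\ \mathrm{id} \times \bar p\ }\; W_\lambda \times \ft_\lambda,
\end{align*}
which is the required trivialization over $\ft_\lambda$.

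I do not foresee a serious obstacle: the content is essentially combinatorial, and the only mildly delicate point is verifying that $\bar p$ is a scheme isomorphism rather than a bijection of sets, which is handled by the explicit stratification of $\ft_\lambda$ described above.
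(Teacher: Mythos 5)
Your proof is correct and is essentially the paper's argument: the paper likewise observes that the subgroup $\prod_{k\geq 1}\Ss_{m_k(\lambda)}\subset \Ss_l$ (your $W_\lambda$, of order $u_\lambda$) acts simply transitively on the fibers of $p$, which immediately gives the trivial covering. Your additional details — freeness on $\FP_\lambda$, the stratification of $\ft_\lambda$ by unordered set partitions, and the explicit section trivializing the cover — are all fine elaborations of the same idea.
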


\begin{proof} For $e \in \BN$ let $\Ss_e$ be the symmetric group on $e$ elements. Then the lemma follows from the fact that the subgroup $\prod_{j \geq 1} \Ss_{m_j(\lambda)}$ of $\Ss_l$ acts simply transitively on the fibers of $p$.
\end{proof}

\begin{lemma}\label{thirda} The following relation holds in $\kexp$
\[ \left[\ZZ_\lambda^d, \sum_{i=1}^d \phi^{C^i} \right] = \frac{[\G \times \L_\lambda^{d-1}]}{u_\lambda}  \sum_{(I^1,\dots,I^d)\in (\FP_\lambda)^d} \left[\BA_\circ^{l}, \sum_{i=1}^d \lla C_1^i,p_{I^i}\rra\right], \]
where $p_{I^i}= p_{|\{I^i\} \times \BA_\circ^l}: \BA_\circ^l \rightarrow \ft_\lambda.$
\end{lemma}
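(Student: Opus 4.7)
The plan is to unfold $\ZZ_\lambda^d$ into a trivial degree-$u_\lambda^d$ cover and track the function $\sum_i \phi^{C^i}$ stratum by stratum. Via the isomorphism \eqref{decoup}, a point of $\ZZ_\lambda^d$ is equivalent to a tuple $(g_1,\dots,g_d,t_1,\dots,t_d) \in \G^d \times \ft_\lambda^d$ satisfying the fiber-product constraint $\Ad_{g_1}t_1 = \cdots = \Ad_{g_d}t_d$, under which $\phi^{C^i}$ pulls back to $\lla C_1^i, t_i\rra$. Changing variables to $g := g_1$ and $h_i := g_1^{-1}g_i$ for $i \geq 2$ decouples $g$, turning the constraint into $\Ad_{h_i} t_i = t_1$. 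Now pull back each $t_i$ through the trivial $u_\lambda$-to-$1$ covering $p$ of Lemma \ref{param}; since the $d$-fold cover is trivial of degree $u_\lambda^d$, we have in $\kexp$
\[
u_\lambda^d \bigl[\ZZ_\lambda^d,\textstyle\sum_i \phi^{C^i}\bigr] = \bigl[\tilde Y, \tilde F\bigr],
\]
where $\tilde Y$ parametrizes tuples $\bigl(g,(I^i,\alpha^i)_{i=1}^d,(h_i)_{i=2}^d\bigr)$ with $\Ad_{h_i} p(I^i,\alpha^i) = p(I^1,\alpha^1)$ and $\tilde F = \sum_i \lla C_1^i, p(I^i,\alpha^i)\rra$.

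Next, stratify $\tilde Y$ by the discrete data $\vec I = (I^1,\dots,I^d) \in \FP_\lambda^d$ and, for each stratum and each $i \geq 2$, analyze the equation $\Ad_{h_i} p(I^i,\alpha^i) = p(I^1,\alpha^1)$. Since both sides are diagonal with $l$ distinct eigenvalues, conjugation by $h_i$ must match eigenspaces, forcing a permutation $\sigma_i$ of $\{1,\dots,l\}$ with $\alpha^i_k = \alpha^1_{\sigma_i(k)}$ and $h_i V_{I^i_k} = V_{I^1_{\sigma_i(k)}}$ for every $k$. Block-size compatibility $\lambda_k = \lambda_{\sigma_i(k)}$ restricts $\sigma_i$ to the subgroup $W_\lambda := \prod_r \Ss_{m_r(\lambda)} \leq \Ss_l$ of order $u_\lambda$. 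For fixed $\sigma_i$, $\alpha^i$ is determined and the admissible $h_i$ form a torsor under the Levi $\prod_j \GL(V_{I^1_j}) \cong \L_\lambda$. Thus
\[
\tilde Y_{\vec I} \;\cong\; \G \;\times\; \bigsqcup_{\vec \sigma \in W_\lambda^{d-1}} \BA_\circ^l \times \L_\lambda^{d-1},
\]
and on the $\vec \sigma$-piece the function simplifies to $F_{\vec \sigma,\vec I}(\alpha^1) = \sum_j \alpha^1_j \sum_i \lla C_1^i, E_{I^i_{\sigma_i^{-1}(j)}}\rra$ (with the convention $\sigma_1 := \unt$), independent of both $g$ and the $h_i$; hence it factors cleanly out of the $\G$ and $\L_\lambda^{d-1}$ factors in $\kexp$.

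Finally, substitute $I^{i\prime}_j := I^i_{\sigma_i^{-1}(j)}$ for each $i$. Since $\sigma_i \in W_\lambda$, $I^{i\prime}$ still lies in $\FP_\lambda$, and the map $(\vec I,\vec \sigma) \mapsto (\vec{I'},\vec \sigma)$ is a bijection of $\FP_\lambda^d \times W_\lambda^{d-1}$; under this relabelling the function becomes $\sum_i \lla C_1^i, p_{I^{i\prime}}(\alpha^1)\rra$, which is $\vec \sigma$-independent. Summing over $\vec \sigma$ contributes a factor $u_\lambda^{d-1}$, yielding
\[
\bigl[\tilde Y,\tilde F\bigr] = u_\lambda^{d-1}\,[\G \times \L_\lambda^{d-1}] \sum_{\vec I \in \FP_\lambda^d} \bigl[\BA_\circ^l, \textstyle\sum_i \lla C_1^i, p_{I^i}\rra\bigr],
\]
and dividing by $u_\lambda^d$ gives the claim. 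The main obstacle is the combinatorial bookkeeping: pinning down the precise dependence $\alpha^i = \alpha^1 \circ \sigma_i$ and verifying that the substitution $(\vec I,\vec \sigma) \mapsto (\vec{I'},\vec\sigma)$ is a genuine bijection of ordered set partitions. Once those identifications are clean, the remainder is routine manipulation in the exponential Grothendieck ring.
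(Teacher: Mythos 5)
Your argument is correct and follows essentially the same route as the paper: both rest on the decoupling isomorphism \eqref{decoup}, the covering of Lemma \ref{param}, and the observation that conjugators matching two elements of $\ft_\lambda$ with the same eigenvalues form an $\L_\lambda$-torsor indexed by set partitions. The only difference is bookkeeping — you unfold all $d$ copies of $\ft_\lambda$ into a degree-$u_\lambda^d$ cover and then cancel the redundant $W_\lambda^{d-1}$ of relabellings, whereas the paper unfolds a single copy and absorbs the other factors directly into $\FP_\lambda^{d-1}\times\L_\lambda^{d-1}$, arriving at the same degree-$u_\lambda$ covering $\G\times\L_\lambda^{d-1}\times\FP_\lambda^d\times\BA_\circ^l\to\ZZ_\lambda^d$.
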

\begin{proof} For $\alpha \in \BA^l_\circ$ the map $p_{|\FP_\lambda \times \{\alpha \}}:\FP_\lambda \to \ft_\lambda$ from Lemma \ref{param} is injective and its image are exactly the elements in $\ft_\lambda$ with eigenvalues given by $\alpha$. Combining this with \eqref{decoup} we see
\[ \ZZ_\lambda \times_\g \ZZ_\lambda \cong \left(\ft_\lambda \times \G\right) \times_\g  \left(\ft_\lambda \times \G\right) \cong \ft_\lambda \times \G \times \L_\lambda \times \FP_\lambda.\]

Applying this reasoning $d-1$ times and then using Lemma \ref{param} we obtain a trivial covering of degree $ u_\lambda$
\[ \G \times \L_\lambda^{d-1} \times \FP_\lambda^d  \times \BA^l_\circ \to   \ZZ_\lambda^d.\]
Keeping track of the isomorphisms gives the desired equality.
\end{proof}

\begin{proof}[Proof of Theorem \ref{mainodr}] Combining (\ref{firsta}), (\ref{seca}) and Lemma \ref{thirda} we are left with computing the character sum $\left[\BA_\circ^{l}, \sum_{i=1}^d \lla C_1^i,p_{I^i}\rra \right]$ for a fixed $d$-tuple $(I^1,\dots,I^d)$ of set partitions of $n$. For $\alpha \in \BA_\circ^l$ we can write 
\[ \sum_{i=1}^d \lla C_1^i,p_{I^i}(\alpha)\rra = \sum_{i=1}^d \lla C_1^i, \sum_{j=1}^l \alpha_j E_{I_j^i} \rra = \sum_{j=1}^l \alpha_j \sum_{i=1}^d \lla C_1^i,E_{I^i_j}\rra. \]

Now, for a fixed $1\leq j \leq l$ we have by definition $|{I_j^1}|=\dots =|{I_j^d}|$. Thus by our genericity assumption (\ref{gen}) the numbers $\beta_j = \sum_{i=1}^d \lla C_1^i,E_{I^i_j}\rra$ satisfy the assumptions of Lemma \ref{finalc} below, and we deduce

\begin{equation} \label{tadaa} \left[\BA_\circ^{l}, \sum_{i=1}^d \lla C_1^i,p_{I^i}\rra\right] = (-1)^{l-1} (l-1)! \BL,
\end{equation}
which proves Theorem \ref{mainodr}.
\end{proof}

\begin{lemma} \label{finalc} Let $\beta_1,\dots,\beta_e \in \K$ be such that $\sum_{j=1}^e \beta_j = 0$ and for $J \subset \{1, 2,\dots, e\}$ a proper subset,  $\sum_{j\in J} \beta_j \neq 0$. Then for the function 
\begin{align*}
& \lla \cdot, \beta\rra: \BA^e_\circ \rightarrow \K, &  \alpha &\mapsto \sum_{j=1}^e \alpha_j\beta_j, 
\end{align*}
we have
\[ [\BA^e_{\circ}, \lla \cdot, \beta\rra ] = (-1)^{e-1} (e-1)! \BL \in \kexp.\]
\end{lemma}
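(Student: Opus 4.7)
The plan is strong induction on $e$, combining the orthogonality statement in Lemma~\ref{orth} with a stratification of $\BA^e$ by coincidence patterns. The base case $e=1$ is immediate: the constraint $\sum_j\beta_j=0$ forces $\beta_1=0$, so $\lla\cdot,\beta\rra\equiv 0$, $\BA^1_\circ=\BA^1$, and $[\BA^1,0]=\BL=(-1)^0\cdot 0!\cdot\BL$.

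For $e\ge 2$, I would first observe that the singleton $J=\{1\}$ in the hypothesis forces $\beta_1\neq 0$, so $\lla\cdot,\beta\rra$ is a non-zero linear form on $V=\BA^e$; Lemma~\ref{orth} applied with $X=\spec\K$ then gives $[\BA^e,\lla\cdot,\beta\rra]=0$. Next, stratify $\BA^e=\bigsqcup_\pi\BA^\pi$ by set partitions $\pi=(P_1,\ldots,P_k)$ of $\{1,\ldots,e\}$, where
\[
\BA^\pi:=\{\alpha\in\BA^e\mid \alpha_i=\alpha_j\iff i,j\text{ lie in the same part of }\pi\}.
\]
Each $\BA^\pi$ is locally closed and isomorphic to $\BA_\circ^{k}$ via the map reading off the common value on each part. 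Under this isomorphism the restriction of $\lla\cdot,\beta\rra$ becomes $\lla\cdot,\beta^\pi\rra$ where $\beta^\pi_m:=\sum_{i\in P_m}\beta_i$. Crucially, $\beta^\pi$ still satisfies the hypothesis of the lemma: $\sum_m\beta^\pi_m=0$, and for any proper non-empty $J'\subset\{1,\ldots,k\}$ the set $\bigsqcup_{m\in J'}P_m$ is a proper non-empty subset of $\{1,\ldots,e\}$, so $\sum_{m\in J'}\beta^\pi_m\neq 0$. Hence for every partition $\pi$ with $k=|\pi|<e$ the induction hypothesis applies and produces $[\BA^\pi,\lla\cdot,\beta\rra|_{\BA^\pi}]=(-1)^{k-1}(k-1)!\,\BL$.

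Separating the all-singletons partition (for which $\BA^\pi=\BA^e_\circ$) in the identity $[\BA^e,\lla\cdot,\beta\rra]=0$ then yields
\[
[\BA^e_\circ,\lla\cdot,\beta\rra]\;=\;-\BL\sum_{k=1}^{e-1}(-1)^{k-1}(k-1)!\,S(e,k),
\]
where $S(e,k)$ is the Stirling number of the second kind. To finish, I would invoke the classical identity $\sum_{k=1}^{e}(-1)^{k-1}(k-1)!\,S(e,k)=0$ for $e\ge 2$, which falls out of the exponential-generating-function computation $\sum_{k\ge 1}(-1)^{k-1}(e^x-1)^k/k=\log(e^x)=x$ by extracting the coefficient of $x^e/e!$. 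Isolating the missing $k=e$ term, which equals $(-1)^{e-1}(e-1)!$, and substituting gives the desired value $(-1)^{e-1}(e-1)!\,\BL$.

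The only conceptually delicate point is checking that the genericity hypothesis on $\beta$ transfers to each $\beta^\pi$ so that the induction closes cleanly; the rest reduces to the disjoint stratification $\BA^e=\bigsqcup_\pi\BA^\pi$, the vanishing from Lemma~\ref{orth}, and the above Stirling-number identity.
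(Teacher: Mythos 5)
Your argument is correct, but it takes a different decomposition from the paper's. The paper peels off one coordinate at a time: since $\beta_e\neq 0$, Lemma~\ref{orth} gives $[\BA^{e-1}_\circ\times\BA^1,\lla\cdot,\beta\rra]=0$, and the complement of $\BA^e_\circ$ inside $\BA^{e-1}_\circ\times\BA^1$ consists of the $e-1$ loci $\{\alpha_e=\alpha_i\}$, each a copy of $\BA^{e-1}_\circ$ carrying the form obtained by merging $\beta_e$ into $\beta_i$; ordinary induction then gives $-(e-1)\cdot(-1)^{e-2}(e-2)!\,\BL=(-1)^{e-1}(e-1)!\,\BL$ with no further combinatorics. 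You instead kill the whole of $\BA^e$ at once (using $\beta_1\neq 0$), stratify by all set partitions, and close the strong induction with the Stirling identity $\sum_{k=1}^{e}(-1)^{k-1}(k-1)!\,S(e,k)=0$ for $e\geq 2$. The key verification in both arguments is the same and you carry it out correctly: the genericity hypothesis on $\beta$ is stable under merging parts, because a union of parts indexed by a proper non-empty subset is again a proper non-empty subset of $\{1,\dots,e\}$. What the paper's route buys is economy (one scissor cut, no generating functions); what yours buys is a single global identity that makes the full partition-lattice combinatorics visible, essentially a M\"obius inversion on the partition lattice, whose values $(-1)^{k-1}(k-1)!$ are exactly the constants appearing in the answer. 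One small point worth making explicit: the scissor relation in $\kexp$ is stated for a closed subvariety and its open complement, so the additivity $[\BA^e,f]=\sum_\pi[\BA^\pi,f|_{\BA^\pi}]$ over your locally closed strata should be justified by refining the stratification into a chain of closed/open cuts (e.g.\ by decreasing number of parts); this is routine and does not affect the validity of the proof.
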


\begin{proof} We use induction on $e$. For $e=1$ we have $\beta_1 =0$ and $\BA^1_\circ = \BA^1$, hence the statement is clear. For the induction step consider $\BA^{e}_\circ$ as a subvariety of $\BA^{e-1}_\circ \times \BA^1$. As $\beta_e\neq 0$ we have $[\BA^{e-1}_\circ \times \BA^1, \lla \cdot, \beta\rra] = 0$ by Lemma \ref{orth}, hence 
\[ [\BA^e_{\circ}, \lla \cdot, \beta\rra ] = - [\BA^{e-1}_\circ \times \BA^1 \setminus \BA^{e}_\circ,  \lla \cdot, \beta\rra ].\] 
Now notice that the complement $\BA^{e-1}_\circ \times \BA^1 \setminus \BA^{e}_\circ$ has $e-1$ connected components, each of which is isomorphic to $\BA^{e-1}_\circ$, which implies the formula.
\end{proof}

\section{Tame poles and finite fields} \label{cac}

Here, the notation will be the same as in Sections \ref{s:odr} and \ref{s:motcls}, especially Definition \ref{genodr} and Section \ref{s:partnote}.  In this section, we replace the motivic computations over an algebraically closed field with arithmetic ones over a finite field $\BF_q$. This allows us to study the number of $\BF_q$-rational points of $\FM^*_{\muhat,\bfr}$ for any pair $(\muhat,\bfr)$ with $s \geq 1$.

First, we derive in Theorem \ref{finalcount} a closed formula for $|\FM^*_{\muhat,\bfr}(\BF_q)|$ using techniques similar to those of Section \ref{s:motcls}. Then, in Theorem \ref{maincor} we give a second description of $|\FM^*_{\muhat,\bfr}(\BF_q)|$ in terms of symmetric functions, which allows us to identify the $E$-polynomial of $\FM^*_{\muhat,\bfr}$ with the pure part of the conjectural mixed Hodge polynomial of the corresponding character variety \cite[Conjecture 0.2.2]{HMW16}. This gives strong numerical evidence for the purity Conjecture \ref{purity}, which was one of the main motivations of this paper. 

\subsection{de Rham spaces and finite fields}

Let $\BF_q$ be a finite field of characteristic coprime to $n$ and $q$ large enough (see Remark \ref{larchar}) so that there exists a regular generic tuple $\mathbf{C}=(C^1,\dots,C^d)$ of formal types over $\BF_q$ for a given pair $(\muhat,\bfr)$ with $s\geq 1$. Then also $\FM^*_{\muhat,\bfr}$ is defined over $\BF_q$ and we can prove a version of Theorem \ref{mainodr} which also includes tame poles. 

For a variety $X$ defined over $\BF_q$ we sometimes abbreviate $|X(\BF_q)| = |X|$. We write $\g'(\BF_q) \subset \g(\BF_q)$ for the matrices in $\g(\BF_q)$ whose eigenvalues are in $\BF_q$ and for $\lambda \in \FP_n$ we write $\g'_\lambda(\BF_q)= \g'(\BF_q) \cap \g_\lambda(\BF_q)$. The proof of Theorem \ref{ftpuncturet} then implies that the Fourier transform of the count function $\#_C: \g^\vee(\BF_q) \rightarrow \BZ$, $Y \mapsto |\pi_{\res}^{-1}(Y)|$ associated to the coadjoint orbit $\pi_{\res} : \FO(C) \rightarrow \g^\vee$ is supported on $\bigsqcup_{\lambda \in \FP_n} \g'_\lambda(\BF_q)$. Given such an $X \in \g'_\lambda(\BF_q)$, the $\BF_q$-version of formula (\ref{ftpuncture}) reads 
\begin{equation}\label{ffft} \FF(\#_C)(X) = \frac{q^{n+\frac{1}{2}\left(m(n^2-2n)+(m-2)N(\lambda)\right)}}{(q-1)^n} \left| \L_\lambda \right| \sum_{t \in  \ft \cap \FO(X)(\BF_q)} \Psi(\lla C_1,t\rra),
\end{equation}
where $\FO(X) \subset \g$ denotes the orbit of $X$ under the adjoint action and $\FF$ and $\Psi$ are defined as in Section \ref{groexp}.

We now spell out a similar formula for the Fourier transform of a tame pole. Let $\mu = (\mu_1 \geq \dots \geq \mu_l) \in \FP_n$ be a partition, $C \in \ft(\BF_q) \cap \g_\lambda(\BF_q)$ and $1_{C}:\g(\BF_q) \to \BZ$ the characteristic function of the adjoint orbit of $C$. Furthermore, for  $e \in \BN$ we write $\Ss_e$  for the symmetric group on $e$ elements and $\Ss_\mu = \prod_{i=1}^{l} \Ss_{\mu_i} \subset \Ss_n$.  
Now conjugacy classes of maximal tori over $\BF_q$ in $G=\GL_n$ are in natural bijection with the conjugacy classes of the Weil group $W= \Ss_n$ \cite[Section 2.5.3]{HLV11}. For any $w \in \Ss_n$ we fix a maximal torus $\T_w$ corresponding to the conjugacy class of $w$ under this bijection and define
\[ Q_{\T_w}^{\L_\lambda} =   (-1)^{n-rk_q(\T_w)} \frac{|\L_\lambda|}{q^{\frac{1}{2}\left(N(\lambda)-n\right)}|\T_w|}, \] 
where $rk_q$ denotes the $\BF_q$-rank. This is a shorthand notation for the value at $1$ of the Green function \cite[Theorem 7.1]{DL76} and we can be even more explicit. If we write $\mu(w) \in \FP_n$ for the partition defined by the cycles of $w \in \Ss_n$, we have 
\begin{equation} \label{tw} 
|\T_w| = \prod_{i=1}^{l(\mu(w))} (q^{\mu(w)_i}-1). 
\end{equation}

\begin{lemma}\label{fttame}For $\lambda \in \FP_n$ and any $X \in \g'_\lambda(\BF_q)$ we have
\begin{equation}\label{fttame2} \F\left(1_C\right)(X) = \frac{q^{\frac{1}{2}(n^2-N(\mu))}}{\mu!} \sum_{w \in \Ss_\mu} Q^{\L_\lambda}_{\T_w}   \sum_{Y \in \t^w \cap \FO(X)(\BF_q)} \Psi(\lla C, Y \rra), \end{equation} where $\t^w \subset \t$ denotes the locus fixed by $w \in S_\mu$.  If $C$ is regular, then $\F\left(1_C\right)$ is supported on $\g'(\BF_q)$.
\end{lemma}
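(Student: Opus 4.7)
My strategy is to unwind the Fourier integral, swap the roles of $C$ and $X$, and then apply a Weyl-integration / Deligne--Lusztig character decomposition on $\g(\BF_q)$ to produce the Green functions; this is a Lie-algebra analogue of the Letellier-type formula for Fourier transforms of semisimple orbital integrals, in the spirit of the argument used in Theorem \ref{ftpuncturet}.

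First, since the $\G$-stabilizer of $C$ is $\L_\mu$ and that of $X\in \g'_\lambda$ is $\L_\lambda$, one has
\[
\F(1_C)(X) \;=\; \sum_{Y \in \FO(C)(\BF_q)} \Psi(\tr XY) \;=\; \frac{1}{|\L_\mu|}\sum_{g \in \G(\BF_q)}\Psi\bigl(\tr C \cdot \Ad_{g^{-1}}X\bigr) \;=\; \frac{|\L_\lambda|}{|\L_\mu|}\sum_{V \in \FO(X)(\BF_q)}\Psi(\tr CV),
\]
swapping the two orbits.  Because $C \in \z(\ll_\mu) \subseteq \t$, the pairing $\tr CV$ depends on $V$ only through the $\ll_\mu$-block-diagonal projection $p_\mu(V)$, hence only through the $l$ block-traces of $V$.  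I would then stratify the sum over $V \in \FO(X)$ according to which $\BF_q$-form of a maximal torus inside $\L_\mu$ the block-diagonal part of $V$ lies in.  The $\BF_q$-conjugacy classes of maximal tori in $\L_\mu$ are indexed by conjugacy classes in $\Ss_\mu$, contributing weight $|\L_\mu|/(\mu!\,|\T_w|)$ via the Weyl integration formula on $\L_\mu(\BF_q)$.  For each $w$ the corresponding piece yields $Q^{\L_\lambda}_{\T_w}\cdot \sum_{Y \in \t^w \cap \FO(X)}\Psi(\tr CY)$ after plugging in the explicit $|\L_\lambda| = (-1)^n q^{(N(\lambda)-n)/2}\phi_\lambda(q)$, the analogous expression for $|\L_\mu|$, and the definition of $Q^{\L_\lambda}_{\T_w}$; the restriction to $w \in \Ss_\mu$ (rather than all of $\Ss_n$) is forced by character orthogonality, since $C \in \z(\ll_\mu)$ is orthogonal to the entries of $V$ outside the $\mu$-block pattern.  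Assembling the constants produces the factor $q^{(n^2-N(\mu))/2}/\mu!$.

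The main obstacle is precisely this combinatorial bookkeeping: matching $|\L_\lambda|/|\L_\mu|$ and the Weyl-integration weights against the stated normalization $q^{(n^2-N(\mu))/2}/\mu!$ times $Q^{\L_\lambda}_{\T_w}$, where one relies on the precise Green function value of \cite[Theorem~7.1]{DL76}.  For the support claim, if $C$ is regular then $\mu = (1^n)$, $\Ss_\mu = \{e\}$, and the formula collapses to a single term of the shape $c\cdot \sum_{Y \in \t \cap \FO(X)}\Psi(\tr CY)$ for some nonzero constant $c$, which vanishes unless $\t \cap \FO(X)\neq\emptyset$, i.e.\ unless $X \in \g'(\BF_q)$.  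For $X$ outside $\bigsqcup_\lambda \g'_\lambda(\BF_q)$, the vanishing of $\F(1_C)(X)$ follows separately by combining Jordan decomposition with the Galois-equivariance of $\F$ to reduce to the semisimple case already treated.
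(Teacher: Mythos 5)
Your opening orbit-swap, $\F(1_C)(X) = \frac{|\L_\lambda|}{|\L_\mu|}\sum_{V\in\FO(X)(\BF_q)}\Psi(\tr CV)$, is correct and is a legitimate place to start, but the heart of your argument --- the step that produces the Green functions $Q^{\L_\lambda}_{\T_w}$ --- is asserted rather than proved, and the mechanism you describe does not work as stated. You propose to stratify the sum over $V\in\FO(X)$ according to ``which maximal torus of $\L_\mu$ the block-diagonal part of $V$ lies in'', but the $\ll_\mu$-block-diagonal projection of a generic $V\in\FO(X)$ is not semisimple and lies in no maximal torus; the whole difficulty of the lemma is precisely to control the pushforward of the orbital sum under this projection, including its non-semisimple fibers. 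The Green functions appearing in \eqref{fttame2} are the signature of that non-semisimple contribution: they are values of Deligne--Lusztig virtual characters, and extracting them from the orbital sum is the content of the Springer/Kazhdan/Letellier theorem on Fourier transforms of semisimple orbital integrals, which the paper imports wholesale as \cite[Equation (2.5.5)]{HLV11}, namely $\F(1_C)= \epsilon_\G \epsilon_\L |\W_\L|^{-1} \sum_{w \in \W_\L} q^{d_\L/2} \mathscr{R}_{\t_w}^\g \big( \F^{\t_w}( 1^{\T_w}_{C}) \big)$. The paper's proof is then a direct evaluation of this formula at semisimple $X$ via the character formula \cite[(2.5.4)]{HLV11} and \cite[Theorem 7.1]{DL76}. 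Your sketch also leaves unexplained why a ``Weyl integration on $\L_\mu$'' would produce Green functions of the \emph{other} centralizer $\L_\lambda=C_\G(X)$ evaluated on tori indexed by $w\in\Ss_\mu=\W_{C_\G(C)}$; this mixing of the two centralizers is exactly what the Deligne--Lusztig character formula delivers and what no elementary counting argument supplies. In short, what you call ``combinatorial bookkeeping'' is the substantive theorem, and it is missing from your proof.

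On the support claim: for regular $C$ your single-term reduction is in the right spirit, but note that $\g'(\BF_q)$ consists of \emph{all} matrices with eigenvalues in $\BF_q$, not only the semisimple ones, so the criterion ``$\t\cap\FO(X)\neq\emptyset$'' (which fails for every non-semisimple $X$) cannot be the right test. The paper instead observes that in the character formula the sum runs over $\{h\in\G(\BF_q)\mid X_s\in\Ad_h(\t)\}$, where $X_s$ is the semisimple part of $X$, and this set is empty precisely when some eigenvalue of $X$ lies outside $\BF_q$. Your proposed reduction ``by Jordan decomposition and Galois-equivariance of $\F$'' to the semisimple case is not a reduction that exists in this form; the non-semisimple $X$ are handled inside the character formula, not by a separate symmetry argument.
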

\begin{proof} The proof consists of writing out Equation (2.5.5) in \cite{HLV11} explicitly. In their notation we have 
\begin{align*}
\F\left(1_C\right)= \epsilon_G \epsilon_L |\W_\L|^{-1} \sum_{w \in \W_\L} q^{d_L/2} \mathscr{R}_{\t_w}^\g \big( \F^{\t_w}( 1^{\T_w}_{C}) \big).
\end{align*}

Here $\L \cong \prod_{i=1}^l \GL_{\mu_i}$ denotes the centralizer of $C$ and $\W_\L \cong \prod_{i=1}^l \Ss_{\mu_i}$ the normalizer of the standard maximal torus $\T$ in $\L$.  We have $d_\L = \dim \G - \dim \L = n^2 - N(\mu)$ and since $\T$ is split over $\BF_q$ also $\epsilon_\G \epsilon_\L = (-1)^n(-1)^n = 1$. The formula then becomes 
\[\F\left(1_C\right) = \frac{q^\frac{n^2-N(\mu)}{2}}{\mu!} \sum_{w \in \W_\L} \mathscr{R}_{\t_w}^\g \big( \F^{\t_w}( 1^{\T_w}_{C}) \big).\]

The $\T_w$-coadjoint orbit of $C$ is just $C$ and hence $\F^{\t_w}( 1^{\T_w}_{C})(Y) = \Psi(\lla C, Y \rra)$ for all $Y \in \ft_w$. As $X \in \g'_\lambda(\BF_q)$ is semisimple \cite[Equation (2.5.4)]{HLV11} gives
\begin{align} \label{e:indR}
\mathscr{R}_{\t_w}^\g \big( \FF^{\t_w}( 1^{\T_w}_{C} ) \big) (X) = \left| \L_\lambda\right|^{-1} \sum_{ \substack{ \{h \in \G(\BF_q) | \\ X \in \Ad_h \t_w \} }} Q_{h\T_wh^{-1}}^{C_\G(X)} \big(1 \big) \Psi(\lla C,\Ad_{h^{-1}} X\rra).
\end{align}

By  \cite[Theorem 7.1]{DL76}, for any $h \in \G(\BF_q)$, $Q_{h\T_wh^{-1}}^{C_G(X)}(1) = Q_{\T_w}^{\L_\lambda}$ and thus 
\begin{align*}
\mathscr{R}_{\t_w}^\g \big( \F^{\t_w}( 1^{\T_w}_{C} ) \big) (X) &=  \frac{Q_{\T_w}^{\L_\lambda}}{|\L_\lambda|} \sum_{ \substack{ \{h \in \G(\BF_q) | \\ X \in \Ad_h \t_w \} }} \Psi(\lla C,\Ad_{h^{-1}} X\rra) = Q_{\T_w}^{\L_\lambda} \sum_{Y \in \t_w \cap \FO(X)(\BF_q)} \Psi(\lla C, Y \rra).
\end{align*}

Using \cite[2.5.3]{HLV11} we see that for every $w \in W_{C_G(C)}$ there is a $g \in C_G(C)(\overline{\BF}_q)$ such that $(\t_w \cap \FO(X))(\BF_q) = g (\t^w\cap \FO(X)(\BF_q))g^{-1}$ and hence 
\[ \sum_{Y \in \t_w \cap \FO(X)(\BF_q)} \Psi(\lla C, Y \rra) = \sum_{Y \in \t^w \cap \FO(X)(\BF_q)} \Psi(\lla C,gYg^{-1} \rra) = \sum_{Y \in \t^w \cap \FO(X)(\BF_q)} \Psi(\lla C, Y \rra). \]

If $C$ is regular we have $\W_\L = \{1\}$ and for $X\in \g(\BF_q)$  clearly $\{h \in \G(\BF_q) | X_s \in \Ad_h(\t)\} = \emptyset$ unless $X\in \g'(\BF_q)$, which finishes the proof of the lemma.
\end{proof}

Recall that for $\lambda \in \FP_n$ we denote by $\FP_\lambda$ the set of ordered set partitions of $\lambda$. The natural action of $S_n$ on $\{1,\dots,n\}$ induces an action on $\FP_\lambda$ and for $w \in \Ss_n$ we write $\FP^w_\lambda \subset \FP_\lambda$ for the set partitions invariant under $w$ i.e.
 \[\FP_\lambda^w=\{ I=(I_1,\dots,I_l) \in \FP_\lambda \ | \ wI_j = I_j \text{ for all } 1\leq j\leq l\}.\]
For two partitions $\lambda,\mu \in \FP_n$ we then define 
\[ \Delta(\lambda,\mu) = \frac{1}{\mu!}\sum_{w \in S_\mu} Q_{\T_w}^{\L_\lambda}|\FP^w_\lambda|. \]

\begin{thm} \label{finalcount} Let $d=k+s$ and $\mathbf{C} = (C^1,\dots,C^k,C^{k+1},\dots,C^d)$ generic regular formal types. If $s\geq 1$ we have
\begin{align}  |\FM^*_{\muhat,\bfr}(\BF_q)| &= \label{finalcountf} \\ \nonumber
\frac{q^{ \frac{d_{\muhat,\mathbf{r}}}{2}}}{(q-1)^{ns-1} } &\sum_{\lambda \in \FP_n}(-1)^{n(s-1)+	l(\lambda)-1} \frac{(l(\lambda)-1)!}{u_\lambda } \binom{n}{\lambda}^s q^{\frac{(r-1)}{2}\left(N(\lambda)-n \right) } \phi_\lambda(q)^{s-1}\prod_{i=1}^k \Delta(\lambda,\mu^i), \end{align}
where $d_{\muhat,\bfr}$ denotes the dimension of $\FM^*_{\muhat,\bfr}$.
\end{thm}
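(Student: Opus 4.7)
The plan is to mimic the proof of Theorem~\ref{mainodr} in the arithmetic setting, replacing the motivic Fourier transform by the finite-field Fourier transform on $\g(\BF_q)$, and using Lemma~\ref{fttame} in addition to equation (\ref{ffft}) to handle the tame poles. Concretely, the argument proving (\ref{firsta}) is purely geometric---it depends only on the freeness of the $\PGL_n$-action on $\mu^{-1}(0)$ established in Proposition~\ref{odrspa} and the Zariski local triviality of the principal $\T_\mathbf{m}$-bundle $\G_\mathbf{m}\to \prod_i \FO(C^i)$---so that argument carries over verbatim to give
\begin{align*}
|\FM^*_{\muhat,\bfr}(\BF_q)| = \frac{(q-1)\,|\mu^{-1}(0)(\BF_q)|}{|\G(\BF_q)|}.
\end{align*}

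Next, I would expand $|\mu^{-1}(0)(\BF_q)|$ using Fourier inversion on $\g(\BF_q)$ with respect to the pairing $\lla\cdot,\cdot\rra$. Writing $1_{C^j}$ for the characteristic function of the adjoint orbit of the tame $C^j$ and $\#_{C^{k+i}}$ for the fiber-counting function of $\pi_{\res}:\FO(C^{k+i})\to\g^*$, one has
\begin{align*}
|\mu^{-1}(0)(\BF_q)| = \frac{1}{q^{n^2}} \sum_{X\in\g(\BF_q)} \prod_{j=1}^{k}\FF(1_{C^j})(X)\prod_{i=1}^{s}\FF(\#_{C^{k+i}})(X).
\end{align*}
By Lemma~\ref{fttame} and equation (\ref{ffft}), every factor is supported on $\bigsqcup_{\lambda\in\FP_n}\g'_\lambda(\BF_q)$, so the sum decomposes over $\lambda\in\FP_n$. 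Plugging in the explicit formulas, each factor becomes, up to an overall power of $q$ and an explicit constant, a character sum $\sum_{Y\in \ft\cap\FO(X)}\Psi(\lla C^{k+i}_1,Y\rra)$ (irregular) or $\sum_{w\in\Ss_{\mu^j}}Q^{\L_\lambda}_{\T_w}\sum_{Y\in\t^w\cap\FO(X)}\Psi(\lla C^j,Y\rra)$ (tame).

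The combinatorial reduction then follows the pattern of Lemma~\ref{thirda} and the proof of Theorem~\ref{mainodr}. For each $\lambda$, one replaces the outer sum $\sum_{X\in\g'_\lambda(\BF_q)}$ by the orbit/stabilizer formula: the orbits of $\G$ on $\g'_\lambda(\BF_q)$ have common size $|\G|/|\L_\lambda|$, and $\ft\cap\FO(X)$ is in bijection with $\FP_\lambda$ when $X$ is fixed with eigenvalues $\alpha\in\BA^l_\circ(\BF_q)$ via the parametrization of Lemma~\ref{param}, up to the symmetry factor $u_\lambda$. For the tame poles one restricts the same parametrization to $w$-invariant partitions, so $|\t^w\cap\FO(X)|$ accounts for $|\FP^w_\lambda|$. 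After factoring out the $\alpha$-independent combinatorial data, the character sum separates into a product of Fourier-type integrals over the common variable $\alpha\in\BA^l_\circ(\BF_q)$; these collapse into a single sum $\sum_{\alpha\in\BA^l_\circ(\BF_q)}\Psi\big(\sum_j\alpha_j\beta_j\big)$ whose coefficients $\beta_j:=\sum_{i=1}^k\lla C^i,E_{I^i_j}\rra+\sum_{i=1}^s\lla C_1^{k+i},E_{I_j^{k+i}}\rra$ satisfy the hypothesis of Lemma~\ref{finalc} by the genericity assumption (Definition~\ref{genericc}). Lemma~\ref{finalc} then produces the factor $(-1)^{l-1}(l-1)!\,q$.

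The main obstacle will be the tame bookkeeping: the Green function weights $Q^{\L_\lambda}_{\T_w}$ enter each tame Fourier transform together with the sum over $w\in\Ss_{\mu^j}$, and one must verify that, once one passes to the parametrization by ordered set partitions, the $w$-invariant subsets $\FP^w_\lambda$ arise precisely from $\t^w\cap\FO(X)$, so that the contribution of the $j$-th tame pole coalesces into exactly $\binom{n}{\lambda}^{-1}|\FP_\lambda|\cdot\Delta(\lambda,\mu^j)=\Delta(\lambda,\mu^j)$ per the definition of $\Delta$. The remaining task is to collect powers of $q$ from (\ref{ffft}), (\ref{fttame2}), $|\L_\lambda|=(-1)^n q^{(N(\lambda)-n)/2}\phi_\lambda(q)$, $|\G|$, and the Fourier normalization $q^{-n^2}$, and to check that they match $q^{d_{\muhat,\bfr}/2}$ using the dimension formula \eqref{e:dimform}; this is a direct but somewhat tedious verification mirroring the analogous count in Theorem~\ref{mainodr}, yielding the stated formula (\ref{finalcountf}).
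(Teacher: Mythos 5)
Your proposal follows essentially the same route as the paper's proof: reduce to $(q-1)|\mu^{-1}(0)|/|\G|$, apply finite-field Fourier inversion and convolution, use the support of $\FF(\#_{C^{k+j}})$ on $\bigsqcup_\lambda \g'_\lambda(\BF_q)$ (which is where $s\geq 1$ enters), parametrize $\g'_\lambda(\BF_q)$ via $\G(\BF_q)\times\FP_\lambda\times\BA^l_\circ(\BF_q)$ with the $w$-fixed set partitions $\FP^w_\lambda$ accounting for $\t^w\cap\FO(X)$, and invoke Lemma~\ref{finalc} together with genericity before collecting powers of $q$. All the key steps and lemmata match the paper's argument, so this is correct and not a genuinely different approach.
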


\begin{proof} The argument is similar to the proof of Theorem \ref{mainodr}. First by repeating the proof of Lemma $\ref{printrick}$ with $\FM^*_{n,\bfr}$ replaced by $\FM^*_{\muhat,\bfr}$ we see that
\[ |\FM^*_{\muhat,\bfr}(\BF_q)| = \frac{(q-1) |\mu^{-1}(0)|}{|\G|},\]
where $\mu$ is the moment map defined in \eqref{e:mud}. Using Fourier inversion and convolution over finite fields we further have \small
\begin{align*} |\mu^{-1}(0)|  &=  q^{-n^2}\FF\left( \prod_{i=1}^k \FF(1_{C^i}) \prod_{j=1}^s \FF(\#_{C^{k+j}})\right)(0) = q^{-n^2} \sum_{X \in \g(\BF_q)}\prod_{i=1}^k \FF(1_{C^i})(X) \prod_{j=1}^s \FF(\#_{C^{k+j}})(X)\\
&=  q^{-n^2} \sum_{\lambda \in \FP_n} \sum_{X \in \g'_\lambda(\BF_q)} \prod_{i=1}^k \FF(1_{C^i})(X) \prod_{j=1}^s \FF(\#_{C^{k+j}})(X).   \end{align*} \normalsize
For the last equation we used that $\FF(\#_{C^{k+j}})$ is supported on $\bigsqcup_{\lambda \in \FP_n} \g'_\lambda(\BF_q)$ for all $1\leq j \leq s$ and our assumption $s\geq 1$. 
We now parametrize $\g'_\lambda(\BF_q)$ using the same notation as in Section \ref{mcodr}, i.e., we have a surjective map 
\begin{align*}
p & : \G(\BF_q) \times \FP_\lambda \times \BA^l_\circ(\BF_q) \to \g'_\lambda(\BF_q) & 
(g,I,\alpha) &\mapsto g\left( \sum_{j=1}^l \alpha_j E_{I_j} \right)g^{-1},
\end{align*}
with each fiber of $p$ having cardinality $u_\lambda \binom{n}{\lambda} |\L_\lambda|$. Since $\FF(1_{C^i})$  and $\FF(\#_{C^{k+j}})$ are class functions they depend only on $\alpha \in \BA_\circ^l(\BF_q)$. In particular if $X= p(g,I,\alpha)$ we have for every $w \in \Ss_n$ 
\[ \ft^w\cap \FO(X)(\BF_q) = p\left( \{\Id\} \times \FP_\lambda^w \times \{\alpha\} \right),       \]
with $\FP_\lambda^w=\{ I=(I_1,\dots,I_l) \in \FP_\lambda \ | \ wI_j = I_j \text{ for all } 1\leq j\leq l\}$. Using these parametrizations of $ \g'_\lambda(\BF_q)$ and $\ft^w\cap \FO(X)(\BF_q)$ as well as  \eqref{ffft} and Lemma \ref{fttame}  we can write

\small
\begin{align*}
& \sum_{X \in \g'_\lambda(\BF_q)} \prod_{i=1}^k \FF(1_{C^i})(X) \prod_{j=1}^s \FF(\#_{C^{k+j}})(X)  \\
& = \frac{q^{\Omega}|L_\lambda|^s}{(q-1)^{ns}} \sum_{X \in \g'_\lambda(\BF_q)}\prod_{i=1}^k \left( \sum_{w \in \Ss_{\mu^i}} \frac{Q^{\L_\lambda}_{\T_w}}{\mu^i!}   \sum_{Y \in \t^w \cap \FO(X)(\BF_q)} \Psi(\lla C^i, Y \rra) \right) \prod_{j=1}^s \left( \sum_{t \in  \ft \cap \FO(X)(\BF_q)} \Psi(\lla C^{k+j}_1,t\rra) \right)\\
& = \frac{q^{\Omega}|L_\lambda|^{s-1}|G|}{(q-1)^{ns}u_\lambda} \sum_{\alpha \in \BA_\circ^l(\BF_q)}\prod_{i=1}^k \left( \sum_{w \in \Ss_{\mu^i}}  \frac{Q^{\L_\lambda}_{\T_w}}{\mu^i!}   \sum_{I \in \FP_\lambda^w } \Psi\lla C^i, \sum_{a = 1}^l \alpha_a E_{I_a} \rra \right) \prod_{j=1}^s \left( \sum_{I \in  \FP_\lambda} \Psi\lla C^{k+j}_1,\sum_{a = 1}^l \alpha_a E_{I_a}\rra \right) ,
\end{align*} \normalsize
where we abbreviated $\Omega = \frac{1}{2}\left( n^2(d+r)-2nr -\sum N(\mu^i)\right) + \frac{N(\lambda)}{2}(r-s)$. Now multiplying out the products over $i$ and $j$ we are left with computing sums of the form
\[ \sum_{\alpha \in \BA_\circ^l(\BF_q)} \Psi \left( \sum_{a=1}^l \alpha_a \lla \sum_{i=1}^d C^i_1,E_{I^i_a}\rra \right),   \]
with $I^i \in \FP_\lambda$. Because of the genericity assumption \eqref{gen} and (the finite field version of) Lemma \ref{finalc} this sum always equals $(-1)^{l-1} (l-1)!q$ independently of the $I^i$'s. With this simplification we finally deduce 

\small
\begin{align*}
 \sum_{X \in \g'_\lambda(\BF_q)} & \prod_{i=1}^k \FF(1_{C^i})(X) \prod_{j=1}^s \FF(\#_{C^{k+j}})(X)  \\
& = \frac{q^{\Omega} |\L_\lambda|^{s-1} |\G|}{(q-1)^{ns} u_\lambda } \prod_{i=1}^k\left( \frac{1}{\mu^i!}\sum_{w \in \Ss_{\mu^i}} Q_{\T_w}^{\L_\lambda} |\FP^w_\lambda |\right) |\FP_\lambda|^s (-1)^{l-1}(l-1)!q \\
& = \frac{q^{ \Omega+1 } |\L_\lambda|^{s-1} |\G|}{(q-1)^{ns} u_\lambda } \prod_{i=1}^k \Delta(\lambda,\mu^i) \binom{n}{\lambda}^s (-1)^{l-1}(l-1)!.
\end{align*} \normalsize
Summing up over all $\lambda \in \FP_\lambda$ finishes the proof.
\end{proof}

Theorem \ref{finalcount} immediately implies that $ |\FM^*_{\muhat,\bfr}(\BF_q)|$ is a rational function in $q$ as we vary the finite field $\BF_q$. Because it is the count of an algebraic variety we deduce that it is in fact a polynomial in $q$, namely its weight polynomial by Theorem \ref{Katzthm}. By analyzing the combinatorics of \eqref{finalcountf} we obtain the following 

\begin{cor}\label{nonemp} For every pair $(\muhat,\bfr)$ with $d_{\muhat,\bfr} \geq 0$ and $s \geq 1$ the counting polynomial for $\FM^*_{\muhat,\bfr}$ is monic of degree $d_{\muhat,\bfr}$. In particular  $\FM^*_{\muhat,\bfr}$ is non-empty and connected,  both over $\BF_q$ and $\mathbb{C}$.
\end{cor}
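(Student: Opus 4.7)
The plan is to extract the degree and leading coefficient of the counting polynomial directly from \eqref{finalcountf}, and then invoke general principles about smooth varieties. First I would observe that the right-hand side of \eqref{finalcountf} is a priori a rational function of $q$; but since it equals the integer $|\FM^*_{\muhat,\bfr}(\BF_q)|$ for infinitely many prime powers $q$, it is in fact a polynomial $P(q) \in \BZ[q]$, and by Theorem \ref{Katzthm} this $P(q)$ coincides with the weight polynomial of $\FM^*_{\muhat,\bfr}$.

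By Proposition \ref{odrspa}, whenever $\FM^*_{\muhat,\bfr}$ is non-empty it is smooth and equidimensional of dimension $d_{\muhat,\bfr}$. For such a variety, $H^i_c$ vanishes for $i > 2 d_{\muhat,\bfr}$ and has weights $\leq i$; Poincar\'e duality identifies $H^{2d_{\muhat,\bfr}}_c$ with the dual of $H^0$, which is pure of weight $0$. Hence $\deg P \leq d_{\muhat,\bfr}$, and the coefficient of $q^{d_{\muhat,\bfr}}$ in $P$ equals $\dim H^{2d_{\muhat,\bfr}}_c$, which counts the connected (equivalently, geometrically irreducible) components. It therefore suffices to show that $P(q)$ is monic of degree $d_{\muhat,\bfr}$: monicness gives a unique top-dimensional component, forcing connectedness of the smooth equidimensional variety over $\overline{\BF_q}$ and (by a standard spreading-out argument) over $\BC$, while positivity of a non-zero monic polynomial for $q$ large gives non-emptiness over $\BF_q$ and, by lifting, over $\BC$.

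The technical heart of the proof is then to identify the leading term of $P(q) = q^{d_{\muhat,\bfr}/2} \cdot S(q)/(q-1)^{ns-1}$, where $S(q)$ denotes the sum on the right of \eqref{finalcountf}. My approach is to compute the leading term of each summand $T_\lambda(q)$ indexed by $\lambda \in \FP_n$, using the asymptotics $\phi_\lambda(q) \sim (-1)^n q^{(N(\lambda)+n)/2}$ and, for the Green function values, $Q_{\T_w}^{\L_\lambda} \sim (-1)^{n-l(\mu(w))} q^{(N(\lambda)-n)/2}$ computed from \eqref{tw}. These yield the leading behaviour of each factor $\Delta(\lambda, \mu^i)$, and after collection the required monicness reduces to a combinatorial identity of character-sum type on $\Ss_n$ that collapses to $1$ at the top order.

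The main obstacle will be exhibiting the cancellations needed to produce a polynomial of the claimed degree: individual summands $T_\lambda(q)$ can have degree strictly larger than $d_{\muhat,\bfr}/2 + ns - 1$ (for instance $T_{(n)}$ does whenever some $\mu^i \neq (1^n)$), so excess powers of $q$ must cancel across the partition sum and, inside each $\Delta(\lambda, \mu^i)$, across the sum over $w \in \Ss_{\mu^i}$. A cleaner organization is to first handle the purely irregular case $k=0$ by reading the leading term straight off the motivic formula of Theorem \ref{mainodr}, where the cancellations are transparent and the dominant contribution comes from $\lambda = (1^n)$, and then to incorporate tame punctures one at a time by induction on $k$, using the multiplicative structure of \eqref{finalcountf} together with the elementary evaluation $\Delta((1^n),\mu) = \binom{n}{\mu}$ which pins down the contribution of each additional semisimple orbit to the leading term.
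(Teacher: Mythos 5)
Your global framing is the same as the paper's (read the leading coefficient off \eqref{finalcountf}; the leading coefficient of the counting polynomial counts components, whence connectedness and non-emptiness), but the leading-term analysis at the heart of your argument is wrong, and the strategy you build on top of it would not go through. The degree of the $\lambda$-summand $T_\lambda$ in \eqref{finalcountf} is an \emph{increasing} function of $N(\lambda)$: the factor $q^{\frac{r-1}{2}(N(\lambda)-n)}\phi_\lambda(q)^{s-1}$ contributes $\frac{r-1}{2}(N(\lambda)-n)+\frac{s-1}{2}(N(\lambda)+n)$, and $\deg\Delta(\lambda,\mu)=\frac{1}{2}\bigl(N(\lambda)-\min_{I\in\FP_\lambda}N(\mu^I)\bigr)$ is also (weakly) maximized at $\lambda=(n)$. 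So the dominant summand is $\lambda=(n)$, not $\lambda=(1^n)$ as you claim (at $\lambda=(1^n)$ the quantity $N(\lambda)=n$ is \emph{minimal}, so that summand has the \emph{smallest} degree); this is already visible in the $k=0$ motivic formula \eqref{totalct}, where under $d_{\muhat,\bfr}\geq 0$ and $n\geq 2$ one has $r+d>2$ and the exponent $\frac{r+d-2}{2}N(\lambda)$ is strictly maximized at $\lambda=(n)$. Moreover a direct computation using $\deg\Delta((n),\mu^i)=\frac{1}{2}(n^2-N(\mu^i))$ shows $\deg T_{(n)}=d_{\muhat,\bfr}/2+ns-1$ \emph{exactly}, so your assertion that $T_{(n)}$ exceeds the target degree whenever some $\mu^i\neq(1^n)$ is false, and the cross-$\lambda$ cancellation problem you set out to solve does not exist: there is a unique summand of top degree and its leading coefficient is $1$ by the closed form $\Delta((n),\mu)=(-1)^n|\G|/(q^{(n^2-n)/2}\phi_\mu(q))$.

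The genuine technical content you are missing is therefore not a cancellation identity on $\Ss_n$ but the two facts constituting the paper's Lemma \ref{degest}: the evaluation of $\Delta((n),\mu)$ just quoted, and the strict inequality $\deg\Delta((n),\mu)>\deg\Delta(\lambda,\mu)$ for $\lambda,\mu\neq(n)$, which reduces (after computing $\deg\Delta(\lambda,\mu)$ as above --- note that your naive estimate $Q^{\L_\lambda}_{\T_w}\sim\pm q^{(N(\lambda)-n)/2}$ overestimates $\deg\Delta$ because of cancellation over $w\in\Ss_\mu$) to the combinatorial inequality $n^2+\min_{I}N(\mu^I)\geq N(\lambda)+N(\mu)$. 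One also needs the case split: for $n\geq 2$, $d_{\muhat,\bfr}\geq 0$ forces either $r\geq 2$ (so the power $q^{\frac{r-1}{2}(N(\lambda)-n)}$ strictly separates $\lambda=(n)$ from the rest) or $r=s=1$ with some $\mu^i\neq(n)$ (so the strict inequality for that $\Delta(\lambda,\mu^i)$ does the separating). Your proposed induction on $k$ anchored at $\Delta((1^n),\mu)=\binom{n}{\mu}$ is aimed at the wrong summand and would not recover the leading term.
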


\begin{proof} The statement about the counting polynomial implies the second one, as the leading coefficient gives the number of components of $\FM^*_{\muhat, \bfr}$; see for example \cite[Lemma 5.1.2]{HLV13}. 

For $n=1$ we have $d_{\muhat,\bfr}  = 0$ and Theorem \ref{finalcount} gives indeed $ |\FM^*_{\muhat,\bfr}(\BF_q)| =1$. For $n\geq 2$, $d_{\muhat,\bfr} \geq 0$ implies either $r\geq 2$ or $r=s=1, k\geq 1$ and there exists an $1\leq i \leq k$ with $\mu^i \neq (n)$. In both cases it follows from Lemma \ref{degest} below that there is only one summand in \eqref{finalcountf} of highest degree, namely the one for $\lambda = (n)$. From the explicit formula for $\Delta((n),\mu)$ in Lemma \ref{degest} we also see that the coefficient of the highest $q$-power equals $1$ and the corollary follows.
\end{proof}

\begin{lemma}\label{degest} For any $\lambda,\mu \in \FP_n$ we have 
\begin{enumerate}
\item $  \Delta((n),\mu) =  (-1)^n \frac{ |\G|}{q^{\frac{1}{2}\left(n^2-n\right)} \phi_\mu(q)}$. 
\item $\deg \Delta((n),\mu) \geq \deg \Delta(\lambda,\mu),$ and equality holds if and only if $\lambda = (n)$ or $\mu = (n)$.
\end{enumerate}
\end{lemma}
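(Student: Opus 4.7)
My goal is the master formula
\[
\Delta(\lambda, \mu) \;=\; \frac{(-1)^n |\L_\lambda|}{q^{(N(\lambda)-n)/2}} \sum_{M} \frac{1}{\phi_M(q)},
\]
the sum ranging over \emph{contingency tables} $M = (m_{ij})$---non-negative integer $l(\mu)\times l(\lambda)$ matrices with row sums $\mu$ and column sums $\lambda$---and $\phi_M(q) := \prod_{i,j}\phi_{(m_{ij})}(q)$. Both parts of the lemma will fall out of this.

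To derive the master formula, I view $\FP_\lambda \cong \Ss_n/\Ss_\lambda$ as an $\Ss_n$-set and restrict to the Young subgroup $\Ss_\mu \leq \Ss_n$. The double cosets $\Ss_\mu\backslash\Ss_n/\Ss_\lambda$ are classically parametrized by contingency tables $M$, with corresponding point stabilizer $\Ss_M := \prod_{i,j}\Ss_{m_{ij}}$; this respects the decomposition $\Ss_\mu = \prod_i \Ss_{\mu_i}$, so $\Ss_\mu/\Ss_M \cong \prod_i \FP_{m_{i\cdot}}$ as $\Ss_\mu$-sets, where $m_{i\cdot} = (m_{i1}, \ldots, m_{i,l(\lambda)})$. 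Observing that \eqref{tw} gives $(-1)^{n-\rk_q\T_w}/|\T_w| = (-1)^n/\prod_j(1-q^{(\mu(w))_j})$, the definition of $\Delta(\lambda,\mu)$ factors row-by-row into sums $\sum_{w_i\in \Ss_{\mu_i}}|\FP_{m_{i\cdot}}^{w_i}|/\prod_j(1-q^{(\mu(w_i))_j})$. Each equals $\mu_i!/\phi_{m_{i\cdot}}(q)$: this is the Frobenius characteristic identity $k!\,h_\nu = \sum_{w\in\Ss_k}|\FP_\nu^w|\,p_{\mu(w)}$ (valid for any composition $\nu$ of $k$), evaluated under the principal specialization $x_i = q^{i-1}$, which sends $p_k \mapsto 1/(1-q^k)$ and $h_k \mapsto 1/\phi_{(k)}(q)$. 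Multiplying over $i$ and cancelling $\mu!$ delivers the master formula.

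Part (1) is then immediate: when $\lambda=(n)$ there is a unique contingency table, $m_{i1}=\mu_i$, so $\phi_M = \phi_\mu$. For part (2), $\deg\phi_M = \sum_{i,j}\binom{m_{ij}+1}{2} = (n+\sum_{i,j}m_{ij}^2)/2$, and bounding the degree of a sum by the maximum of its term-degrees yields $\deg\Delta(\lambda,\mu) \leq (N(\lambda)-\min_M \sum_{i,j} m_{ij}^2)/2$. The combinatorial crux is the inclusion--exclusion estimate $\sum_{i,j} m_{ij}^2 \geq N(\mu)+N(\lambda)-n^2$: writing $[n] = \bigsqcup_i R_i = \bigsqcup_j C_j$ with $|R_i|=\mu_i$, $|C_j|=\lambda_j$, the sets $A, B \subseteq [n]^2$ of pairs sharing a row, resp.\ a column, satisfy $|A|=N(\mu)$, $|B|=N(\lambda)$, $|A\cap B|=\sum_{i,j} m_{ij}^2$, and $|A\cap B|=|A|+|B|-|A\cup B|\geq N(\mu)+N(\lambda)-n^2$. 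This gives $\deg\Delta(\lambda,\mu) \leq (n^2-N(\mu))/2 = \deg\Delta((n),\mu)$, the second equality by direct calculation from part (1).

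For the equality condition, if $\lambda \neq (n)$ and $\mu \neq (n)$ then $A\cup B \subsetneq [n]^2$: otherwise, fixing any $x\in R_1$, every $y\notin R_1$ would have to lie in the column $C_{c(x)}$ of $x$, and varying $x\in R_1$ would force $R_1$---and hence $[n] = \bigcup_i R_i$---into a single column, contradicting $\lambda\neq(n)$. Parity considerations ($\sum m_{ij}^2 \equiv n \equiv N(\mu)+N(\lambda)-n^2 \pmod 2$) then upgrade the strict inequality on $\sum m_{ij}^2$ to $\deg\Delta(\lambda,\mu) < \deg\Delta((n),\mu)$. Conversely, for $\lambda=(n)$ or $\mu=(n)$ there is a unique contingency table and direct computation from the master formula shows the degrees agree (both give $(n^2 - N(\mu))/2$ in the first case, both give $0$ in the second). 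The main obstacle is orchestrating the Mackey--Frobenius derivation of the master formula cleanly; the combinatorial estimates themselves are short.
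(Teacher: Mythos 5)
Your proof is correct. Your ``master formula'' is the paper's own closed form
$\Delta(\lambda,\mu)=(-1)^n\frac{|\L_\lambda|}{\mu!\,q^{(N(\lambda)-n)/2}}\sum_{I\in\FP_\lambda}\mu^I!/\phi_{\mu^I}(q)$
regrouped by $\Ss_\mu$-orbits on $\FP_\lambda$ (i.e.\ by contingency tables): each table $M$ arises from exactly $\mu!/\prod_{i,j}m_{ij}!$ set partitions $I$, and $\mu^I!=\prod_{i,j}m_{ij}!$, so the two sums agree term by term; your derivation via $k!\,h_\nu=\sum_w|\FP_\nu^w|\,p_{\mu(w)}$ under the principal specialization is a mild repackaging of the identity the paper extracts from Macdonald. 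Hence part (1) and the reduction of part (2) to the inequality $\min_M\sum_{i,j}m_{ij}^2\geq N(\lambda)+N(\mu)-n^2$ (note $N(\mu^I)=\sum_{i,j}m_{ij}^2$) are essentially as in the paper. Where you genuinely diverge is the proof of that inequality: the paper runs an induction on $n$ (carried out in its appendix), whereas your one-step inclusion--exclusion on $[n]^2$ --- with $A,B$ the same-row and same-column pair sets, $|A\cap B|\geq|A|+|B|-n^2$ --- is shorter and, more importantly, makes the equality analysis transparent ($A\cup B=[n]^2$ forces $\lambda=(n)$ or $\mu=(n)$), which the paper dispatches only with ``direct inspection.'' Two minor points: your bound ``degree of a sum $\leq$ max of term degrees'' gives only an upper bound on $\deg\Delta(\lambda,\mu)$, but that suffices everywhere you use it, since in the extremal cases $\lambda=(n)$ or $\mu=(n)$ there is a unique contingency table and hence no cancellation; and the parity step is superfluous, as the strict integer inequality on $\sum_{i,j}m_{ij}^2$ already yields strict inequality of degrees.
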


\begin{proof} Let $\alpha \in \BN$ and write $\T^\alpha$ for the standard maximal torus of $\GL_\alpha$.The lemma boils down to the following combinatorial identity:  
\[\sum_{w \in \Ss_\alpha} \frac{(-1)^{\alpha-rk_q(T^{\alpha}_{w})}}{|T^{\alpha}_{w}|}	= (-1)^\alpha \alpha !\sum_{\nu \in \FP_\alpha} \frac{1}{u_\nu\prod_i \nu_i 	 \prod_i (1- q^{\nu_i})}= (-1)^\alpha \frac{\alpha !}{\phi_{(\alpha)}(q)}.    \]

Here the first equality follows from \eqref{tw}, while the second one can be deduced from \cite[(2.14') and Example I.2.4]{Mac}.

Now every $I  = (I_1, \ldots, I_\ell) \in \FP_\lambda$ defines a refinement $\mu^I$ of $\mu$ as follows. Write $\mu = (\mu_1 \geq \ldots \geq \mu_m)$ and $M_1 = \{ 1, \ldots, \mu_1 \}$, $M_i = \{ \sum_{p=1}^{i-1} \mu_p + 1, \ldots, \sum_{p=1}^i \mu_p \}$ for $2 \leq i \leq m$, so that $\{ 1, \ldots, n \} = \coprod_{i=1}^m M_i$.  Then $\mu^I$ is obtained by writing the numbers
\begin{align*}
| M_i \cap I_j |, \quad 1 \leq i \leq m, 1 \leq j \leq \ell
\end{align*}
in (some) non-increasing order. In particular $\{ w \in \Ss_\mu \ | \ wI = I  \} = \Ss_{\mu^I}$. With this we have
\begin{align*}   \Delta(\lambda,\mu) &= \frac{1}{\mu!} \sum_{w \in S_\mu} Q_{\T_w}^{\L_\lambda}|\FP^w_\lambda| =\frac{1}{\mu!}  \sum_{I \in \FP_\lambda} \sum_{w \in \Ss_{\mu^I}} Q_{\T_w}^{\L_\lambda} = \frac{|\L_\lambda|}{\mu!q^{\frac{1}{2}\left(N(\lambda)-n\right)}}\sum_{I \in \FP_\lambda} \sum_{w \in \Ss_{\mu^I}} \frac{ (-1)^{n-rk_q(\T_w)}}{|\T_w|} \\
&= (-1)^n \frac{|\L_\lambda|}{\mu! q^{\frac{1}{2}\left(N(\lambda)-n\right)}}\sum_{I \in \FP_\lambda} \frac{\mu^I!}{\phi_{\mu^I}(q)}.
\end{align*}

From this we get the formula for $ \Delta((n),\mu)$ as well as
\[\deg \Delta(\lambda,\mu) = \frac{1}{2} (N(\lambda) +n) - \min_{I \in \FP_\lambda} \frac{1}{2}(N(\mu^I)+n)  = \frac{1}{2}(N(\lambda) -   \min_{I \in \FP_\lambda} N(\mu^I)).\]
The inequality $\deg \Delta((n),\mu) \geq \deg \Delta(\lambda,\mu) $ is thus equivalent to 
\begin{align} \label{e:degineq}
 \min_{I \in \FP_\lambda} N(\mu^I) \geq N(\lambda) + N(\mu)-n^2.
 \end{align}
We prove this inequality by induction on $n$. Let $I \in \mathcal{P}_\lambda$ be such that $N(\mu^I)$ is minimal and suppose $n \in I_j$.  We may assume that $\lambda_{j+1} < \lambda_j$ since otherwise we could interchange $I_j$ and $I_{j+1}$ and the resulting $I'$ would still minimize $N(\mu^I)$. Hence
\begin{align*}
\tilde{\lambda} := (\lambda_1 \geq  \ldots \geq \lambda_{j-1} \geq \lambda_j - 1 \geq \lambda_{j+1} \geq \ldots \geq \lambda_\ell)
\end{align*}
is still a partition (of $n-1$).  Further let
\begin{align*}
\hat{\mu} := (\mu_1 \geq \ldots \geq \mu_{m-1} \geq \mu_m - 1),
\end{align*}
and $\hat{M}_i = M_i$ for $1 \leq i \leq m-1$, $\hat{M}_m = M_m \setminus \{ n \}$.
We have
\begin{align*}
N(\lambda) & = N(\tilde{\lambda}) + 2\lambda_j - 1 & N(\mu) & = N(\hat{\mu}) + 2\mu_m - 1.
\end{align*}
By induction, we know
\begin{align*}
\min_{J \in \mathcal{P}_{\tilde{\lambda}}} N(\hat{\mu}^J) \geq N(\tilde{\lambda}) + N( \hat{\mu}) - (n-1)^2.
\end{align*}
Therefore it suffices to show that
\begin{align} \label{e:tildelam}
\min_{J \in \mathcal{P}_{\tilde{\lambda}}} N(\hat{\mu}^J) \leq N(\mu^I) + 2(n- \lambda_j - \mu_m) + 1.
\end{align}
Consider
\begin{align*}
J = (I_1, \ldots, I_{j-1}, I_j \setminus \{n \}, I_{j+1}, \ldots, I_\ell) \in \mathcal{P}_{\tilde{\lambda}}.
\end{align*}
Clearly if $i \neq m$ or $k \neq j$ we have $|\hat{M}_i \cap J_k| = |M_i \cap I_k|$ and so 
\begin{align*}
N(\hat{\mu}^J) - N(\mu^I) & = \sum_{i=1}^m \sum_{k=1}^\ell | \hat{M}_i \cap J_k |^2 - \sum_{i=1}^m \sum_{k=1}^\ell | M_i \cap I_k |^2 = |\hat{M}_m \cap J_j|^2 - |M_m \cap I_j|^2 \\
& = (|M_m \cap I_j| - 1)^2 - |M_m \cap I_j|^2 = -2|M_m \cap I_j| + 1 \\
& = 2( |M_m \cup I_j| - \lambda_j - \mu_m) + 1 \leq 2( n - \lambda_j - \mu_m) + 1
\end{align*}
and taking the minimum we see that the inequality \eqref{e:tildelam} holds. The equality cases follows from a direct inspection.
\end{proof}

\begin{ex} Let us spell out explicitly formula \eqref{finalcountf} for $n=2,3$. Recall that $k$ denotes the number of simple poles, $s$ the number of higher order poles, $d=k+s$ the total number of poles, and $r$ the sum of the Poincar\'e ranks of the poles. We will assume $\mu_i \neq (n)$ for all $1 \leq i \leq k$, since the corresponding coadjoint orbits are points and do not contribute to the count. 

For $n=2$,  we have by assumption $\mu^i = (1^2)$ for $1 \leq i \leq k$ and thus 
\begin{eqnarray}\label{formn=2}  |\FM^*_{\muhat,\bfr}(\BF_q)|= \frac{q^{r+d-3} (q^{r-1}(q+1)^{d-1}-2^{d-1})}{q-1}.  \end{eqnarray}

For $n=3$, we write $k=k_1+k_2$ with $k_1 = |\{i \ | \ \mu^i = (1^3)\}|$ and $k_2 = |\{i\ |\ \mu^i = (2,1) \}|$.  Then $|\FM^*_{\muhat,\bfr}(\BF_q)|$ is given by \small
\begin{align} 
 \frac{q^{3r+3d-k_2-8} \left( q^{3r-3}(q^2+q+1)^{d-1}(q+1)^{s+k_1-1}-3^{s+k_1}q^{r-1}(q+1)^{s+k_1-1}(q+2)^{k_2}+3^{d-1}2^{s+k_1}   \right)   }{(q-1)^2}.\label{formn=3}\end{align} \normalsize
\end{ex}

\begin{rmk} If we take $k=0$, and thus $s=d$, formula \eqref{finalcountf} agrees with \eqref{totalct} if we replace $q$ by $\BL$. In fact by Theorem \ref{Katzthm}, \eqref{finalcountf} is a consequence of \eqref{totalct} provided there exists a suitable spreading out of $\FM^*_{n,\bfr}$. We refer to \cite[Appendix A]{HLV11}, where a similar construction has been carried out in detail.
\end{rmk}

\subsection{Comparison with wild character varieties} \label{s:wcvcomp}

\begin{lemma}\label{fourier} Let $m\geq 1$ and $C$ any formal type of order $m$ with $C_1$ regular semisimple. Then we have an equality of Fourier transforms
\[ \FF(\#_C) = \FF(1_{C_1}) \FF(1_\FN)^{m-1},\]
where $\FN \subset \g(\BF_q)$ denotes the nilpotent cone and $1_{\FN}$ its characteristic function on $\g(\BF_q) \cong \g^\vee(\BF_q)$.
\end{lemma}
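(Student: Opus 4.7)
Plan: The strategy is to verify the identity pointwise on $\bigsqcup_\lambda \g'_\lambda(\BF_q)$, the common support of both sides, and to reduce the problem to an explicit value of $\FF(1_\FN)$ on regular semisimple classes. The case $m=1$ is immediate: $\pi_{\res}$ is the identity, $\#_C = 1_{\O(C_1)}$, and the right-hand side is $\FF(1_{C_1})\cdot 1$. So assume $m \geq 2$; as this lemma is applied in the context of regular formal types, I may assume $C_m \in \t^{\reg}$ so that Theorem \ref{ftpuncturet} applies. Both $\FF(\#_C)$ and $\FF(1_{C_1})$ are then supported on $\bigsqcup_\lambda \g'_\lambda$: the former by the finite-field formula \eqref{ffft}, the latter by the last assertion of Lemma \ref{fttame} applied with $\mu=(1,\ldots,1)$, which is valid since $C_1$ is regular semisimple. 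Off this locus both sides vanish.

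On $X \in \g'_\lambda$, I substitute the explicit expressions. With $\mu=(1,\ldots,1)$ the symmetric group $\Ss_\mu$ is trivial and $Q^{\L_\lambda}_{\T} = |\L_\lambda|/[q^{(N(\lambda)-n)/2}(q-1)^n]$, so \eqref{fttame2} collapses to
\[
\FF(1_{C_1})(X) = \frac{q^{(n^2-N(\lambda))/2}\,|\L_\lambda|}{(q-1)^n}\sum_{t\in \t\cap\FO(X)(\BF_q)}\Psi(\lla C_1,t\rra).
\]
Comparing with \eqref{ffft} and simplifying exponents gives
\[
\frac{\FF(\#_C)(X)}{\FF(1_{C_1})(X)} = q^{(m-1)\left(\frac{n^2+N(\lambda)}{2}-n\right)},
\]
so the lemma is equivalent to the pointwise formula
\[
\FF(1_\FN)(X) = q^{\frac{n^2+N(\lambda)}{2}-n}, \qquad X \in \g'_\lambda(\BF_q).
\]

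To establish this last identity, I would place $X$ in the standard form $\diag(\alpha_1 I_{\lambda_1},\ldots,\alpha_l I_{\lambda_l})$ and use the orthogonal decomposition $\g = \ll_\lambda \oplus \ll_\lambda^\od$ under the trace pairing. Writing $N = N'+N''$ with $N' \in \ll_\lambda$, $N''\in\ll_\lambda^\od$, one has $\tr(NX) = \tr(N'X)$, so
\[
\FF(1_\FN)(X) = \sum_{N' \in \ll_\lambda}\Psi(\tr(N'X))\cdot\#\{N''\in\ll_\lambda^\od : N'+N''\in\FN\}.
\]
The inner count is not concentrated on nilpotent $N'$, so a naive parabolic descent fails; nevertheless the outer character sum over $\ll_\lambda$ produces the required scalar after a cancellation between non-nilpotent $N'$ that admit nilpotent completions. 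This cancellation can be executed either by iterating Lemma \ref{orth} on the polynomial equations $\tr((N'+N'')^k) = 0$ ($k=1,\ldots,n$) cutting out $\FN$, or by citing the formula for $\FF(1_\FN)$ on semisimple classes from Lusztig--Letellier's theory of character sheaves on finite reductive Lie algebras. This combinatorial/Springer-theoretic evaluation of $\FF(1_\FN)$ is the main obstacle; once it is in hand, the lemma follows by the reduction above.
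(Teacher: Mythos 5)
Your reduction is correct and is in fact the same skeleton as the paper's argument: compare \eqref{ffft} with \eqref{fttame2} for $\mu=(1^n)$ (your simplification $Q^{\L_\lambda}_{\T}=|\L_\lambda|/\bigl(q^{(N(\lambda)-n)/2}(q-1)^n\bigr)$ and the resulting ratio $q^{(m-1)\left(\frac{n^2+N(\lambda)}{2}-n\right)}$ both check out), so the lemma is equivalent to $\FF(1_\FN)(X)=q^{\frac{1}{2}(n^2-2n+N(\lambda))}$ for $X\in\g'_\lambda(\BF_q)$, together with the vanishing of $\FF(1_\FN)$ off the split semisimple locus. But this is precisely the content of the lemma, and you leave it unproven, explicitly flagging it as "the main obstacle". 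That is a genuine gap, not a detail: the paper closes it by citing Letellier, namely $\FF(1_\FN)=q^{\frac{1}{2}\dim\FO_{(n)}}\St_n$ with $\St_n$ supported on semisimple elements and taking the value $q^{\frac{1}{2}\sum_i\lambda_i(\lambda_i-1)}$ on $\g'_\lambda$, which gives exactly your target exponent. Of your two suggested routes, the second (quoting Letellier/Springer theory) is the paper's proof; the first, iterating Lemma \ref{orth} over the equations $\tr((N'+N'')^k)=0$, is not viable as sketched, since those equations are not affine-linear in $N''$ and Lemma \ref{orth} only handles families of affine linear forms — the cancellation you invoke is precisely the nontrivial Springer-theoretic input.

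A secondary inaccuracy: the last assertion of Lemma \ref{fttame} says $\FF(1_{C_1})$ is supported on $\g'(\BF_q)$ (eigenvalues in $\BF_q$), which strictly contains the split semisimple locus $\bigsqcup_\lambda\g'_\lambda(\BF_q)$; it does not by itself force the right-hand side to vanish on non-semisimple $X\in\g'(\BF_q)$, where the left-hand side does vanish. For $m\geq 2$ this is rescued only by the fact that $\St_n$ (hence $\FF(1_\FN)$) is supported on semisimple elements — again part of the missing input.
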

\begin{proof}

By \cite[Theorem 3.6]{Le15}, the Fourier transform $\FF(1_\NN)$ is given by
\[ \FF(1_\NN) = \mathcal{U}_{1^n} = q^{\frac{1}{2}\dim \FO_{(n)}}\text{St}_n,\]
where $\text{St}_n$ denotes the Steinberg character and $\FO_{(n)}$ the regular nilpotent orbit, hence 
\begin{align*}
\dim \FO_{(n)} = n(n-1).
\end{align*}
By \cite[(2.2)]{Le15}, $\text{St}_n$ is supported on semisimple elements and its value on $X \in \g'_\lambda(\BF_q)$ is
\[\text{St}_n(X) = \epsilon_G \epsilon_{C_G(X)} |C_G(X)|_p = q^{\frac{1}{2}\sum_i \lambda_i(\lambda_i-1)}.\]
Thus in total we obtain 
\begin{align*}
\F(1_\NN)(X) = q^{\frac{1}{2} ( n^2-2n + N(\lambda))}.
\end{align*}
Comparing this with equation \eqref{ffft} and \eqref{fttame2} we finally see
\[ \FF(\#_C) = \FF(1_{C_1}) \F(1_\NN)^{m-1}. \qedhere \]
\end{proof}

Next we recall some combinatorics; for more details see \cite[\S 2.3]{HLV11} and \cite[\S 4.2]{HMW16}. For a partition $\lambda\in\calP_n$ we let \begin{equation*}
{\cal H}_\lambda(z,w)=\prod \frac{1}{(z^{2a+2}-w^{2l})(z^{2a}-w^{2l+2})}\in \Q(z,w),\end{equation*} 
where the product is over the boxes in the Young diagram of $\lambda$ and $a$ and $l$ are the arm length and the leg length of the given box.  

We fix integers $g\geq 0$ and $k>0$. Let
$\x_1=\{x_{1,1},x_{1,2},\dots\}, \dots,
\x_k=\{x_{k,1},x_{k,2},\dots\}$ be $k$  sets of infinitely many independent
variables and let $\Lambda(\x_1,\ldots,\x_k)$ be the ring of functions
separately symmetric in each of the set of variables.
Then we define the Cauchy kernel
$$\Omega_k(z,w):=\sum_{\lambda\in \calP}{\cal H}_\lambda(z,w)\prod_{i=1}^k \tilde{H}_\lambda(z^2,w^2;{\bf x}_i)\in \Lambda({\bf x}_1,\dots,{\bf x}_k)\otimes_\Z \Q(z,w),$$
where we have the Macdonald polynomials of \cite{garsia-haiman}
 $$\tilde{H}_\lambda(q,t;{\bf x})=\sum_{\mu\in \calP_n} \tilde{K}_{\lambda\mu}(q,t) s_\mu(\bf{x})\in \Lambda({\bf x})\otimes_\Z\Q(z,w).$$ Let now $\muhat \in \calP_n^k$, $\r =(r_1,\dots,r_s) \in \Z_{> 0}^s$ and $r:=r_1+\dots +r_s$ be as in Definition \ref{genodr}.  We let \small
\begin{align*}&\BH_{\muhat,r}(z,w)=\\ & (-1)^{rn} (z^2-1)(1-w^2)\left\langle {\rm Log} \left(\Omega_{k+r}\right),h_{\mu_1}({\bf x}_1)\otimes \cdots \otimes h_{\mu_k}({\bf x}_k)\otimes s_{(1^n)}({\bf x}_{k+1})\otimes \cdots \otimes s_{(1^n)}({\bf x}_{k+s})\right\rangle.\end{align*}
\normalsize

Let now $\mathcal{M}_{\rm B}^{\muhat,\r}$ denote a generic wild character variety of type ${\muhat,\r}$ and $g=0$ as studied in \cite{HMW16}.  The main conjecture \cite[Conjecture 0.2.2]{HMW16} in the form of \cite[Lemma 5.2.1]{HMW16} is that the mixed Hodge polynomial of $\mathcal{M}_{\rm B}^{\muhat,\r}$ satisfies \begin{equation}\label{mhpconj}WH(\mathcal{M}_{\rm B}^{\muhat,\r};q,t)= (qt^2)^{d_{\mu,\r}/2} \BH_{\tilde{\muhat},r}\left(q^{1/2},-q^{-1/2} t^{-1}\right),\end{equation} where  $\tilde{\muhat}=(\mu^1,\dots,\mu^k,(1^n),\dots,(1^n))\in \calP_n^{k+s}$.
The purity conjecture then is saying that the pure part of 
$WH(\mathcal{M}_{\rm B}^{\muhat,\r};q,t)$ should equal the Poincar\'e polynomial of the corresponding open de Rham space. 

The following is our main Theorem~\ref{main}. 
\begin{thm} \label{maincor} In the notation of Theorem \ref{finalcount}, we have $E(\FM_{\muhat,\bfr}^*;q)=q^{d_{\muhat,\r}/2} \BH_{\tilde{\muhat},r}(0,q^{1/2})$.
\end{thm}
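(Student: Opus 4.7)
Since Theorem \ref{finalcount} shows that $|\FM^*_{\muhat,\bfr}(\BF_q)|$ is a polynomial in $q$ for $q$ large enough, and $\FM^*_{\muhat,\bfr}$ admits a natural integral model coming from its GIT presentation with the generic tuple $\mathbf{C}$ spread out as in \cite[Appendix A]{HLV11}, Katz's Theorem \ref{Katzthm} gives $E(\FM^*_{\muhat,\bfr};q) = |\FM^*_{\muhat,\bfr}(\BF_q)|$. The theorem is thus equivalent to the symmetric-function identity
\begin{equation*}
|\FM^*_{\muhat,\bfr}(\BF_q)| \;=\; q^{d_{\muhat,\r}/2}\,\BH_{\tilde{\muhat},r}\bigl(0, q^{1/2}\bigr).
\end{equation*}

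My plan is to return to the proof of Theorem \ref{finalcount} and apply Lemma \ref{fourier} to rewrite, for each irregular pole, $\FF(\#_{C^{k+j}}) = \FF(1_{C_1^{k+j}})\,\FF(1_\FN)^{r_j}$. With this substitution the count takes the shape of a tame character-variety count with character data $\tilde{\muhat} = (\mu^1,\dots,\mu^k,(1^n),\dots,(1^n))$ augmented by $r=r_1+\cdots+r_s$ insertions of $\FF(1_\FN)$. Such tame counts are precisely what is computed in \cite[\S4--5]{HLV11} and \cite{HLV13}: expanding each Fourier transform in terms of Green functions $Q^{\L_\lambda}_{\T_w}$, applying the Schur expansion of Hall--Littlewood polynomials, and invoking the Cauchy identity, one repackages the sum as a symmetric-function pairing against $\mathrm{Log}(\Omega_{k+r})$. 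The $k$ tame data contribute the product $h_{\mu^1}\otimes\cdots\otimes h_{\mu^k}$, while each of the $s$ leading parts $C_1^{k+j}$, being regular semisimple of type $(1^n)$, contributes $s_{(1^n)}$; together this matches the tuple $\tilde{\muhat}$ entering $\BH_{\tilde{\muhat},r}$ exactly.

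The decisive ingredient is the identification of the $r$ extra $\FF(1_\FN)$ factors with the remaining $r$ copies of $s_{(1^n)}$ arguments in $\BH_{\tilde{\muhat},r}$ evaluated at $z=0$. By the proof of Lemma \ref{fourier}, $\FF(1_\FN)$ equals $q^{(n^2-2n+N(\lambda))/2}$ on $\g'_\lambda(\BF_q)$, which is precisely the specialization produced by the pairing $\langle \tilde{H}_\lambda(0, w^2;\mathbf{x}),\, s_{(1^n)}(\mathbf{x})\rangle$; geometrically, this is the statement that the Steinberg character is the $z\to 0$ limit of the full Macdonald/Green specialization that governs the pure part of the conjectured mixed Hodge polynomial \eqref{mhpconj}. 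The main obstacle is the explicit combinatorial bookkeeping: the powers of $q$ and $(q-1)$, the factors $\phi_\lambda(q)$ coming from tori, the binomial weights $\binom{n}{\lambda}^s$, the dimension shift $q^{d_{\muhat,\r}/2}$, and the normalization $(-1)^{rn}(z^2-1)(1-w^2)$ must all be reconciled term by term; additionally, the Log/exp reorganization that passes from a generating sum over all partition-tuples to the connected count must be verified to match the $\mathrm{Log}(\Omega_{k+r})$ appearing in the definition of $\BH_{\tilde{\muhat},r}$. Once these specialize correctly against the explicit formula of Theorem \ref{finalcount}, the identity, and hence the theorem, follows.
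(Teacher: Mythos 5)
Your proposal follows the paper's proof essentially verbatim: reduce to a point count via Katz's Theorem \ref{Katzthm}, factor each irregular local term as $\FF(\#_{C^{k+j}})=\FF(1_{C_1^{k+j}})\FF(1_\FN)^{r_j}$ using Lemma \ref{fourier}, and recognize the resulting convolution of orbit Fourier transforms as the symmetric-function pairing defining $\BH_{\tilde{\muhat},r}(0,q^{1/2})$ via the dictionary split semisimple orbit $\leftrightarrow h_{\mu^i}$, regular nilpotent orbit $\leftrightarrow s_{(1^n)}$. The only difference is that the ``combinatorial bookkeeping'' you flag as the remaining obstacle is exactly what the paper disposes of by citing Letellier's results \cite[Theorem 7.3.3, Corollary 7.3.5, \S6.10.3--6.10.4]{Le13} together with the argument of \cite[Theorem 7.1.1]{HLV13}, so no term-by-term reconciliation needs to be redone.
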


\begin{proof} By Theorem \ref{Katzthm} we have $E(\FM_{\muhat,\bfr}^*;q)=|\FM_{\muhat,\bfr}^*(\BF_q)|$ and by Lemma \ref{fourier} we can compute \small
\begin{equation} \label{finalfourier}|\FM_{\muhat,\bfr}^*(\BF_q)|=\frac{q^{-n^2}}{|\text{P\GL}_n(\BF_q)|}\FF\left(\prod_{i=1}^k \FF(1_{C^{i}})\prod_{j=1}^s\FF(1_{C^{k+j}_{1}})\FF(1_\NN)^{r_j}\right)(0)=q^{d_{\muhat,\r}/2}\BH_{\tilde{\muhat},r}(0,q^{1/2}).\end{equation} \normalsize
The last equation follows from combining \cite[Theorem 7.3.3]{Le13} and \cite[Corollary 7.3.5]{Le13} with the observation in \cite[\S 6.10.3]{Le13} that the symmetric function corresponding to a split semisimple adjoint orbit  of type $\mu^i$ is $h_{\mu_i}$ and in \cite[\S 6.10.4]{Le13} the symmetric function corresponding to a regular nilpotent adjoint orbit is $ s_{(1^n)}$.  The result follows in the usual way, as in the proof of \cite[Theorem 7.1.1]{HLV11}.
\end{proof}

\begin{rem} The formula for $ |\FM^*_{\muhat,\bfr}(\BF_q)| =E(\FM_{\muhat,\bfr}^*;q)$ in \eqref{finalcountf} was obtained by computing the same Fourier transform as in \eqref{finalfourier}. Thus we can consider the result of Thm~\ref{finalcount} as the explicit form of Theorem~\ref{maincor}.
\end{rem}

We finish this section by observing that a combination of \cite[Theorem 6.10.1, Theorem 7.4.1]{Le13} implies that $ \BH_{\tilde{\muhat},r}(0,q^{1/2})$ has non-negative coefficients, thus we have
 \begin{corollary} The weight polynomial $E(\FM_{\muhat,\bfr}^*;q)$ has non-negative coefficients. \end{corollary}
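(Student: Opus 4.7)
The plan is to reduce to positivity results of Letellier. By Theorem~\ref{maincor} we have
\[
E(\FM_{\muhat,\bfr}^*;q) = q^{d_{\muhat,\r}/2} \, \BH_{\tilde{\muhat},r}(0, q^{1/2}),
\]
and since the prefactor $q^{d_{\muhat,\r}/2}$ is a monomial with coefficient $1$, it suffices to prove that $\BH_{\tilde{\muhat},r}(0, q^{1/2})$ has non-negative coefficients as a polynomial in $q^{1/2}$ (which, combined with Theorem~\ref{Katzthm} and the fact that $E(\FM_{\muhat,\bfr}^*;q)$ is a polynomial in $q$, then gives non-negativity in $q$).

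I would proceed by first recalling the symmetric-function dictionary used at the end of the proof of Theorem~\ref{maincor}: the function $h_{\mu^i}({\bf x}_i)$ corresponds to a split semisimple adjoint orbit of type $\mu^i$ (\cite[\S 6.10.3]{Le13}) and $s_{(1^n)}({\bf x}_{k+j})$ corresponds to a regular nilpotent orbit (\cite[\S 6.10.4]{Le13}). With this dictionary, $\BH_{\tilde{\muhat},r}(z,w)$ is precisely the polynomial whose coefficients are interpreted in \cite[Theorem 6.10.1]{Le13} as counting invariants of the relevant additive/multiplicative moduli of representations with prescribed adjoint-orbit types. Letellier's \cite[Theorem 7.4.1]{Le13} then asserts that this polynomial has non-negative integer coefficients in $z$ and $w$; specializing $z=0$ manifestly preserves non-negativity, yielding the desired conclusion for $\BH_{\tilde{\muhat},r}(0,q^{1/2})$.

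The main thing to check carefully is that the data $\tilde{\muhat} = (\mu^1, \dots, \mu^k, (1^n), \dots, (1^n))$ appearing in our setting falls within the hypotheses of \cite[Theorem 6.10.1, Theorem 7.4.1]{Le13}, and that the $s$ copies of $s_{(1^n)}$ arising from the irregular poles (via the factors $\F(1_\NN)^{r_j}$ in Lemma~\ref{fourier}) are correctly accounted for on the Letellier side. Once this identification is in place, no further computation is needed: the corollary follows immediately from the cited positivity theorem.
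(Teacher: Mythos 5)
Your proposal is correct and follows exactly the paper's argument: the paper likewise deduces the corollary from Theorem~\ref{maincor} together with the observation that \cite[Theorem 6.10.1, Theorem 7.4.1]{Le13} imply non-negativity of the coefficients of $\BH_{\tilde{\muhat},r}(0,q^{1/2})$. Your additional remarks about the symmetric-function dictionary are just an expanded account of the identification already made in the proof of Theorem~\ref{maincor}.
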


This motivates the following

\begin{conj}\label{purcon} The mixed Hodge structure on the cohomology of  $\FM_{\muhat,\bfr}^*$ is pure.
\end{conj}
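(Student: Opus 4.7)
\medskip

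\noindent\textbf{Proof proposal.} The plan is to extend the tame-case argument of \cite[Proposition 2.2.6]{HLV13} via the identification of $\FM^*_{\muhat,\bfr}$ with a quiver variety with multiplicities from Section \ref{s:qmv}, following the Hausel--Rodriguez-Villegas philosophy: a smooth complex variety $X$ carrying a $\C^\times$-action with proper fixed locus such that every point flows into a fixed component under $t\to 0$ (a \emph{semiprojective} variety) has pure $H^*(X,\Q)$. I would seek such a structure on $\FM^*_{\muhat,\bfr}$ by combining the standard Nakajima scaling of the arrows of the underlying quiver with a suitable weight on the ``multiplicity'' data encoding higher-order terms of the formal types. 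Concretely, the affine fibration $\FO(C^i)\to \FO(C^i_{m_i})$ coming from Lemma \ref{codecomp}\eqref{l:C1aff}--\eqref{ztfib} exhibits each irregular coadjoint orbit as an affine bundle over the regular semisimple orbit of its leading term, and assigning positive weights along these affine fibres while keeping the base $\G/\T$ rigid should descend through the GIT quotient to an action on $\FM^*_{\muhat,\bfr}$.

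Given such an action, I would proceed in two steps. First, I would identify the fixed locus $(\FM^*_{\muhat,\bfr})^{\C^\times}$ as a finite disjoint union of smaller moduli spaces obtained by splitting each local datum into block-diagonal summands; each of these should be a Nakajima-type quiver variety in which the irregular data has collapsed to tame data, hence pure by the tame result. Second, I would check that the Białynicki--Birula strata are Zariski-locally trivial affine-space bundles over these fixed components, so that the weight filtration on $H^*(\FM^*_{\muhat,\bfr})$ is built inductively from pure pieces via the Mayer--Vietoris and Gysin sequences in mixed Hodge theory. The final Corollary of the excerpt, namely the non-negativity of $E(\FM^*_{\muhat,\bfr};q)$, is at the very least a consistency check for this picture and also rules out cancellation between even and odd Betti numbers.

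The main obstacle is that the regular leading coefficients $C^i_{m_i} \in \ft^\reg$ are rigid parameters of the moduli problem and are not preserved by any naive scaling, so no obvious $\C^\times$-action exists on $\FM^*_{\muhat,\bfr}$ itself. One route around this is to work on the total space of the family in which the formal types are allowed to rescale by $t \in \C^\times$, where a semiprojective structure is more plausible, and then to deduce purity of the fibres from purity of the total space via generic smoothness, in the spirit of Simpson's approach to non-abelian Hodge theory. An alternative, more arithmetic route sidesteps $\C^\times$-actions: since Theorem \ref{finalcount} exhibits $|\FM^*_{\muhat,\bfr}(\BF_q)|$ as an explicit polynomial with non-negative integer coefficients, if one could construct a smooth compactification of $\FM^*_{\muhat,\bfr}$ with simple normal crossing boundary whose strata have pure cohomology, Deligne's weight spectral sequence combined with the polynomiality of the point count would force purity; building such a compactification, conceivably via a wild analogue of the Biquard--Boalch construction by varying stability parameters, is then the decisive step.
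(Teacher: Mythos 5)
The statement you are trying to prove is stated in the paper as Conjecture~\ref{purcon}, and the authors explicitly say they were unable to prove it; it is known only in the tame case (via \cite[Theorem 2.2.6]{HLV11}) and when there is a single irregular pole (via \cite[Theorem 9.11]{boalch-simply}). So there is no proof in the paper to compare against, and your text must stand on its own as a proof. It does not: it is a strategy sketch whose decisive steps are conditional, and you concede the central gap yourself. The whole first part hinges on producing a semiprojective $\C^\times$-action on $\FM^*_{\muhat,\bfr}$, but no such action is exhibited. Scaling the affine fibres of $\O(C^i)\cong(\G\times\G_{m_i}^{\od})/\T$ by weights $b_j\mapsto t^jb_j$ does not preserve the coadjoint orbit in a way compatible with the moment map: the residue term of $\Ad_{gb}C^i$ (see \eqref{e:coadjointaction}) is an inhomogeneous polynomial in the $b_j$, so the condition $\sum_i Y_1^i=0$ is not preserved, and the orbit itself is pinned by the rigid leading term $C^i_{m_i}\in\ft^{\reg}$. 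This is precisely why the problem is hard: the quiver description of Section~\ref{s:qmv} involves a non-reductive group, so the Nakajima machinery (hyperk\"ahler rotation to a semiprojective resolution, variation of stability) that underlies the tame purity proof does not transfer.

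Your two fallback routes do not close this gap either. Purity of the total space of a family does not descend to its fibres (and since the isomorphism type of $\FM^*(t\mathbf{C})$ is independent of $t$ for generic $t$, the family carries no more information than a single fibre times $\C^\times$). More seriously, the "arithmetic route" rests on a false principle: polynomiality of the point count together with a smooth normal-crossings compactification does not force purity. The tame character varieties $\M_\B^{\muhat}$ of this very paper are strongly polynomial count yet have highly non-pure mixed Hodge structure; the $E$-polynomial only records an alternating sum of Hodge--Deligne numbers, so impurity can hide behind cancellations. The non-negativity of the coefficients of $E(\FM^*_{\muhat,\bfr};q)$ is, as you say, consistent with purity, but it is evidence, not an argument. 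As it stands the proposal identifies the right circle of ideas (and the same ones the paper flags as a "possible strategy"), but it does not constitute a proof.
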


If all poles in $\mathbf{C}$ are of order one this is proven in \cite[Theorem 2.2.6]{HLV11} using the description of $\FM_{\muhat,\bfr}^*$ as a quiver variety. Moreover if there is only one higher order pole, \cite[Theorem 9.11]{boalch-simply} also identifies $\FM_{\muhat,\bfr}^*$ with a quiver variety, thus Conjecture~\ref{purcon} follows in this case too.   In Section \ref{s:qmv}, we obtain a quiver like description of $\FM_{\muhat,\bfr}^*$ for poles of any order, giving more evidence and possible strategy for Conjecture \ref{purcon}.

\section{Quivers with multiplicities and their associated varieties} \label{s:qmv}

The greater part of this section is devoted to a generalization of the theorem of Crawley-Boevey \cite[Theorem 1]{CB03}, which realizes an additive fusion product of coadjoint orbits for $\GL_n(\K)$, where $\K$ is a fixed perfect ground field (the primary examples the reader should have in mind are where $\K = \C$ or $\K$ is a finite field), as a quiver variety associated to a star-shaped quiver.  The generalization we will achieve is to replace the $\GL_n(\K)$-coadjoint orbits with those for the non-reductive group $\GL_n(\RR_m)$ of Section \ref{s:jd} and to replace quivers with ``quivers with multiplicities'' or ``weighted quivers'' which are defined in Section \ref{s:qmvdefnot} below.  The main theorem is stated as Theorem \ref{mstar=q}. It states that an open de Rham space as defined in Section \ref{s:addfusprod} may be realized as a variety associated to a weighted star-shaped quiver for an appropriate choice of multiplicities.

The main novelty that enters when one introduces multiplicities is that the groups one obtains are no longer reductive.  As in the usual quiver case, one wants to define a variety associated to a quiver with multiplicities as an algebraic symplectic quotient.  In the approach we take here, we simply define this quotient as the spectrum of the appropriate ring of invariants.  Of course, this makes good sense as an affine scheme, however, without the reductivity hypothesis, we are not guaranteed that this ring of invariants is a finitely generated $\K$-algebra.  For the star-shaped quivers described in Section \ref{s:star}, we show that we do, in fact, get finite generation.  Essentially, what we show is that the preimage of the moment map is a trivial principal bundle for the unipotent radical of the group (Proposition \ref{trivbundleq}), the proof of which occupies Section \ref{s:redstages}. 

Let us also mention, that there seem to be several difficulties in applying the general theory of non-reductive GIT as developed by B\'erczy, Doran, Hawes and Kirwan \cite{BHFD16} directly to our situation: The varieties we consider are affine and the unipotent radical does not seem to admit a suitable $\mathbb{G}_m$-grading.

In Section \ref{s:coadjorb}, we explain how a certain symplectic quotient of a ``leg''-shaped quiver (out of which one builds a star-shaped quiver) yields a coadjoint orbit for the group $\GL_n(\RR_m)$, generalizing Boalch's explanation of Crawley-Boevey's theorem \cite[Lemma 9.10]{boalch-simply}.  This was done in the holomorphic category for ``short legs'' in \cite[Lemma 3.7]{YA10}; ours is an algebraic version, along the lines described above.

Quivers with multiplicities  have been introduced in \cite{YA10} for purposes quite similar to ours (namely, the description of moduli spaces of irregular meromorphic connections), and indeed, the quivers of interest in that paper are a special case of those considered here \cite[\S6.2]{YA10}.  We would like to point out that the quotients referred to there are either taken in the holomorphic category (when restricted to the stable locus, in the sense defined there) or simply set-theoretic \cite[Definition 3.3]{YA10}.  The results of Sections \ref{s:redstages} and \ref{s:coadjorb} show that these quotients indeed make sense algebraically.

Finally, we put together the results in Section \ref{s:qmvodr}, relating the varieties constructed in this section to the open de Rham spaces of Section \ref{s:odr}.  Quivers with multiplicities are also discussed by \cite{GeissLeclercSchroeer2017} in order to generalize certain classical results known for finite-dimensional algebras associated to symmetric Cartan matrices to the case where the Cartan matrix is only symmetrizable.  It is well-known that given a simply laced affine Dynkin diagram, one can associate a $2$-dimensional quiver variety (depending on some parameters), which will in fact be a gravitational instanton (i.e., carry a complete hyperk\"ahler metric).  In a parallel generalization, the construction above allows us to construct quiver varieties for non-simply laced Dynkin diagrams; this is explained in Section \ref{s:nonslDD}.  In the next section, we will see that these also admit complete hyperk\"ahler metrics (see Theorem \ref{t:odRhK} and Remark \ref{r:hKrmks}\eqref{r:higherorderhK}).

\subsection{Definitions, notation and a statement for star-shaped quivers} \label{s:qmvdefnot}
 
\subsubsection{Definitions, notation and a finite generation criterion}

Here, we discuss a generalization of the definition of quiver representations to allow for multiplicities at the vertices.  This was first done in \cite{YA10}. We follow the approach of \cite[\S1.4]{GeissLeclercSchroeer2017} and \cite{yamakawanotes}, using the greatest common divisor of neighbouring multiplicities in the definition of a representation.

Let $\K$ be a field.  For a positive integer $m \in \Z_{\geq 1}$, we will denote by $\RR_m$ the truncated polynomial ring
\begin{align*}
\RR_m := \K[z]/(z^m),
\end{align*}
so that $R_1 = \K$.  Observe that if $m | \ell$, then there is a natural inclusion of $\K$-algebras $\RR_m \hookrightarrow R_\ell$, which makes $R_\ell$ a free $\RR_m$-module of rank $\ell/m$.

As in Section \ref{s:jd}, we will use the groups $\GL_n(\RR_m)$ and $\GL_n^1(\RR_m)$ and their respective Lie algebras $\gl_n(\RR_m)$ and $\gl_n^1(\RR_m)$.  However, since the value of $n$ will vary, we will not shorten these to $\G$ or $\G_m$ etc.  We do adopt the identifications
\begin{align*}
\gl_n(\RR_m)^\vee = z^{-m} \gl_n(\RR_m)
\end{align*}
via the trace-residue pairing \eqref{trp}.

As usual, a \emph{quiver} $Q = (Q_0, Q_1, h, t)$ is a finite directed graph, i.e., $Q_0$ and $Q_1$ are finite sets and one has head and tail maps $h$, $t : Q_1 \to Q_0$.  By a \emph{set $\m$ of multiplicities for $Q$} we will mean an element $\m \in \Z_{\geq 1}^{Q_0}$.  The pair $(Q, \m)$ of a quiver and a set of multiplicities $\m$ can be referred to as a \emph{quiver with multiplicities} or a \emph{weighted quiver}.

A \emph{dimension vector} is also an element $\n \in \Z_{\geq 1}^{Q_0}$.  The space of representations of $(Q, \m)$ for a given dimension vector $\n$ is defined as
\begin{align*}
\Rep(Q, \m, \n) := \bigoplus_{\alpha \in Q_1} \Hom_{\RR_\alpha} \left( \RR_{m_{t(\alpha)}}^{\oplus n_{t(\alpha)}}, \RR_{m_{h(\alpha)}}^{\oplus n_{h(\alpha)}} \right)
\end{align*}
where we have made the abbreviation $\RR_\alpha := \RR_{(m_{t(\alpha)}, m_{h(\alpha)})}$. Here $(m_{t(\alpha)}, m_{h(\alpha)})$ denotes the greatest common divisor of $m_{t(\alpha)}$ and of $m_{h(\alpha)}$.  In the case that $m_i = 1$ for all $i \in Q_0$, this is the usual definition of the space of representations $\Rep(Q, \n)$ for $Q$ with dimension vector $\n$.

For a triple $(Q, \m, \n)$ we define the group
\begin{align*}
G_{Q, \m, \n} = G_{\m, \n} := \prod_{i \in Q_0} \GL_{n_i}(\RR_{m_i}).
\end{align*}
We will denote its Lie algebra by $\g_{\m, \n}$.  One has an action of $G_{\m, \n}$ on $\Rep(Q, \m, \n)$ as in the usual case:  for $(g_i) \in G_{\m, \n}$ and $(\varphi_a) \in \Rep(Q, \m, \n)$, 
\begin{align*}
(g_i) \cdot (\varphi_\alpha) = \left( g_{h(\alpha)} \varphi_a g_{t(\alpha)}^{-1} \right).
\end{align*}
The main difference between this and the case without multiplicities is that $G_{\m, \n}$ is not reductive if there is some $i \in Q_0$ with $m_i \geq 2$.

As usual, we denote by $\overline{Q}$ the doubled quiver:  i.e., $\overline{Q}_1 := Q_1 \sqcup Q_1'$, where $Q_1' := \{ \alpha' \, : \, \alpha \in Q_1 \}$ and $h(\alpha') = t(\alpha)$, $t(\alpha') = h(\alpha)$ for all $\alpha \in Q_1$.  Then
\begin{align} \label{e:RepQbar}
\Rep(\overline{Q}, \m, \n) & = \bigoplus_{\alpha \in Q_1} \Hom_{\RR_\alpha} \left( \RR_{m_{t(\alpha)}}^{\oplus n_{t(\alpha)}}, \RR_{m_{h(\alpha)}}^{\oplus n_{h(\alpha)}} \right) \oplus \Hom_{\RR_\alpha} \left(  \RR_{m_{h(\alpha)}}^{\oplus n_{h(\alpha)}}, \RR_{m_{t(\alpha)}}^{\oplus n_{t(\alpha)}} \right)\\
& = T^* \Rep(Q, \m, \n). \nonumber
\end{align}
Thus, we have the usual situation of a cotangent bundle, which has a canonical symplectic form, and an action of a group on the base, for which the induced action on the cotangent bundle is Hamiltonian.  We will be working with the corresponding moment map 
\begin{align*}
\mu : \Rep(\overline{Q},\m, \n) \to \g_{\m, \n}^\vee
\end{align*}
for the $G_{\m, \n}$-action. An explicit formula can be derived as in the case without multiplicities (see for example \cite[Lemma 3.1]{Lo14}) and is given by
\begin{align} \label{e:QMmm}
\mu(p,q) = \left( z^{-m_i}\sum_{\alpha \in h^{-1}(i)} p_\alpha q_\alpha -  z^{-m_i}\sum_{\beta \in t^{-1}(i)} q_\beta p_\beta \right)_{i \in Q_0},
\end{align} 
where we identify for each $i\in Q_0$ the dual $\g_{m_i,n_i}^\vee$ with $z^{-m_i}\g_{m_i,n_i}$ as in \eqref{e:gmvee}. We will usually drop the factor $z^{-m_i}$ in the computations below if no confusion arises. 


\begin{defn} \label{d:qmv}
Let us fix an element $\bm{\gamma} \in \g_{\m, \n}^\vee$ whose $G_{\m, \n}$-coadjoint orbit is a singleton.  One defines the \emph{reduced quiver scheme associated to $Q(\m, \n)$ at $\bm{\gamma}$} as
\begin{align} \label{e:qmv}
\mathcal{Q}_{\bm{\gamma}} := \spec \left( \K[\mu^{-1}(\bm{\gamma})]^{G_{\m, \n}} \right).
\end{align}
In this formula $\K[\mu^{-1}(\bm{\gamma})]$ denotes the coordinate ring of the affine variety $\mu^{-1}(\bm{\gamma})$, and $\K[\mu^{-1}(\bm{\gamma})]^{G_{\m, \n}}$ its subring of $G_{\m, \n}$-invariants.
\end{defn}

\begin{rmk} 
By definition $\mathcal{Q}_{\bm{\gamma}}$ is an affine scheme over $\K$.  However, as $G_{\m, \n}$ is not reductive, $\K[\mu^{-1}(\bm{\gamma})]^{G_{\m, \n}}$ is not a priori a finitely generated $\K$-algebra, therefore we cannot say that $\mathcal{Q}_{\bm{\gamma}}$ is an affine variety (possibly non-reduced) without further justification.
\end{rmk}

In view of the preceding remark, we now give a criterion which is sufficient to conclude that $\mathcal{Q}_{\bm{\gamma}}$ is an affine variety.  Then in Section \ref{s:star} we will describe a class of quivers with multiplicities and conditions on $\bm{\gamma}$ for which this criterion is fulfilled.

Observe that the group $G_{\m, \n}$ is a semi-direct product:  there is a surjective morphism of algebraic groups
\begin{align*}
G_{\m, \n} \to G_{\n},
\end{align*}
where 
\begin{align*}
G_\n := \prod_{i \in Q_0} \GL_{n_i}(\K)
\end{align*}
is the usual group associated to the quiver $Q$ and dimension vector $\n$. We will call the kernel $G_{\m, \n}^1$.  This surjection splits, with $\GL_{n_i}(\K)$ being the subgroup of ``constant'' group elements in $\GL_{n_i}(\RR_{m_i})$, and then taking the product of these inclusions (cf.\ Section \ref{s:jd}).  We may thus write
\begin{align*}
G_{\m, \n} = G_{\n} \ltimes G_{\m, \n}^1.
\end{align*}

With this decomposition, we may similarly decompose the Lie algebra $\g_{\m, \n}$ and its dual as direct sums:
\begin{align} \label{e:DS}
\g_{\m, \n} & = \g_\n \oplus \g_{\m, \n}^1 &  \g_{\m, \n}^\vee & = \g_\n^\vee \oplus (\g_{\m, \n}^1)^\vee. 
\end{align}
For $\bm{\gamma} \in \g_{\m, \n}^\vee$, we will write $\bm{\gamma}_{\res} \in \g_{\n}^\vee$ and $\bm{\gamma}_{\irr} \in (\g_{\m, \n}^1)^\vee$ for its components.  Likewise, the moment map \eqref{e:QMmm} will also have components $\mu_{\res}$ and $\mu_{\irr}$ corresponding to $\g_\n^\vee$ and $(\g_{\m, \n}^1)^\vee$, respectively.  Of course, each of these components is a moment map for the restriction of the action to the respective subgroup.

\begin{lem} \label{l:muirrtriv}
Suppose that there is a $G_\n$-invariant closed subvariety $M_{\bm{\gamma}} \subseteq \mu_{\irr}^{-1}( \bm{\gamma}_{\irr})$ and a $G_{\m,  \n}^1$-equivariant isomorphism
\begin{align*}
\mu_{\irr}^{-1}( \bm{\gamma}_{\irr}) \arsim G_{\m, \n}^1 \times M_{\bm{\gamma}} ,
\end{align*}
where $G_{\m, \n}^1$ acts by left multiplication on the first factor (and trivially on $M_{\bm{\gamma}}$); in other words, $\mu_{\irr}^{-1}( \bm{\gamma}_{\irr})$ is a trivial (left) principal $G_{\m, \n}^1$-bundle over $M_{\bm{\gamma}}$.  Then $\mathcal{Q}_{\bm{\gamma}}$ is an affine algebraic symplectic variety, and hence we will often refer to it as a \emph{quiver variety}.
\end{lem}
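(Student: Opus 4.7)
The plan is to push the non-reductive quotient defining $\mathcal{Q}_{\bm{\gamma}}$ through the two steps of the semi-direct decomposition $G_{\m,\n}=G_\n\ltimes G_{\m,\n}^1$: use the given trivialization of $\mu_{\irr}^{-1}(\bm{\gamma}_{\irr})$ to dispose of the unipotent radical $G_{\m,\n}^1$ geometrically, and then take an ordinary affine GIT quotient by the reductive Levi $G_\n$. A preliminary observation is that $\bm{\gamma}_{\irr}$ is $G_{\m,\n}$-fixed: since $\g_{\m,\n}^1$ is a Lie ideal of $\g_{\m,\n}$, the restriction-of-functionals map $\pi_{\irr}\colon\g_{\m,\n}^\vee\to(\g_{\m,\n}^1)^\vee$ is $G_{\m,\n}$-equivariant with respect to the natural action on its target, and $\bm{\gamma}$ has a singleton coadjoint orbit by hypothesis. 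Consequently $\mu_{\irr}^{-1}(\bm{\gamma}_{\irr})$ is $G_{\m,\n}$-stable and $G_\n$ acts on it, preserving $M_{\bm{\gamma}}$ by assumption.

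The key step is to identify $\mu^{-1}(\bm{\gamma})$ inside the trivialization. Under the $G_{\m,\n}^1$-equivariant isomorphism $\mu_{\irr}^{-1}(\bm{\gamma}_{\irr})\cong G_{\m,\n}^1\times M_{\bm{\gamma}}$, the group $G_{\m,\n}^1$ acts only by left translation on the first factor, so every $G_{\m,\n}^1$-stable closed subvariety is of the form $G_{\m,\n}^1\times N$ for a unique closed $N\subseteq M_{\bm{\gamma}}$. Applied to the $G_{\m,\n}$-invariant (in particular $G_{\m,\n}^1$-invariant) subvariety $\mu^{-1}(\bm{\gamma})\subseteq \mu_{\irr}^{-1}(\bm{\gamma}_{\irr})$, this yields
$$\mu^{-1}(\bm{\gamma}) \;\cong\; G_{\m,\n}^1 \times N, \qquad N \;=\; M_{\bm{\gamma}} \cap \mu^{-1}(\bm{\gamma}),$$
a $G_\n$-invariant closed subvariety of $M_{\bm{\gamma}}$. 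Taking invariants in stages along the semi-direct decomposition,
$$\K[\mu^{-1}(\bm{\gamma})]^{G_{\m,\n}} \;=\; \bigl(\K[G_{\m,\n}^1 \times N]^{G_{\m,\n}^1}\bigr)^{G_\n} \;=\; \K[N]^{G_\n},$$
which is finitely generated since $G_\n$ is reductive, so $\mathcal{Q}_{\bm{\gamma}} \cong N/\!\!/G_\n$ is an affine variety.

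The symplectic structure then comes from Marsden--Weinstein reduction performed in these same two stages. Triviality of the $G_{\m,\n}^1$-bundle forces its action on $\mu_{\irr}^{-1}(\bm{\gamma}_{\irr})$ to be free, so $\bm{\gamma}_{\irr}$ is a regular value of $\mu_{\irr}$ and $M_{\bm{\gamma}}$ acquires an algebraic symplectic form as the first-stage reduction of $\Rep(\overline{Q},\m,\n)$ by $G_{\m,\n}^1$ at $\bm{\gamma}_{\irr}$; the residual $G_\n$-action on $M_{\bm{\gamma}}$ is Hamiltonian with moment map the descent of $\mu_{\res}$, and the further affine GIT reduction of $M_{\bm{\gamma}}$ by $G_\n$ at $\bm{\gamma}_{\res}$ realizes $\mathcal{Q}_{\bm{\gamma}}$ as an affine symplectic variety (symplectic at least on its smooth locus). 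The chief technical point to verify is that this two-stage reduction really computes the one-stage reduction at $\bm{\gamma}$ and that $\mu_{\res}$ descends to a bona fide moment map on $M_{\bm{\gamma}}$; both are standard consequences of the normality of $G_{\m,\n}^1$ in $G_{\m,\n}$ together with the $G_{\m,\n}$-fixedness of $\bm{\gamma}_{\irr}$ established at the outset.
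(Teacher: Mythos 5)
Your proof is correct and follows essentially the same route as the paper's: trivialize away the unipotent factor $G_{\m,\n}^1$, identify $\mu^{-1}(\bm{\gamma})\cong G_{\m,\n}^1\times X$ with $X=M_{\bm{\gamma}}\cap\mu_{\res}^{-1}(\bm{\gamma}_{\res})$ (your $N$ is this same set, since $M_{\bm{\gamma}}\subseteq\mu_{\irr}^{-1}(\bm{\gamma}_{\irr})$), and then take invariants in stages, with reductivity of $G_\n$ giving finite generation. The only cosmetic difference is that the paper derives the product decomposition from the constancy of $\mu_{\res}$ on $G_{\m,\n}^1$-orbits (via normality), whereas you derive it from the $G_{\m,\n}$-stability of $\mu^{-1}(\bm{\gamma})$ together with the structure of $G_{\m,\n}^1$-stable subvarieties of a trivial bundle; these are equivalent observations.
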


\begin{rmk}
By ``symplectic variety'', we mean that it is an algebraic symplectic manifold along its smooth locus.  We are making no claims about the possible singular locus.
\end{rmk}

\begin{proof}
We deal only with the finite generation of $\K[\mu^{-1}(\bm{\gamma})]^{G_{\m, \n}}$; the symplectic structure arises as usual on the smooth locus \cite[Theorem 1]{MW74}.  Since $G_{\m, \n}^1$ is normal in $G_{\m, \n}$, it is not hard to see that $\mu_{\res}$ is constant on $G_{\m, \n}^1$-orbits in $\Rep(\overline{Q}, \m, \n)$.  We have that
\begin{align*}
\mu^{-1}( \bm{\gamma}) = \mu_{\res}^{-1}( \bm{\gamma}_{\res}) \cap \mu_{\irr}^{-1}( \bm{\gamma}_{\irr}) \cong \mu_{\res}^{-1}( \bm{\gamma}_{\res}) \cap \left( G_{\m, \n}^1 \times M_{\bm{\gamma}} \right).
\end{align*}
Using the fact just mentioned, we obtain an isomorphism
\begin{align*}
\mu^{-1}( \bm{\gamma}) \cong G_{\m, \n}^1 \times \left( \mu_{\res}^{-1}( \bm{\gamma}_{\res}) \cap M_{\bm{\gamma}} \right).
\end{align*}
Now write $X := \mu_{\res}^{-1}( \bm{\gamma}_{\res}) \cap M_{\bm{\gamma}}$ for the second factor and note that this is an affine algebraic $G_\n$-variety.  From this,
\begin{align*}
\K[ \mu^{-1}( \bm{\gamma}) ]^{G_{\m, \n}} = \left( \left( \K[G_{\m, \n}^1] \otimes_\K \K[X] \right)^{G_{\m, \n}^1} \right)^{G_{\n}} = \K[X]^{G_{\n}},
\end{align*}
and since $G_\n$ is reductive, this is a finitely generated $\K$-algebra.
\end{proof}

\begin{rmk}
The idea in the proof comes from the procedure in the general theory of Hamiltonian reduction known as ``reduction in stages''. When taking a symplectic quotient by a group with a semi-direct product decomposition, one obtains the same quotient by first reducing by the normal subgroup and then by the quotient group. See, for example, \cite[\S4.2]{MMOPR}.
\end{rmk}

\subsubsection{Star-shaped quivers with multiplicities constant on the legs} \label{s:star}

We now describe a class of quivers with multiplicities and give conditions on the choice of $\bm{\gamma} \in \g_{\m, \n}^\vee$ for which the hypotheses of Lemma \ref{l:muirrtriv} will hold.  Let $Q$ be a star-shaped quiver as on \cite[p.340]{CB03},\footnote{The same diagram also appears at \cite[p.27]{Yamakawa2008} and at \cite[p.348, Figure 1]{HLV11}.}:

\vspace{10pt}

\begin{center}
\unitlength 0.1in
\begin{picture}( 45.0000, 15.4500)( 11.1000,-17.0000)
%
\special{pn 8}%
\special{ar 1376 1010 70 70  0.0000000 6.2831853}%
%
\special{pn 8}%
\special{ar 1946 410 70 70  0.0000000 6.2831853}%
%
\special{pn 8}%
\special{ar 2946 410 70 70  0.0000000 6.2831853}%
%
\special{pn 8}%
\special{ar 5540 410 70 70  0.0000000 6.2831853}%
%
\special{pn 8}%
\special{ar 1946 810 70 70  0.0000000 6.2831853}%
%
\special{pn 8}%
\special{ar 2946 810 70 70  0.0000000 6.2831853}%
%
\special{pn 8}%
\special{ar 5540 810 70 70  0.0000000 6.2831853}%
%
\special{pn 8}%
\special{ar 1946 1610 70 70  0.0000000 6.2831853}%
%
\special{pn 8}%
\special{ar 2946 1610 70 70  0.0000000 6.2831853}%
%
\special{pn 8}%
\special{ar 5540 1610 70 70  0.0000000 6.2831853}%
%
\special{pn 8}%
\special{pa 1890 1560}%
\special{pa 1440 1050}%
\special{fp}%
\special{sh 1}%
\special{pa 1440 1050}%
\special{pa 1470 1114}%
\special{pa 1476 1090}%
\special{pa 1500 1088}%
\special{pa 1440 1050}%
\special{fp}%
%
\special{pn 8}%
\special{pa 2870 410}%
\special{pa 2020 410}%
\special{fp}%
\special{sh 1}%
\special{pa 2020 410}%
\special{pa 2088 430}%
\special{pa 2074 410}%
\special{pa 2088 390}%
\special{pa 2020 410}%
\special{fp}%
%
\special{pn 8}%
\special{pa 3720 410}%
\special{pa 3010 410}%
\special{fp}%
\special{sh 1}%
\special{pa 3010 410}%
\special{pa 3078 430}%
\special{pa 3064 410}%
\special{pa 3078 390}%
\special{pa 3010 410}%
\special{fp}%
\special{pa 3730 410}%
\special{pa 3010 410}%
\special{fp}%
\special{sh 1}%
\special{pa 3010 410}%
\special{pa 3078 430}%
\special{pa 3064 410}%
\special{pa 3078 390}%
\special{pa 3010 410}%
\special{fp}%
%
\special{pn 8}%
\special{pa 2870 810}%
\special{pa 2020 810}%
\special{fp}%
\special{sh 1}%
\special{pa 2020 810}%
\special{pa 2088 830}%
\special{pa 2074 810}%
\special{pa 2088 790}%
\special{pa 2020 810}%
\special{fp}%
%
\special{pn 8}%
\special{pa 2870 1610}%
\special{pa 2020 1610}%
\special{fp}%
\special{sh 1}%
\special{pa 2020 1610}%
\special{pa 2088 1630}%
\special{pa 2074 1610}%
\special{pa 2088 1590}%
\special{pa 2020 1610}%
\special{fp}%
%
\special{pn 8}%
\special{pa 3730 810}%
\special{pa 3020 810}%
\special{fp}%
\special{sh 1}%
\special{pa 3020 810}%
\special{pa 3088 830}%
\special{pa 3074 810}%
\special{pa 3088 790}%
\special{pa 3020 810}%
\special{fp}%
\special{pa 3740 810}%
\special{pa 3020 810}%
\special{fp}%
\special{sh 1}%
\special{pa 3020 810}%
\special{pa 3088 830}%
\special{pa 3074 810}%
\special{pa 3088 790}%
\special{pa 3020 810}%
\special{fp}%
%
\special{pn 8}%
\special{pa 3730 1610}%
\special{pa 3020 1610}%
\special{fp}%
\special{sh 1}%
\special{pa 3020 1610}%
\special{pa 3088 1630}%
\special{pa 3074 1610}%
\special{pa 3088 1590}%
\special{pa 3020 1610}%
\special{fp}%
\special{pa 3740 1610}%
\special{pa 3020 1610}%
\special{fp}%
\special{sh 1}%
\special{pa 3020 1610}%
\special{pa 3088 1630}%
\special{pa 3074 1610}%
\special{pa 3088 1590}%
\special{pa 3020 1610}%
\special{fp}%
%
\special{pn 8}%
\special{pa 5466 410}%
\special{pa 4746 410}%
\special{fp}%
\special{sh 1}%
\special{pa 4746 410}%
\special{pa 4812 430}%
\special{pa 4798 410}%
\special{pa 4812 390}%
\special{pa 4746 410}%
\special{fp}%
%
\special{pn 8}%
\special{pa 5466 810}%
\special{pa 4746 810}%
\special{fp}%
\special{sh 1}%
\special{pa 4746 810}%
\special{pa 4812 830}%
\special{pa 4798 810}%
\special{pa 4812 790}%
\special{pa 4746 810}%
\special{fp}%
%
\special{pn 8}%
\special{pa 5466 1610}%
\special{pa 4746 1610}%
\special{fp}%
\special{sh 1}%
\special{pa 4746 1610}%
\special{pa 4812 1630}%
\special{pa 4798 1610}%
\special{pa 4812 1590}%
\special{pa 4746 1610}%
\special{fp}%
%
\special{pn 8}%
\special{pa 1880 840}%
\special{pa 1450 990}%
\special{fp}%
\special{sh 1}%
\special{pa 1450 990}%
\special{pa 1520 988}%
\special{pa 1500 972}%
\special{pa 1506 950}%
\special{pa 1450 990}%
\special{fp}%
%
\special{pn 8}%
\special{pa 1900 460}%
\special{pa 1430 960}%
\special{fp}%
\special{sh 1}%
\special{pa 1430 960}%
\special{pa 1490 926}%
\special{pa 1468 922}%
\special{pa 1462 898}%
\special{pa 1430 960}%
\special{fp}%
%
\special{pn 8}%
\special{sh 1}%
\special{ar 1946 1010 10 10 0  6.28318530717959E+0000}%
\special{sh 1}%
\special{ar 1946 1210 10 10 0  6.28318530717959E+0000}%
\special{sh 1}%
\special{ar 1946 1410 10 10 0  6.28318530717959E+0000}%
\special{sh 1}%
\special{ar 1946 1410 10 10 0  6.28318530717959E+0000}%
%
\special{pn 8}%
\special{sh 1}%
\special{ar 4056 410 10 10 0  6.28318530717959E+0000}%
\special{sh 1}%
\special{ar 4266 410 10 10 0  6.28318530717959E+0000}%
\special{sh 1}%
\special{ar 4456 410 10 10 0  6.28318530717959E+0000}%
\special{sh 1}%
\special{ar 4456 410 10 10 0  6.28318530717959E+0000}%
%
\special{pn 8}%
\special{sh 1}%
\special{ar 4056 810 10 10 0  6.28318530717959E+0000}%
\special{sh 1}%
\special{ar 4266 810 10 10 0  6.28318530717959E+0000}%
\special{sh 1}%
\special{ar 4456 810 10 10 0  6.28318530717959E+0000}%
\special{sh 1}%
\special{ar 4456 810 10 10 0  6.28318530717959E+0000}%
%
\special{pn 8}%
\special{sh 1}%
\special{ar 4056 1610 10 10 0  6.28318530717959E+0000}%
\special{sh 1}%
\special{ar 4266 1610 10 10 0  6.28318530717959E+0000}%
\special{sh 1}%
\special{ar 4456 1610 10 10 0  6.28318530717959E+0000}%
\special{sh 1}%
\special{ar 4456 1610 10 10 0  6.28318530717959E+0000}%
\put(19.7000,-2.4500){\makebox(0,0){$[1,1]$}}%
\put(29.7000,-2.4000){\makebox(0,0){$[1,2]$}}%
\put(55.7000,-2.5000){\makebox(0,0){$[1,l_1]$}}%
\put(19.7000,-6.5500){\makebox(0,0){$[2,1]$}}%
\put(29.7000,-6.4500){\makebox(0,0){$[2,2]$}}%
\put(55.7000,-6.5500){\makebox(0,0){$[2,l_2]$}}%
\put(19.7000,-17.8500){\makebox(0,0){$[n,1]$}}%
\put(29.7000,-17.8500){\makebox(0,0){$[n,2]$}}%
\put(55.7000,-17.8500){\makebox(0,0){$[n,l_n]$}}%
\put(12.4500,-10.1000){\makebox(0,0){$0$}}%
%
\special{pn 8}%
\special{sh 1}%
\special{ar 2950 1010 10 10 0  6.28318530717959E+0000}%
\special{sh 1}%
\special{ar 2950 1210 10 10 0  6.28318530717959E+0000}%
\special{sh 1}%
\special{ar 2950 1410 10 10 0  6.28318530717959E+0000}%
\special{sh 1}%
\special{ar 2950 1410 10 10 0  6.28318530717959E+0000}%
\end{picture}%
\end{center}
with $d$ legs and the $i$-th leg of length $l_i$.  Thus, the vertex set is
\begin{align} \label{e:starvert}
Q_0 = \{ 0 \} \cup \bigcup_{i=1}^d \{ [i,1], \ldots, [i,l_i] \}.
\end{align}
For notational convenience, for $1 \leq i \leq d$, we will often write $[i, 0]$ for $0$.  Then for $1 \leq i \leq d$ and $1 \leq j \leq l_i$, the doubled quiver $\overline{Q}$ will have one arrow from $[i,j-1]$ to $[i,j]$ and one from $[i,j]$ to $[i,j-1]$.

For each $1 \leq i \leq d$, we fix $m_i \in \Z_{>0}$ and choose the multiplicity vector $\m$ with
\begin{align} \label{e:starmult}
\m_0 & = 1 & \m_{[i,j]} & = m_i;
\end{align}
thus, the multiplicity is fixed in each leg (away from the central vertex $0$).  The dimension vector $\n$ will be
\begin{align} \label{e:stardim}
\n_0 & = n & \n_{[i,j]} & = n_{i,j}
\end{align}
with $n > n_{i,1} > \ldots > n_{i,l_i} > 0$ for $1 \leq i \leq d$, so that the dimensions are decreasing as one moves away from the central vertex on each leg.

Observe that with this,
\begin{align*}
G_{\m, \n} = \GL_n(\K) \times \prod_{i=1}^d \prod_{j=1}^{l_i} \GL_{n_{i,j}}(\RR_{m_i});
\end{align*}
hence
\begin{align*}
\g_{\m, \n} & = \gl_n(\K) \oplus \bigoplus_{i=1}^d \bigoplus_{j=1}^{l_i} \gl_{n_{i,j}}(\RR_{m_i}) & \g_{\m, \n}^\vee & = z^{-1} \gl_n(\K) \oplus \bigoplus_{i=1}^d \bigoplus_{j=1}^{l_i} z^{-m_i} \gl_{n_{i,j}}(\RR_{m_i}).
\end{align*}
We choose $\bm{\gamma} \in \g_{\m, \n}$ of the form
\begin{align*}
\bm{\gamma} & = (\gamma^0 \1_n, \gamma^{[i,j]} \1_{n_{i,j}})_{ \substack{1 \leq i \leq d \\ 1 \leq j \leq l_i}} & & \textnormal{ with } & \gamma^0 & \in z^{-1} \cdot \K, \quad \gamma^{[i,j]} \in z^{-m_i} \RR_{m_i}.
\end{align*}
We further assume that for each $1 \leq i \leq d$ and for $1 \leq j \leq k \leq l_i$,
\begin{align} \label{e:cassum2}
 z^{m_i} \left( \gamma^{[i,j]} + \cdots + \gamma^{[i,k]} \right) \in \RR_{m_i}^\times  .
\end{align}

\begin{prop} \label{p:qmvstar}
With $(Q, \m, \n)$ and $\bm{\gamma}$ as above, $\mathcal{Q}_{\bm{\gamma}}$ is an affine algebraic symplectic variety.
\end{prop}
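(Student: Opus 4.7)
The goal is to invoke Lemma~\ref{l:muirrtriv}: I need to produce a $G_\n$-invariant closed subvariety $M_{\bm\gamma}\subseteq \mu_{\irr}^{-1}(\bm\gamma_{\irr})$ together with a $G^1_{\m,\n}$-equivariant trivialization $\mu_{\irr}^{-1}(\bm\gamma_{\irr}) \cong G^1_{\m,\n}\times M_{\bm\gamma}$. The plan is to exploit the star shape to decouple legs, then on each leg to use the regularity of the outermost $\gamma$ and the cumulative regularity \eqref{e:cassum2} to perform an inductive trivialization in the spirit of Lemma~\ref{codecomp}.

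\textbf{Step 1 (Leg decomposition).} Because $m_0=1$, the central vertex contributes nothing to the unipotent radical, so
\[
G^1_{\m,\n} \;=\; \prod_{i=1}^d \prod_{j=1}^{l_i} \GL^1_{n_{i,j}}(\RR_{m_i}).
\]
The component of $\mu_{\irr}$ at each non-central vertex $[i,j]$ involves only the four arrows of leg $i$ meeting $[i,j]$, and the representation space itself splits as the central hom-space (untouched by $\mu_{\irr}$) times one factor per leg. Hence it suffices to prove the principal bundle statement one leg at a time, i.e.\ to show for each $i$ that the leg-wise irregular fiber is a trivial principal $\prod_{j=1}^{l_i}\GL^1_{n_{i,j}}(\RR_{m_i})$-bundle over a $G_\n$-invariant closed slice.

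\textbf{Step 2 (Outer vertex as a coadjoint orbit).} Fix leg $i$. At the outermost vertex $[i,l_i]$ the irregular moment map equation reads $p_{l_i}q_{l_i} \equiv \gamma^{[i,l_i]}_{\irr} I_{n_{i,l_i}}$, modulo the residue component. The regularity hypothesis \eqref{e:cassum2} applied to $j=k=l_i$ forces the leading coefficient of $\gamma^{[i,l_i]}$ to be a unit, which is precisely the input needed to run the trivialization of Lemma~\ref{codecomp}\eqref{l:C1aff}: the pair $(p_{l_i},q_{l_i})$ can be put into the normal form $(b\cdot p_{l_i}^\circ, q_{l_i}^\circ \cdot b^{-1})$ for a unique $b\in \GL^1_{n_{i,l_i}}(\RR_{m_i})$ and a canonical slice $(p_{l_i}^\circ,q_{l_i}^\circ)$ of some affine space --- this is the algebraic avatar of the identification of an outer-leg fiber with a $\GL_n(\RR_{m_i})$-coadjoint orbit (cf.\ the forthcoming Section~\ref{s:coadjorb}). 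The strict inequality $n_{i,l_i-1}>n_{i,l_i}$ guarantees $q_{l_i}$ is, generically on the slice, injective in the appropriate sense, so there is no obstruction to freeness.

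\textbf{Step 3 (Propagation inward).} Proceeding by descending induction on $j<l_i$, the equation at vertex $[i,j]$ is
\[
p_j q_j \;-\; q_{j+1}p_{j+1} \;\equiv\; \gamma^{[i,j]}_{\irr}\,I_{n_{i,j}} \pmod{\text{residue}}.
\]
Once $(p_{j+1},q_{j+1})$ has been placed in normal form, substituting yields an equation of the same shape as in Step~2, but now governed by the partial sum $\sum_{k=j}^{l_i}\gamma^{[i,k]}$, which by hypothesis \eqref{e:cassum2} again has unit leading coefficient. Thus the same argument --- $\GL^1_{n_{i,j}}(\RR_{m_i})$ acts freely, there is a canonical affine slice, and the combined inequality $n_{i,j-1}>n_{i,j}$ provides the required rank --- gives a trivial $\GL^1_{n_{i,j}}(\RR_{m_i})$-bundle structure over a slice. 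Composing the trivializations along the leg yields, for each $i$, a trivial $\prod_{j}\GL^1_{n_{i,j}}(\RR_{m_i})$-principal bundle structure with a $G_\n$-invariant affine base $M_{\bm\gamma}^{(i)}$.

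\textbf{Step 4 (Assembly and conclusion).} Take $M_{\bm\gamma}$ to be the product over $i$ of the $M_{\bm\gamma}^{(i)}$, together with the full central hom-space. By construction this is a $G_\n$-invariant closed subvariety of $\mu_{\irr}^{-1}(\bm\gamma_{\irr})$, and the trivializations assembled in Steps 1--3 produce the $G^1_{\m,\n}$-equivariant isomorphism required by Lemma~\ref{l:muirrtriv}. Applying that lemma yields that $\mathcal{Q}_{\bm\gamma}$ is an affine algebraic symplectic variety, completing the proof.

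The main technical obstacle is Step~2--3: writing out the explicit normal form for $(p_j,q_j)$ and verifying, uniformly in $j$, that the $\GL^1$-action is free and transverse to the slice. The condition \eqref{e:cassum2} on every partial sum --- not just on individual $\gamma^{[i,j]}$ --- is doing essential work here, playing the role that the regularity of $C_m$ plays in Lemma~\ref{intor}\eqref{secnd} and Lemma~\ref{codecomp}; without it the inductive elimination of the $\GL^1$-factors breaks down.
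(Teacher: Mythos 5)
Your overall strategy is the paper's: reduce to Lemma~\ref{l:muirrtriv}, use $m_0=1$ to decouple the legs, and on each leg eliminate the unipotent factors one vertex at a time, with the unit-leading-coefficient condition \eqref{e:cassum2} on the \emph{partial sums} supplying the invertibility (via the similarity chain \eqref{e:pq} and Remark~\ref{C}, one gets $p^j_0 q^j_m\in\GL_{n_j}(\K)$ at every vertex, which is what lets one solve for the gauge parameter coefficient by coefficient). However, your induction runs in the wrong direction, and this is a genuine gap, not a cosmetic one. From \eqref{e:Gmnact}--\eqref{e:ripq}, the gauge group $\GL^1_{n_j}(\RR_m)$ at vertex $j$ acts on \emph{both} adjacent edge pairs: it sends $(p^j,q^j)\mapsto (r^jp^j,\,q^j(r^j)^{-1})$ \emph{and} $(p^{j+1},q^{j+1})\mapsto (p^{j+1}(r^j)^{-1},\,r^jq^{j+1})$. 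The natural slice condition at a vertex (in the paper, that $p^{j}q^{j}_m$ be constant in $z$, i.e.\ $p^j_1q^j_m=\cdots=p^j_{m-1}q^j_m=0$) involves the higher coefficients $p^{j}_1,\dots,p^{j}_{m-1}$, and these are \emph{not} preserved by the action of $r^{j-1}$ at the inner neighbour: one computes $(p^{j}(r^{j-1})^{-1})_1 q^{j}_m = p^{j}_1q^{j}_m - p^{j}_0r^{j-1}_1q^{j}_m$, which is nonzero in general. So in your descending scheme, normalizing at vertex $j$ destroys the normalization already achieved at vertex $j+1$, and there is no evident reason the process stabilizes.

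The paper avoids this by inducting from the central vertex \emph{outward}: the action of $r^j$ preserves the leading/constant terms $q^{j}_m$, $p^{j}_0$, $q^{j+1}_m$, $p^{j+1}_0$, and the condition at vertex $j-1$ depends on the data at vertex $j$ only through $q^{j}_m$ and $p^{j}_0$; this is exactly Lemma~\ref{D}, and it is what makes the successive trivializations (Lemmata~\ref{E}--\ref{H}) compose into a global section. To repair your argument you must either reverse the order of Steps~2--3, or exhibit a slice condition at each vertex that is invariant under the gauge groups of all vertices closer to the centre --- which, given the asymmetry of the action just described, effectively forces the paper's choice. Two smaller points: the invertibility of $p^j_0q^j_m$ holds everywhere on $\mu_{\irr}^{-1}(\bm{\gamma}_{\irr})$, not just ``generically on the slice'' (and it must, since freeness of the unipotent action is needed at every point); and freeness alone does not yield a \emph{trivial} principal bundle --- one needs the explicit algebraic section $\varphi_j$ of Lemma~\ref{F}, which is precisely where $p^j_0q^j_m\in\GL_{n_j}(\K)$ is used to solve for $r^j_1,\dots,r^j_{m-1}$ successively.
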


In the next subsection, we will show that the hypotheses of Proposition \ref{p:qmvstar} imply those of Lemma \ref{l:muirrtriv}, which then provides a proof of the proposition.

\subsection{Proof of Proposition \ref{p:qmvstar}: construction of a section} \label{s:redstages}

Here we prove the following

\begin{prop}\label{trivbundleq}
Let $(Q, \m, \n)$ and $\bm{\gamma} \in \g_{\m, \n}^\vee$ be as in Proposition \ref{p:qmvstar}.  Then there is a $G_\n$-invariant closed subvariety $M_{\bm{\gamma}} \subseteq \mu_{\irr}^{-1}( \bm{\gamma}_{\irr})$ and a $G_{\m, \n}^1$-equivariant isomorphism
\begin{align} \label{e:G1section}
\mu_{\irr}^{-1}( \bm{\gamma}_{\irr}) \arsim G_{\m, \n}^1 \times M_{\bm{\gamma}},
 \end{align}
 as in the hypothesis of Lemma \ref{l:muirrtriv}.
\end{prop}

\paragraph{Simplified case: \texorpdfstring{$d = 1$}{Lg}.}

For notational simplicity, we will first assume that $d = 1$, so that we have a quiver with a single leg.  This allows us to drop the index $i$ from the notation in \eqref{e:starvert}, \eqref{e:starmult} and \eqref{e:stardim}.  Thus, the doubled quiver $\overline{Q}$ is the following
\begin{align} \label{e:oneleg}
\xymatrix{ 
\circ \ar@/^/[r]^{p^1} \ar@{-}[r] & \bullet \ar@/^/[l]^{q^1} \ar@/^/[r]^{p^2} \ar@{-}[r] & \bullet \ar@/^/[l]^{q^2} & \cdots & \bullet \ar@/^/[r]^{p^{l-1}} \ar@{-}[r] & \bullet \ar@/^/[l]^{q^{l-1}} \ar@/^/[r]^{p^l} \ar@{-}[r] & \bullet \ar@/^/[l]^{q^l}
},
\end{align}
with the vertices are labeled $0, \ldots, l$ from left to right, multiplicity vector $\m = (1, m, \ldots, m)$ and dimension vector $\n = (n, n_1, \ldots, n_l)$ with $n > n_1 > \cdots > n_l$.  To avoid having to write out things separately for the vertex $0$, we will often write $n_0 := n$.

Also, 
\begin{align*}
\bm{\gamma} & = \left( \gamma^0 \1_n, \gamma^j \1_{n_j} \right)_{1 \leq j \leq l} & & \textnormal{ with } & \gamma^0 \in z^{-1} \cdot \K, \quad \gamma^j \in z^{-m} \RR_m.
\end{align*}
The condition \eqref{e:cassum2} on $\bm{\gamma}$ reads
\begin{align} \label{e:cassum1}
z^m \left( \gamma^j + \cdots + \gamma^k \right) \in \RR_m^\times
\end{align}
for $1 \leq j \leq k \leq l$.

\paragraph{Explicit description of the groups, Lie algebras and their duals.}  Let us now be explicit about the groups that are involved.  One has
\begin{align} \label{e:Gmn}
G_{\m, \n} & = \GL_n(\K) \times \prod_{i=1}^l \GL_{n_i}(\RR_m) & G_{\m, \n}^1 & = \prod_{i=1}^l \GL_{n_i}^1(\RR_m),
\end{align}
with respective Lie algebras
\begin{align*}
\g_{\m, \n} & = \gl_n(\K) \oplus \prod_{i=1}^l \gl_{n_i}(\RR_m) & \g_{\m, \n}^1 & = \bigoplus_{i=1}^l \gl_{n_i}^1(\RR_m)
\end{align*}
and duals
\begin{align*}
\g_{\m, \n}^\vee & = z^{-1} \gl_n(\K) \oplus \prod_{i=1}^l z^{-m} \gl_{n_i}(\RR_m) & \left( \g_{\m, \n}^1 \right)^\vee & = \bigoplus_{i=1}^l z^{-m} \gl_{n_i}(\RR_m) \big/ z^{-1} \gl_{n_i}(\RR_m).
\end{align*}
One has direct sum decompositions as in \eqref{e:DS} and the projection maps to each factor simply omits the ``irregular'' part or the ``residue'' term, respectively, as in \eqref{pis}, \eqref{e:irrproj}.  As before, we write $\bm{\gamma}_{\res}$ and $\bm{\gamma}_\irr$ for the images of $\bm{\gamma}$ under the respective projections to $\g_{\n}$ and to $\g_{\m, \n}^1$.

\paragraph{Explicit description of the space of representations $\Rep(\overline{Q}, \m, \n)$ and group actions.}

Explicitly, the space $\Rep(\overline{Q}, \m, \n)$ is given by
\begin{align*}
\Hom_\K(\K^n, \RR_m^{\oplus n_1}) \oplus \Hom_\K(\RR_m^{\oplus n_1}, \K^n) \oplus  \bigoplus_{i=2}^l \left( \Hom_{\RR_m}(\RR_m^{n_{i-1}}, \RR_m^{n_i}) \oplus \Hom_{\RR_m}( \RR_m^{n_i}, \RR_m^{n_{i-1}}) \right).
\end{align*}
We will write elements of $\Rep(\overline{Q}, \m, \n)$ as pairs $(p,q)$, where $p = (p^1, \ldots, p^l)$ and $q = (q^1, \ldots, q^l)$ are each themselves $s$-tuples, where
\begin{align*}
p^1 & \in \Hom_\K(\K^n, \RR_m^{\oplus n_1}) & p^i & \in \Hom_{\RR_m}(\RR_m^{n_{i-1}}, \RR_m^{n_i}), \quad 2 \leq i \leq l \\
q^1 & \in \Hom_\K(\RR_m^{\oplus n_1}, \K^n) & q^i & \in \Hom_{\RR_m}(\RR_m^{n_i}, \RR_m^{n_{i-1}}), \quad 2 \leq i \leq l.
\end{align*}

Since $ \Hom_{\RR_m}(\RR_m^{r}, \RR_m^{s}) \cong  \Hom_{\K}(\K^{r}, \K^{s}) \otimes_{\K} \RR_m$, we will think of the $p^i$ as elements
\begin{align*}
p^i \in M_{n_i \times n_{i-1}}(\RR_m), \quad 1 \leq i \leq l
\end{align*}
and dually of the $q^i$ as Laurent polynomials
\begin{align*}
q^i \in z^{-m} M_{n_{i-1} \times n_i}(\RR_m), \quad 1 \leq i \leq l.
\end{align*}

In this notation, an element $g = (g^0, \ldots, g^l) \in G_{\m, \n}$ acts on $(p,q) \in \Rep(\overline{Q}, \m, \n)$ by
\begin{align} \label{e:Gmnact} 
g \cdot (p,q) & = \big( g^1 p^1 (g^0)^{-1}, g^2 p^2 (g^1)^{-1}, \ldots, g^l p^l (g^{l-1})^{-1}, \nonumber \\
& \quad \quad g^0 q^1 (g^1)^{-1}, g^1 q^2 (g^2)^{-1}, \ldots, g^{l-1} q^l (g^l)^{-1}  \big).
\end{align}
Of course, here $g^0 \in \GL_n(\K)$ and $g^i \in \GL_{n_i}(\RR_m)$.

Later we will need to be even more explicit, and so we will write 
\begin{align*}
p^i & = p_0^i + z p_1^i + \cdots + z^{m-1} p_{m-1}^i & q^i & = \frac{q_m^i}{z^m} + \cdots + \frac{q_1^i}{z}
\end{align*}
with $p_j^i \in M_{n_i \times n_{i-1}}(\K)$, $q_j^i \in M_{n_{i-1} \times n_i}(\K)$.  

When we will need to evaluate moment maps or group actions, we will need to regard, for example, the product $p^i q^i$ as an element of $\gl_{n_i}(\RR_m)^\vee = z^{-m} \gl_{n_i}(\RR_m)$.  We do this by  multiplying $p^i$ and $q^i$ as matrices of Laurent polynomials and truncating the terms of degree $\geq 0$. Explicitly,
\begin{align} \label{e:pqex}
p^i q^i = \frac{p_0^i q_m^i}{z^m} + \frac{p_0^i q_{m-1}^i + p_1^i q_m^i}{z^{m-1}} + \cdots + \frac{p_0^i q_1^i + p_1^i q_2^i + \cdots + p_{m-1}^i q_m^i}{z} \in \gl_{n_i}(\RR_m)^\vee.
\end{align}
Of course, products of the form $q^i p^i$ or $g^{i-1} q^i (g^i)^{-1}$ as in \eqref{e:Gmnact} are written similarly.  Products such as $g^i p^i (g^{i-1})^{-1}$ in \eqref{e:Gmnact} will be considered as multiplication of the relevant matrices with entries in $\RR_m$.

We will also write
\begin{align*}
\gamma^i = \frac{\gamma_m^i}{z^m} + \cdots + \frac{\gamma_1^i}{z}, \quad 1 \leq i \leq l
\end{align*}
with $\gamma_j^i \in \K$.  In particular, the assumption \eqref{e:cassum1} implies that $\gamma_m^i \in \K^\times$ for all $1 \leq i \leq l$.

\paragraph{Explicit description of the moment maps.}

The moment map $\mu : \Rep(\overline{Q}, \m, \n) \to \g_{\m, \n}^\vee$ has the explicit expression
\begin{align} \label{e:mu0}
\mu(p,q) = (-q^1 p^1, p^1 q^1 - q^2 p^2, \ldots, p^{l-1} q^{l-1} - q^l p^l, p^l q^l) \in \g_{\m, \n}^\vee.
\end{align}
Composing with the projections arising from the direct sum decomposition \eqref{e:DS}, we get moment maps 
\begin{align*}
& \mu_{\res} : \Rep(\overline{Q}, \m, \n) \to \g_{\n}^\vee & & \mu_{\irr} : \Rep(\overline{Q}, \m, \n) \to (\g_{\m, \n}^1)^\vee 
\end{align*}
which are, in fact, the moment maps for the restricted $G_{\n}$- and $G_{\m, \n}^1$-actions, respectively.

We are primarily interested in the preimage $\mu_{\irr}^{-1}(\bm{\gamma}_{\irr})$. The point $(p,q)$ lies in this preimage if and only if (in other words the moment map condition translates to)
\begin{align} \label{e:mu0inv}
p^i q^i & = q^{i+1} p^{i+1} + \gamma^i \1_{n_i}, \quad 1 \leq i \leq l-1 & p^l q^l & = \gamma^l \1_{n_l}.
\end{align}
We recall that $(\g_{\m, \n}^1)^\vee$ has no component corresponding to the vertex $0$. Thus if we write out an explicit expression as in \eqref{e:pqex}, we should ignore any contributions coming from the residue terms, since we are considering these equations in $(\g_{\m, \n}^1)^\vee$.

\paragraph{Definition of \texorpdfstring{$M_{\bm{\gamma}}$}{Lg} and its \texorpdfstring{$G_{\n}$}{Lg}-invariance.}

We now proceed with defining the subvariety $M_{\bm{\gamma}}$ appearing in Lemma \ref{l:muirrtriv}, where $(Q, \m, \n)$ is as in Proposition \ref{p:qmvstar} in the special case where $d = 1$.  We start by defining 
\begin{align*}
M_{\bm{\gamma}}^0 := \mu_{\irr}^{-1}(\bm{\gamma}_\irr)
\end{align*}
and define inductively a nested sequence of subvarieties of $M_{\bm{\gamma}}^0$ by
\begin{align} \label{Mlambdai}
M_{\bm{\gamma}}^i & := \left\{ (p,q) \in M_{\bm{\gamma}}^{i-1} \, : \, p^i q_m^i = \gamma_m^i \1_{n_i} + q_m^{i+1} p_0^{i+1} \right\} \nonumber \\
M_{\bm{\gamma}} = M_{\bm{\gamma}}^l & := \left\{ (p,q) \in M_{\bm{\gamma}}^{l-1} \, : \, p^l q_m^l = \gamma_m^l \1_{n_l} \right\}.
\end{align}

\begin{rmk} \label{r:Mlambdai}
On the left side of the defining equation for $M_{\bm{\gamma}}^i$, the matrix $p^i q_m^i$ is a priori a $\gl_{n_i}(\K)$-valued polynomial in $z$ of degree $\leq m-1$, but the right hand side is, in fact, a constant matrix.  So the defining condition is equivalent to the further conditions
\begin{align} \label{e:Mlamex}
p_1^i q_m^i = \cdots = p_{m-1}^i q_m^i = 0 \in \gl_{n_i}(\K).
\end{align} 
The fact that $M_{\bm{\gamma}}$ is an affine variety is thus clear.

Furthermore, from this description, it is easy to see that $M_{\bm{\gamma}}$ is a $G_{\n}$-invariant subvariety of $\Rep(\overline{Q}, \m, \n)$ using \eqref{e:Gmnact} and the appropriate expressions in \eqref{e:pqex}:  the vanishing conditions \eqref{e:Mlamex} imposed by \eqref{Mlambdai} are left unchanged.
\end{rmk}

\begin{rmk} \label{C}
 Equation \eqref{e:pq} below implies that $z^m p^i q^i \in \GL_{n_i}(\RR_m)$ for $1 \leq i \leq l$.  In particular, the constant term, namely $p_0^i q_m^i$, must lie in $\GL_{n_i}(\K)$.  This fact is crucial in the construction of the isomorphism \eqref{e:G1section}; see Lemma \ref{F} below.
\end{rmk}

We now establish some properties of the $M_{\bm{\gamma}}^i$ with respect to the action of $G_{\m, \n}^1$ and its subgroups.  Let us consider each $\GL_{n_i}^1(\RR_m)$ for $1 \leq i \leq l$ as a subgroup of $G_{\m, \n}^1$ via the obvious inclusion in \eqref{e:Gmn}. We will also use the subgroups 
\begin{align*}
(G_{\m, \n}^1)^i & := \prod_{j=i}^l \GL_{n_j}^1(\RR_m) \subset G_{\m, \n}^1, 
\end{align*}
noting that we have a (decreasing) chain of inclusions
\begin{align*}
(G_{\m, \n}^1)^l \subset (G_{\m, \n}^1)^{l-1} \subset \cdots \subset (G_{\m, \n}^1)^2 \subset (G_{\m, \n}^1)^1 = G_{\m, \n}^1.
\end{align*}

\begin{lem} \label{D}
$M_{\bm{\gamma}}^i$ is invariant under $(G_{\m, \n}^1)^{i+1}$.
\end{lem}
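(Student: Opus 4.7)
The plan is to argue by induction on $i$. The base case $i=0$ is immediate: since $\bm{\gamma}_{\irr}$ lies in the (scalar) centre $\bigoplus_{j=1}^l \K\cdot I_{n_j} \subseteq (\g_{\m,\n}^1)^\vee$, the coadjoint action of $G_{\m,\n}^1 = (G_{\m,\n}^1)^1$ fixes it, so $M_{\bm{\gamma}}^0 = \mu_{\irr}^{-1}(\bm{\gamma}_\irr)$ is $(G_{\m,\n}^1)^1$-invariant.

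For the inductive step, observe that $(G_{\m,\n}^1)^{i+1} \leq (G_{\m,\n}^1)^i$, so by the induction hypothesis $(G_{\m,\n}^1)^{i+1}$ already preserves $M_{\bm{\gamma}}^{i-1}$. Hence it suffices to check that the \emph{additional} defining condition of $M_{\bm{\gamma}}^i$ over $M_{\bm{\gamma}}^{i-1}$, namely
\begin{align*}
p^i q_m^i = \gamma_m^i I_{n_i} + q_m^{i+1} p_0^{i+1} \qquad \textnormal{(or $p^l q_m^l = \gamma_m^l I_{n_l}$ when $i=l$),}
\end{align*}
is preserved. I would do this by direct computation using the action formula \eqref{e:Gmnact}. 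Write $g = (\unt,\ldots,\unt,g^{i+1},\ldots,g^l) \in (G_{\m,\n}^1)^{i+1}$, so $g^j = \unt$ for $0 \leq j \leq i$. Then the only components of $(p,q)$ that are touched by the action are
\begin{align*}
p^{i+1} & \mapsto g^{i+1} p^{i+1}, & q^{i+1} & \mapsto q^{i+1} (g^{i+1})^{-1}, & p^j, q^j \textnormal{ for } j > i+1 \textnormal{ as usual},
\end{align*}
while $p^i$, $q^i$, $p^k$ and $q^k$ for $k \leq i$ are fixed. In particular $p^i$ and $q_m^i$ are unchanged. The crucial point is that $g^{i+1} \in \GL_{n_{i+1}}^1(\RR_m)$ has constant term $\unt_{n_{i+1}}$, as does $(g^{i+1})^{-1}$; hence the leading Laurent coefficient of $q^{i+1}(g^{i+1})^{-1}$ is still $q_m^{i+1}$, and the constant term of $g^{i+1} p^{i+1}$ is still $p_0^{i+1}$. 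All four quantities appearing in the defining equation are therefore $g$-invariant, so the equation is preserved, completing the induction.

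The proof is more bookkeeping than substance; no analytic obstacle arises. The only thing to be a bit careful about is to verify, in the inductive hypothesis, that the restriction of the $(G_{\m,\n}^1)^i$-action to the subgroup $(G_{\m,\n}^1)^{i+1}$ really is the action we compute with above (i.e.\ that the embeddings $(G_{\m,\n}^1)^{i+1} \hookrightarrow (G_{\m,\n}^1)^i \hookrightarrow G_{\m,\n}^1$ are compatible with \eqref{e:Gmn}), but this is tautological from the definitions. Once the case $d=1$ is established, the general case of Proposition~\ref{p:qmvstar} reduces leg-by-leg, since the defining conditions on distinct legs are independent and the groups act independently.
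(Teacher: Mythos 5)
Your proof is correct and is essentially the paper's argument: the only computation that matters is that $r^{i+1}$ and $(r^{i+1})^{-1}$ have constant term $I_{n_{i+1}}$, so $p_0^{i+1}$ and $q_m^{i+1}$ (and $p^i, q_m^i$) are untouched, and membership in $M_{\bm{\gamma}}^{i-1}$ is preserved because the relevant data are unchanged (for $M_{\bm{\gamma}}^0$, by equivariance of $\mu_{\irr}$ and centrality of $\bm{\gamma}_{\irr}$). The paper phrases this as a direct observation rather than a formal induction, but the content is identical.
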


\begin{proof}
Let $(p,q) \in M_{\bm{\gamma}}^i$ and $r = (1, \ldots, 1, r^{i+1}, \ldots, r^l) \in (G_{\m, \n}^1)^{i+1}$.  Then 
\begin{align*}
r \cdot (p,q) = & \big(p^1, q^1, \ldots, p^i, q^i, r^{i+1} p^{i+1}, q^{i+1} (r^{i+1})^{-1}, r^{i+2} p^{i+2} (r^{i+1})^{-1}, r^{i+1} q^{i+2} (r^{i+2})^{-1}, \\
& \ldots, r^l p^l (r^{l-1})^{-1} , r^{l-1} q^l (r^l)^{-1} \big).
\end{align*}
Observe that $r \cdot (p,q) \in M_{\bm{\gamma}}^{i-1}$ since the condition for this to hold depends only on the components $p^1, q^1, \ldots, p^i, q^i$, which are unchanged under the action of $r$.  We only need to check the condition for $M_{\bm{\gamma}}^i$, which involves the terms $q_m^{i+1}$, $p_0^{i+1}$. It suffices to see that
\begin{align*}
(r^{i+1} p^{i+1})_0 & = p_0^{i+1} & \big(q^{i+1} (r^{i+1})^{-1} \big)_m & = q_m^{i+1}.
 \end{align*}
But since $r^{i+1}\in \GL_{n_{i+1}}^1(\RR_m)$, the constant term of $r^{i+1}$ and $(r^{i+1})^{-1}$ are both $\1_{n_{i+1}}$.
\end{proof}

\begin{lem} \label{E}
Let $r^i \in \GL_{n_i}^1(\RR_m)$ and $(p, q) \in M_{\bm{\gamma}}^i$.  If $r^i \cdot (p,q) \in M_{\bm{\gamma}}^i$ then $r^i = \1_{n_i}$.
\end{lem}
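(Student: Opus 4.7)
The plan is to track how the action of $r^i$ modifies the top-order coefficients in the defining equation of $M_{\bm\gamma}^i$, and then to exploit the invertibility supplied by Remark \ref{C}. I will write $r^i = I_{n_i} + zr_1^i + \cdots + z^{m-1}r_{m-1}^i$ and view $r^i$ as the element of $G_{\m,\n}^1$ with identity at every other vertex. By \eqref{e:Gmnact} only $p^i, q^i, p^{i+1}, q^{i+1}$ are affected, via
\[ p^i \mapsto r^i p^i, \quad q^i \mapsto q^i (r^i)^{-1}, \quad p^{i+1} \mapsto p^{i+1}(r^i)^{-1}, \quad q^{i+1} \mapsto r^i q^{i+1}. \]
Since both $r^i$ and $(r^i)^{-1}$ have constant term $I_{n_i}$, the extremal coefficients $(r^i p^i)_0 = p_0^i$, $(q^i(r^i)^{-1})_m = q_m^i$, $(p^{i+1}(r^i)^{-1})_0 = p_0^{i+1}$, and $(r^i q^{i+1})_m = q_m^{i+1}$ are all unchanged. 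Because these are precisely the pieces appearing in the defining equations of $M_{\bm\gamma}^j$ for $j<i$, those conditions, as well as membership in $\mu_\irr^{-1}(\bm\gamma_\irr)$ (which is $G_{\m,\n}$-invariant since $\bm\gamma$ has singleton coadjoint orbit), are automatically inherited by the translate.

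Applying the above to the one remaining condition at level $i$, the hypothesis $r^i \cdot (p,q) \in M_{\bm\gamma}^i$ reduces to $r^i p^i q_m^i = \gamma_m^i I_{n_i} + q_m^{i+1} p_0^{i+1}$. On the other hand, since $(p,q)$ itself lies in $M_{\bm\gamma}^i$, Remark \ref{r:Mlambdai} together with \eqref{e:Mlamex} tells me that $p^i q_m^i$ is already the \emph{constant} matrix $p_0^i q_m^i = \gamma_m^i I_{n_i} + q_m^{i+1} p_0^{i+1}$. Subtracting the two equalities, the condition collapses to
\[ (r^i - I_{n_i}) \cdot p_0^i q_m^i \;=\; 0 \quad \text{in } M_{n_i \times n_i}(\RR_m). \]
The essential input now is Remark \ref{C}: $p_0^i q_m^i \in \GL_{n_i}(\K)$, hence is invertible also over $\RR_m$. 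Multiplying the displayed identity on the right by its inverse forces $r^i = I_{n_i}$, completing the argument. The only non-routine step is this appeal to invertibility---without it one could at most conclude that $r^i - I_{n_i}$ annihilates some element of $\gl_{n_i}(\K)$ which, a priori, need not be a unit.
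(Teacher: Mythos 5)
Your proof is correct and follows essentially the same route as the paper's: both reduce the level-$i$ condition for the translate to $r^i p_0^i q_m^i = p_0^i q_m^i$ using that the extremal coefficients of $q^i$, $q^{i+1}$, $p^{i+1}$ are unchanged and that $p^i q_m^i$ is already the constant matrix $p_0^i q_m^i$, and then invoke the invertibility of $p_0^i q_m^i$ from Remark \ref{C} to cancel it. No gaps.
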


\begin{proof}
Write $r^i = \1_{n_i} + z r_1^i + \cdots + z^{m-1} r_{m-1}^i$.  Then 
\begin{align} \label{e:ripq}
r^i \cdot (p,q) = \left( p^1, q^1, \ldots, r^i p^i, q^i(r^i)^{-1}, p^{i+1} (r^i)^{-1}, r^i q^{i+1}, \ldots, p^l, q^l \right).
\end{align}
The condition for $r^i \cdot (p,q) \in M_{\bm{\gamma}}^i$ is
\begin{align*}
r^i p^i \left( q^i(r^i)^{-1} \right)_m = \gamma_m^i \1_{n_i} + (r^i q^{i+1})_m \left( p^{i+1} (r^i)^{-1} \right)_0,
\end{align*}
but 
\begin{align*}
\left( q^i (r^i)^{-1} \right)_m & = q_m^i & (r^i q^{i+1})_m & = q_m^{i+1} & \left( p^{i+1} (r^i)^{-1} \right)_0 & = p_0^{i+1},
\end{align*}
so we need
\begin{align*}
r^i p^i q_m^i = \gamma_m^i \1_{n_i} + q_m^{i+1} p_0^{i+1} = p_0^i q_m^i,
\end{align*}
where the last equality follows from the definition of $M_{\bm{\gamma}}^i$. But by Remark \ref{C}, $p_0^i q_m^i \in \GL_{n_i}(\K) \subset \GL_{n_i}(\RR_m)$ and hence this may be regarded as an equation in $\GL_{n_i}^1(\RR_m)$ and hence $r^i = \1_{n_i}$.
\end{proof}

\paragraph{Construction of the isomorphism \eqref{e:G1section}.}

\begin{lem} \label{F}
For $1 \leq i \leq l$, there exists a morphism $\varphi_i : M_{\bm{\gamma}}^{i-1} \to \GL_{n_i}^1(\RR_m)$ such that for all $(p,q) \in M_{\bm{\gamma}}^{i-1}$ we have $\varphi_i(p,q) \cdot (p, q) \in M_{\bm{\gamma}}^i$.
\end{lem}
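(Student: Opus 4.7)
The strategy is to construct $\varphi_i(p,q) = r^i \in \GL^1_{n_i}(\RR_m)$ explicitly by writing $r^i = I_{n_i} + z r_1^i + \cdots + z^{m-1} r_{m-1}^i$ and solving for the coefficients $r_j^i \in \gl_{n_i}(\K)$ by a triangular recursion. First I would record that under the action of $r^i \in \GL^1_{n_i}(\RR_m)$, viewed in the obvious way as a subgroup of $G_{\m, \n}^1$, the data $p_0^i$, $q_m^i$, $p_0^{i+1}$ and $q_m^{i+1}$ are all preserved, since both $r^i$ and $(r^i)^{-1}$ have constant term $I_{n_i}$. In particular (mimicking the proof of Lemma~\ref{D}) $M_{\bm{\gamma}}^{i-1}$ is $\GL^1_{n_i}(\RR_m)$-invariant, so after acting we remain in $M_{\bm{\gamma}}^{i-1}$ and only the fresh condition cutting out $M_{\bm{\gamma}}^i$ needs to be arranged.

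Next, extracting the coefficient of $z^{-m}$ from the moment-map relation $p^i q^i = q^{i+1} p^{i+1} + \gamma^i I_{n_i}$ (or $p^l q^l = \gamma^l I_{n_l}$ in the terminal case $i=l$), which holds on $M_{\bm{\gamma}}^{i-1}\subseteq M_{\bm{\gamma}}^0$, yields
\[
p_0^i q_m^i \;=\; \gamma_m^i I_{n_i} + q_m^{i+1} p_0^{i+1}.
\]
Combining this with the invariance observations above, the condition $r^i\cdot (p,q) \in M_{\bm{\gamma}}^i$ collapses to the single equation
\[
r^i\, p^i\, q_m^i \;=\; p_0^i\, q_m^i \qquad \text{in } M_{n_i}(\RR_m).
\]
Comparing coefficients of $z^j$ for $1\leq j \leq m-1$ turns this into the triangular system
\[
r_j^i\, (p_0^i q_m^i) \;=\; -\Bigl( p_j^i + \sum_{k=1}^{j-1} r_k^i\, p_{j-k}^i \Bigr) q_m^i,
\]
which by Remark~\ref{C} (giving $p_0^i q_m^i \in \GL_{n_i}(\K)$) can be solved inductively:
\[
r_j^i \;=\; -\Bigl( p_j^i + \sum_{k=1}^{j-1} r_k^i\, p_{j-k}^i \Bigr)\, q_m^i\, (p_0^i q_m^i)^{-1}.
\]
Each $r_j^i$ is thus a polynomial expression in the entries of $p$, $q_m^i$ and the entries of $(p_0^i q_m^i)^{-1}$, so the assignment $(p,q)\mapsto r^i$ defines a morphism $\varphi_i: M_{\bm{\gamma}}^{i-1} \to \GL^1_{n_i}(\RR_m)$ with $\varphi_i(p,q)\cdot(p,q)\in M_{\bm{\gamma}}^i$.

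There is no serious obstacle in this argument; the entire construction hinges on the fact that $p_0^i q_m^i$ is invertible on $M_{\bm{\gamma}}^{i-1}$, which is precisely what Remark~\ref{C} provides and what makes the recursion defining $r_j^i$ well posed as a morphism of varieties. The only bookkeeping required is the verification that $\GL^1_{n_i}(\RR_m)$ leaves all the previous defining equations for $M_{\bm{\gamma}}^{i-1}$ untouched, which follows from the fact that the action changes $p^i, q^i, p^{i+1}, q^{i+1}$ but leaves fixed exactly those coefficients of these appearing in the defining conditions for $M_{\bm{\gamma}}^0, \ldots, M_{\bm{\gamma}}^{i-1}$.
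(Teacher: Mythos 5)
Your proposal is correct and follows essentially the same route as the paper: reduce to the single condition that the non-constant $z$-coefficients of $(r^i p^i) q_m^i$ vanish (the other defining equations of $M_{\bm{\gamma}}^{i-1}$ and the relevant leading coefficients being untouched by the $\GL_{n_i}^1(\RR_m)$-action), and then solve the resulting triangular system using the invertibility of $p_0^i q_m^i$ from Remark \ref{C}. Your explicit recursion $r_j^i = -\bigl( p_j^i + \sum_{k=1}^{j-1} r_k^i p_{j-k}^i \bigr) q_m^i (p_0^i q_m^i)^{-1}$ is the correct closed form of what the paper solves for inductively.
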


\begin{proof}
Let $(p,q) \in M_{\bm{\gamma}}^{i-1}$ and $r^i \in \GL_{n_i}^1(\RR_m)$.  Since $r^i \in (G_{\m, \n}^1)^i$, $r^i \cdot (p,q) \in M_{\bm{\gamma}}^{i-1}$ and so we need only check the condition for $M_{\bm{\gamma}}^i$.  We recall the expression for $r^i \cdot (p,q)$ from \eqref{e:ripq} and observe that
\begin{align*}
r^i p^i q_m^i = & \ p_0^i q_m^i + z (p_1^i + r_1^i p_0^i) q_m^i + z^2 ( p_2^i + r_1^i p_1^i + r_2^i p_0^i) q_m^i + \cdots + z^{m-1} ( p_{m-1}^i + r_1^i p_{m-2}^i + \cdots \\
& \ + r_{m-1}^i p_0^i) q_m^i.
\end{align*}
The condition we want is that all the non-constant (with respect to $z$) terms  vanish (Remark \ref{r:Mlambdai}).  But now, by Remark \ref{C}, $p_0^i q_m^i$ is invertible, and so starting with the coefficient of $z$ above, we may solve for $r_1^i = - p_1^i (p_0^i q_m^i)^{-1}$ so that this coefficient vanishes.  It is then clear that we may successively solve for $r_2^i$, $\ldots$, $r_{m-1}^i$ algebraically as functions of $p$ and $q$ to eliminate the remaining non-constant terms.  This produces $\varphi_i$ with the stated property.
\end{proof}

\begin{cor} \label{G}
\begin{enumerate}[(a)]
\item For $r^i \in \GL_{n_i}^1(\RR_m)$ and $x \in M_{\bm{\gamma}}^{i-1}$ we have $\varphi_i(r^i \cdot x) = \varphi_i(x) (r^i)^{-1}$.
\item For $\bar{r} \in (G_{\m, \n}^1)^{i+1}$ and $x \in M_{\bm{\gamma}}^{i-1}$ we have $\varphi_i(\bar{r} \cdot x) = \varphi_i(x)$.
\end{enumerate}
\end{cor}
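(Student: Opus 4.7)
The strategy for both parts is the same: use the defining property of $\varphi_i$ from Lemma \ref{F} together with the uniqueness statement in Lemma \ref{E} to identify two elements of $\GL_{n_i}^1(\RR_m)$ that both bring a given point into $M_{\bm{\gamma}}^i$. The key observation is that the subgroup $\GL_{n_i}^1(\RR_m)$ in which $\varphi_i$ takes values commutes with $(G_{\m, \n}^1)^{i+1}$ inside $G_{\m, \n}$, since they act on disjoint components of the quiver.

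For part (a), set $y := r^i \cdot x$. Since $r^i \in \GL_{n_i}^1(\RR_m) \leq (G_{\m, \n}^1)^{i}$, Lemma \ref{D} gives $y \in M_{\bm{\gamma}}^{i-1}$, so $\varphi_i(y)$ is defined. By Lemma \ref{F} both $\varphi_i(y) \cdot y$ and $\varphi_i(x) \cdot x = \varphi_i(x)(r^i)^{-1} \cdot y$ lie in $M_{\bm{\gamma}}^i$. Letting $z := \varphi_i(x)(r^i)^{-1} \cdot y \in M_{\bm{\gamma}}^i$, the element
\[
s := \varphi_i(y) \cdot \bigl(\varphi_i(x)(r^i)^{-1}\bigr)^{-1} = \varphi_i(y) \, r^i \, \varphi_i(x)^{-1} \in \GL_{n_i}^1(\RR_m)
\]
satisfies $s \cdot z = \varphi_i(y) \cdot y \in M_{\bm{\gamma}}^i$. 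Lemma \ref{E} then forces $s = I_{n_i}$, giving $\varphi_i(y) = \varphi_i(x)(r^i)^{-1}$.

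For part (b), set $y := \bar{r} \cdot x$; Lemma \ref{D} again ensures $y \in M_{\bm{\gamma}}^{i-1}$. Because $\varphi_i(x) \in \GL_{n_i}^1(\RR_m)$ and $\bar{r} \in (G_{\m, \n}^1)^{i+1}$ act on disjoint vertices, they commute in $G_{\m, \n}$, so
\[
\varphi_i(x) \cdot y = \varphi_i(x) \bar{r} \cdot x = \bar{r} \varphi_i(x) \cdot x = \bar{r} \cdot \bigl(\varphi_i(x) \cdot x\bigr).
\]
By Lemma \ref{F}, $\varphi_i(x) \cdot x \in M_{\bm{\gamma}}^i$, and by Lemma \ref{D}, $M_{\bm{\gamma}}^i$ is $(G_{\m, \n}^1)^{i+1}$-invariant, hence $\varphi_i(x) \cdot y \in M_{\bm{\gamma}}^i$. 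Thus the element $\varphi_i(y)\varphi_i(x)^{-1} \in \GL_{n_i}^1(\RR_m)$ maps $\varphi_i(x) \cdot y \in M_{\bm{\gamma}}^i$ to $\varphi_i(y) \cdot y \in M_{\bm{\gamma}}^i$. Lemma \ref{E} gives $\varphi_i(y)\varphi_i(x)^{-1} = I_{n_i}$, i.e.\ $\varphi_i(y) = \varphi_i(x)$.

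There is no genuine obstacle here: the construction in Lemma \ref{F} is unique by Lemma \ref{E}, and both parts reduce to producing a second element of $\GL_{n_i}^1(\RR_m)$ with the same defining property and invoking that uniqueness. The only point that requires a moment of care is noting that in (b) the commutation $[\varphi_i(x),\bar{r}] = 1$ holds because the groups act on disjoint factors of $G_{\m, \n}$.
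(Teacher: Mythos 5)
Your proof is correct and follows essentially the same route as the paper's: both parts combine the defining property of $\varphi_i$ from Lemma \ref{F} with the rigidity statement of Lemma \ref{E}, using in (b) the fact that $\varphi_i(x)$ and $\bar{r}$ commute (they lie in distinct factors of the product group $G_{\m,\n}^1$, which is the precise reason, rather than "disjoint components" of the action) together with the invariance from Lemma \ref{D}. The only cosmetic difference is that in (b) you push $\varphi_i(x)\cdot x$ forward by $\bar{r}$ instead of pulling $\varphi_i(\bar{r}\cdot x)\cdot(\bar{r}\cdot x)$ back by $\bar{r}^{-1}$, which changes nothing.
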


\begin{proof}
\begin{enumerate}[(a)]
\item By definition, $\varphi_i(x) \cdot x \in M_{\bm{\gamma}}^i$, so also $\varphi_i(r^i \cdot x) \cdot (r^i \cdot x) \in M_{\bm{\gamma}}^i$, but the latter is equal to $\varphi_i( r^i \cdot x) \cdot r^i \cdot \varphi_i(x)^{-1} \cdot \big( \varphi_i(x) \cdot x \big)$.  Since $\varphi_i(x) \cdot x \in M_{\bm{\gamma}}^i$, by Lemma \ref{E}, $\varphi_i(r^i \cdot x) \cdot r^i \cdot \varphi_i(x)^{-1} = \1_{n_i}$.
\item We have $\varphi_i(\bar{r} \cdot x) \cdot (\bar{r} \cdot x) \in M_{\bm{\gamma}}^i$ but since $\varphi_i( \bar{r} \cdot x) \in \GL_{n_i}^1(\RR_m)$ and $\bar{r} \in (G_{\m, \n}^1)^{i+1}$ we have
\[\varphi_i(\bar{r} \cdot x) \bar{r} = \bar{r}\varphi_i(\bar{r} \cdot x)\]
and hence $\bar{r} \cdot \big( \varphi_i(\bar{r} \cdot x) \cdot x \big) \in M_{\bm{\gamma}}^i$, but then also $\varphi_i(\bar{r} \cdot x) \cdot x \in M_{\bm{\gamma}}^i$ by Lemma \ref{D}.  So again Lemma \ref{E} yields $\varphi_i(\bar{r} \cdot x) = \varphi_i(x)$. \qedhere
\end{enumerate}
\end{proof}

\begin{cor} \label{H}
There is a $(G_{\m, \n}^1)^i$-equivariant isomorphism $\sigma_i : M_{\bm{\gamma}}^{i-1} \to \GL_{n_i}^1(\RR_m) \times M_{\bm{\gamma}}^i$, where the action of $(r^i, \bar{r}) \in \GL_{n_i}^1(\RR_m) \times (G_{\m, \n}^1)^{i+1} = (G_{\m, \n}^1)^i$ on $(s,y) \in \GL_{n_i}^1(\RR_m) \times M_{\bm{\gamma}}^i$ is
\begin{align*}
(r^i, \bar{r}) \cdot (s, x) = (r^i s, \bar{r} \cdot x).
\end{align*}
\end{cor}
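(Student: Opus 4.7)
The proof will proceed by explicitly writing down $\sigma_i$ and its inverse using the morphism $\varphi_i$ from Lemma \ref{F}. Specifically, I propose to define
\begin{align*}
\sigma_i : M_{\bm{\gamma}}^{i-1} & \to \GL_{n_i}^1(\RR_m) \times M_{\bm{\gamma}}^i, & x & \mapsto \bigl( \varphi_i(x)^{-1},\ \varphi_i(x) \cdot x \bigr),
\end{align*}
which makes sense because $\varphi_i(x) \cdot x \in M_{\bm{\gamma}}^i$ by Lemma \ref{F}. The candidate inverse is the restriction of the action map,
\begin{align*}
\tau_i : \GL_{n_i}^1(\RR_m) \times M_{\bm{\gamma}}^i & \to M_{\bm{\gamma}}^{i-1}, & (s, y) & \mapsto s \cdot y,
\end{align*}
which lands in $M_{\bm{\gamma}}^{i-1}$ since $M_{\bm{\gamma}}^{i-1}$ is $\GL_{n_i}^1(\RR_m)$-invariant (this follows from Lemma \ref{D} applied to index $i-1$, as $\GL_{n_i}^1(\RR_m) \le (G_{\m,\n}^1)^{i}$).

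The key step in showing that these are mutual inverses is to observe that if $y \in M_{\bm{\gamma}}^i$, then $\varphi_i(y) = I_{n_i}$. Indeed, $\varphi_i(y) \cdot y \in M_{\bm{\gamma}}^i$ by construction, but also $y \in M_{\bm{\gamma}}^i$ by hypothesis, so Lemma \ref{E} forces $\varphi_i(y) = I_{n_i}$. Given this, for $(s,y) \in \GL_{n_i}^1(\RR_m) \times M_{\bm{\gamma}}^i$, Corollary \ref{G}(a) yields $\varphi_i(s \cdot y) = \varphi_i(y) s^{-1} = s^{-1}$, so $\sigma_i(\tau_i(s,y)) = (s,\ s^{-1} \cdot (s \cdot y)) = (s,y)$. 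Conversely, $\tau_i(\sigma_i(x)) = \varphi_i(x)^{-1} \cdot \bigl(\varphi_i(x) \cdot x\bigr) = x$.

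For the equivariance statement, note that elements of $\GL_{n_i}^1(\RR_m)$ and $(G_{\m,\n}^1)^{i+1}$ act on disjoint sets of vertices and hence commute with each other, so for $(r^i, \bar r) \in (G_{\m,\n}^1)^{i}$ acting as $r^i \bar r = \bar r r^i$, Corollary \ref{G}(a) and (b) combine to give $\varphi_i(r^i \bar r \cdot x) = \varphi_i(x) (r^i)^{-1}$. Plugging this in,
\begin{align*}
\sigma_i\bigl( (r^i, \bar r) \cdot x \bigr) = \bigl( r^i \varphi_i(x)^{-1},\ \varphi_i(x)(r^i)^{-1} r^i \bar r \cdot x \bigr) = \bigl( r^i \varphi_i(x)^{-1},\ \bar r \cdot (\varphi_i(x) \cdot x) \bigr),
\end{align*}
where in the last step we again commute $\bar r$ past $\varphi_i(x)$. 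This equals $(r^i, \bar r) \cdot \sigma_i(x)$ by the prescribed action, completing the equivariance check. The only real work has already been done in Lemmata \ref{D}--\ref{F} and Corollary \ref{G}; no substantive obstacle remains in Corollary \ref{H} beyond assembling these ingredients, and the algebraicity of $\sigma_i$ follows from that of $\varphi_i$ and the action map.
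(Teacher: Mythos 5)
Your proposal is correct and follows essentially the same route as the paper's proof: the same definitions of $\sigma_i$ and $\tau_i$, the same key observation that $\varphi_i(y) = I_{n_i}$ for $y \in M_{\bm{\gamma}}^i$ via Lemma \ref{E}, and the same use of Corollary \ref{G} for equivariance (the paper merely checks the two factors of the action separately rather than in one combined computation).
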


\begin{proof}
We define $\sigma_i : M_{\bm{\gamma}}^{i-1} \to \GL_{n_i}^1(\RR_m) \times M_{\bm{\gamma}}^i$ by $\sigma_i(x) = \big( \varphi_i(x)^{-1}, \varphi_i(x) \cdot x \big)$, which is well-defined by Lemma \ref{F}.  The inverse $\tau_i : \GL_{n_i}^1(\RR_m) \times M_{\bm{\gamma}}^i \to M_{\bm{\gamma}}^{i-1}$ is simply given by the action $\tau_i(s,y) = s \cdot y$.  This is well-defined since $\GL_{n_i}^1(\RR_m) \subset (G_{\m, \n}^1)^i$ and $M_{\bm{\gamma}}^i \subseteq M_{\bm{\gamma}}^{i-1}$ and $(G_{\m, \n}^1)^i$ acts on $M_{\bm{\gamma}}^{i-1}$ by Lemma \ref{D}.  It is clear that $\tau_i \circ \sigma_i = \1_{M_{\bm{\gamma}}^{i-1}}$.  Now, if $(s,y) \in \GL_{n_i}^1(\RR_m) \times M_{\bm{\gamma}}^i$, then $\varphi_i(s \cdot y) = \varphi_i(y) s^{-1} = s^{-1}$ since $\varphi_i(y) \cdot y \in M_{\bm{\gamma}}^i$ but already $y \in M_{\bm{\gamma}}^i$, so one uses Lemma \ref{E} to see that $\varphi_i(y) = \1_{n_i}$.  Then
\begin{align*}
\sigma_i \circ \tau_i(s,y) = \sigma_i(s \cdot y) = \left( \varphi_i(s \cdot y)^{-1}, \varphi_i(s \cdot y) \cdot (s \cdot y) \right) = \left( s, s^{-1} \cdot (s \cdot y) \right) = (s,y).
\end{align*}

Using Corollary \ref{G}(a), it is easy to see that $\sigma_i$ is $\GL_{n_i}^1(\RR_m)$-equivariant.  Thus, it suffices to show that it is $(G_{\m, \n}^1)^{i+1}$-equivariant.  Let $\bar{r} \in (G_{\m, \n}^1)^{i+1}$, $x \in M_{\bm{\gamma}}^{i-1}$:
\begin{align*}
\sigma_i( \bar{r} \cdot x) & = \left( \varphi_i( \bar{r} \cdot x)^{-1},\varphi_i( \bar{r} \cdot x) \cdot (\bar{r} \cdot x) \right) = \left( \varphi_i(x)^{-1}, \varphi_i(x) \cdot \bar{r} \cdot x \right) = \bar{r} \cdot \left( \varphi_i(x)^{-1}, \varphi_i(x) \cdot x \right) \\
& = \bar{r} \cdot \sigma_i(x). & \qedhere
\end{align*}
\end{proof}

\paragraph{Conclusion in the case \texorpdfstring{$d =1$}{Lg}.}
We can now put the $\sigma_i$ of Corollary \ref{H} into a $G_{\m, \n}^1$-equivariant isomorphism
\begin{align*}
\mu_{\irr}^{-1}(\bm{\gamma}_{\irr}) = & \ M_{\bm{\gamma}}^0 \arsim \GL_{n_1}^1(\RR_m) \times M_{\bm{\gamma}}^1 \arsim \GL_{n_1}^1(\RR_m) \times \GL_{n_2}^1(\RR_m) \times M_{\bm{\gamma}}^2 \arsim \cdots \\
& \arsim \GL_{n_1}^1(\RR_m) \times \cdots \times \GL_{n_l}^1(\RR_m) \times M_{\bm{\gamma}}^l = G_{\m, \n}^1 \times M_{\bm{\gamma}}. 
\end{align*}
It was already noted in Remark \ref{r:Mlambdai} that $M_{\bm{\gamma}}$ is $G_{\n}$-invariant.  These are the hypotheses of Lemma \ref{l:muirrtriv}, and so we may conclude in the case $d = 1$.

\paragraph{The general case of \texorpdfstring{$d$}{Lg} legs.}

Now, consider the situation of Proposition \ref{p:qmvstar}, where the number of legs $d \in \Z_{>0}$ in the quiver $Q$ is arbitrary.  Since the multiplicity at the central vertex $0$ is $m_0 = 1$, $\mu_{\irr}$ has no component at $0$ and thus 
\[\mu_{\irr}^{-1}( \bm{\gamma}_{\irr}) \cong \prod_{i=1}^d \mu_{\irr}^{-1}( \bm{\gamma}_{\irr})_i,\]
where $\mu_{\irr}^{-1}( \bm{\gamma}_{\irr})_i$ denotes the moment map preimage for the $i$-th leg.
Likewise, $G_{\m, \n}^1$ factors as a product of the groups \eqref{e:Gmn} for each leg and the action on $\mu_{\irr}^{-1}( \bm{\gamma}_{\irr})$ is just the product action. Thus if we set 
\[ M_{\bm{\gamma}} = \prod_{i=1}^d M_{\bm{\gamma},i},\]
with $M_{\bm{\gamma},i}$ is in \eqref{Mlambdai} for the $i$-th leg, we get a $G_{\m,  \n}^1$-equivariant isomorphism
\[
\mu_{\irr}^{-1}( \bm{\gamma}_{\irr}) \arsim G_{\m, \n}^1 \times M_{\bm{\gamma}}.\]

Finally the $G_{\n}$-invariance of $M_{\bm{\gamma}}$ follows from the corresponding statement for each leg (see Remark \ref{r:Mlambdai}) since there we already included the action of $\GL_n(\K)$ at the central vertex. Applying Lemma \ref{l:muirrtriv} completes the proof of Proposition \ref{p:qmvstar}.

\subsection{Coadjoint orbits} 
\label{s:coadjorb}

Here, we discuss the relationship between coadjoint orbits for the group $\GL_n(\RR_m)$ for a fixed $m \geq 1$ and varieties associated to quivers with multiplicities, where the underlying quiver is a single leg.  It may thus help the reader to refer back to the diagram \eqref{e:oneleg}.  What will be true is that coadjoint orbits of certain diagonal elements $C \in \t_m^\vee$ in $\gl_n(\RR_m)^\vee$ \eqref{e:diagelt} can be realized as symplectic reductions of the spaces $\Rep(\overline{Q}, \m, \n)$ that we considered in the case $d = 1$ in Section \ref{s:redstages}.  To be able to make a precise statement, we will first need to explain the conditions on the coadjoint orbits and set some notation.

We will take $C \in \t_m^\vee$, and suppose that it is written in the form \eqref{e:C}. We will make the further assumption that \eqref{e:cassum} holds.  For such a $C$, we wish to describe a quiver $Q$, which will be a leg, as well as some data on it, from which we will recover $\O(C)$.  The quiver will be the same as in \eqref{e:oneleg}, having $l+1$ nodes and $2l$ arrows, and will have the same multiplicity vector $\m$, with multiplicity $1$ at the vertex $0$ and all other vertices receiving multiplicity $m$.  The dimension vector $\n$ will be defined by taking $n_l := \lambda_l, \ n_{l-1} := \lambda_l + \lambda_{l-1}$ and so on, with $(\lambda_0, \ldots, \lambda_l)$ given as in \eqref{e:C}.  

\begin{rmk} \label{r:regcassum}
Observe that if $C$ is a regular formal type (recall Definition \ref{d:formaltype}), then any representative of $C$ in any local coordinate $z$ satisfies \eqref{e:cassum}.
\end{rmk}

From the data in \eqref{e:C}, we set
\begin{align} \label{e:lambdadef}
\gamma^0 & := c^0 & \gamma^i & := c^i - c^{i-1}, \quad 1 \leq i \leq l.
\end{align}
Then \eqref{e:cassum} implies that \eqref{e:cassum1} holds.

The statement that we want is that the $\GL_n(\RR_m)$-coadjoint orbit of $C$ as above is given by a symplectic reduction of $\Rep(\overline{Q}, \m, \n)$ by a subgroup of $G_{\m, \n}$.  The subgroup in question is that we obtain by leaving out the group $\GL_n(\K)$ corresponding to the vertex $0$, namely,
\begin{align*}
G_{\m, \n, 0} := \prod_{i=1}^l \GL_{n_i}(\RR_m).
\end{align*}
We write $\g_{\m, \n, 0}$ for its Lie algebra.  The reason the vertex $0$ in \eqref{e:oneleg} was drawn empty is because we want to consider only the symplectic quotient by $G_{\m, \n, 0}$.

Of course, $G_{\m, \n}^1$ is a normal subgroup of $G_{\m,\n,0}$ with quotient
\begin{align*}
G_{\n, 0} = \prod_{i=1}^l \GL_{n_i}(\K)
\end{align*}
which is precisely the group associated to the underlying quiver with dimension vector $\n$, ignoring the multiplicities, where again we are omitting the group $\GL_n(\K)$ corresponding to the vertex $0$.  Thus, we want to take a symplectic quotient by the semi-direct product
\begin{align} \label{e:semidirprod}
G_{\m, \n, 0} = G_{\n, 0} \ltimes G_{\m, \n}^1.
\end{align}

From the inclusion $\g_{\m, \n, 0} \subseteq \g_{\m, \n}$ we have a natural surjection of the duals
\begin{align*}
\g_{\m, \n}^\vee \to \g_{\m, \n, 0}^\vee,
\end{align*}
and the moment map for the $G_{\m, \n, 0}$-action on $\Rep(\overline{Q}, \m, \n)$ is given by the composition
\begin{align*}
\mu_0 : \Rep(\overline{Q}, \m, \n) \xrightarrow{\mu} \g_{\m, \n}^\vee \to \g_{\m, \n, 0}^\vee.
\end{align*}
We will consider the element
\begin{align*}
\bm{\gamma}_0 := (\gamma^1 \1_{n_1}, \ldots, \gamma^l \1_{n_l}) \in \g_{\m,\n,0}^\vee,
\end{align*}
and define the symplectic quotient 
\begin{align*}
\Rep(\overline{Q}, \m, \n) /\!\!/_{\bm{\gamma}_0} G_{\m, \n, 0}  := \spec \left( \K[\mu_0^{-1}( \bm{\gamma}_0)]^{G_{\m, \n,0}} \right) .
\end{align*}
For less burdensome notation, we will often abbreviate the left hand side of the above to $\Rep /\!\!/ G_{\m, \n, 0}$.  Observe that the assumption \eqref{e:cassum}, the arguments of Section \ref{s:redstages} and Lemma \ref{l:muirrtriv} already show that $\Rep /\!\!/ G_{\m, \n, 0}$ is an affine symplectic variety.  We will write 
\begin{align*}
\pi : \mu_0^{-1}(\bm{\gamma}_0) \to \Rep /\!\!/ G_{\m, \n, 0}
\end{align*}
for the quotient map; since this is defined as a GIT quotient, this is a categorical quotient.

\begin{prop} \label{p:coadjqmv}
Suppose $C \in z^{-m} \t(\RR_m) \subseteq \gl_n(\RR_m)^\vee$ is written in the form \eqref{e:C} and satisfies \eqref{e:cassum} and that $Q$, $\m$, and $\n$ are given as above.  Then $\Rep(\overline{Q}, \m, \n) /\!\!/_{\bm{\gamma}_0} G_{\m, \n, 0}$ admits a $\GL_n(\RR_m)$-action and there is a $\GL_n(\RR_m)$-equivariant isomorphism
\begin{align} \label{e:coadjqmv}
\Rep(\overline{Q}, \m, \n) /\!\!/_{\bm{\gamma}_0} G_{\m, \n, 0} \arsim \O(C).
\end{align}
\end{prop}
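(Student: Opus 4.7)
The plan proceeds in three steps. First, I equip $\Rep(\overline{Q}, \m, \n)$ with a $\GL_n(\RR_m)$-action commuting with that of $G_{\m, \n, 0}$. The canonical isomorphism $\Hom_\K(\K^n, \RR_m^{n_1}) \cong \Hom_{\RR_m}(\RR_m^n, \RR_m^{n_1})$ (extending a $\K$-linear map uniquely to an $\RR_m$-linear one via the inclusion $\K^n \hookrightarrow \RR_m^n$) lets $\GL_n(\RR_m)$ act on $p^1$ by precomposition with the inverse and on $q^1$ by postcomposition, while fixing all other $p^i, q^i$. This action preserves $\mu_0^{-1}(\bm{\gamma}_0)$ and commutes with $G_{\m, \n, 0}$, hence descends to the quotient. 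I then define
\[
\Phi : \mu_0^{-1}(\bm{\gamma}_0) \longrightarrow \gl_n(\RR_m)^\vee, \qquad \Phi(p,q) := q^1 p^1 + \gamma^0 I_n,
\]
where $q^1 p^1$ is viewed in $\gl_n(\RR_m)^\vee = z^{-m}\gl_n(\RR_m)$ as the principal (negative-power) part of the Laurent-polynomial product. A direct check shows $\Phi$ is $G_{\m,\n,0}$-invariant and $\GL_n(\RR_m)$-equivariant for the coadjoint action on the target.

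Second, I verify that $\Phi$ lands in $\O(C)$. By the proof of Lemma~\ref{l:coadjorb}\eqref{l:coadjaff}, $\O(C)$ is cut out by the polynomial identity $\prod_{k=0}^l (z^m A - z^m c^k I_n) = 0$ in $M_n(\RR_m)$. Rescaling by $z^m$, set $Y := z^m q^1 p^1 \in M_n(\RR_m)$, $\tilde q^i := z^m q^i$, $\zeta^i := z^m \gamma^i \in \RR_m$, and $\alpha^k := z^m(c^k - c^0) = \zeta^1 + \cdots + \zeta^k$. The conditions \eqref{e:mu0inv} become honest identities in $M_{n_i}(\RR_m)$: $p^i \tilde q^i - \tilde q^{i+1} p^{i+1} = \zeta^i I_{n_i}$ for $1 \leq i \leq l-1$ and $p^l \tilde q^l = \zeta^l I_{n_l}$. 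A telescoping induction on $j$ then yields
\[
Y \prod_{k=1}^{j}(Y - \alpha^k I_n) = \tilde q^1 \tilde q^2 \cdots \tilde q^{j+1} p^{j+1} \cdots p^1 \qquad (1 \leq j \leq l-1),
\]
while a parallel computation gives $p^l \cdots p^1 \, Y = \alpha^l \, p^l \cdots p^1$; combining these produces $Y \prod_{k=1}^l (Y - \alpha^k I_n) = 0$, which after the substitutions $Y = z^m \Phi(p,q) - z^m c^0 I_n$ and $Y - \alpha^k I_n = z^m \Phi(p,q) - z^m c^k I_n$ is precisely the orbit-defining identity. That $\Phi(p,q)$ lies in $\O(C)$ (rather than a smaller orbit with matching characteristic polynomial but different multiplicities) follows from the rank bounds on the intermediate products above together with the regularity assumption \eqref{e:cassum2}.

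Third, I promote this to an isomorphism. The induced morphism $\bar{\Phi} : \Rep(\overline{Q},\m,\n) /\!\!/_{\bm{\gamma}_0} G_{\m, \n, 0} \to \O(C)$ is $\GL_n(\RR_m)$-equivariant with nonempty image; since $\O(C)$ is a single $\GL_n(\RR_m)$-orbit (Lemma~\ref{l:coadjorb}\eqref{l:coadjhomog}), $\bar\Phi$ is surjective. A direct dimension count using $\dim \O(C) = m(n^2 - \sum_{i=0}^l \lambda_i^2)$ and $n_i = \lambda_i + \cdots + \lambda_l$ shows both sides have equal dimension. To exhibit an inverse, for $A \in \O(C)$ consider the increasing flag $V_k := \ker \prod_{i=0}^k (z^m A - z^m c^i I_n) \subseteq \RR_m^n$; by \eqref{e:cassum2} these are free $\RR_m$-submodules of ranks $0, \lambda_0, \lambda_0 + \lambda_1, \ldots, n$. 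The natural projections $\RR_m^n / V_{k-2} \twoheadrightarrow \RR_m^n / V_{k-1}$, together with the adjoint maps induced by the flag-preserving action of $A$, reconstruct the tuple $(p^k, q^k)$ up to the choice of $\RR_m$-bases on each successive quotient---that is, up to $G_{\m, \n, 0}$---giving a morphism $\Psi$ with $\bar\Phi \circ \Psi = \operatorname{id}_{\O(C)}$.

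The principal obstacle is Step~2: since $\gl_n(\RR_m)^\vee$ is not naturally an algebra, the polynomial identity must be carried out in $M_n(\RR_m)$ after the $z^m$-rescaling, and the telescoping bookkeeping of the moment-map relations must be executed carefully. The rank tracking needed to ensure the image lies in the specific orbit $\O(C)$ rather than in a degenerate cousin relies essentially on the regularity condition \eqref{e:cassum2}, and the construction of a clean inverse $\Psi$ in Step~3 similarly requires a choice of bases compatible with the filtration.
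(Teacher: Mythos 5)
Your overall architecture (the $\GL_n(\RR_m)$-action, the map $(p,q)\mapsto q^1p^1+\gamma^0 I_n$, and a two-sided inverse) matches the paper's, but the two places where you diverge are exactly where the gaps sit. For Step 2 you replace the paper's argument by a telescoping polynomial identity $\prod_{k=0}^{l}(z^mY - z^mc^kI_n)=0$ for $Y=z^m\Phi(p,q)$. The telescoping itself is correct, but this identity only places $\Phi(p,q)$ in the \emph{union} of semisimple orbits with spectrum contained in $\{c^0,\dots,c^l\}$; it does not determine the multiplicities $\lambda_k$. You acknowledge this and appeal to ``rank bounds on the intermediate products,'' but over the non-reduced local ring $\RR_m$ rank is not a well-behaved invariant, and making this precise is precisely the content of the paper's Lemma \ref{l:A1}: if $pq\in\GL_m(R)$ for $R$ local, then $\ker p$ and $\im q$ are \emph{free} direct summands and $qp$ is conjugate to $\diag(0_{n-m},pq)$ (proved via Cauchy--Binet). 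Moreover, even knowing that the intermediate products $z^mp^iq^i$ are invertible --- which your rank argument presupposes --- requires the paper's decreasing induction \eqref{e:pq}--\eqref{e:qp} using that lemma together with \eqref{e:cassum}: one only gets $p^iq^i\sim t^i$ and hence invertibility of $z^mp^iq^i$ after knowing $q^{i+1}p^{i+1}\sim\diag(0_{\lambda_i},t^{i+1})$. Without Lemma \ref{l:A1} (or an equivalent), Step 2 does not close.

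The same issue infects Step 3 twice over. First, freeness of the kernel flag $V_k=\ker\prod_{i=0}^k(z^mA-z^mc^iI_n)$ over $\RR_m$ is not automatic and again needs the Lemma \ref{l:A1}-type argument. Second, and more seriously, you only verify $\bar\Phi\circ\Psi=\operatorname{id}_{\O(C)}$; this shows $\Psi$ is a section of $\bar\Phi$, but it does not give $\Psi\circ\bar\Phi=\operatorname{id}$, i.e.\ injectivity of $\bar\Phi$. Surjectivity plus equality of dimensions does not yield bijectivity (the map could be generically finite of higher degree, and irreducibility of the quotient --- taken with respect to a non-reductive group --- has not been established at this point). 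The statement $\Psi\circ\bar\Phi=\operatorname{id}$ amounts to showing that $G_{\m,\n,0}$ acts transitively on the fibre of $\widetilde\Phi$ over each $A=\Ad_gC$, i.e.\ that every $(p,q)\in\mu_0^{-1}(\bm{\gamma}_0)$ lies in the $G_{\m,\n,0}$-orbit of $g\cdot(p,q)_C$; this is the longest part of the paper's proof (an increasing induction along the leg, again driven by invertibility of $z^mp^iq^i$) and is missing from your proposal.
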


The following is a slight generalization of \cite[Proposition D.1]{boalch-simply} which will be important in the proof of the proposition.

\begin{lem}  \label{l:A1}
Let $R$ be a commutative local ring and let $m \leq n \in \Z_{>0}$, $p \in M_{m \times n}(R)$, $q \in M_{n \times m}(R)$ be such that $pq \in \GL_m(R)$. Then 
\begin{align} \label{e:simmat}
qp \quad \text{and} \quad \mat{ 0_{n-m}}{}{}{ pq }
\end{align}
are conjugate in $\GL_n(R)$.
\end{lem}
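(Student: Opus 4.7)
The plan is to construct an explicit change-of-basis matrix $g \in \GL_n(R)$ that conjugates $qp$ into the desired block form. The key observation is that since $pq \in \GL_m(R)$, the map $u := (pq)^{-1} p \in M_{m \times n}(R)$ satisfies $uq = I_m$, so $q : R^m \to R^n$ is split injective. Setting $e := q u = q(pq)^{-1} p \in M_n(R)$, one obtains an idempotent with $\im e = \im q$ and $\ker e = \ker u = \ker p$ (the second equality uses that $(pq)^{-1}$ is invertible; the third uses that $q$ is injective, being split injective). This yields a direct sum decomposition $R^n = \im q \oplus \ker p$. Both summands, being direct summands of $R^n$, are finitely generated projective and hence free over the local ring $R$; their ranks are necessarily $m$ and $n-m$, respectively.

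Next I would verify that $qp$ respects this decomposition. On $\ker p$, $qp$ vanishes tautologically. On $\im q$, using the basis consisting of the columns $q_1, \ldots, q_m$ of $q$, one computes
\begin{align*}
(qp)\, q_i = q (p q_i) = q \sum_{j=1}^m (pq)_{ji}\, e_j = \sum_{j=1}^m (pq)_{ji}\, q_j,
\end{align*}
so that $qp$ acts on $\im q$ as multiplication by $pq$ in this basis.

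Finally, I would choose any $R$-basis $w_1, \ldots, w_{n-m}$ of the free module $\ker p$ and assemble the matrix
\begin{align*}
g := \bigl[\, w_1 \mid \cdots \mid w_{n-m} \mid q_1 \mid \cdots \mid q_m \,\bigr] \in \GL_n(R),
\end{align*}
whose invertibility follows since its columns form an $R$-basis of $R^n$. The computations above give exactly $g^{-1}(qp)\, g = \bigl(\begin{smallmatrix} 0_{n-m} & 0 \\ 0 & pq \end{smallmatrix}\bigr)$, proving conjugacy. The only step that uses the full strength of the hypothesis that $R$ is local is the passage from projective to free for $\ker p$; this is the sole subtlety, and it is completely routine. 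The rest of the argument is purely a direct verification and works over any commutative ring once one has a free complement to $\im q$ with a distinguished basis.
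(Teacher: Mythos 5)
Your argument is correct, and it rests on the same structural decomposition as the paper's proof, namely $R^n = \ker p \oplus \im q$ with $qp$ acting as $0$ on the first summand and as $pq$ (in the basis of columns of $q$) on the second. Where the two proofs genuinely diverge is in how this decomposition and the freeness of the summands are justified. The paper establishes $R^n = \ker p + \im q$ by a direct computation (given $v$, set $w := (pq)^{-1}pv$ and check $p(v-qw)=0$) and then proves freeness of $\ker p$ via the Cauchy--Binet formula: since $\det pq$ is a unit and $R$ is local, some maximal minor of $p$ is a unit, so $p$ can be put in reduced row echelon form over $R$ and an explicit basis of $\ker p$ is read off as in elementary linear algebra. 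You instead package everything into the idempotent $e = q(pq)^{-1}p$, which gives the direct sum decomposition for free, and then invoke the general fact that a finitely generated projective module over a local ring is free. Both routes use locality exactly once and for the same purpose (freeness of $\ker p$); yours is shorter and more conceptual, while the paper's is more explicit and constructive, actually exhibiting a basis of $\ker p$ rather than appealing to an abstract structure theorem. One small point worth making explicit in your write-up: the columns of $q$ form a basis of $\im q$ (not merely a generating set) because $uq = I_m$ forces them to be linearly independent; you use this tacitly when computing the matrix of $qp|_{\im q}$.
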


\begin{proof}
Observe that $R^n = \ker p \oplus \im q$. Indeed, given $v \in R^n$, let $w := (pq)^{-1} pv \in R^m$.  Then
\begin{align*}
p ( v - qw) = 0
\end{align*}
hence $v = (v - qw) + qw \in \ker p + \im q$, i.e., $R^n = \ker p + \im q$.  Furthermore, the sum is direct, for if $v \in \ker p \cap \im q$, say $v = qw$ with $w \in R^m$ and $pv = pqw = 0$, then $w = 0$ and hence $v = 0$.

Furthermore, the assumption that $pq$ is invertible also implies that $\ker p$ and $\im q$ are free $R$-modules.  This can be seen via the Cauchy--Binet formula:  for a subset $I \subseteq \{ 1, \ldots, n\}$ of size $m$, one sets $\det_I p$ to be the determinant of the $m \times m$ submatrix of $p$ taking the columns with indices in $I$; one defines $\det_I q$ the same way, using columns instead of rows; then the formula states that
\begin{align*}
\mathrm{det} \ pq = \sum_I (\mathrm{det}_{I}  p) (\mathrm{det}_{I} q),
\end{align*}
where the sum is over all subsets of size $m$.  Since $\det pq \in R^\times$ and $R$ is local, there must be some $I$ with $\det_I p \in R^\times$ (otherwise, all the terms in the sum would lie in the maximal ideal and hence $\det pq$ could not be a unit).  Hence there exists $r \in \GL_m(R)$ for which the submatrix of $rp$ corresponding to $I$ is the identity matrix.  That is, the matrix $rp$ is in reduced row echelon form and one can find a basis of $\ker p = \ker rp$ as one does in a first-year linear algebra class.  A similar argument shows that the columns of $q$ are linearly independent over $R$ and hence already give a basis of $\im q$.

Now, we choose a basis of $R^n$ by taking the first $n-m$ vectors as a basis of $\ker p$ and the last $m$ vectors as the columns of $q$.  Then the fact that $R^n = \ker p \oplus \im q$ implies that the matrix obtained in this way lies in $\GL_n(R)$.  Writing $qp$ with respect to this basis gives the second matrix in \eqref{e:simmat}.
\end{proof}

With this, the proof of Proposition \ref{p:coadjqmv} follows the idea of \cite[Lemma 9.10]{boalch-simply}, which explains the proof of \cite[\S3]{CB03}.  However, there one can rely on usual results of linear algebra over fields, while in our case working with the orbits of the unipotent groups involved requires a little care, which makes the arguments somewhat longer.

\begin{proof}[Proof of Proposition \ref{p:coadjqmv}] 
For the reader's convenience, we recall the explicit expressions for the moment map \eqref{e:mu0}
\begin{align} \label{e:mu0actual}
\mu_0(p,q) = (p^1 q^1 - q^2 p^2, \ldots, p^{l-1} q^{l-1} - q^l p^l, p^l q^l) \in \g_{\m, \n, 0}^\vee = \bigoplus_{i=1}^l \gl_{n_i}(\RR_m)^\vee
\end{align}
and the $G_{\m, \n, 0}$-action: for $h = (h_1, \ldots, h_l) \in G_{\m, \n, 0}$, one has
\begin{align} \label{e:Gmn0act}
h \cdot (p,q) = ( h_1 p^1, q^1 h_1^{-1}, h_2 p^2 h_1^{-1}, h_1 q^2 h_2^{-1}, \ldots, h_l p^l h_{l-1}^{-1}, h_{l-1} q^l h_l^{-1}).
\end{align}

\paragraph{\texorpdfstring{$\GL_n(\RR_m)$}{Lg}-action on \texorpdfstring{$\Rep/\!\!/ G_{\m, \n, 0}$}{Lg}.} Of course $\Rep(\overline{Q}, \m, \n)$ admits a $\GL_n(\RR_m)$-action:  for $g \in \GL_n(\RR_m)$, one has
\begin{align} \label{e:GnRmact}
g \cdot (p,q) = (p^1g^{-1}, gq^1, p^2, q^2, \ldots, p^l, q^l).
\end{align}
From \eqref{e:mu0actual}, it is easy to check that $\mu_0^{-1}(\bm{\gamma}_0)$ is invariant under this action.  It is likewise easy to see that it commutes with the $G_{\m, \n, 0}$-action \eqref{e:Gmn0act}.  Thus, the action descends to the quotient $\Rep/\!\!/ G_{\m, \n, 0}$.

\paragraph{Definition of the isomorphism \texorpdfstring{$\Phi : \Rep/\!\!/ G_{\m, \n, 0} \to \O(C)$}{Lg}.}
We begin by defining a morphism $\widetilde{\Phi} : \mu_0^{-1}(\bm{\gamma}_0) \to \O(C)$ by
\begin{align} \label{e:quivertocoadjorbx}
(p,q) \mapsto q^1 p^1 + \gamma^0 \1_n.
\end{align}
For this to define a $\GL_n(\RR_m)$-equivariant morphism $\Phi : \Rep/\!\!/ G_{\m, \n, 0} \to \O(C)$, we need to verify three things:  first, a priori, $\widetilde{\Phi}$ takes values only in $\gl_n(\RR_m)^\vee$, so we need to see that it indeed takes values in $\O(C)$; second, we need to check that $\widetilde{\Phi}$ is $G_{\m, \n, 0}$-invariant; finally, one wants to see that $\widetilde{\Phi}$ is $\GL_n(\RR_m)$-equivariant.  The latter two statements are easy to check simply from their definitions:  \eqref{e:Gmn0act} for $G_{\m, \n, 0}$-invariance and \eqref{e:GnRmact} for $\GL_n(\RR_m)$-equivariance.  The first statement is a bit longer and so we will justify it in the next paragraph.

\paragraph{\texorpdfstring{$\widetilde{\Phi}$}{Lg} takes values in \texorpdfstring{$\O(C)$}{Lg}.}
For $0 \leq i \leq l$, we define the diagonal matrix
\begin{align*}
t^i & := \left[ \begin{array}{ccccc}
\gamma^i \1_{\lambda_i} & & & & \\
& \gamma^{i,i+1} \1_{\lambda_{i+1}} & & & \\
& & \ddots & & \\
& & & \gamma^{i,l-1} \1_{\lambda_{l-1}} & \\
& & & & \gamma^{i,l} \1_{\lambda_l} 
\end{array} \right] \in z^{-m} \gl_{n_i}(\RR_m) = \gl_{n_i}(\RR_m)^\vee,  
\end{align*}
where the $\gamma^i$ were defined in \eqref{e:lambdadef} and
\begin{align*}
\gamma^{i,j} & := \gamma^i + \cdots + \gamma^j, \quad 1 \leq i < j \leq l.
\end{align*}
Since \eqref{e:cassum} and hence \eqref{e:cassum1} hold, $z^m t^i \in \GL_{n_i}(\RR_m)$ for $1 \leq i \leq l$.
Also, we have
\begin{align} \label{e:tind}
\mat{ 0_{\lambda_i} }{}{}{ t^{i+1} } + \gamma^i \1_{n_i} & = t^i, \quad 0 \leq i \leq l-1 & t^0 & = C.
\end{align}

We observe that if $(p,q) \in \mu_0^{-1}(\bm{\gamma}_0)$, then for $1 \leq i \leq l$,
\begin{align} \label{e:pq}
p^i q^i \sim_{\GL_{n_i}(\RR_m)} t^i, 
\end{align}
where $\sim_{\GL_{n_i}(\RR_m)}$ means in the same $\GL_{n_i}(\RR_m)$ coadjoint orbit in $\gl_{n_i}(\RR_m)^\vee$, and
\begin{align} \label{e:qp}
q^i p^i \sim_{\GL_{n_{i-1}}(\RR_m)} \mat{ 0_{\lambda_{i-1}} }{}{}{ t^i }.
\end{align}

First, note that \eqref{e:qp} follows from \eqref{e:pq} and Lemma \ref{l:A1}.  Then \eqref{e:pq} is easy to see by decreasing induction on $i$.  For $i = l$, \eqref{e:pq} follows from the last component of the moment map condition $\mu_0(p,q) = \bm{\gamma}_0$, see \eqref{e:mu0actual}.  Now, for the inductive step, one has
\begin{align*}
p^i q^i = q^{i+1} p^{i+1} + \gamma^i \1_{n_i} \sim \mat{ 0_{\lambda_i} }{}{}{ t^{i+1} } + \gamma^i \1_{n_i} = t^i,
\end{align*}
the first equality being the $i$th component of the moment map \eqref{e:mu0actual}, the similarity \eqref{e:qp} and the last equality following directly from the definition of the $t^i$.

Finally, to show that $\widetilde{\Phi}$ takes values in $\O(C)$, we wish to show that for $(p,q) \in \mu_0^{-1}(\bm{\gamma})$, one has $q^1 p^1 + \gamma^0 \1_n \in \O(C)$.  This now follows from \eqref{e:qp} for $i = 1$ and \eqref{e:tind}.

\paragraph{Definition of the inverse \texorpdfstring{$\Psi : \O(C) \to \Rep/\!\!/ G_{\m, \n, 0}$}{Lg}.}

We start by defining a morphism $\Psi' : \GL_n(\RR_m) \to \mu_0^{-1}(\bm{\gamma}_0)$.  Of course, we can compose this with the projection $\pi : \mu_0^{-1}(\bm{\gamma}_0) \to \Rep /\!\!/ G_{\m, \n, 0}$ to obtain a map $\widetilde{\Psi} : \GL_n(\RR_m) \to \Rep /\!\!/ G_{\m, \n, 0}$.  Then, since the map $\eta : \GL_n(\RR_m) \to \O(C)$ of Lemma \ref{l:coadjorb}\eqref{l:coadjhomog} is a categorical quotient, in order to define $\Psi : \O(C) \to \Rep /\!\!/ G_{\m, \n, 0}$, it suffices to show that $\widetilde{\Psi}$ is $\L_{\lambda, m}$-invariant, by Lemma \ref{intor}\eqref{l:centralizers}.

We first define a tuple $(p,q)_C$ by
\begin{align} \label{e:pCqC}
p_C^i & := \left[ \begin{array}{cc}
0_{n_i \times \lambda_{i-1}} & \1_{n_i} \end{array} \right]
& q_C^i & := \left[ \begin{array}{c} 0_{\lambda_{i-1} \times n_i} \\ t^i 
\end{array} \right], \quad 1 \leq i \leq l.
\end{align}
With \eqref{e:tind} it is easy to check that 
\begin{align} \label{e:pqC}
(p,q)_C & \in \mu_0^{-1}(\bm{\gamma}_0) & & \textnormal{and} & \widetilde{\Phi}(p,q)_C & = C.
\end{align}
We now define $\Psi' : \GL_n(\RR_m) \to \mu_0^{-1}(\bm{\gamma}_0)$ using the action \eqref{e:GnRmact}
\begin{align*} 
g \mapsto g \cdot (p, q)_C.
\end{align*}

We will show that the resulting $\widetilde{\Psi}$ is $\L_{\lambda, m}$-invariant.  Let $f \in \L_{\lambda, m}$.  We may write
\begin{align*}
f = \diag(f_0, \ldots, f_l)
\end{align*}
with $f_i \in \GL_{\lambda_i}(\RR_m)$.  Furthermore, for $0 \leq i \leq l$, we will set 
\begin{align*}
f^i := \diag(f_i, \ldots, f_l) \in \GL_{n_i}(\RR_m),
\end{align*}
noting that $f^0 = f$.  Then it is easy to check that for $1 \leq i \leq l$,
\begin{align} \label{e:fswitch}
p_C^i (f^{i-1})^{-1} & = (f^i)^{-1} p_C^i & f^{i-1} q_C^i & = q_C^i f^i.
\end{align}
By an inductive argument using \eqref{e:fswitch}, it is straightforward to show that for $g \in \GL_n(\RR_m)$ and $f \in \L_{\lambda, m}$ as above,
\begin{align*}
(gf) \cdot (p,q)_C & = \left( f^1, \ldots, f^l\right)^{-1} \cdot \left( g \cdot ( p, q)_C \right),
\end{align*}
where the right hand side is the action of $G_{\m, \n, 0}$; in other words, $\Psi'(gf)$ and $\Psi'(f)$ lie in the same $G_{\m, \n, 0}$-orbit.  It follows that $\widetilde{\Psi}$ is $\L_{\lambda, m}$-invariant, and hence induces $\Psi : \O(C) \to \Rep/\!\!/G_{\m, \n, 0}$ with
\begin{align*}
\Psi \circ \eta = \widetilde{\Psi}.
\end{align*}

\paragraph{Verification that \texorpdfstring{$\Phi$}{Lg} and \texorpdfstring{$\Psi$}{Lg} are mutually inverse.}
We first check that $\Phi \circ \Psi = \1_{\O(C)}$.  Now, $\Phi \circ \Psi$ is the morphism induced via $\L_{\lambda, m}$-invariance from the map $\widetilde{\Phi} \circ \Psi' : \GL_n(\RR_m) \to \O(C)$, which is, by \eqref{e:pqC},
\begin{align*}
g \mapsto g \cdot (p,q)_C \mapsto \Ad_g C.
\end{align*}
But this is precisely $\eta$, as in Lemma \ref{l:coadjorb}\eqref{l:coadjhomog}, hence the induced map on the quotient must be the identity.

Furthermore, for $(p,q)_C$, the similarity relations in \eqref{e:pq} and \eqref{e:qp} are, in fact, equalities.  In particular, $q_C^1 p_C^1 + \gamma^0 \1_n = t^0 = C$ and hence it is easy to check that 
\begin{align*}
\Phi \circ \Psi(A) = A
\end{align*}
for all $A \in \O(C)$.

Finally, we show that $\Psi \circ \Phi = \1_{\Rep/\!\!/G_{\m, \n, 0}}$.  Let $(p,q) \in \mu_0^{-1}( \bm{\gamma}_0)$.  Then $\widetilde{\Phi}(p,q) = q^1 p^1 + \gamma^0 \1_n$; if this is $\Ad_g C$, for $g \in \GL_n(\RR_m)$, then 
\begin{align*}
\Psi' \circ \widetilde{\Phi}(p,q) = g \cdot (p,q)_C.
\end{align*}
Thus, to show that $\Psi \circ \Phi \circ \pi(p,q) = \pi(p,q)$, we will show that $(p,q)$ and $g\cdot (p,q)_C$ are in the same $G_{\m, \n, 0}$-orbit.  This is again an (increasing) induction.  Using \eqref{e:pqC}, we have
\begin{align*}
q^1 p^1 + \gamma_0 \1_n = \Ad_g C = g(q_C^1 p_C^1 + \gamma_0 \1_n)g^{-1}
\end{align*}
and hence we find
\begin{align} \label{e:qpC1}
q^1 p^1 = g q_C^1 p_C^1 g^{-1}
\end{align}
and multiplying by $p^1$ on the left and by $g$ on the right, we obtain
\begin{align} \label{e:gqp1}
p^1 q^1 p^1 g  = p^1 g q_C^1 p_C^1.
\end{align}
Let us now write
\begin{align*}
p^1 g =: \left[ \begin{array}{cc}
d_1 & h_1
\end{array} \right], 
\end{align*}
for some $d_1 \in M_{n_1 \times \lambda_0}(\RR_m)$ and $h_1 \in \gl_{n_1}(\RR_m)$.  Substituting this into \eqref{e:gqp1}, and using the explicit expressions for $p_C^1$ and $q_C^1$ \eqref{e:pCqC}, we get
\begin{align*}
\left[ \begin{array}{cc}
(p^1 q^1) d_1 & (p^1 q^1) h_1
\end{array} \right] = \left[ \begin{array}{cc}
0_{n_1 \times \lambda_0} & h_1 t^1 
\end{array} \right]
\end{align*}
Now, \eqref{e:pq} tells us that $z^m p^1 q^1$ is invertible and hence $d_1 = 0$.  Therefore
\[ p^1 g  = \left[ \begin{array}{cc}
0 & h_1
\end{array} \right] = h_1 p_C^1\]
or equivalently
\begin{align} \label{e:p1}
 p^1 = h_1 p_C^1 g^{-1}.
\end{align}

Since $z^m p^1 q^1$ is invertible, $p^1$ is of rank $n_1$ and multiplication by $g$ does not change this, so $h_1$ must also be of rank $n_1$ and hence $h_1 \in \GL_{n_1}(\RR_m)$.  Using this and substituting \eqref{e:p1} into \eqref{e:qpC1}, we can conclude that
\begin{align*}
q^1 = g q_C^1 h_1^{-1}.
\end{align*}
Therefore, 
\begin{align*}
(p,q) & = (h_1 p_C^1 g^{-1}, g q_C^1 h_1^{-1}, p^2, q^2, \ldots, p^l, q^l) \\
& = (h^1,1, \ldots, 1) \cdot (p_C^1 g^{-1}, gq_C^1, p^2 h_1, h_1^{-1} q^2, p^3, q^3, \ldots, p^l, q^l),
\end{align*}
thus, after relabeling $p^2$, $q^2$, $(p,q)$ is in the same $G_{\m, \n, 0}$-orbit as an element of the form $(p_C^1 g^{-1}, gq_C^1, p^2 , q^2, p^3, q^3, \ldots, p^l, q^l)$.

By induction, we may assume that $(p,q)$ is of the form
\begin{align*}
(p,q) = (p_C^1 g^{-1}, gq_C^1, p_C^2, q_C^2, \ldots, p_C^i, q_C^i, p^{i+1}, q^{i+1}, \ldots, p^l, q^l).
\end{align*}
Then the $i$th component of the moment map \eqref{e:mu0actual} gives
\begin{align*}
q^{i+1} p^{i+1} = p_C^i q_C^i - \gamma^i \1_{n_i} = \mat{ 0_{\lambda_i}}{}{}{ t^{i+1} };
\end{align*}
(in case $i = 1$, we have the product $(p_C^i g^{-1})(g q_C^1) = p_C^1 q_C^1$, and so this case yields the same equation).  Using the same argument as above, we write $p^{i+1}$ as a block matrix with two blocks and using the fact that $z^m p^{i+1} q^{i+1}$ is invertible, show that the square block is an invertible matrix $h_{i+1}$ and the other block is zero.  We then conclude that $p^{i+1} = h_{i+1} p_C^{i+1}$ and then that $q^{i+1} = q_C^{i+1} h_{i+1}^{-1}$.  Hence
\begin{align*}
(p,q) & = (p_C^1 g^{-1}, gq_C^1, p_C^2, q_C^2, \ldots, p_C^i, q_C^i, h_{i+1} p_C^{i+1}, q_C^{i+1} h_{i+1}^{-1}, \ldots, p^l, q^l) \\
& = (1, \ldots, h_{i+1}, \ldots, 1) \cdot (p_C^1 g^{-1}, gq_C^1, p_C^2, q_C^2, \ldots, p_C^{i+1}, q_C^{i+1}, p_C^{i+2} h_{i+1} , h_{i+1}^{-1} q_C^{i+2}, \ldots, p^l, q^l),
\end{align*}
and the induction hypothesis is satisfied for $i+1$.  Continuing in this fashion, we see that our original $(p,q)$ is in the $G_{\m, \n, 0}$-orbit of $g \cdot (p, q)_C$ and hence $\Psi \circ \Phi \circ \pi(p,q) = \pi(p,q)$.
\end{proof}

\subsection{Relation between open de Rham spaces and non-simply laced affine Dynkin diagrams} \label{s:qmvodr}

\subsubsection{Additive fusion product of coadjoint orbits and open de Rham spaces}

The reason for the emphasis on the quiver with multiplicities described in Section \ref{s:star} is to relate the corresponding variety to an additive fusion product of coadjoint orbits and hence open de Rham spaces. Suppose we are given a $d$-tuple $\m := (m_i)_{i=1}^d$ of positive integers and coadjoint orbits $\O(C^i)$, $1 \leq i \leq d$, for some diagonal elements $C^i \in \gl_n(R_{m_i})^\vee$, which we will take to be written in the form \eqref{e:C}.  We use this to define the data for a quiver with multiplicities:
\begin{enumerate}
\item For $1 \leq i \leq d$, the integer $l_i$ is defined as in \eqref{e:C} and the quiver $Q$  as in \eqref{e:starvert} with arrows defined immediately thereafter.
\item The tuple $\m$ of multiplicities can then be chosen as in \eqref{e:starmult}.
\item We define the dimension vector $\n$ as follows.  We set $n_0 := n$.  Again from \eqref{e:C}, for each $1 \leq i \leq d$, one gets a series of positive integers $\lambda_{[i,0]}, \ldots, \lambda_{[i,l_i-1]}$, and we set $n_{[i,k+1]} := n_{[i,k]} - \lambda_{[i,k]}$ with the convention $n_{[i,0]} = n$.  This defines $(n_{[i,1]}, \ldots, n_{[i,l_i]})$ for $1 \leq i \leq d$ and we use these to define the remaining entries of $\n$.  We have now defined $Q(\m, \n)$, and hence also $G_{\m, \n}$, $\g_{\m, \n}$, etc.
\item Finally, we define an element $\bm{\gamma} \in \g_{\m, \n}^\vee$.  Once again from \eqref{e:C} and \eqref{e:lambdadef}, for each $1 \leq i \leq d$, we obtain elements $\gamma^{[i,0]}, \ldots, \gamma^{[i, l_i]} \in z^{-m_i} R_{m_i}$; we take 
\begin{align*}
\gamma^{[i,j]} \1_{n_{[i,j]}} \in z^{-m_i} \gl_{n_{[i,j]}}(R_{m_i})
\end{align*}
to be the component of $\bm{\gamma}$ at all vertices except the central vertex $0$.  There, we take $\gamma^0 z^{-1} \1_n \in \gl_n(\K)^\vee$, where
\begin{align} \label{e:vertex0value}
\gamma^0 := \sum_{i=1}^d \res_{z=0} \gamma^{[i,0]}.
\end{align}
\item Furthermore, we will assume that for $1 \leq i \leq d$, \eqref{e:cassum} is satisfied.
\end{enumerate}

\begin{thm}  \label{mstar=q}
\label{t:qmvafs} With $Q(\m, \n)$ and $\bm{\gamma} \in \g_{\m, \n}$ chosen as above, one has an isomorphism of the associated quiver variety with the additive fusion product of coadjoint orbits
\begin{align} \label{e:quivertoorbits}
\mathcal{Q}_{\bm{\gamma}} \cong \left( \prod_{i=1}^d \O(C^i) \right) \bigg/\!\!\!\!\bigg/_0 \GL_n(\K).
\end{align}
In particular, if $C^1,\dots,C^d$ are regular generic we have $\mathcal{Q}_{\bm{\gamma}} \cong \FM^*_{\muhat,\bfr}$.  Furthermore, in terms of the quiver data, the dimension of $\mathcal{Q}_{\bm{\gamma}}$ is given by the formula 
\begin{align} \label{e:quivdimform}
\dim \mathcal{Q}_{\bm{\gamma}} = 2 \left( \sum_{i=1}^d m_i \sum_{k=1}^l n_{[i,k]} ( n_{[i,k-1]} - n_{[i,k]}) - n_0^2 + 1 \right).
\end{align}
\end{thm}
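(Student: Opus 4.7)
The approach is symplectic reduction in stages, with the central vertex split off from the legs. As a direct product,
\begin{align*}
G_{\m, \n} \cong \GL_n(\K) \times \prod_{i=1}^d G_{\m_i, \n_i, 0}, \qquad G_{\m_i, \n_i, 0} := \prod_{j=1}^{l_i} \GL_{n_{[i,j]}}(\RR_{m_i}),
\end{align*}
and the components of the moment map \eqref{e:QMmm} at the non-central vertices $[i,j]$ ($j \geq 1$) depend only on arrows along the $i$-th leg. The plan is to reduce first by $\prod_i G_{\m_i, \n_i, 0}$, which factors into $d$ independent leg-wise reductions, and then by the residual $\GL_n(\K)$ at the central vertex $0$. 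The hypothesis \eqref{e:cassum2} forces \eqref{e:cassum} for each $C^i$, so Proposition \ref{p:coadjqmv} applies leg-by-leg and identifies the first-stage reduction with $\prod_{i=1}^d \O(C^i)$. Moreover, the isomorphism of Proposition \ref{p:coadjqmv} is $\GL_n(\RR_{m_i})$-equivariant, so the residual action at vertex $0$ on $\prod_i \O(C^i)$ is the restriction of each $\GL_n(\RR_{m_i})$-action to its constant subgroup $\GL_n(\K)$, i.e.\ simultaneous conjugation precisely as in Definition \ref{d:odr}.

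Under these leg-wise isomorphisms, the first arrows of the $i$-th leg encode the element $A_i := q^{[i,1]} p^{[i,1]} + \gamma^{[i,0]} I_n \in \O(C^i)$. The remaining moment map at vertex $0$ reads $-\sum_i q^{[i,1]} p^{[i,1]} \in \gl_n(\K)^\vee \cong z^{-1} \gl_n(\K)$, where one extracts the residue since the multiplicity at $0$ equals $1$. Setting this equal to $\gamma^0 I_n$ and substituting $q^{[i,1]} p^{[i,1]} = A_i - \gamma^{[i,0]} I_n$ yields
\begin{align*}
-\sum_{i=1}^d \res_{z=0}(A_i) + \sum_{i=1}^d \res_{z=0}(\gamma^{[i,0]}) I_n = \gamma^0 I_n,
\end{align*}
which, by the defining equation \eqref{e:vertex0value} for $\gamma^0$, collapses to the additive fusion constraint $\sum_i \res_{z=0}(A_i) = 0$, namely the zero-locus of the moment map \eqref{e:mud}. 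Completing the GIT quotient by $\GL_n(\K)$ produces the desired isomorphism \eqref{e:quivertoorbits}, and Definition \ref{d:odr} together with Definition \ref{genodr} gives $\mathcal{Q}_{\bm{\gamma}} \cong \FM^*_{\muhat, \bfr}$ in the regular generic case.

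For the dimension formula, Lemma \ref{l:coadjorb}\eqref{l:coadjhomog} combined with Lemma \ref{intor}\eqref{l:centralizers} yields $\O(C^i) \cong \GL_n(\RR_{m_i})/\L_{\lambda^i, m_i}$, so $\dim \O(C^i) = m_i\bigl(n^2 - \sum_j \lambda_{[i,j]}^2\bigr)$. Since $n_{[i,k-1]} - n_{[i,k]} = \lambda_{[i,k-1]}$, a short telescoping argument gives the identity $n^2 - \sum_j \lambda_{[i,j]}^2 = 2 \sum_{k=1}^{l_i} n_{[i,k]}(n_{[i,k-1]} - n_{[i,k]})$. The scalars $\K^\times \subset \GL_n(\K)$ act trivially on $\prod_i \O(C^i)$ (cf.\ the proof of Proposition \ref{odrspa}), so the quotient is effectively by $\PGL_n$ of dimension $n^2 - 1$, and
\begin{align*}
\dim \mathcal{Q}_{\bm{\gamma}} = \sum_{i=1}^d \dim \O(C^i) - 2(n^2 - 1) = 2\left( \sum_{i=1}^d m_i \sum_{k=1}^{l_i} n_{[i,k]}(n_{[i,k-1]} - n_{[i,k]}) - n^2 + 1 \right),
\end{align*}
matching \eqref{e:quivdimform}.

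The main subtlety to verify is the algebro-geometric validity of reduction in stages, since $\prod_i G_{\m_i, \n_i, 0}$ is non-reductive. However, the analysis of Section \ref{s:redstages} underlying Proposition \ref{p:qmvstar}—which shows that $\mu_{\irr}^{-1}(\bm{\gamma}_{\irr})$ is a trivial principal bundle for the unipotent radical $G_{\m, \n}^1$—plays exactly this role: it reduces the question to an honest affine GIT quotient by the reductive group $\GL_n(\K)$, so that combined with the leg-wise Proposition \ref{p:coadjqmv} the identification \eqref{e:quivertoorbits} holds as an isomorphism of affine varieties without further analysis.
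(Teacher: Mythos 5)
Your proof is correct and follows essentially the same route as the paper: leg-by-leg application of Proposition \ref{p:coadjqmv}, matching of the central-vertex moment map condition with the fusion constraint via \eqref{e:vertex0value}, and identification of the residual $\GL_n(\K)$-action with simultaneous conjugation. The only cosmetic difference is in the dimension count, where you compute $\dim \O(C^i) = m_i(n^2 - N(\lambda^i))$ from the homogeneous-space description and telescope, while the paper tallies arrow and group dimensions directly from the quiver data; both yield \eqref{e:quivdimform}.
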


\begin{proof}
Taking each leg one at a time, Proposition \ref{p:coadjqmv} takes $(p,q) \in \mu^{-1}(\bm{\gamma})$ and gives us a $d$-tuple $(A^1, \ldots, A^d)$ with $A^i \in \O(C^i)$, $1 \leq i \leq d$; more explicitly \eqref{e:quivertocoadjorbx}, one has 
\begin{align*}
A^i = q^{[i,1]} p^{[i,1]} + \gamma^{[i,0]} \1_n.
\end{align*}
The moment map condition for the quiver at the vertex $0$ is
\begin{align*}
\pi_{\res} \left( -\sum_{i=1}^d q^{[i,1]} p^{[i,1]} \right) = \gamma^0 \1_n,
\end{align*}
and that for the additive fusion product, that is the right hand side of \eqref{e:quivertoorbits}, is
\begin{align*}
\pi_{\res} \left( \sum_{i=1}^d A^i \right) = \pi_{\res} \left( \sum_{i=1}^d q^{[i,1]} p^{[i,1]} + \gamma^{[i,0]} \1_n \right) = 0.
\end{align*}
So by definition \eqref{e:vertex0value} it is clear that one moment map condition is satisfied if and only if the other one is.  Finally, we remark that the remaining group action is the diagonal action of $\GL_n(\K)$, with $g \in \GL_n(\K)$ acting on $q^{[i,1]}$ as $g q^{[i,1]}$ and on $p^{[i,1]}$ by $p^{[i,1]} g^{-1}$; this clearly translates into conjugation on the $A^i$.

For the dimension formula, we can use the expression \eqref{e:quivertoorbits} and compute the total dimension by summing those of the coadjoint orbits $\O(C^i)$.  To obtain these in terms of the quiver data, we use the expression \eqref{e:coadjqmv}.  For the $i$th leg, the dimension of the space of representations for the arrows (going in opposite directions) joining the $(k-1)$th and $k$th nodes is $2m_i n_{[i,k]} n_{[i,k-1]}$; the dimension of the group at the node $[i,k]$ is $m_i n_{[i,k]}^2$.  Summing over the nodes on the $i$th leg and accounting for the preimage of central elements in the dual of the Lie algebra, we get
\begin{align*}
\dim \O(C^i) = 2 \left( m_i \sum_{k=1}^{l_i} n_{[i,k]} n_{[i,k-1]} - m_i \sum_{k=1}^{l_i} n_{[i,k]}^2 \right) = 2m_i \sum_{k=1}^{l_i} n_{[i,k]} (n_{[i,k-1]} - n_{[i,k]}).
\end{align*}
The term $-2(n_0^2-1) = -2(n^2-1)$ of course comes from the quotient by $P\GL_n = \GL_n/Z$.
\end{proof}

\subsubsection{Surfaces associated to non-simply laced affine Dynkin diagrams} \label{s:nonslDD}

It is well-known how to attach a smooth algebraic surface to a simply laced affine Dynkin diagram. Namely, given a simply laced affine root system of type $\mathcal{T}$ (where $\mathcal{T}$ is one of $\{\{\tilde{A}_i\}_{i\geq 1},\{\tilde{D}_i\}_{i\geq 4},\tilde{E}_6,\tilde{E}_7,\tilde{E}_8\}$), the type $\mathcal{T}$ ALE space can be constructed as a Nakajima quiver variety for the quiver, the affine Dynkin diagram, and with a suitable choice of dimension vectors \cite[\S 2]{nak94}. It is isomorphic to a resolution of a Kleinian singularity $\C^2/\Gamma$ for a finite $\Gamma\subset \SL_2$ corresponding to $\mathcal{T}$ via the McKay correspondence. It carries  natural {\em Asymptotically Locally Euclidean} hyperk\"ahler metrics \cite{Kr89} - hence the abbreviation ALE. In this section we will study
open de Rham spaces corresponding to non-simply laced affine Dynkin diagrams. Many of them will turn out to be isomorphic to ALE spaces, thanks to  Boalch's \cite[Theorem 9.11]{boalch-simply}. 

Let $Q=(Q_0,Q_1,h,t)$ be a quiver, $\n \in \Z_{>0}^{Q_0}$ a dimension vector and $\m\in \Z_{>0}^{Q_0}$ the multiplicity vector. As explained in \cite{yamakawanotes} this data is equivalent to the following symmetrizable generalized Cartan matrix $C=(c_{ij})_{i,j\in Q_0}$ defined by $c_{ii}=2$ and for $i\neq j$ $$c_{i,j}:=-\frac{n_i}{{\rm gcd}(n_i,n_j)} a_{i,j}$$ where $$a_{i,j}:=|\{a\in Q_1 \ |\ h(a)=i, t(a)=j \mbox{ or }  h(a)=j, t(a)=i\}|.$$ This we can record by a not necessarily simply-laced Dynkin diagram. 

Let now $\bm{\gamma}\in \g_{\m, \n}^\vee$. Then the quiver variety with multiplicity $Q_\gamma$ has dimension given by formula \eqref{e:quivdimform}.  Thus $\mathcal{Q}_{\bm{\gamma}}$ is a surface if and only if
\begin{align*}
\sum_{i=1}^d m_i \sum_{k=1}^l n_{[i,k]} ( n_{[i,k-1]} - n_{[i,k]}) =  n_0^2.
\end{align*}
For instance, in the example of $\bf F_4^{(1)}$ below, one has 
\begin{align*}
\mathbf{m} & = (m_0, m_{[1,1]}, m_{[1,2]}, m_{[1,3]}, m_{[2,1]}) = (1,2,2,2,1), \\
\mathbf{n} & = (n_0, n_{[1,1]}, n_{[1,2]}, n_{[1,3]}, n_{[2,1]}) = (4,3,2,1,2)
\end{align*}
and thus $m_1=2$ and $m_2=1$ and the condition is readily verified.

Below we will list the star-shaped non-simply laced affine Dynkin diagrams, which correspond to open de Rham spaces in Theorem~\ref{mstar=q} of dimension $2$. The simply-laced star-shaped ones $\tilde{D}_4, \tilde{E}_6,  \tilde{E}_7,\tilde{E}_8$ correspond to open de Rham spaces with logarithmic singularities.

\pagebreak
{\bf Example: }$\bf {A}_5^{(2)}$

 \begin{center}
\begin{tikzpicture}
\begin{scope}[start chain]
\dnodea{1}{\!1}
\node[chj,label={below:\dlabel{\! 2}},label={[inner sep=1pt]above right:\mlabel{1}}] {};
\dnodeanj{2}{\!1}
\path (chain-2) -- node[anchor=mid] {\(\Leftarrow\)} (chain-3);
\end{scope}
\begin{scope}[start chain=br going above]
\chainin(chain-2);
\node[chj,label={below left:\dlabel{1}},label={[inner sep=1pt]above:\mlabel{1}}] {};
\end{scope}
\end{tikzpicture}
\end{center}

The diagram depicts the non-simply laced Dynkin diagram. The integers written below each node show the dimension vector, while the ones above the node give the multiplicity vector. This corresponds to the open de Rham space $\MM^*_{(1^2,1^2),(1)}$ of type $((1^2,1^2),(1))$. By \cite[Theorem 9.11]{boalch-simply} $\MM^*_{(1^2,1^2),(1)}$ is isomorphic to an $A_3$ ALE space. In particular, the mixed Hodge structure is pure on $H^*(\MM^*_{(1^2,1^2),(1)})$ and $$WH(\MM^*_{(1^2,1^2),(1)};q,t)=1+3qt^2,$$ which is compatible with $|\MM^*_{(1^2,1^2),(1)}(\BF_q)|=q^2+3q$ from \eqref{formn=2} with $d=3,r=1$.

{\bf Example: }$\bf {C}^{(1)}_2$

\begin{center} \begin{tikzpicture}[start chain]
\dnodeanj{2}{\!1}
\dnodeanj{1}{\!2}
\dnodeanj{2}{\!1}
\path (chain-1) -- node{\(\Rightarrow\)} (chain-2);
\path (chain-2) -- node{\(\Leftarrow\)} (chain-3);
\end{tikzpicture}
\end{center}
This corresponds to the open de Rham space $\MM^*_{2,(1,1)}$ of  irregular type $(1,1)$. This is the only example from the list of star-shaped non-simply-laced Dynkin diagrams which is not isomorphic to a Nakajima quiver variety \cite{boalch08}. One can see this using the explicit equation for $\MM^*_{2,(1,1)}$ in \cite[(3.1)]{Bielawski2017}. One can deduce it has isolated singularities at infinity by \cite[Remarks 2.5.(a)]{siersma-tibar}. In turn, this implies  that it is homotopic to a wedge of spheres by \cite[Theorem 3.1]{siersma-tibar}. In order to match the virtual weight polynomial computation $WH_c(\MM^*_{2,(1,1)},q,-1)=q^2+2q$ from \eqref{formn=2} with $d=2,r=2$,  we must have that the mixed Hodge structure is pure and
$$WH(\MM^*_{2,(1,1)};q,t)=1+2qt^2.$$
 This case is special in that Boalch's \cite[Theorem 9.11]{boalch-simply} identification with a quiver variety does not apply, as we have two irregular poles. In fact there is no ALE space which is isomorphic with $\MM^*_{2,(1,1)}$ as the intersection form on $H^2_c(\MM^*_{2,(1,1)})$ is divisible\footnote{Because after a hyperk\"ahler rotation the manifold becomes a blow-up of $(\C^\times \times \C)/\Z_2$ at the two $A_1$ singularities, where $\Z_2$ acts by the inverse (see Example~\ref{ex:ALF} and \cite{Hitchin1984,Dancer1993}). Thus $H^2_c(\MM^*_{2,(1,1)},\Q)$ has a basis represented by the two disjoint exceptional divisors of self-intersection $-2$.} by $2$ , while the intersection form of the $A_2$ ALE space is not divisible by $2$.

{\bf Examples: }$\bf D_4^{(3)}, A_2^{(2)}, G_2^{(1)},F_4^{(1)},E_6^{(2)}$

In these cases we will only have one irregular pole, and thus \cite[Theorem 9.11]{boalch-simply} will apply. The arguments are identical to the  ${A}_5^{(2)}$ case above. We collect the results in the following table. 

\begin{center}
\begin{tabular}{||c c c c c||} 
 \hline
 type & Dynkin & open de Rham space&ALE type & $WH(q,t)$-polynomial \\ [0.5ex] 
 \hline\hline
$ {A}_5^{(2)}$& \begin{tikzpicture}
\begin{scope}[start chain]
\dnodea{1}{\!1}
\node[chj,label={below:\dlabel{\! 2}},label={[inner sep=1pt]above right:\mlabel{1}}] {};
\dnodeanj{2}{\!1}
\path (chain-2) -- node[anchor=mid] {\(\Leftarrow\)} (chain-3);
\end{scope}
\begin{scope}[start chain=br going above]
\chainin(chain-2);
\node[chj,label={below left:\dlabel{1}},label={[inner sep=1pt]above:\mlabel{1}}] {};
\end{scope}
\end{tikzpicture}&$\MM^*_{(1^2,1^2),(1)}$ & $\tilde{A}_3$ & $1+3qt^2$ \\
\hline
 $ D_4^{(3)}$ &\begin{tikzpicture}[start chain]
\dnodea{1}{\!1}
\dnodea{1}{\!2}
\dnodeanj{3}{\! 1}
\path (chain-2) -- node {\(\Lleftarrow\)} (chain-3);
\end{tikzpicture}&$\MM^*_{(1^2),(2)}$ & $\tilde{A}_2$ & $1+2qt^2$ \\ 
 \hline
 $ A_2^{(2)}$ & \begin{tikzpicture}[start chain]
\dnodeanj{1}{\!2}
\dnodeanj{4}{\!1}
\path (chain-1) -- node {\QLeftarrow} (chain-2);
\end{tikzpicture}&$\MM^*_{2,(3)}$ & $\tilde{A}_1$ & $1+qt^2$ \\
 \hline
 $ G_2^{(1)}$ & \begin{tikzpicture}[start chain]
\dnodea{3}{\!1}
\dnodea{3}{\!2}
\dnodeanj{1}{\!3}
\path (chain-2) -- node{\(\Rrightarrow\)} (chain-3);
\end{tikzpicture}&$\MM^*_{3,(2)}$ &$\tilde{A}_2$ &$1+2qt^2$ \\
 \hline
 $F_4^{(1)}$ & \begin{tikzpicture}[start chain]
\dnodea{2}{\!1}
\dnodea{2}{\!2}
\dnodea{2}{\!3}
\dnodeanj{1}{\!4}
\dnodea{1}{\!2}
\path (chain-3) -- node[anchor=mid]{\(\Rightarrow\)} (chain-4);
\end{tikzpicture}&$\MM^*_{(2^2),(1)}$ & $\tilde{D}_4$ & $1+4qt^2$ \\
 \hline
 $E_6^{(2)}$ & \begin{tikzpicture}[start chain]
\dnodea{1}{\!1}
\dnodea{1}{\!2}
\dnodea{1}{\!3}
\dnodeanj{2}{\!2}
\dnodea{2}{\!1}
\path (chain-3) -- node[anchor=mid] {\(\Leftarrow\)} (chain-4);
\end{tikzpicture}&$\MM^*_{(1^3),(1)}$ & $\tilde{D}_4$ & $1+4qt^2.$ \\ [1ex] 
 \hline
\end{tabular}
\end{center}

In all these cases the mixed Hodge polynomial is compatible with the weight polynomial computed from \eqref{formn=2} in the rank $2$ cases and  \eqref{formn=3} in the rank $3$ cases. The only example of rank $4$ is  $F_4^{(1)}$ where one can compute the weight polynomial $q^2+4q$  directly from \eqref{finalcountf}.

\newpage
\section{Hyperk\"ahler considerations} \label{s:HK} 

Our purpose in this section is to prove Theorem \ref{t:odRhK}, which says that some of the open de Rham spaces $\MM^*(\mathbf{C})$ that we have been discussing, namely those for which all the formal types are of order $\leq 2$, admit canonical complete hyperk\"ahler metrics.  

Let us first give some motivation for and review what is already known about this problem.  In the tame case (i.e., when $m_i = 1$ for all $i$ in Definition \ref{d:odr}), the corresponding open de Rham is known to be a Nakajima quiver variety \cite[Theorem 1]{CB03}, and these possess complete hyperk\"ahler metrics \cite[Theorem 2.8]{nak94}.  On the other hand, in the case where we have two poles of order $2$ (cf., \cite[discussion after Corollary 1]{Boalch2007}), then $\MM^*(\mathbf{C})$ will be a holomorphic symplectic quotient of $T^*G$, which is in fact realizable as a hyperk\"ahler reduction of $T^*G$; see Example \ref{ex:ALF} for further details.  The existence of such metrics in more general irregular cases was discussed in \cite[\S3.1]{Boalch2012}, but as we know of no precise reference for this fact, we give a construction here.

Now, to give a rough explanation of this construction, let us recall that an $\MM^*(\mathbf{C})$ with poles of order $\leq 2$ is an additive fusion product of coadjoint orbits in $\g^\vee$ and those for the group $\G_2 = \GL_n(\RR_2)$ (using the notation of Section \ref{s:jd}).  Each coadjoint orbit of the latter type may be realized as an algebraic symplectic quotient of $T^*\G$, the cotangent bundle of $\G=\GL_n(\C)$, by left multiplication by a maximal torus (Lemma \ref{l:decoup}).  It is well known that $T^*\G$ admits a hyperk\"ahler metric, by an infinite-dimensional hyperk\"ahler quotient via Nahm's equations \cite{Kr88}.  Furthermore, if $K \subset \G$ is a maximal compact subgroup then $K \times K$ acts by left and right multiplication \eqref{e:TGact} and these actions admit hyperk\"ahler moment maps \cite{DS96}.

For coadjoint orbits in $\g^\vee$, hyperk\"ahler metrics were first constructed by \cite{Kronheimer1990} for regular semisimple orbits and for general semisimple orbits in \cite{Biquard1996} and \cite{Kovalev1996}.  The coadjoint $\G$-action can be restricted to $K$ and it can be shown (Proposition \ref{p:Ohkstmt}), in a manner similar to that for $T^*\G$, that this action also admits a hyperk\"ahler moment map.

Now, if a hyper-hamiltonian action of a compact group extends to a holomorphic action of its complexification, then the hyperk\"ahler reduction can be understood as a holomorphic symplectic reduction \cite[\S3(D)]{HKLR}. Here we will need the opposite direction:  $\MM^*(\mathbf{C})$ is given as an algebraic symplectic quotient; we will  show that it, in fact, arises as a hyperk\"ahler quotient.  A special case of a version of the Kempf--Ness theorem due to Mayrand \cite{Mayrand} gives sufficient conditions for algebraic symplectic quotients of the type we have been considering to be upgraded to hyperk\"ahler quotients.  

\subsection{Holomorphic symplectic quotients to hyperk\"ahler quotients}

Let us begin by incorporating Mayrand's statement into the following, which will give us the criterion we will apply later to obtain the theorem.

\begin{prop} \label{p:Mayrand}
Suppose $(M, g, \mathbf{I}, \mathbf{J}, \mathbf{K})$ is a hyperk\"ahler manifold, with K\"ahler forms $\omega_{\mathbf{I}},\omega_{\mathbf{J}},\omega_{\mathbf{K}}\in \Omega^2(M)$ in the corresponding complex structures.  We suppose that $(M, \mathbf{I})$ is a (smooth) complex affine variety and refer to $M$ as such with the complex structure $\mathbf{I}$ in mind.  Suppose $G$ is a complex reductive group with an algebraic action on $M$ for which the restriction to its maximal compact $K$ admits a hyperk\"ahler moment map $\mu_{\mathbf{I}}, \mu_{\mathbf{J}}, \mu_{\mathbf{K}} : M \to \k^\vee$.  As usual, we will write
\begin{align*}
\mu_{\R} & := \mu_{\mathbf{I}} & \mu_{\C} & := \mu_{\mathbf{J}} + i \mu_{\mathbf{K}}
\end{align*}
for the real and complex components of the moment map; we will assume that $\mu_{\C} : M \to \g^\vee = \k^\vee \oplus i \k^\vee$ is algebraic. Here $\g=\textnormal{Lie}(G)$ is a complex and $\k=\textnormal{Lie}(K)$ a real Lie algebra and $\vee$ means dual vector space over the respective field. Let $\lambda \in (\g^\vee)^G$ be such that $G$ acts freely on the affine variety $\mu_{\C}^{-1}(\lambda)$; thus, the algebraic symplectic quotient
\begin{align*}
M /\!\!/_\lambda G := \spec \C[\mu_{\C}^{-1}(\lambda)]^G
\end{align*}
is smooth.  Then if there exists a $K$-invariant, proper global K\"ahler potential for $\omega_{\mathbf{I}}|_{\mu_{\C}^{-1}(\lambda)}$, which is bounded below, then there exists $\lambda_{\R} \in \k^\vee$ and, for the complex structures induced from $\mathbf{I}$, a natural biholomorphism
\begin{align*}
M /\!\!/\!\!/_{(\lambda_{\R}, \lambda)} K \cong M /\!\!/_\lambda G,
\end{align*} where $M /\!\!/\!\!/_{(\lambda_{\R}, \lambda)}:= \left( \mu_{\R}^{-1}(\lambda_{\R}) \cap \mu_{\C}^{-1}(0) \right) / K$ denotes the hyperk\"ahler quotient. 
\end{prop}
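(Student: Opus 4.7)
The plan is to deduce this from Mayrand's version of the Kempf--Ness theorem, applied to the closed subvariety $N := \mu_{\C}^{-1}(\lambda) \subseteq M$. Since $\lambda \in (\g^\vee)^G$ is fixed by $G$ and $G$ acts freely on $N$, the hypotheses imply that $\lambda$ is a regular value of $\mu_{\C}$, so $N$ is a smooth closed $G$-invariant complex (with respect to $\mathbf{I}$) affine subvariety of $M$.  The restriction of $\omega_{\mathbf{I}}$ to $N$ is a K\"ahler form (this uses that $\mu_{\C}^{-1}(\lambda)$ is an $\mathbf{I}$-complex submanifold, which in turn follows from the standard fact that $\mu_{\C}$ is holomorphic with respect to $\mathbf{I}$).

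The first step is to check that all the data restrict appropriately to $N$: the $K$-action on $N$ is Hamiltonian for $\omega_{\mathbf{I}}|_N$ with moment map $\mu_{\R}|_N : N \to \k^\vee$, and the given K\"ahler potential restricts to a $K$-invariant, proper, bounded-below global K\"ahler potential on $N$.  Thus $N$, together with its $G$-action and the restricted data, satisfies precisely the hypotheses of Mayrand's Kempf--Ness-type theorem for (possibly singular) holomorphic symplectic quotients of smooth affine varieties with a reductive group action.

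Next, I would invoke Mayrand's theorem to produce $\lambda_{\R} \in \k^\vee$ (obtained as the critical level of a Kirwan--Ness-type stratification, equivalently, the $K$-orbit of minima of the norm-squared of the moment map along each closed $G$-orbit) and a natural biholomorphism
\[
(\mu_{\R}|_N)^{-1}(\lambda_{\R})/K \;\cong\; N /\!\!/ G.
\]
By definition of the hyperk\"ahler quotient and of the algebraic symplectic quotient, the left-hand side equals $M /\!\!/\!\!/_{(\lambda_{\R}, \lambda)} K$, while the right-hand side equals $\spec \C[N]^G = M /\!\!/_{\lambda} G$, which gives the result.

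The main technical obstacle is the analytic content of the Kempf--Ness step itself, which Mayrand's theorem packages for us: one needs properness of the K\"ahler potential (together with its lower bound) to guarantee convergence of the gradient flow of $\|\mu_{\R}|_N\|^2$ on each $G$-orbit to a unique $K$-orbit of minima, and to identify the semistable locus with the whole of $N$ (so that the GIT quotient literally parametrises $G$-orbits, which here are already all closed by the freeness hypothesis).  Verifying that the hypothesis we impose really supplies the input to Mayrand's theorem -- in particular, that properness of the K\"ahler potential on $\mu_{\C}^{-1}(\lambda)$ is what is needed rather than properness on all of $M$ -- is the point at which the argument must be stated with care; all remaining checks are formal manipulations with the definitions of the two quotients.
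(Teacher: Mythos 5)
Your proposal follows essentially the same route as the paper: restrict all the data to the smooth affine $G$-variety $N = \mu_{\C}^{-1}(\lambda)$ and invoke Mayrand's Kempf--Ness-type results. One point needs correcting, however: the origin of $\lambda_{\R}$. Mayrand's theorem is stated at level $0$ of the moment map $\tilde{\mu}_{\R}$ that the K\"ahler potential itself induces (his Proposition 4.1), not at an unspecified level of the given hyperk\"ahler moment map $\mu_{\R}$; your parenthetical description of $\lambda_{\R}$ as ``the critical level of a Kirwan--Ness-type stratification'' is not the mechanism and would not let a reader pin $\lambda_{\R}$ down. The correct (and simpler) observation, which is the one step of the paper's proof your write-up omits, is that $\tilde{\mu}_{\R}$ and $\mu_{\R}|_N$ are both moment maps for the same $K$-action with respect to the same form $\omega_{\mathbf{I}}|_N$, hence differ by a constant $\lambda_{\R} \in \k^\vee$, so that $\tilde{\mu}_{\R}^{-1}(0) = (\mu_{\R}|_N)^{-1}(\lambda_{\R})$. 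A second, minor, inaccuracy: freeness of a reductive group action does not by itself force all orbits to be closed; in the paper freeness is used only to ensure there is a single orbit-type stratum, which is what upgrades Mayrand's homeomorphism of Proposition 4.2 to a biholomorphism. With these two repairs your argument coincides with the paper's.
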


\begin{proof}
The K\"ahler potential produces a moment map $\tilde{\mu}_{\R}$ for the $K$-action with respect to $\omega_{\mathbf{I}}$ \cite[Proposition 4.1]{Mayrand}.  This must differ from the given $\mu_{\R}$ by a constant, i.e., there exists $\lambda_{\R} \in \k^\vee$ such that $\mu_\R = \tilde{\mu}_{\R} + \lambda_{\R}$.  Thus, $\tilde{\mu}_{\R}^{-1}(0) = \mu_{\R}^{-1}(\lambda_{\R})$.  Then \cite[Proposition 4.2]{Mayrand} gives a homeomorphism at the last step of the sequence
\begin{align*}
M /\!\!/\!\!/_{(\lambda_{\R}, \lambda)} K = \left( \mu_{\R}^{-1}(\lambda_{\R}) \cap \mu_{\C}^{-1}(0) \right) / K = \left( \tilde{\mu}_{\R}^{-1}(0) \cap \mu_{\C}^{-1}(0) \right) / K \cong M /\!\!/_\lambda G.
\end{align*} 
Furthermore, by the freeness of the action, there is a single orbit type stratum and hence the homeomorphism is, in fact, a biholomorphism, again from \cite[Proposition 4.2]{Mayrand}.
\end{proof}

\subsection{Hyperk\"ahler moment maps on the factors} \label{s:factors}

Here we show that the two kinds of factors that appear in the relevant additive fusion product each admit hyper-hamiltonian group actions.

\subsubsection{Cotangent bundles}

Let $G$ be a complex reductive group with Lie algebra $\g$ and let $K \leq G$ be a maximal compact subgroup.  We recall that for $T^*G = G \times \g^\vee$, there is an algebraic hamiltonian action of $G \times G$ given by
\begin{align} \label{e:TGact}
(g,h) \cdot (a, X) = (gah^{-1}, \Ad_h X),
\end{align}
for which the moment map is
\begin{align} \label{e:TGmm}
(a,X) \mapsto (\Ad_a X, -X).
\end{align}

\begin{prop} \label{p:TGhkstmt}
$T^*G$ admits a hyperk\"ahler metric for which the restriction of the action \eqref{e:TGact} to $K \times K$ admits a hyperk\"ahler moment map.  Furthermore, the complex part of this moment map is given by \eqref{e:TGmm}, and for the natural complex (K\"ahler) structure there exists a $(K \times K)$-invariant, proper and bounded below global K\"ahler potential.
\end{prop}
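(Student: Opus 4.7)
The plan is to realize $T^*G$ as Kronheimer's hyperk\"ahler quotient of a space of Nahm data, read off the $(K \times K)$-moment map from the construction, and then exhibit an explicit K\"ahler potential in terms of the Nahm data.

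First, I would recall from \cite{Kr88} that the moduli space $\MM$ of solutions $T = (T_0,T_1,T_2,T_3) \colon [0,1] \to \k^{\oplus 4}$ to Nahm's equations, modulo the group $\mathscr{G}_0$ of smooth $K$-valued gauge transformations that are the identity at both endpoints $t=0,1$, carries a complete hyperk\"ahler metric $(g,\mathbf{I},\mathbf{J},\mathbf{K})$. The holomorphic data $(\alpha,\beta) := (T_0 + iT_1, T_2 + iT_3)$, together with parallel transport by the complex connection $d/dt + \alpha$, produces a canonical hyperk\"ahler isomorphism $\MM \xrightarrow{\sim} T^*G$ with respect to $\mathbf{I}$, intertwining the holomorphic symplectic form with the standard one.

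Second, the residual $K \times K$-action on $\MM$ by gauge transformations at the two endpoints preserves all three symplectic forms. By the finite-dimensional residue of the infinite-dimensional hyperk\"ahler reduction, this action is tri-hamiltonian with moment maps
\[
(\mu_{\mathbf{I}}, \mu_{\mathbf{J}}, \mu_{\mathbf{K}})(T) = \bigl(\,(T_1,T_2,T_3)(1),\ -(T_1,T_2,T_3)(0)\,\bigr),
\]
as established in \cite{DS96}. Tracing these through the identification with $T^*G = G \times \g^\vee$, the complex moment map $\mu_{\C} = \mu_{\mathbf{J}} + i\mu_{\mathbf{K}}$ takes the form $(a,X) \mapsto (\Ad_a X, -X)$, matching \eqref{e:TGmm}.

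Third, for the K\"ahler potential I would consider the $\Ad_K$-invariant $L^2$-functional
\[
\Phi(T) := \int_0^1 \bigl( \|T_2(t)\|^2 + \|T_3(t)\|^2 \bigr)\, dt,
\]
where $\|\cdot\|$ is an $\Ad_K$-invariant norm on $\k$. It descends to a smooth $(K \times K)$-invariant function on $\MM$, bounded below by $0$. Its strict plurisubharmonicity with respect to $\mathbf{I}$ (so that $dd^c_{\mathbf{I}}\Phi$ is a positive multiple of $\omega_{\mathbf{I}}$) follows from the standard observation, as in the hyperk\"ahler quotient formalism \cite[\S3(D)]{HKLR}, that $\|T_2\|^2 + \|T_3\|^2$ is the norm squared of the holomorphic coordinates $\beta = T_2 + iT_3$ transverse to the $\mathbf{I}$-complex gauge slice.

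The main obstacle will be establishing \emph{properness} of $\Phi$: one needs to show that a sequence of gauge equivalence classes $[T^{(n)}]$ along which $\Phi(T^{(n)})$ stays bounded must lie in a compact subset of $\MM$. The strategy is to combine the $L^2$-bound on $T_2,T_3$ supplied by $\Phi$ with the Nahm equation $\dot T_1 + [T_0,T_1] = [T_2,T_3]$ (which controls $T_1$) and a Coulomb-type gauge-fixing argument in the spirit of Uhlenbeck compactness to bound $T_0$; the endpoint evaluations of $T$ then stay in a compact set of $\k$, which, through the identification $\MM \cong T^*G$, translates into compactness of the image in $T^*G$. With properness in hand, the proposition follows.
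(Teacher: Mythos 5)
Your treatment of the first three claims is exactly the route the paper takes: the hyperk\"ahler structure on the Nahm moduli space is \cite[Proposition 1]{Kr88}, the tri-Hamiltonian $K\times K$-action with moment map given by endpoint evaluation is \cite[\S3, Lemma 2]{DS96}, and the identification of the complex part with $(a,X)\mapsto(\Ad_a X,-X)$ comes from tracing the Dancer--Swann formulas through the isomorphism $\MM\cong T^*G$. The paper disposes of all of this, and of the K\"ahler potential, purely by citation (the potential is \cite[Proposition 4.6]{Mayrand}), so up to that point you are reproving what is quoted, with the correct candidate potential $\Phi(T)=\int_0^1\|T_2\|^2+\|T_3\|^2\,dt$.

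The genuine gap is properness, which you correctly identify as the main obstacle but then only sketch via a ``Coulomb gauge / Uhlenbeck compactness'' strategy that does not work as stated. An $L^2$ bound on $T_2,T_3$ over $[0,1]$ does not by itself control the endpoint values of the Nahm data, nor the complex holonomy of $d/dt+\alpha$, which is the $G$-coordinate of the point in $T^*G=G\times\g^\vee$; and bounding $T_0$ is beside the point, since $T_0$ is pure gauge. The argument actually used (and which the paper spells out in Appendix \ref{s:appcoadjpf} for the coadjoint-orbit analogue, Lemma \ref{l:OKahpot}, adapted from \cite[Lemma 4.5]{Mayrand}) exploits the scaling symmetry of Nahm's equations, $T^{r\xi}(s)=rT^\xi(rs)$, to show the potential is homogeneous of degree one in the boundary data, and then deduces properness from compactness of the unit sphere in that data together with strict positivity of the minimum there. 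Some version of this homogeneity/compactness mechanism, not a gauge-fixing compactness theorem, is what you need to close the argument; as written, the properness claim is unproved.
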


\begin{proof}
As mentioned, the hyperk\"ahler structure is due to \cite[Proposition 1]{Kr88}.  The existence of the hyperk\"ahler moment map is \cite[\S3 Lemma 2]{DS96}.  The expression for the complex part of the moment map is obtained by comparing \cite[Equations (4), (5)]{DS96} and the expressions in the statement of \cite[\S3 Lemma 2]{DS96}.  Finally, the existence of the global K\"ahler potential is \cite[Proposition 4.6]{Mayrand}.
\end{proof}

\subsubsection{Coadjoint orbits} \label{s:Ohk}

Let $G$, $\g$, $K$ be as above and let $\O$ be a semisimple coadjoint orbit in $\g^\vee$.  Of course, $G$ acts algebraically on $\O$ via the coadjoint action and the moment map is simply the inclusion $\O \hookrightarrow \g^\vee$.  Exactly the same statement as in Proposition \ref{p:TGhkstmt} holds for $\O$.

\begin{prop} \label{p:Ohkstmt}
$\O$ admits a complete hyperk\"ahler metric for which the restriction of the coadjoint action to $K$ admits a hyperk\"ahler moment map.  Furthermore, the complex part of this moment map is given by the inclusion $\O \hookrightarrow \g^\vee$, and for the natural complex (K\"ahler) structure there exists a $K$-invariant, proper and bounded below global K\"ahler potential.
\end{prop}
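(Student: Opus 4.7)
The plan is to follow the outline of Proposition \ref{p:TGhkstmt} very closely, with the construction of Kronheimer--Biquard--Kovalev playing the role of the Nahm-type construction of $T^*G$. Recall that their construction realizes $\mathcal{O}$ as an infinite-dimensional hyperk\"ahler quotient of a Hilbert space of $L^2$ solutions to Nahm's equations on the half-line $[0,\infty)$ with values in $\mathfrak{g}$, by the gauge group $\mathcal{G}_0$ of $K$-valued gauge transformations that are the identity at $0$ and suitably decay at infinity. The residual symmetry of the construction is precisely an action of the finite-dimensional compact group $K$, obtained by evaluating allowed gauge transformations at $0$. I will use this presentation to exhibit all four pieces of data required by the Proposition.

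The hyperk\"ahler metric and its completeness are immediate from the cited works. For the hyperk\"ahler moment map of the $K$-action, I will invoke the standard fact \cite{HKLR} that, whenever one performs a hyperk\"ahler quotient and there is a residual symmetry extending to the pre-quotient space, the moment map on the pre-quotient descends to a moment map on the quotient; here the $\mathcal{G}_0$-equivariant hyperk\"ahler moment map on the Hilbert space is literally given by the three Nahm equations, and boundary evaluation at $0$ gives a $K$-equivariant hyperk\"ahler moment map on $\mathcal{O}$. To identify $\mu_{\mathbf{J}}+i\mu_{\mathbf{K}}$ with the tautological inclusion $\mathcal{O}\hookrightarrow\mathfrak{g}^\vee$, I will use the standard twistor description: in a complex structure in which the Nahm triple $(T_1,T_2,T_3)$ reassembles as the pair $(\alpha,\beta)$ with $\alpha=T_0+iT_1$ and $\beta=T_2+iT_3$ (here $T_0$ is the connection component), the complex moment map at the boundary is exactly $\beta(0)$, whose $\mathcal{G}_0$-orbit is precisely the coadjoint orbit through the chosen semisimple element, under the trace-residue identification of $\mathfrak{g}$ with $\mathfrak{g}^\vee$.

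The main obstacle is the construction of the global K\"ahler potential, and this is where I expect to spend the most effort. The cleanest route, mirroring Mayrand's proof in \cite[Proposition 4.6]{Mayrand} of the $T^*G$ case, is to write a $K$-invariant potential in terms of the Nahm data: concretely, take $\rho:\mathcal{O}\to\mathbb{R}$ to be the function obtained by descending the $L^2$-norm squared of a distinguished representative in each $\mathcal{G}_0$-orbit (for instance, the one minimizing the $L^2$-norm, whose existence and smoothness follow from the Kempf--Ness-type theory for $\mathcal{G}_0$ already implicit in the Biquard--Kovalev construction). $K$-invariance is automatic from the $K\times\mathcal{G}_0$-equivariance of the construction; that $\rho$ is a potential for $\omega_{\mathbf{I}}$ follows from the general principle that the square norm of the hyperk\"ahler moment map for the full complexified gauge group serves as a K\"ahler potential on the quotient \cite[\S3(D)]{HKLR}. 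Properness and boundedness below are the genuinely delicate points: boundedness below is obvious since $\rho\geq 0$, while properness should follow from a growth estimate comparing $\rho(A)$ with the norm of $A\in\mathfrak{g}^\vee$ under the embedding $\mathcal{O}\hookrightarrow\mathfrak{g}^\vee$, using the fact that the complex moment map is the inclusion and hence controls the $L^2$-norm of $\beta(0)$ from below. An alternative, should the Nahm-side estimate prove cumbersome, is to invoke Stein-ness of $\mathcal{O}$ and average an exhausting plurisubharmonic function over $K$; one then adjusts by a pluriharmonic correction so that the resulting form equals $\omega_{\mathbf{I}}$, which is possible because $\mathcal{O}$ is simply connected in the cases of interest.
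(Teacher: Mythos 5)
Your overall strategy is the paper's: realize $\O$ as a moduli space of solutions to Nahm's equations on a half-line following Kronheimer--Biquard--Kovalev, obtain the hyperk\"ahler moment map for the residual $K$-action from boundary values (the paper carries out the integration-by-parts computation explicitly, \`a la Donaldson--\v{S}vec / Dancer--Swann, producing $[T]\mapsto(T_1(0),T_2(0),T_3(0))$, so that the complex part is $T_2(0)+iT_3(0)$, i.e.\ the inclusion), and then produce a K\"ahler potential in the spirit of Mayrand's argument for $T^*G$. Up to and including the moment map identification your proposal is sound.

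The K\"ahler potential is where there is a genuine gap, in two places. First, the potential you propose --- the minimal $L^2$-norm squared over a $\mathscr{G}_0$-orbit --- is not the right function, and the ``general principle'' you cite to justify it (that the norm square of the moment map for the complexified gauge group is a K\"ahler potential on the quotient) is not a correct statement; norm-squares of moment maps are Morse--Kirwan functions, not potentials. The correct potential, as in \cite[\S3(E)]{HKLR} and \cite[Proposition 4.6]{Mayrand}, is $\varphi_{\mathbf{I}}([T])=\tfrac12\int\bigl(\langle T_2,T_2\rangle+\langle T_3,T_3\rangle\bigr)\,ds$, i.e.\ only the two components adapted to the complex structure $\mathbf{I}$; this is already invariant under pointwise gauge transformations (the inner product is $\Ad$-invariant), so no choice of distinguished representative is needed. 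Second, and more seriously, properness is exactly the hard point and you have deferred it to a ``growth estimate'' without saying how to get one: knowing that the complex moment map is the inclusion tells you $\|T_2(0)+iT_3(0)\|=\|A\|$, but gives no a priori lower bound on the integral $\int(\|T_2\|^2+\|T_3\|^2)\,ds$ in terms of $\|A\|$. The paper's argument supplies this via a homogeneity trick: extending $\varphi_{\mathbf{I}}$ to a continuous function $\widetilde{\varphi}_{\mathbf{I}}$ on all of $\g^{\oplus 3}$ using solutions of the reduced Nahm equations with prescribed value at $0$, one has $T^{r\xi}(s)=rT^{\xi}(rs)$ and hence $\widetilde{\varphi}_{\mathbf{I}}(r\xi)=r\,\widetilde{\varphi}_{\mathbf{I}}(\xi)$; restricting to the closed cone over the (closed, since semisimple) orbit and using compactness of its intersection with the unit sphere yields $\widetilde{\varphi}_{\mathbf{I}}(\xi)\geq m\|\xi\|$ with $m>0$, which is properness. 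Your fallback via Steinness also does not work as stated: averaging an exhausting plurisubharmonic function over $K$ produces a potential for \emph{some} K\"ahler form, and the discrepancy with $\omega_{\mathbf{I}}$ cannot be absorbed by a pluriharmonic correction (pluriharmonic corrections do not change the $(1,1)$-form at all), so you would still owe an argument that the specific form $\omega_{\mathbf{I}}$ has a proper potential.
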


\begin{proof}
As mentioned earlier, existence of the hyperk\"ahler metrics can be found at \cite[Theorem 1.1]{Kronheimer1990} for regular semisimple orbits and at \cite[Th\'eor\`eme 1]{Biquard1996} and \cite[Theorem 1.1]{Kovalev1996} for general semisimple orbits.  The fact that the conjugation action of $K$ admits a hyperk\"ahler moment map can be proved in the same way as \cite[Lemma 2]{DS96}.  The existence of the K\"ahler potential with the indicated properties uses the same argument as that of \cite[Proposition 4.6]{Mayrand}.  Further details can be found in Subsection~\ref{detailed}.
\end{proof}

\begin{rmk} \label{r:hKfam}
Let $S := T \cap K$ be a maximal torus in the maximal compact group $K \leq G$ and let $\mathfrak{s}$ be its Lie algebra.  Then, as in the references \cite{Kronheimer1990, Biquard1996, Kovalev1996}, once the coadjoint orbit is fixed, the family of such hyperk\"ahler metrics is parametrized by an element $\tau_1 \in \s$ (see \ref{detailed}).
\end{rmk}

\subsection{Hyperk\"ahler metrics on open de Rham spaces}

We first give a lemma describing coadjoint orbits for $G_2$ as algebraic symplectic reductions of $T^*G$.

\begin{lem} \label{l:decoup}
Consider the diagonal element \eqref{e:diagelt}
\begin{align*}
C := \frac{C_2}{z^2} + \frac{C_1}{z} \in \t_2^\vee
\end{align*}
with $C_2$ regular (i.e., having distinct eigenvalues).  Then the $G_2$ coadjoint orbit $\O(C) \subseteq \g_2^\vee$ is isomorphic to the algebraic symplectic quotient
\begin{align*}
T^*G /\!\!/_{C_1} T,
\end{align*}
for the $T$-action $t \cdot (a, X) = (ta, X)$.
\end{lem}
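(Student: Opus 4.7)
The plan is to exhibit the isomorphism explicitly, leveraging Lemma \ref{codecomp}\eqref{ztfib} for the structure of $\O(C)$ together with the regularity of $C_2$. First I would compute that $\Ad_{\unt + zb}(C) = C_2/z^2 + (C_1 + [b, C_2])/z$ for $b \in \g^\od$ (truncating non-negative powers of $z$). Combined with Lemma \ref{codecomp}\eqref{ztfib} at $m = 2$ (where $G_2^\od$ is the affine space $\g^\od$ via $\unt + zb \leftrightarrow b$), this writes every element of $\O(C)$ uniquely as $\Ad_g(C_2)/z^2 + \Ad_g(C_1 + [b,C_2])/z$ with $(g,b) \in G \times \g^\od$ modulo $(g,b) \cdot t = (gt, \Ad_{t^{-1}} b)$. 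Since $C_2 \in \ft^\reg$, the map $\ad_{C_2} : \g^\od \to \g^\od$ is a $T$-equivariant linear isomorphism, so the substitution $X := C_1 + [b, C_2] \in C_1 + \g^\od$ gives a reparametrization by pairs $(g, X)$ satisfying $\pi_{\t^\vee}(X) = C_1$, with transported $T$-action $(g, X) \cdot t = (gt, \Ad_{t^{-1}} X)$.

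Next I would define $\phi : T^*G = G \times \g^\vee \to \g_2^\vee$ by
\[
\phi(a, X) := \Ad_{a^{-1}}(C_2)/z^2 + X/z.
\]
$T$-invariance for the left action $t \cdot (a,X) = (ta, X)$ is immediate from $\Ad_{t^{-1}} C_2 = C_2$, and applying $\Ad_a$ to $\phi(a,X)$ shows that $\phi(a, X) \in \O(C)$ exactly when $\Ad_a X \in C_1 + \g^\od$, i.e., $\pi_{\t^\vee}(\Ad_a X) = C_1$. By \eqref{e:TGmm} the latter is precisely the moment map condition at level $C_1$ for the left $T$-action on $T^*G$. The induced morphism $\bar\phi : T^*G /\!\!/_{C_1} T \to \O(C)$ is surjective (given $A_2/z^2 + A_1/z \in \O(C)$, choose $a \in G$ with $\Ad_{a^{-1}} C_2 = A_2$ and set $X := A_1$; a direct check yields $\pi_{\t^\vee}(\Ad_a X) = C_1$) and injective modulo $T$ (the equality $\Ad_{a^{-1}} C_2 = \Ad_{(a')^{-1}} C_2$ forces $a' a^{-1} \in T$ by regularity of $C_2$, and then $X = X'$). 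As both sides are smooth varieties of equal dimension, $\bar\phi$ is an isomorphism of algebraic varieties.

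Finally, I would verify that the Kirillov--Kostant--Souriau form on $\O(C)$ matches the symplectic form on $T^*G /\!\!/_{C_1} T$ descended from the canonical cotangent form. The cleanest approach is reduction in stages applied to the parametrization of Lemma \ref{codecomp}\eqref{ztfib} combined with the $G_2$-invariance of the KKS form; alternatively one can compare the two forms on tangent vectors at a generic point using the explicit formula for $\phi$. The main obstacle in the argument is precisely this symplectic compatibility, since the set-theoretic and varietal isomorphism follows transparently from the explicit $\phi$; however, the regularity of $C_2$ and the explicitness of $\phi$ should make the required tangent-space check routine.
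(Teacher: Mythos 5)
Your proof is correct and follows essentially the same route as the paper: the same map $(a,X)\mapsto \Ad_{a^{-1}}(C_2)/z^2 + X/z$ descending to the quotient, with regularity of $C_2$ (via $\ad_{C_2}:\g^{\od}\to\g^{\od}$ being an isomorphism) doing all the work. The only differences are cosmetic: the paper exhibits the inverse explicitly as $g(\unt+zH)\mapsto[g^{-1},\Ad_g(C_1+[H,C_2])]$ rather than deducing invertibility from bijectivity of a morphism between smooth varieties (which is valid here since Section 6 works over $\C$), and you additionally flag the symplectic compatibility, which the paper's proof leaves implicit.
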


\begin{rmk} \label{r:Boalchsp}
This is a special case of \cite[Lemma 2.3(2), see also Lemma 2.4]{Bo01}.
\end{rmk}

\begin{proof}
Recall that the moment map for this action is $\mu : T^* G  \to \t^\vee$
\begin{align*}
(a, X) \mapsto \pi_{\t} \left( \Ad_a X \right).
\end{align*}
The map $\mu^{-1}(C_1) \to \O(C)$
\begin{align*}
(a, X) \mapsto \frac{\Ad_{a^{-1}} C_2 }{z^2 } + \frac{X}{z}
\end{align*}
is readily seen to be $T$-invariant, so descends to a morphism $T^*G/\!\!/_{C_1} T \to \O(C)$.

Now, as in Lemma \ref{l:coadjorb}\eqref{l:coadjhomog}, $\O(C)$ may be realized as the geometric quotient $G_2/T_2$.  If we write an element of $G_2$ in the form $g (I + zH)$ for some $g \in G$, $H \in \g$, then we define $G_2 \to T^*G /\!\!/_{C_1} T$ by
\begin{align*}
g(I + zH) \mapsto [ g^{-1}, \Ad_g ( C_1 + [H, C_2]) ],
\end{align*}
with the square brackets indicating the class mod $T$.  It is straightforward to check that this is well-defined and that it descends to $\O(C) \to T^*G/\!\!/_{C_1} T$ to give the inverse.
\end{proof}

\begin{thm} \label{t:odRhK}
Consider a generic open de Rham space $\MM^*(\mathbf{C})$ for which the orders of all the formal types $C^i$ are $\leq 2$.  Then $\MM^*(\mathbf{C})$ admits a complete hyperk\"ahler metric induced by a  choice of a hyperk\"ahler metric on each coadjoint orbit $\O(C^i)$ as in Proposition~\ref{p:Ohkstmt}.
\end{thm}

\begin{proof}
As in Section \ref{s:addfusprod}, we label the coadjoint orbits so that $\O(C^i) \subseteq \g^\vee$ for $1 \leq i \leq k$ and $\O(C^i)$ is a coadjoint orbit for $G_2$ for $k+1 \leq i \leq d$; we will abbreviate $\O(C^i)$ to $\O^i$ in the following.  Spelling out \cite[Proposition 2.1]{Bo01}, we may use Lemma \ref{l:decoup} to rewrite the holomorphic symplectic quotient of Definition \ref{d:odr} as follows \small
\begin{align} \label{e:odRsympquot}
\MM^*(\mathbf{C}) & = \prod_{i=1}^d \O(C^i) \bigg/\!\!\!\!\bigg/_0 G = \left( \prod_{i=1}^k \O^i \times \prod_{i=k+1}^d \O^i \right) \bigg/\!\!\!\!\bigg/_0 G \cong \left( \prod_{i=1}^k \O^i \times \prod_{i=k+1}^d T^*G /\!\!/_{C_1^i} T \right) \bigg/\!\!\!\!\bigg/_0 G \nonumber \\
& \cong \left( \prod_{i=1}^k \O^i \times \prod_{i=1}^m T^*G \right) \bigg/\!\!\!\!\bigg/_{(\mathbf{C}_1, 0)} T^m \times G,
\end{align} \normalsize
where $\mathbf{C}_1$ denotes the tuple $(C_1^{k+1}, \ldots, C_1^d)$ of residue terms of the formal types.  Each factor of $T$ acts via the left action on the corresponding $T^*G$ factor as in \eqref{e:TGact} and the $G$ factor acts diagonally:  by the coadjoint action on $\O^i$, for $1 \leq i \leq k$ and with the right action in \eqref{e:TGact} for the factors indexed by $k+1 \leq i \leq d$.  We have thus expressed $\MM^*(\mathbf{C})$ as an algebraic symplectic quotient and we can now apply Proposition \ref{p:Mayrand}.  
But now the hypotheses are verified for each factor in Propositions \ref{p:TGhkstmt} and \ref{p:Ohkstmt}, and can therefore easily be verified for the product.
\end{proof}

\begin{ex} \label{ex:ALF}
Consider the case of a rank $2$ open de Rham space $\MM^*(\mathbf{C})$ with two poles each of order $2$, which is a smooth affine algebraic surface (Proposition \ref{odrspa}).  Applying \cite[Proposition 2.1]{Bo01} or \eqref{e:odRsympquot}, if we first take the quotient of $T^* G \times T^*G$ by $G$, one can see that it is a quotient of the form $T^*G /\!\!/ T^2$ (for appropriate values of the moment map), which is precisely the reduction carried out at \cite[pp.88--89]{Dancer1993}.  Hence $\MM^*(\mathbf{C})$ is isometric to the deformation of the $D_2$ singularity, which was already observed at \cite[p.3]{boalch08}, the hyperk\"ahler metric on which was previously constructed via twistor methods in \cite[\S7]{Hitchin1984}, and proved to be ALF in \cite[\S5.3]{CherkisKapustin1999}.  It is worth noting that this space can also be described via a slice construction \cite[(3.1)]{Bielawski2017} which, although an algebraic operation, also yields the metric \cite[\S4]{Bielawski2017}.  Furthermore, we expect the metrics on higher dimensional open de Rham spaces to exhibit ``higher dimensional ALF behaviour''.
\end{ex}

\begin{rmk} \label{r:hKrmks}
\begin{enumerate}[(i)]
\item \label{r:higherorderhK} It is true more generally, and not much harder to prove, that with no restriction on the order of the formal types, the spaces $\MM^*(\mathbf{C})$ always admit complete hyperk\"ahler metrics.  However, there is some extra choice involved. In general, a coadjoint orbit for $\G_m$ may be realized as an algebraic symplectic quotient of $T^*G \times \O(C')$ (\cite[Lemma 2.3(2), Lemma 2.4]{Bo01}, cf.\ Remark \ref{r:Boalchsp}), where $\O(C')$ is a coadjoint orbit for the unipotent group $\G_m^1$.  As such, $\O(C')$ is an even-dimensional complex affine space, hence, upon some choice of coordinates, admits a flat hyperk\"ahler metric.  One can show that the coordinates can be chosen so that $S$ acts on pairs of coordinates with opposite weights, and hence with a hyperk\"ahler moment map. 

\item The spaces $T^* G \times \O(C')$ are (isomorphic to) what are known as ``extended orbits'' and these can be arranged into a moduli space by taking an additive fusion product (this is referred to as the ``extended'' moduli space in \cite[Definition 2.6]{Bo01} \cite[Definition 2.4]{HiroeYamakawa}.  This moduli space will admit an action of $T^l$, where $l$ is the number of irregular poles, and $\MM^*(\mathbf{C})$ can thus be realized as a hyperk\"ahler quotient of the extended moduli space from this action.  Indeed, in the last expression in \eqref{e:odRsympquot}, if we first take the quotient by $G$, then we obtain the extended moduli space.
\end{enumerate}
\end{rmk}

\subsection{Details of proof of Proposition \ref{p:Ohkstmt}} \label{detailed}

Here, we will give the details of the proof of Proposition \ref{p:Ohkstmt}.  The statements that need to be proved are:  the existence of a hyperk\"ahler moment map on the coajdoint orbit $\O$ and its complex part is simply the inclusion map into the dual of the complex Lie algebra, which is Lemma \ref{l:Omm} below; and the existence of a K\"ahler potential with the appropriate properties, which is Lemma \ref{l:OKahpot}.

To proceed, we will need to fix notation, and so we will adopt that of \cite[\S3, \emph{L'espace des modules}]{Biquard1996}.  As such, $G$ will now denote a compact Lie group, which is of course the maximal compact subgroup of its complexification $G_\C$, whereas in Section \ref{s:factors} it denoted the complexification and $K$ a maximal compact subgroup; we hope this will cause the reader no confusion.  Of course, $\g$ will denote the Lie algebra of $G$ and $\g_\C$ its complexification, and $\langle \, , \rangle$ will denote a $\Ad$-invariant inner product on $\g$.

Let us recall how the coadjoint orbit $\O$ is identified with a moduli space of solutions to Nahm's equations.  Let $S \subset G$ be a maximal torus (which is, of course, compact) with Lie algebra $\mathfrak{s}$, and respective complexifications $S_\C$ and $\mathfrak{s}_\C$.  As usual, using the invariant inner product, we identify $\g^\vee = \g$ and $\g_\C^\vee = \g_\C$.  Viewing $\O$ as a subset of $\g_\C$, as it is a semisimple orbit, its intersection with $\mathfrak{s}_\C$ is a singleton; we write this element as $\tau_2 + i \tau_3$ with $\tau_2$, $\tau_3 \in \mathfrak{s}$.  One chooses a third element $\tau_1 \in \mathfrak{s}$ so that we have a triple $\tau = (\tau_1, \tau_2, \tau_3) \in \mathfrak{s}^3$, so that for an appropriate $\varsigma > 0$, we can make sense of the space $\Omega_{\nabla; \varsigma}^1$ as described at \cite[\S3, p.265]{Biquard1996}.  We will write an element $\nabla + a \in \mathscr{A}_\varsigma$ as a quadruple $T = (T_0, T_1, T_2, T_3)$ of smooth maps $T_i : (-\infty, 0] \to \g$ satisfying an asymptotic condition depending on $\tau$: one has $\nabla + a = d + T_0 \, ds + \sum_{i=1}^e T_i \, d\theta^i$.  We consider such $T$ which are solutions to Nahm's equations:
\begin{align} \label{e:Nahmeq}
\frac{dT_i}{ds} + [T_0, T_i] + [T_j, T_k] = 0
\end{align}
for cyclic permutations $(i \ j \ k)$ of $(1 \ 2 \ 3)$.  The quotient of the space of such solutions by the group $\mathscr{G}_\varsigma$, also defined at \cite[\S3, p.265]{Biquard1996}, will be referred to as the moduli space $M = M(\tau)$ of solutions to Nahm's equations.  We will often write $[T]$ for the $\mathscr{G}_\varsigma$-orbit of a solution $T$.  The isomorphism $M \xrightarrow{\sim} \O$ is given by \cite[Corollaire 4.5]{Biquard1996} (see also the definition before Equations (IIIa) and (IIIb)) as
\begin{align} \label{e:ONahmiso}
[T] \mapsto T_2(0) + i T_3(0).
\end{align}

\begin{lem} \label{l:Omm}
The map $\mu : M \to \g^{\oplus 3}$ given by
\begin{align} \label{e:Omm}
[T] \mapsto \left( T_1(0), T_2(0), T_3(0) \right)
\end{align}
yields a hyperk\"ahler moment map for the (co)adjoint $G$-action.  Furthermore, the complex part of the moment map coincides with the inclusion of $\O$ in the dual of the complex Lie algebra.
\end{lem}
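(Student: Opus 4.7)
The plan is to verify directly that $\mu$ as defined in \eqref{e:Omm} satisfies the defining properties of a hyperk\"ahler moment map at the level of the infinite-dimensional affine model $\mathscr{A}_\varsigma$, and then to descend to $M$. Following the template of Kronheimer and Biquard, $\mathscr{A}_\varsigma$ carries a flat hyperk\"ahler structure whose three complex structures act by the quaternionic permutations of the indices $\{1,2,3\}$ on tangent vectors $t = (t_0,t_1,t_2,t_3)$, with associated symplectic forms
$$\omega_i(t,t') = \int_{-\infty}^0 \bigl( \langle t_0, t'_i\rangle - \langle t_i, t'_0\rangle + \langle t_j, t'_k\rangle - \langle t_k, t'_j\rangle \bigr)\, ds$$
for cyclic permutations $(i\,j\,k)$ of $(1,2,3)$. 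The three components of Nahm's equations \eqref{e:Nahmeq} are precisely the vanishing of the three moment maps for the action of the gauge group $\mathscr{G}_\varsigma$ (gauge transformations trivial at $s=0$ with Biquard's asymptotics at $-\infty$), so that $M$ inherits a hyperk\"ahler structure by hyperk\"ahler reduction.

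Next I would identify the residual $G$-action on $M$ with the quotient of gauge transformations having arbitrary boundary value at $s=0$, modulo $\mathscr{G}_\varsigma$. Given $\xi \in \g$, choose a smooth lift $\hat\xi\colon(-\infty,0]\to\g$ with $\hat\xi(0)=\xi$ and with tail in $\s$; then the fundamental vector field at a solution $T$ is
$$X_\xi = \bigl( -\dot{\hat\xi} - [T_0,\hat\xi],\ -[T_1,\hat\xi],\ -[T_2,\hat\xi],\ -[T_3,\hat\xi] \bigr).$$
To verify $d\mu_i(t) = \omega_i(X_\xi, t)$ for a gauge-fixed tangent vector $t$ representing a class in $T_{[T]}M$, substitute into $\omega_i$ and integrate by parts in the $t_0$--$\dot{\hat\xi}$ pairing: the bulk terms cancel by linearising Nahm's equations \eqref{e:Nahmeq} together with the Jacobi identity and $\Ad$-invariance of $\langle\,,\rangle$; the boundary contribution at $-\infty$ vanishes by Biquard's weighted Sobolev decay; and the only surviving term is $\langle\xi,t_i(0)\rangle$, which is exactly $\langle\xi,d\mu_i(t)\rangle$. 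Equivariance of $\mu$ follows from the $\Ad_g$-covariance of evaluation at $s=0$, completing the hyperk\"ahler moment map verification.

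For the final assertion, assemble the complex moment map $\mu_\C := \mu_2 + i\mu_3$ relative to the complex structure $I_1$. By \eqref{e:ONahmiso}, the isomorphism $M \arsim \FO$ is given by $[T]\mapsto T_2(0)+iT_3(0) = \mu_\C([T])$, so under this identification $\mu_\C$ becomes tautologically the inclusion $\FO \hookrightarrow \g_\C$.

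The hard part will be the analytic bookkeeping around the integration by parts: one must confirm that the gauge-fixed representatives of $X_\xi$ and of $t$ lie in the weighted Sobolev classes in which Biquard establishes the smooth structure on $M$, so that the boundary term at $-\infty$ genuinely vanishes and the pairing at $s=0$ is the only residual contribution. This is routine but requires careful matching against the estimates already present in \cite{Biquard1996}, which treat exactly the horizontal distribution needed here.
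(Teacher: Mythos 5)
Your proposal is correct and follows essentially the same route as the paper's proof: identify the residual $G$-action with the quotient $\mathscr{G}_\varsigma^+/\mathscr{G}_\varsigma$ of gauge transformations with free boundary value at $s=0$, write the fundamental vector field via a lift $u$ with $u(0)=\xi$, and reduce the pairing $\omega_i(v_\xi, w)$ to the total derivative $\frac{d}{ds}\langle u, w_i\rangle$ using the linearized Nahm equations, so that only the boundary term $\langle \xi, w_i(0)\rangle$ survives; the identification of $\mu_\C$ with the inclusion is then immediate from \eqref{e:ONahmiso}. The only cosmetic differences are sign conventions in the flat symplectic forms and your framing of the decay at $-\infty$ in terms of Biquard's weighted spaces, where the paper simply invokes $u \to 0$.
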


\begin{proof}
As mentioned in Section \ref{s:Ohk}, the proof mirrors that of \cite[Lemma 2]{DS96}.  Consider the group 
\begin{align*}
\mathscr{G}_\varsigma^+ := \left\{ g : \R_- \to G \, : \, (\nabla g) g^{-1} \in \Omega_{\nabla; \varsigma}^1 \right\}
\end{align*}
which has Lie algebra
\begin{align*}
\textnormal{Lie}(\mathscr{G}_\varsigma^+) = \left\{ u : \R_- \to \g \, : \, \nabla u \in \Omega_{\nabla; \varsigma}^1 \right\}.
\end{align*}

Consider $\mathscr{G}_\varsigma^+ \to G$ the evaluation map at $s = 0$ and $\mathscr{G}_\varsigma$ its preimage  of $1_\G$. It  is a normal subgroup of $\mathscr{G}_\varsigma^+$ with quotient $G$.  Of course we have a parallel statement for the Lie algebras.  Furthermore, $\mathscr{G}_\varsigma^+$ acts on the space of solutions to Nahm's equations inducing the adjoint action of $G$ on $\O$, as is easily seen via the map \eqref{e:ONahmiso}.

A tangent vector to $M$ at $[T]$ is represented by a quadruple $w = (w_0, w_1, w_2, w_3) \in \Omega_{\nabla, \varsigma}^1$ satisfying
\begin{align} \label{e:Nahmtgtvec}
\frac{dw_i}{ds} = - [T_0, w_i] - [w_0, T_i] - [T_j, w_k ] - [w_j, T_k]
\end{align}
for cyclic permutations $(i \ j \ k)$ of $(1 \ 2 \ 3)$ (cf.\ \cite[Equations (6)-(9)]{DS96}; note that there is a slight difference in the complex structures there and in \cite{Biquard1996} accounting for the sign differences in the equations).  These equations are obtained simply by linearizing Nahm's equations \eqref{e:Nahmeq}.

Let $\xi \in \g$ and choose a lift $u(s) \in \textnormal{Lie}(\mathscr{G}_\varsigma^+)$, so that $u(0) = \xi$.  A representative for the tangent vector $v_\xi([T])$ to $M$ at $[T]$ generated by the infinitesimal action of $\xi$ is given by
\begin{align*}
v_\xi([T]) \leftrightarrow \left( [ u, T_0] - \frac{du}{ds} , [ u, T_1], [ u, T_2], [ u, T_3] \right).
\end{align*}
We evaluate using \eqref{e:Nahmtgtvec}
\begin{align*}
\omega_{\mathbf{I}}( v_\xi, w) & = \int_{-\infty}^0 - \left\langle [u, T_0] -\frac{du}{ds}, w_1 \right\rangle + \langle [u, T_1], w_0 \rangle - \langle [ u, T_2], w_3 \rangle + \langle [ u, T_3], w_2 \rangle \, ds \\
& = \int_{-\infty}^0 \left\langle u, - [T_0, w_1] -[w_0, T_1] - [T_2, w_3] - [w_2, T_3] \right\rangle + \left\langle \frac{du}{ds}, w_1 \right\rangle \, ds \\
& = \int_{-\infty}^0 \left\langle u, \frac{dw_1}{ds} \right\rangle + \left\langle \frac{du}{ds}, w_1 \right\rangle \, ds = \int_{-\infty}^0 \frac{d}{ds} \langle u, w_1 \rangle \, ds = \langle \xi, w_1(0) \rangle,
\end{align*}
since $u \to 0$ as $s \to -\infty$.

On the other hand, pairing $\xi$ with the moment map $\mu_{\mathbf{I}}$ gives the function $\mu_{\mathbf{I}}^\xi : M \to \R$
\begin{align*}
[T] \mapsto \langle \xi, T_1(0) \rangle.
\end{align*}
Hence
\begin{align*}
d\mu_{\mathbf{I}}^\xi(w) = \langle \xi, w_1(0) \rangle = \omega_{\mathbf{I}}( v_\xi, w)
\end{align*}
and this is exactly the moment map condition.  The same computation can be repeated for the complex structures $\mathbf{J}$ and $\mathbf{K}$.

The statement about the complex part of the moment map is obvious from the expressions \eqref{e:ONahmiso} and \eqref{e:Omm}.
\end{proof}

\begin{lem} \label{l:OKahpot}
The semisimple coajdoint orbit $\O$ admits a global $G$-invariant K\"ahler potential (for the complex structure $\mathbf{I}$) which is proper and bounded below.
\end{lem}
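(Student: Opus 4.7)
My plan is to adapt the argument of \cite[Proposition 4.6]{Mayrand}, which establishes the analogous statement for $T^*G$, to the coadjoint orbit setting by exploiting the Nahm moduli presentation $\O \cong M(\tau)$ from \eqref{e:ONahmiso}. The hyperk\"ahler metric on $M(\tau)$ is descended from the flat $L^2$ metric on the space of tuples $(T_0, T_1, T_2, T_3)$ satisfying Nahm's equations \eqref{e:Nahmeq}, so a K\"ahler potential for $\omega_{\mathbf{I}}$ should arise from an $L^2$-type functional on this moduli space. The asymptotic data $\tau$ provides a natural basepoint from which to measure the $L^2$-norm.

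Concretely, I would fix a slice for the $\mathscr{G}_\varsigma$-action (for example the Coulomb-type gauge $T_0 \equiv 0$, which can be achieved after an absorbing $\mathscr{G}_\varsigma$-transformation), and then set
\[
f([T]) := \int_{-\infty}^0 \left( \|T_1(s)\|^2 + \|T_2(s) - \tau_2\|^2 + \|T_3(s) - \tau_3\|^2 \right) ds.
\]
The decay conditions built into $\mathscr{A}_\varsigma$ ensure the integral converges. Boundedness below is automatic since the integrand is non-negative. $G$-invariance follows because the coadjoint $G$-action on $\O \cong M(\tau)$ is realized via the quotient $\mathscr{G}_\varsigma^+/\mathscr{G}_\varsigma$ evaluating at $s=0$, and such constant gauge transformations act as fiberwise isometries with respect to the chosen inner product on $\g$, preserving $f$. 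Properness should be deduced by showing that when $[T_n] \to \infty$ in $M(\tau)$, the boundary data $(T_1(0), T_2(0), T_3(0))$ exits every compact set in $\g^{\oplus 3}$, and that for any gauge-fixed representative the $L^2$ norm dominates these boundary values (using the asymptotic anchor at $\tau$ and integration against a suitable cut-off).

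The main obstacle is verifying the identity $dd^c f = c\,\omega_{\mathbf{I}}$ for some positive constant $c$. To carry this out, I would differentiate $f$ twice along pairs of tangent vectors $v, w$ satisfying the linearized Nahm equations \eqref{e:Nahmtgtvec}, rewrite the second derivative as an $L^2$ pairing via integration by parts on the half-line, and use both the asymptotic decay as $s \to -\infty$ and the algebraic identities \eqref{e:Nahmeq} to identify the resulting boundary contribution at $s=0$ with the expression \eqref{e:sympform}-type formula for $\omega_{\mathbf{I}}$. An alternative route, which sidesteps a direct computation, is to first show that $f$ is strictly plurisubharmonic and that the induced moment map $\tfrac{1}{2}\mathbf{I}\,df$ for the $G$-action coincides (up to the additive constant $\tau_1$ of Remark \ref{r:hKfam}) with the moment map $\mu_{\mathbf{I}}$ constructed in Lemma \ref{l:Omm}; since a $G$-invariant K\"ahler form with this moment map is determined up to a constant multiple by a uniqueness argument in the hyperk\"ahler setting, one then deduces $dd^c f \propto \omega_{\mathbf{I}}$. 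Either way, once the potential property is established, the conditions required by Proposition \ref{p:Mayrand} are in place.
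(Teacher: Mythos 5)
Your overall strategy---take an $L^2$-type functional of the Nahm data as the K\"ahler potential and then check invariance, lower bounds, and properness---is the same as the paper's, but the specific functional you choose and the two key verifications both have genuine problems. The paper's potential is $\varphi_{\mathbf{I}}([T])=\tfrac12\int_{-\infty}^0\langle T_2,T_2\rangle+\langle T_3,T_3\rangle\,ds$, involving only the $\mathbf{J}$- and $\mathbf{K}$-components; that this is a potential for $\omega_{\mathbf{I}}$ is not computed but quoted from \cite[\S3(E)]{HKLR} and \cite{DS96}, which is how the paper disposes of what you call the ``main obstacle.'' Your functional differs in two ways, both damaging. First, the extra $\|T_1\|^2$ term: on the flat ambient space $dd^c(\tfrac12\|T_1\|^2)$ (for $\mathbf{I}$) lands in the $(T_0,T_1)$-block rather than reproducing the $(T_2,T_3)$-block that the quotient construction converts into $\omega_{\mathbf{I}}$, so there is no reason for your $dd^cf$ to be proportional to $\omega_{\mathbf{I}}$. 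Second, the recentering $\|T_i-\tau_i\|^2$: since gauge transformations and the residual constant conjugations act on $T_2,T_3$ by pointwise conjugation but do not fix $\tau_2,\tau_3$, your $f$ is neither well defined on the quotient $M$ nor $G$-invariant (the $T_0\equiv0$ slice does not kill the constant gauge group), and the cross term $-2\int\langle T_2,\tau_2\rangle+\langle T_3,\tau_3\rangle\,ds$ it introduces is not pluriharmonic. So the identity $dd^cf=c\,\omega_{\mathbf{I}}$ is not merely unverified; for this $f$ it would fail.

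The properness claim is the other place where the paper does real work and your sketch only restates the goal: ``the $L^2$ norm dominates the boundary values'' is precisely what must be proved, and no mechanism is given. The paper's argument is a homogeneity trick. One extends $\varphi_{\mathbf{I}}$ to a continuous function $\widetilde\varphi_{\mathbf{I}}$ on all of $\g^{\oplus3}$ by solving the reduced Nahm equations \eqref{e:Nahmredeq} (with $T_0\equiv0$) backwards from the initial triple $\xi$ at $s=0$ and integrating; the scaling symmetry $T^{r\xi}(s)=rT^{\xi}(rs)$ gives $\widetilde\varphi_{\mathbf{I}}(r\xi)=r\,\widetilde\varphi_{\mathbf{I}}(\xi)$. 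Because $\O$ is semisimple, it is closed in $\g_\C$, so the cone $V$ over $\ev_\C^{-1}(\O)$ is closed, $\widetilde\varphi_{\mathbf{I}}$ attains a strictly positive minimum $m$ on the intersection of $V$ with the unit sphere, and degree-one homogeneity yields $\|\xi\|\le\widetilde\varphi_{\mathbf{I}}(\xi)/m$ on $V$; properness on the closed subset $\O\subseteq V$ follows. Both the homogeneity argument and the essential use of closedness of the semisimple orbit are absent from your proposal, so the properness step remains unproved as written.
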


As mentioned, the proof here is adapted from that of \cite[Lemma 4.5]{Mayrand}.  

\begin{proof}
A global K\"ahler potential $\varphi_{\mathbf{I}} : M \to \R$ for the K\"ahler form $\omega_{\mathbf{I}}$ is given by (see \cite[\S3(E)]{HKLR}, cf.\ \cite[p.64]{DS96})
\begin{align*}
[T] \mapsto \frac{1}{2} \int_{-\infty}^0 \langle T_2, T_2 \rangle + \langle T_3, T_3 \rangle \, ds.
\end{align*}
It is then sufficient to show that the $\varphi_{\mathbf{I}}$ is $G$-invariant, proper and bounded below.  $G$-invariance follows from that of the bilinear form $\langle \, , \rangle$ and lower-boundedness is obvious, so properness is essentially all that needs to be proved.

Let $\xi = (\xi_1, \xi_2, \xi_3) \in \g^{\oplus 3}$.  Then the existence and uniqueness theorem for systems of ordinary differential equations gives a unique solution $T^\xi = (T_0^\xi \equiv 0, T_1^\xi, T_2^\xi, T_3^\xi)$ to the reduced Nahm's equations 
\begin{align} \label{e:Nahmredeq}
\frac{dT_i}{dt} + [T_j, T_k] = 0
\end{align}
(this is just \eqref{e:Nahmeq} with $T_0 = 0$) with $T_i^\xi(0) = \xi_i$ for $i = 1, 2, 3$.  This allows us to define a function $\widetilde{\varphi}_{\mathbf{I}} : \g^{\oplus 3} \to \R$ by
\begin{align} \label{e:tildephi}
\xi \mapsto \frac{1}{2} \int_{-\infty}^0 \langle T_2^\xi, T_2^\xi \rangle + \langle T_3^\xi, T_3^\xi \rangle \, ds,
\end{align}
which is (at the very least) continuous, again by the assertions of the existence and uniqueness theorem on the dependence on the initial conditions.

Let us explain how this is related to $\varphi_{\mathbf{I}}$.  Consider the map $\ev_\C : \g^{\oplus 3} \to \g_\C$
\begin{align*}
\xi \mapsto \xi_2 + i \xi_3.
\end{align*}
As $\O$ is a semi-simple (co)adjoint orbit, $\O$ is closed in $\g_\C$ and hence so is $\ev_\C^{-1}(\O)$.  We may then identify $\O$ with the subset of $\xi \in \ev_\C^{-1}(0)$ for which $T^\xi$ is gauge equivalent to an element of $\mathscr{A}_\varsigma$.  We observe that this will be closed, as we are imposing an asymptotic condition on $T_1^\xi$.  As the definition of $\varphi_{\mathbf{I}}$ is independent of the gauge equivalence class, one sees that under the identification of $\O$ with the described subset of $\ev_\C^{-1}(\O)$, 
\begin{align*}
\varphi_{\mathbf{I}} = \widetilde{\varphi}_{\mathbf{I}}|_{\O}.
\end{align*}
It now suffices to show that there is a closed subset $V \subseteq \g^{\oplus 3}$ containing $\O$ (using the identification above) for which $\widetilde{\varphi}_{\mathbf{I}}|_V$ is proper, since the restriction of a proper map to a closed subset is still proper.

For this, we will take
\begin{align*}
V := \left\{ r \cdot \xi \, : \, \xi \in \ev_\C^{-1}(\O), \quad r \in \R_{\geq 0} \right\},
\end{align*}
which is closed in $\g^{\oplus 3}$.  Let $S \subseteq \g^{\oplus 3}$ be the unit sphere (here, we may take the invariant inner product on $\g$ in each factor); this is compact.  Then $S \cap V$ is a compact subset of $\g^{\oplus 3}$, and hence $\widetilde{\varphi}_{\mathbf{I}}$ has a minimum value $m > 0$.  It is non-zero, for if $r \cdot \xi \in V$ is such that $\widetilde{\varphi}_{\mathbf{I}}(r \xi) = 0$ then, assuming that $\O \neq \{ 0 \}$ (which we may of course do), then one finds $\ev_\C(r \xi) = 0$, and hence $r = 0$; but this would contradict $r \xi \in S$.

By uniqueness of the solutions $T^\xi$ to \eqref{e:Nahmredeq}, for $r \in \R_{>0}$, it is easy to see that $T^{r \xi}(s) = r T^\xi(rs)$.  From this, we obtain for any $\xi \in \g^{\oplus 3}$,
\begin{align} \label{e:rscal}
\widetilde{\varphi}_{\mathbf{I}} ( r\xi) & = \frac{1}{2} r^2 \int_{-\infty}^0 \langle T_2^\xi(rs), T_2^\xi(rs) \rangle + \langle T_3^\xi(rs), T_3^\xi(rs) \rangle \, ds \nonumber \\
& = \frac{1}{2} r \int_{-\infty}^0 \langle T_2^\xi(t), T_2^\xi(t) \rangle + \langle T_3^\xi(t), T_3^\xi(t) \rangle \, dt = r \widetilde{\varphi}_{\mathbf{I}} ( \xi).
\end{align}

Now, given $\xi \in V$, one has $\tfrac{\xi}{\| \xi \|} \in S \cap V$ and so \eqref{e:rscal} gives
\begin{align*}
\widetilde{\varphi}_{\mathbf{I}} ( \xi)  = \| \xi \|  \widetilde{\varphi}_{\mathbf{I}} \left( \frac{\xi}{\| \xi \|} \right) \geq m \| \xi \|  \end{align*} or equivalently \begin{align*}  \| \xi \|  \leq  \frac{\widetilde{\varphi}_{\mathbf{I}} ( \xi)}{m}.
\end{align*}
From this, one finds that the preimage of a bounded set in $\R$ under $\widetilde{\varphi}_{\mathbf{I}}|_V$ is bounded in $V$.  By continuity, the preimage of a closed set is closed, and so $\widetilde{\varphi}_{\mathbf{I}}|_V$ is proper.
\end{proof}

\appendix

\footnotesize

\bibliographystyle{abbrv}
\bibliography{AAAreferences}
\end{document}